\numberwithin{equation}{section}
\newcommand{\boundary}[1]{{\del #1}}
\newcommand{\abs}[1]{{\lvert#1\rvert}}
\newcommand{\half}{\tfrac{1}{2}}
\newcommand{\ol}[1]{\overline{#1}}
\newcommand{\mi}{\bbi\xspace}
\DeclareMathSymbol{\varnothing}{\mathord}{AMSb}{"3F}
\DeclareMathOperator{\Imag}{Im}
\DeclareMathOperator{\Order}{O}
\DeclareMathOperator{\Real}{Re}
\DeclareMathOperator{\sign}{sign}
\DeclareMathOperator{\cross}{\times}
\DeclareMathOperator{\del}{\partial\!}
\DeclareMathOperator{\divides}{|}
\DeclareMathOperator{\id}{\mathbbm{1}}
\DeclareMathOperator*{\ord}{ord}
\DeclareMathOperator{\sech}{sech}
\DeclareMathOperator{\suchthat}{|}
\DeclareMathOperator{\trace}{tr}
\DeclareMathOperator{\curve}{Y}
\DeclareMathOperator{\cc}{C}
\DeclareMathOperator{\jx}{\mathbf{x}}
\DeclareMathOperator{\jy}{\mathbf{y}}
\DeclareMathOperator{\jk}{\mathrm{q}}
\DeclareMathOperator{\jq}{\mathrm{k}}
\DeclareMathOperator{\jh}{\mathrm{h}}
\newcommand{\sql}[1]{\lambda_{#1}^{1/2}}
\newcommand{\bbC}{\mathbb{C}}
\newcommand{\bbE}{\mathbb{E}}
\newcommand{\bbP}{\mathbb{P}}
\newcommand{\bbQ}{\mathbb{Q}}
\newcommand{\bbR}{\mathbb{R}}
\newcommand{\bbS}{\mathbb{S}}
\newcommand{\bbT}{\mathbb{T}}
\newcommand{\bbZ}{\mathbb{Z}}
\newcommand{\calB}{\mathcal{B}}
\newcommand{\calC}{\mathcal{C}}
\newcommand{\calD}{\mathcal{D}}
\newcommand{\calI}{\mathcal{I}}
\newcommand{\calL}{\mathcal{L}}
\newcommand{\calR}{\mathcal{R}}
\newcommand{\calT}{\mathcal{T}}
\newcommand{\calW}{\mathcal{W}}
\newcommand{\calZ}{\mathcal{Z}}
\newcommand{\ep}{\epsilon}
\newcommand{\AND}{\quad\text{and}\quad}
\newcommand{\OR}{\quad\text{or}\quad}
\DeclareMathOperator{\jacobiDN}{dn}
\newcommand{\MatrixGroup}[1]{{\mathrm{#1}}}
\newcommand{\matSL}[2]{\MatrixGroup{SL}_{#1}{#2}}
\newcommand{\matSU}[2]{\MatrixGroup{SU}_{#1}{#2}}
\newcommand{\matsl}[2]{\MatrixGroup{sl}_{#1}{#2}}
\newcommand{\matsu}[2]{\MatrixGroup{su}_{#1}{#2}}
\theoremstyle{plain}
\newtheorem{theorem}{Theorem}[section]
\newtheorem*{theorem*}{Theorem}
\newtheorem{corollary}[theorem]{Corollary}
\newtheorem*{corollary*}{Corollary}
\newtheorem{proposition}[theorem]{Proposition}
\newtheorem*{proposition*}{Proposition}
\newtheorem{lemma}[theorem]{Lemma}
\newtheorem*{lemma*}{Lemma}
\newtheorem{example}[theorem]{Example}
\newtheorem*{example*}{Example}
\newtheorem{definition}[theorem]{Definition}
\newtheorem*{definition*}{Definition}
\newtheorem*{notation*}{Notation}
\newtheorem{remark}[theorem]{Remark}
\newtheorem*{remark*}{Remark}
\numberwithin{figure}{section}
\newcommand{\DOTC}[2]{\langle{#1},\,{#2}\rangle}
\newcommand{\DOTR}[2]{\langle\langle{#1},\,{#2}\rangle\rangle}
\newcommand{\JK}{{\mathbf{K}}}
\newcommand{\JE}{{\mathbf{E}}}
\newcommand{\OMEGA}{\omega}
\newcommand{\NU}{\nu}
\newcommand{\Vconst}{\nu}
\newcommand{\Metric}{v}
\newcommand{\Period}{2\JK'}
\newcommand{\Ratio}{\varrho}
\newcommand{\Bpoint}{\jq}
\newcommand{\olBpoint}{\jq}
\newcommand{\Xzero}{\jq}
\newcommand{\Xone}{\jk}
\newcommand{\Xtwo}{\jh}
\newcommand{\XzeroDot}{\dot{\jq}}
\newcommand{\XoneDot}{\dot{\jk}}
\newcommand{\XtwoDot}{\dot{\jh}}
\newcommand{\XzeroDdot}{\ddot{\jq}}
\newcommand{\AngleZero}{{\chi_0}}
\newcommand{\AngleOne}{{\chi_1}}
\newcommand{\AngleTwo}{{\chi_2}}
\newcommand{\FEQ}{F}
\newcommand{\Turn}{\ell}
\newcommand{\TurnZero}{\ell_0}
\newcommand{\TurnOne}{\ell_1}
\newcommand{\TurnTwo}{\ell_2}
\newcommand{\maptau}{\mathbf{s}}
\newcommand{\bbi}{\mathbbm{i}}
\newcommand{\bbj}{\mathbbm{j}}
\newcommand{\bbk}{\mathbbm{k}}
\newcommand{\coloneq}{:=}
\newcommand{\cmc}{{\sc{cmc}}\xspace}
\newcommand{\sym}{{sym}\xspace}
\newcommand{\Rstar}{{\bbR^\times}}
\newcommand{\Cstar}{{\bbC^\times}}
\newlength{\PWIDTH}
\newlength{\PSPACE}
\newcommand{\Diagonal}{\mathfrak{k}}
\newcommand{\OffDiagonal}{\mathfrak{p}}
\newcommand{\GK}{K}
\newcommand{\spacecomma}{\,\,,}
\newcommand{\spaceperiod}{\,\,.}
\newcommand{\interspace}{\qquad}
\title[Flows of equivariant {\sc{cmc}} tori in $\bbS^3$]
{Flows of constant mean curvature tori in the 3-sphere: \\The
equivariant case}
\author{M.~Kilian, M.~U.~Schmidt and N.~Schmitt}
\address{M. Kilian, Department of Mathematics,
University College Cork, Ireland. \newline M. U. Schmidt, Institut
f\"ur Mathematik, Universit\"at Mannheim, Germany.\newline N.
Schmitt, Mathematisches Institut, Universit\"at T\"ubingen,
Germany.}
\email{m.kilian@ucc.ie; schmidt@math.uni-mannheim.de;
nschmitt@mathematik.uni-tuebingen.de\,.}
\thanks{{\it 2000 Mathematics Subject Classification. }Primary 53A10, 53C42; Secondary 58E20. \today.}
\begin{document}

%%%%%%%%%%%%%%%%%%%%%%%%%%%%%%%%%%%%%%%%%%%%%%%%%%%%%%%%%%%%%%%%

\begin{abstract}
We present a deformation for constant mean curvature tori
in the 3-sphere. We show that the moduli space of equivariant constant mean curvature tori in the 3-sphere is connected, and we classify the minimal, the embedded, and the Alexandrov embedded tori therein. We conclude with an instability result.
\end{abstract}

%%%%%%%%%%%%%%%%%%%%%%%%%%%%%%%%%%%%%%%%%%%%%%%%%%%%%%%%%%%%%%%%

\maketitle

\section*{Introduction}

The euclidean 3-sphere $\bbS^3$ admits compact embedded minimal surfaces of any genus \cite{Law:S3, KarPS, KapY}. For simple examples like a great 2-sphere or the (minimal) Clifford torus there is a smooth deformation through constant mean curvature (\cmc) surfaces with the same topology, which can be expressed in terms of changing radii. For other minimal tori, as well as higher genus compact embedded minimal surfaces in $\bbS^3$ it might be useful to have a general deformation technique. We do this for the case of \cmc tori, and present a smooth topology preserving deformation for \cmc tori in $\bbS^3$ in Theorem~\ref{th:nocommon}. By this theorem the moduli space of \cmc tori in $\bbS^3$ is locally one dimensional. To get an idea of the global structure of the moduli space we turn to equivariant \cmc tori in $\bbS^3$ \cite{HsiL, Uhl:equi, BurK}, which are either flat or a truncation of some member of an associated family of a Delaunay surface \cite{DoCD}. We show that the moduli of equivariant \cmc tori in $\bbS^3$ is a connected infinite graph whose edges are parameterized by the mean curvature, and by  flowing through this moduli space of equivariant \cmc tori we classify the minimal, the embedded, and the Alexandrov embedded tori therein.

Amongst harmonic maps \cite{Poh, Uhl, Uhl:connection} there is an important class consisting of harmonic maps of finite type \cite{BurFPP, hit:tor, PinS}. To a harmonic map of finite type there corresponds an associated algebraic curve whose compactification is called the spectral curve. The genus of the curve is called the spectral genus, and denoted by $g$. The crucial fact that makes it possible to adapt the Whitham deformation technique \cite{MaOs,Kri_whi,GriS1} to the case of \cmc tori in $\bbS^3$, is that a \cmc torus in $\bbS^3$ always has finite spectral genus \cite{hit:tor, PinS}. The spectral curve of a \cmc torus is a double cover of the Riemann sphere with $2g+2$ many branch points. During the deformation the spectral curve changes such that two branch points remain fixed, while the other $2g$ branch points may move around. The closing conditions involve a choice of two double points on the real part of the spectral curve: we call these the \sym points, and to ensure that the topology of the surface stays intact during the deformation, the flow of the \sym points has to be controlled.

In the smooth deformation family of the Clifford torus there is a $\bbZ$-family which allow a deformation into absolute cohomogeneity one rotational embedded \cmc tori. Such a deformation is possible when in addition to the two \sym points there is a further double point on the real part of the spectral curve. By opening up this additional double point and moving the resulting two branch points off the real part, the spectral curve becomes a double cover of the Riemann sphere branched now at four points: It has spectral genus $g=1$ and is the spectral curve of a Delaunay surface, and the corresponding \cmc torus is a truncation of a Delaunay surface in $\bbS^3$. We show that at the end of the flow the new branch points pair-wise coalesce with the two fixed branch points. Hence in the limit the coalescing pairs of branch points disappear, and the limit curve is an unbranched double cover of the sphere: the spectral curve of a bouquet of spheres. In the rotational case, see also \cite{HynPM}, our deformation corresponds to pinching the neck of a Delaunay surface, starting at a flat torus and continuing through to a bouquet of spheres. Thus the connected component of the Clifford torus is an infinite comb: The spine ($g=0$) consists of embedded flat \cmc tori parameterized by the mean curvature, and each tooth ($g=1$) of embedded Delaunay tori ends in a bouquet of spheres. By considering covers of Clifford tori the moduli space of rotational \cmc tori is a $\bbZ^2$--family of such combs. It turns out that each bouquet of spheres occurs exactly twice in this moduli space (Theorem~\ref{thm:sphere-bouquet}), so that we may glue the two families together there. Thus the moduli space of rotational \cmc tori in $\bbS^3$ is an infinite connected graph.

A similar picture transpires in the non-rotational case. In each isogeny class there is a sequence of $g=0$ tori that can be deformed into $g=1$ tori. In the non-rotational case a $g=1$ deformation family stays away from bouquets of spheres, and we prove that every $g=1$ deformation family begins and ends at a $g=0$ torus (Theorem~\ref{thm:moduli-space}). The above results combined give that every deformation family of absolute cohomogeneity one \cmc tori ends at a flat \cmc torus. The classification of absolute cohomogeneity one \cmc tori is thus reduced to that of spectral curves of flat \cmc tori with a double point on the real part; this initial data is classified and interpreted geometrically. We classify the equivariant minimal tori, as well as the embedded and Alexandrov embedded equivariant \cmc tori, and prove that the minimal Clifford torus is the only embedded minimal equivariant torus in the 3-sphere. We also show that the spectral curve of an equivariant \cmc torus has no double points off the real part (Theorem~\ref{th:no_equi_bub}), which implies that there can not be a Bianchi-B\"acklund transform of an equivariant \cmc torus into a \cmc torus. We prove that the moduli space of equivariant \cmc tori in $\bbS^3$ is connected, and conclude the paper by showing that minimal tori of spectral genus one are all unstable extrema of the Willmore energy. Throughout the text we provide graphics of some of the surfaces under discussion. More images and videos of deformation families can be viewed at the website \cite{Sch:gallery}.

%%%%%%%%%%%%%%%%%%%%%%%%%%%%%%%%%%%%%%%%%%%%%%%%%%%%%%%%%%%%%%%%%%%%%%%%%%%%%%
%%%%%%%%%%%%%%%%%%%%%%%%%%%%%%%%%%%%%%%%%%%%%%%%%%%%%%%%%%%%%%%%%%%%%%%%%%%%%%
%%%%%%%%%%%%%%%%%%%%%%%%%%%%%%%%%%%%%%%%%%%%%%%%%%%%%%%%%%%%%%%%%%%%%%%%%%%%%%

\section{Spectral curve} \label{sec:spectral_curve}

We start by recalling the description of {\sc{cmc}} tori in terms of spectral curves and abelian differentials \cite{Bob:tor,hit:tor, McI:tor,KilS_osaka}. We then adapt a deformation technique from \cite{GriS1} and prove that any generic \cmc torus in $\bbS^3$ lies in a smooth family of \cmc tori in $\bbS^3$.

Let $\curve$ be a hyperelliptic Riemann surface with meromorphic function $\lambda$ of degree two and with branch points over $\lambda = 0 \,(y^+)$ and $\lambda = \infty \,(y^-)$. Then $\curve$ is the \emph{spectral curve} of an immersed {\sc{cmc}} torus in $\mathbb{S}^3$ if and only if the following four conditions hold:
\begin{enumerate}
\item Besides the hyperelliptic involution $\sigma$, the surface
$\curve$ has two further anti-holomorphic involutions $\eta$ and $\varrho = \eta \circ \sigma = \sigma \circ \eta$, such that $\eta$ has no fix points and $\eta(y^+) = y^-$.
\item There exist two non-zero holomorphic functions
$\mu_1,\,\mu_2$ on $\curve\setminus\{y^+,\,y^-\}$ such that for $i=1,\,2$
\begin{align*}
  \mu_i \circ \sigma &=\mu_i^{-1}&
  \mu_i \circ \eta &=\bar{\mu}_i&
  \mu_i \circ \varrho &=\bar{\mu}_i^{-1}.
\end{align*}
\item The forms $d \ln \mu_i$ are meromorphic differentials
of the second kind with double poles at $y^\pm$. The singular parts at $y^+$ respectively $y^-$ of these two differentials are linearly independent.
\item There are four fixed points $y_1,\,y_2 =
\sigma(y_1),\,y_3,\,y_4 = \sigma(y_3)$ of $\varrho$, such that the
functions $\mu_1$ and $\mu_2$ are either $1$ or $-1$ there.
\end{enumerate}
\begin{definition}
Given the spectral curve of an immersed {\sc{cmc}} torus in $\mathbb{S}^3$, let $\lambda_1\in\mathbb{S}^1$ denote the value $\lambda(y_1) = \lambda(y_2)$, and $\lambda_2\in\mathbb{S}^1$ the value $\lambda(y_3) = \lambda(y_4)$ at the four fixed points of $\varrho$ where $\mu_1$ and $\mu_2$ are either $1$ or $-1$. We call $\lambda_1$ and $\lambda_2$ the \sym points.
\end{definition}
For $\mi = \sqrt{-1}$, the mean curvature of the corresponding \cmc torus in terms of the \sym points is
\begin{equation} \label{eq:cmc-I-II}
    H = \mi\,\frac{\lambda_2+\lambda_1}{\lambda_2-\lambda_1}\spaceperiod
\end{equation}
We shall describe spectral curves of \cmc tori in $\mathbb{S}^3$
via hyperelliptic surfaces of the form
\begin{align*}
  \NU^2 = \lambda\,a(\lambda)
\end{align*}
where $a \in \bbC^g[\lambda]$ is a polynomial of degree $g$ and %
$$
\bar{\lambda}^{-2g}\bar{a}(\lambda) = a(\bar{\lambda}^{-1})
\quad\mbox{ and }
\lambda^{-g}a(\lambda)\geq 0\quad\mbox{ for }|\lambda|=1.
$$
The involutions of the spectral curve are
\begin{equation} \label{eq:involutions} \begin{split}
  \sigma\,(\lambda,\,\NU) &=(\lambda,\,-\NU)\,,\\
  \eta\,(\lambda,\,\NU) &= (\bar{\lambda}^{-1},\, -\bar{\NU}\bar{\lambda}^{-g-1})\,,\\
  \varrho\,(\lambda,\,\NU) &= (\bar{\lambda}^{-1},\,\bar{\NU}\bar{\lambda}^{-g-1})\,. \end{split}
\end{equation}
Making use of a rotation of $\lambda$ and a rescaling of $\NU$ we may
assume that $a$ is a polynomial with highest coefficient one. The
meromorphic differentials $d\ln\mu_i$ have the form
\begin{equation} \label{eq:b_i}
d\ln\mu_i\coloneq\pi\frac{b_i(\lambda)}{\NU}\frac{d\lambda}{\lambda}
\end{equation}
with polynomials $b_i \in \bbC^{g+1}[\lambda]$ of degree $g+1$ and
$\bar{\lambda}^{-g-1}\bar{b}_i(\lambda)=b_i(\bar{\lambda}^{-1})$.

We next describe a one parameter family of deformations
of the spectral curve, that depends on a deformation parameter $t$.
We view all functions on the corresponding spectral curves as
functions in the variables $\lambda$ and $t$.

Since the path integrals of the differentials $d \ln \mu_i$ along
all cycles in $H_1(\curve,\,\bbZ)$ are integer multiples of
$2\pi\mi$, these differentials do not depend on the deformation
parameter $t$. Further, see \cite{Mir} (ch.3, Prop. 1.10), a meromorphic function $f$ on a hyperelliptic Riemann surface given by $\NU^2 = h(\lambda)$ is of the form $f(\lambda) = r(\lambda) + \NU \,s(\lambda)$ with rational functions $r,\,s$. Hence both $\del_{\,t} \ln \mu_i$ are global meromorphic functions on $\curve$ with only possible poles at the
branch points of $\curve$. More precisely these meromorphic
functions are of the form
\begin{equation} \label{eq:c_i}
  \del_{\,t} \ln \mu_i =\pi\frac{\mi \,c_i(\lambda)}{\NU}
\end{equation}
with polynomials $c_i\in \bbC^{g+1}[\lambda]$ of degree $g+1$ and
$\bar{\lambda}^{-g-1}\bar{c}_i(\lambda)=c_i(\bar{\lambda}^{-1})$.

Integrability $\del^{\,2}_{\,t\lambda} \ln \mu_i = \del^{\,2}_{\,\lambda
t} \ln \mu_i$ reads $\del_{\,t} \left( \lambda^{-1}\NU^{-1}b_i
\right) =
  \del_{\,\lambda} \NU^{-1}\mi c_i$ which yields
\begin{equation}\label{eq:adot}
    2 a(\lambda)\dot{b}_i(\lambda)-\dot{a}(\lambda)b_i(\lambda)=
    \mi(2\lambda a(\lambda)c_i'(\lambda)-
    \lambda a'(\lambda)c_i(\lambda)-a(\lambda)c_i(\lambda)).
\end{equation}
Here dash and dot denote the derivatives with respect to $\lambda$ respectively $t$.

The differential
$$
    \Omega = (\del_{\,t} \ln \mu_1)\,d\ln \mu_2
  - (\del_{\,t} \ln \mu_2 ) \,d\ln \mu_1
$$
is a meromorphic $1$-form on $\curve$ with only poles at most of order three at $\lambda = 0,\,\infty$, and roots at the sym points $\lambda_1,\,\lambda_2$. Further, since $\eta^*\bar{\Omega} =
\Omega$, $\varrho^*\bar{\Omega} = \Omega$ and $\sigma^*\Omega =
\Omega$, we conclude that
\begin{equation*}
  \Omega = \pi^2 \cc_1\frac{(\lambda - \lambda_1)(\lambda -\lambda_2)}
       {\lambda\sqrt{\lambda_1\,\lambda_2}}
   \frac{d\lambda}{\mi\lambda}
\end{equation*}
with a real function $\cc_1 = \cc_1(t)$. Using \eqref{eq:b_i} and \eqref{eq:c_i} we obtain
\begin{equation} \label{eq:c1c2}
  b_1(\lambda)c_2(\lambda)-b_2(\lambda)c_1(\lambda)=
   \cc_1 a(\lambda)\frac{(\lambda-\lambda_1)(\lambda-\lambda_2)}
     {\sqrt{\lambda_1\lambda_2}}\,,
  \end{equation}
and combining the above gives
\begin{equation} \label{eq:diff_ode}
(\del_{\,t} \ln \mu_1)\,d\ln \mu_2
  - (\del_{\,t} \ln \mu_2 ) \,d\ln \mu_1 =
  \pi^2 \cc_1\frac{(\lambda - \lambda_1)(\lambda -\lambda_2)}
       {\lambda \,\sqrt{\lambda_1\,\lambda_2}}
   \frac{d\lambda}{\mi\lambda}\,.
\end{equation}
To preserve the topology during the flow, the values of $\ln\mu_i$ should be fixed at the two sym points $\lambda=\lambda_1$ and $\lambda=\lambda_2$. Consequently $\del_{\,t} \ln\mu_i |_{\lambda = \lambda_j} = \bigl( (\del_\lambda \ln\mu_i) \,\dot{\lambda} + \del_{\,t} \ln\mu_i \bigr)|_{\lambda = \lambda_j} =0$, which gives
\begin{equation} \label{eq:sym-point-deformation}
    \dot{\lambda}_j = -\left.\frac{\del_{\,t}\ln\mu_i}
                             {\del_\lambda\ln\mu_i}
                        \right|_{\lambda=\lambda_j}\,.
\end{equation}
\begin{theorem} \label{th:nocommon}
Let $\curve$ be a genus $g$ spectral curve of a {\sc{cmc}} torus in
$\bbS^3$. If the two differentials $d \ln \mu_i$ for $i=1,\,2$ have
no common roots, then $\curve$ is contained in a unique smooth one
dimensional family of spectral curves of {\sc{cmc}} tori in
$\mathbb{S}^3$.
\end{theorem}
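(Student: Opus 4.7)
The plan is to cast the deformation as a first-order analytic ODE on the finite-dimensional space of spectral data $(a, b_1, b_2, \lambda_1, \lambda_2)$, subject to the reality, normalization, and sym-point conditions, and then invoke Picard--Lindel\"of. The task reduces to showing that at any point of the moduli with $\gcd(b_1, b_2) = 1$, the relations \eqref{eq:adot}, \eqref{eq:c1c2}, \eqref{eq:sym-point-deformation} determine, up to gauge, a unique vector field $(\dot a, \dot b_1, \dot b_2, \dot \lambda_1, \dot \lambda_2)$.

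First I would construct the auxiliary polynomials $c_1, c_2 \in \bbC^{g+1}[\lambda]$ satisfying \eqref{eq:c1c2}. The kernel of the map $(c_1, c_2) \mapsto b_1 c_2 - b_2 c_1$ is the one-dimensional subspace $\bbC \cdot (b_1, b_2)$: coprimeness forces any null element to satisfy $b_i \mid c_i$, and the degree constraint then forces the quotient to be constant. A dimension count yields surjectivity onto polynomials of degree at most $2g+2$, so \eqref{eq:c1c2} has a solution, unique up to addition of $\alpha(b_1, b_2)$; averaging over the involutions $\sigma, \eta, \varrho$ enforces the reality conditions on the $c_i$.

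Next I would determine $\dot a$ and $\dot b_i$ from \eqref{eq:adot}. Writing $R_i$ for the right-hand side and reducing modulo $a$ gives $R_i \equiv -\mi \lambda a' c_i$, and combining with \eqref{eq:c1c2} shows $a \mid (b_2 R_1 - b_1 R_2)$. Evaluating \eqref{eq:adot} at each root $\beta$ of $a$ gives $\dot a(\beta) = -R_i(\beta)/b_i(\beta)$, which is well-defined for any $i$ with $b_i(\beta) \neq 0$ (such an $i$ existing by the hypothesis) and independent of the choice of $i$ by the divisibility above. Lagrange interpolation then produces a unique $\dot a$ of degree at most $g-1$, and $\dot b_i = (R_i + \dot a\, b_i)/(2a)$ follows as a polynomial of degree at most $g+1$. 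The sym-point velocities $\dot \lambda_j$ are read off from \eqref{eq:sym-point-deformation}; the denominator is nonzero for at least one $i$ by the hypothesis, and the two admissible choices of $i$ agree by \eqref{eq:diff_ode} evaluated at $\lambda_j$.

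It remains to verify that the resulting flow preserves the reality conditions on $a, b_i$ and the integrality of the periods of $d\ln\mu_i$; both follow from the invariance of \eqref{eq:adot} and \eqref{eq:c1c2} under the involutions \eqref{eq:involutions} and from the $t$-independence of the $d\ln\mu_i$ as cohomology classes, which is built into the derivation. Picard--Lindel\"of then yields the unique smooth one-parameter family of spectral curves through $\curve$, with the residual gauge (the parameter $\alpha$ together with the overall scaling $\cc_1$) absorbed by the normalization of the leading coefficient of $a$ and by time reparametrization. The crux of the argument is the divisibility by $a$ and the degree bookkeeping in the middle step: it is precisely here that the no-common-roots hypothesis is essential, since otherwise the linear system for $\dot a, \dot b_i$ would be either inconsistent or underdetermined, and the sym-point flow \eqref{eq:sym-point-deformation} would degenerate.
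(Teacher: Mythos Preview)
Your proposal is correct and follows essentially the same route as the paper: solve \eqref{eq:c1c2} for $(c_1,c_2)$ modulo the line $\bbR\cdot(b_1,b_2)$, feed this into \eqref{eq:adot} to get $(\dot a,\dot b_1,\dot b_2)$, and fix the residual gauge by the normalization of $a$. The paper argues by evaluating \eqref{eq:c1c2} at the roots of $b_1$ and $b_2$ to pin down $c_1,c_2$ and then says \eqref{eq:adot} ``determines the derivatives of all roots of $a$ and $b_i$''; you instead compute the kernel of $(c_1,c_2)\mapsto b_1c_2-b_2c_1$ by coprimeness plus a dimension count, and recover $\dot a$ by evaluating \eqref{eq:adot} at the roots of $a$ and interpolating. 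Your version is slightly more robust (it does not implicitly require the $b_i$ to have simple roots) and you are more explicit about the consistency check $a\mid b_2R_1-b_1R_2$, the well-definedness of \eqref{eq:sym-point-deformation}, and the final appeal to Picard--Lindel\"of, all of which the paper leaves tacit. In substance, though, the two arguments coincide.
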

\begin{proof}
If the roots of $b_1$ and $b_2$ are pairwise distinct, then the
$2g+2$ values of equation \eqref{eq:c1c2} at these roots uniquely
determine the values of $c_1$ and $c_2$ there. Therefore the
equation \eqref{eq:c1c2} determines the polynomials $c_1$ and $c_2$
uniquely up to a real multiple of $b_1$ and $b_2$. The choice
$c_1=b_1$ and $c_2=b_2$ in \eqref{eq:c_i} and \eqref{eq:adot}
corresponds to a rotation of $\lambda$. For given $c_1$ and $c_2$
the equations \eqref{eq:adot} determine uniquely the derivatives of
all roots of the polynomials $a$ and $b_i$. The condition
$\bar{\lambda}^{-2g}\bar{a}(\lambda)=a(\bar{\lambda}^{-1})$ together
with the assumption that the highest (and lowest) coefficient of the
polynomial $a$ has absolute value one determines the polynomial $a$
in terms of its roots up to multiplication with $\pm 1$. We conclude
that these conditions on $a$ determine $\dot{a}$, $\dot{b}_1$ and
$\dot{b}_2$ in terms of $c_1$ and $c_2$. We remark that due to
equation \eqref{eq:c1c2} the solutions $\dot{a}$ of both equations
\eqref{eq:adot} coincide. Finally the condition that the highest
(and lowest) coefficient of $a$ is qual to one, fixes the rotations. Therefore there exist unique solutions $c_1$ and $c_2$ of \eqref{eq:c1c2}.
\end{proof}
We expect that one can pass through common zeroes of the $b_i's$, therefore making the deformation global, but this will be considered elsewhere. Below we will restrict to the cases of spectral genera $g=0$, where this is trivially true, and $g=1$ where the roots of $b_1$ and $b_2$ turn out to always be distinct throughout the flow (Corollary~\ref{th:no_common_zeroes}).

%%%%%%%%%%%%%%%%%%%

\section{Flows of equivariant cmc tori in the 3-sphere}

The double cover of the isometry group of $\bbS^3 \cong \mathrm{SU}_2$ is $\mathrm{SU}_2 \times \mathrm{SU}_2$ via the action $P \mapsto F\,P\,G^{-1}$. A surface is \emph{equivariant} if it is preserved set-wise by a one-parameter family of isometries. To an equivariant surface we associate two \emph{axes}: These are geodesics which are fixed set-wise by the one-parameter family of isometries. An equivariant surface is rotational precisely when one of its axes is fixed point-wise. Equivariant {\sc{cmc}} surfaces have spectral genus zero or one \cite{BurK}. By a theorem of DoCarmo and Dajczer \cite{DoCD}, an equivariant {\sc{cmc}} surface is a member of an associated family of a Delaunay surface, so up to isometry determined by an elliptic modulus and its \sym points. Hence an equivariant \cmc surface in the 3-sphere is parameterized by its elliptic modulus, its mean curvature, and its associated family parameter. We will express the differential equations \eqref{eq:diff_ode} and
\eqref{eq:sym-point-deformation} in terms of the three coordinates
\begin{equation}\label{eq:coor0}
    (\jq,\,\jk,\,\jh) \in [-1,\,1]^3
\end{equation}
where $\jq$ is the elliptic modulus, and $\jk,\,\jh$ are defined in terms of the \sym points as
\begin{equation}\label{eq:coor}
    \jk \coloneq \frac{1}{2} \left(\sqrt{\lambda_1\lambda_2}\, + \, 1/\sqrt{\lambda_1\lambda_2}\,\,\right)
    \quad \mbox{ and } \quad
    \jh \coloneq \frac{1}{2}\left(\sqrt{\frac{\lambda_1}{\lambda_2}}\, + \, \sqrt{\frac{\lambda_2}{\lambda_1}} \,\,\right)\,.
\end{equation}
The mean curvature $H$ in \eqref{eq:cmc-I-II} can then be expressed as
\begin{equation} \label{eq:H_and_h}
    H = \frac{\jh}{\sqrt{1-\jh^2}}\,.
\end{equation}
The following identities will be used below:
\begin{equation} \label{eq:sqrt_q_formula} \begin{split}
    2\sqrt{\jk^2 - 1} &= \sqrt{\lambda_1\lambda_2}\, - \, 1/\sqrt{\lambda_1\lambda_2} \,, \quad 2\sqrt{\jh^2 - 1} = \sqrt{\lambda_1/\lambda_2}\, - \, \sqrt{\lambda_2/\lambda_1}  \,,\\
    4(\jk^2 - \jh^2) &= \left( \lambda_1 -\lambda_1^{-1} \right) \left(\lambda_2 - \lambda_2^{-1} \right)\,, \quad 4\jk\sqrt{\jk^2 -1} = \lambda_1\lambda_2 - \lambda_1^{-1}\lambda_2^{-1} \\
    4\jh\sqrt{\jk^2-1} &= \lambda_1 - \lambda_1^{-1} + \lambda_2 - \lambda_2^{-1}\,, \quad 4\jk\sqrt{\jh^2-1} = \lambda_1 - \lambda_1^{-1}-\lambda_2+\lambda_2^{-1} \\
    4\jh\sqrt{\jh^2-1} &= \lambda_1\lambda_2^{-1} - \lambda_1^{-1}\lambda_2\,.
    \end{split}
\end{equation}
The derivatives of $\jk,\,\jh$ with respect to the flow parameter are
\begin{equation} \label{eq:qdot_hdot}
    \dot{\jk} = \frac{\sqrt{\jk^2-1}}{2}
    \left(\frac{\dot{\lambda}_1}{\lambda_1}+
    \frac{\dot{\lambda}_2}{\lambda_2}\right)\,, \qquad
    \dot{\jh} = \frac{\sqrt{\jh^2-1}}{2}
    \left(\frac{\dot{\lambda}_1}{\lambda_1}-
    \frac{\dot{\lambda}_2}{\lambda_2}\right)\,.
\end{equation}
%
%
%%%%%%%%%%%%%%%%%%%%%%%%%%%%%%%%%%%%%%%%%%%%%%%%%%%%%%%%%%%%%%%%%%%%%%%%%
%
\subsection{Spectral genus zero.}
In the spectral genus zero case, $a \equiv 1$, the spectral curve is $\nu^2 = \lambda$, and the elliptic modulus $\jq \equiv \pm 1$. The functions $b_i$ in \eqref{eq:b_i} for $i=1,\,2$ are polynomial in $\lambda$ of degree one, and from $\bar{\lambda}^{-1}\bar{b}_i(\lambda)=b_i(\bar{\lambda}^{-1})$ we conclude that $b_i = \beta_i\,\lambda + \bar{\beta}_i$ for some smooth complex valued functions $t \mapsto \beta_i(t)$. Thus $d\ln\mu_i$ integrates to
\begin{equation*}
    \ln \mu_j = \frac{2 \pi (\beta_j\,\lambda - \bar{\beta}_j)}{\nu}\, ,
\end{equation*}
and consequently the functions $c_i,\,i=1,\,2$ in \eqref{eq:c_i} are $c_i = 2\mi (\dot{\bar\beta}_i - \dot\beta_i\lambda)$. We may fix one of the \sym points, and assume without loss of generality that $\lambda_1 \equiv 1$ during the flow. Then $\left.\partial_{\,t}\ln\mu_j\right|_{\lambda =1} =0$, which is equivalent to $c_i(\lambda=1) =0$, or equivalently $\dot{\beta}_i \in \bbR$.
Thus equation \eqref{eq:diff_ode} reads
\begin{equation*}
    2\mi\dot{\beta}_2(\beta_1\lambda+\bar{\beta}_1) - 2\mi\dot{\beta}_1(\beta_2\lambda+\bar{\beta}_2) =
    \frac{C_1}{\sqrt{\lambda_2}}\,(\lambda_2 -\lambda)\,.
\end{equation*}
Evaluating this at $\lambda = -\bar{\beta}_i/\beta_i$ gives
\begin{equation*}
    \dot{\beta}_i = \frac{C_1\,(\beta_i \lambda_2^{1/2} + \bar{\beta}_i \lambda_2^{-1/2})}{2\mi\,(\bar{\beta}_1\beta_2 - \beta_1\bar{\beta}_2)}
\end{equation*}
From \eqref{eq:sym-point-deformation} we get
\begin{equation*}
    \frac{\dot{\lambda}_2}{\lambda_2} = \frac{c_1}{\mi b_1} = \frac{2\dot{\beta}_1(1-\lambda_2)}{\beta_1\lambda_2 +\bar{\beta}_1}
    =  C_1 \frac{\lambda_2^{1/2}-\lambda_2^{-1/2}}{\mi (\beta_1\bar{\beta}_2 - \bar{\beta}_1\beta_2)}
\end{equation*}
Set $C_1 \coloneq \mi (\beta_1\bar{\beta}_2 - \bar{\beta}_1\beta_2)$. Then together with \eqref{eq:sqrt_q_formula} and the fact that $\jk=\jh$ when $\lambda_1 \equiv 1$, the flow equations \eqref{eq:diff_ode} and \eqref{eq:sym-point-deformation} reduce to the single equation
$\dot{\jh} = \jh^2 - 1$. The solution to this equation is given by $\jh(t) = -\tanh(t + C)$ with some constant of integration $C\in\bbR$. Consequently, the argument of the \sym point $\mathrm{arg}[\lambda_2] = 2 \arccos(-\tanh(t))$ can vary over all of the interval $(0,\,2\pi)$, and is strictly monotonic. Hence also the mean curvature $H$ by equation \eqref{eq:H_and_h} is strictly monotonic, and given by $H(t) = -\sinh(t)$. In summary, we have proven the following
\begin{theorem} \label{th:cliff_fam}
Every flat \cmc torus in the 3-sphere lies in a smooth $\bbR$--family of flat \cmc tori. Each family is parameterized by the mean curvature.
\end{theorem}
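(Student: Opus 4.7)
The plan is to assemble the explicit genus zero computation carried out in the preceding paragraphs into a global existence and parameterization statement. The strategy has three parts: apply Theorem~\ref{th:nocommon} to produce a germ of a deformation around each flat cmc torus; globalize the resulting scalar ODE; and identify the mean curvature as a coordinate on each orbit.

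\emph{Local existence.} In spectral genus zero the polynomial $a \equiv 1$ has no roots, so the non-common-zeros hypothesis of Theorem~\ref{th:nocommon} holds trivially (as noted in the remark following that theorem). Thus every flat cmc torus in $\bbS^3$ sits in a smooth local one-parameter family of cmc tori of spectral genus zero, and all members of this family are flat since the deformation fixes $a\equiv 1$ (the reality and normalization constraints on $a$ force $\dot a \equiv 0$).

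\emph{Globalizing the ODE.} The explicit calculation above, together with the gauge $\lambda_1 \equiv 1$ and the normalization $C_1 = \mi(\beta_1\bar\beta_2 - \bar\beta_1\beta_2)$ of the deformation parameter, reduces the flow equations \eqref{eq:diff_ode} and \eqref{eq:sym-point-deformation} to the single autonomous scalar ODE $\dot\jh = \jh^2 - 1$. Its general solution $\jh(t) = -\tanh(t + C)$ is smooth on all of $\bbR$ and takes values in $(-1,1)$, so the local family extends uniquely to a global smooth $\bbR$-family of flat cmc tori.

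\emph{Parameterization by $H$.} By \eqref{eq:H_and_h} the mean curvature along the flow is $H(t) = -\sinh(t + C)$, a strictly monotonic smooth bijection $\bbR \to \bbR$, so $H$ is a global parameter on each orbit. Conversely the map $\jh \mapsto \jh/\sqrt{1 - \jh^2}$ is a bijection $(-1,1) \to \bbR$, and the elliptic modulus is already fixed at $\pm 1$ in spectral genus zero, so each flat cmc torus in $\bbS^3$ corresponds to a unique value of $\jh$ and therefore lies on exactly one orbit. I foresee no substantial obstacle: once the reduction to $\dot\jh = \jh^2 - 1$ is in hand everything is explicit, and the only care required is checking that the two sym points remain distinct for all $t\in\bbR$, which is immediate from $\lambda_2 = \exp\bigl(2\mi\arccos(-\tanh(t+C))\bigr) \neq 1$.
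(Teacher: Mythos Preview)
Your proposal is correct and follows essentially the same approach as the paper: the paper's proof \emph{is} the preceding genus-zero computation (the theorem is stated as a summary of what was just derived), and you have simply organized that computation into three clearly labeled steps, invoking Theorem~\ref{th:nocommon} explicitly for the local existence and adding the check that the \sym points stay distinct. The only minor overreach is the phrase ``lies on exactly one orbit'' in your last paragraph---different lattice data yield different flow orbits with the same $\jh$---but this does not affect the theorem, which asserts only existence of a family, not its uniqueness.
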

%

%%%%%%%%%%%%%%%%%%%%%%%%%%%%%%%%%%%%%%%%%%%%%%%%%%%%%%%%%%%%%%%%%%%%%%%%%

%
\subsection{Spectral genus one.}
In the spectral genus one case, the meromorphic differentials $d\ln\mu_i$ are linear combinations of the derivative of a meromorphic function and an elliptic integral. We write these meromorphic differentials in terms of Jacobi's elliptic functions. We define an elliptic curve by
\begin{equation}\label{eq:nu}
4\NU^2=(\lambda-\jq)(\lambda^{-1}-\jq).
\end{equation}
Here $\jq\in[-1,1]$ is the real modulus. Then
\begin{equation} \label{eq:dnu}
    d\NU = \jq\frac{\lambda^{-1} - \lambda}{8\NU}\frac{d\lambda}{\lambda}\,,
\end{equation}
and the only roots of $d\NU$ are at $\lambda = \jq,\,\jq^{-1}$. In addition to the three involutions in \eqref{eq:involutions}, the elliptic curve \eqref{eq:nu} has a further holomorphic involution
\begin{equation}\label{eq:involtau}
\tau \,(\lambda,\,\NU) = (\lambda^{-1},\NU)\,.
\end{equation}
We decompose $\frac{\ln\mu_i}{\pi\mi}$ into
the symmetric and skew symmetric part with respect to
$\tau$.
The symmetric part is a real multiple of the single valued
meromorphic function $\NU$, and the skew symmetric part is a real multiple of a multi valued function $\OMEGA$ with real periods.
The first homology group of the elliptic curve \eqref{eq:nu} is
generated by a cycle around the two branch points at
$\lambda=\jq^{\pm 1}$ and the cycle
$\bbS^1=\{\lambda\mid|\lambda|=1\}$. The first cycle is symmetric with
respect to $\tau$ and the second skew symmetric. Therefore
the integral of $d\OMEGA$ along the first cycle vanishes. We assume
that the integral of $d\OMEGA$ along the second cycle is equal to
$2$. Together with the first order poles of $\OMEGA$ at $y^{\pm}$ and
the skew symmetry with respect to $\tau$ this normalization
determines $\OMEGA$ uniquely. Further, since $\int_{\bbS^1} d\ln\mu_i \in 2\pi\mi\bbZ$, we conclude that the functions $\ln\mu_i$ are linear combinations of $\NU$ and $\OMEGA$, and given by
\begin{equation}\label{eq:lnmu}
\ln\mu_1 = \pi\mi\,(x_1\,\NU + p_{10}\,\OMEGA)\,,\quad
\ln\mu_2 = \pi\mi\,(x_2\,\NU + p_{20}\,\OMEGA)\,\quad \mbox{for }\, p_{10},\,p_{20} \in \bbZ\,.
\end{equation}
We shall express $\OMEGA$ as a linear combination of complete elliptic integrals of the first and second kind. First we shall relate the curve \eqref{eq:nu} to the elliptic curve $\curve$ in Legendre's form with elliptic modulus $\tfrac{1-\jq}{1+\jq}$ given by
\begin{equation*}
    \jy^2=\left(1-\jx^2\right)\left(1-(\tfrac{1-\jq}{1+\jq})^2\jx^2\right).
\end{equation*}
Let $\JK$ and $\JE$ denote the complete integrals of the first and second kind
\begin{equation*}
\JK(\jq) \coloneq\int_0^1\tfrac{d\jx}{\sqrt{(1-\jx^2)(1-\jq^2\jx^2)}}\,,\quad
\JE(\jq) \coloneq\int_0^1\sqrt{\frac{1-\jq^2\jx^2}{1-\jx^2}}\,d\jx\,.
\end{equation*}
Since $\JE$ has first order poles at the two points over $\jx=\infty$, these two points correspond to $y^{\pm}$. The involution $\tau$ corresponds to $(\jx,\jy)\mapsto(\jx,-\jy)$. Therefore we set
\begin{align*}
\lambda&=
\frac{1+(\tfrac{1-\jq}{1+\jq})^2(1-2\jx^2)-2\frac{1-\jq}{1+\jq}\jy}
     {1-(\tfrac{1-\jq}{1+\jq})^2}\,,&
\lambda^{-1}&=
\frac{1+(\tfrac{1-\jq}{1+\jq})^2(1-2\jx^2)+2\frac{1-\jq}{1+\jq}\jy}
     {1-(\tfrac{1-\jq}{1+\jq})^2}\,,&
\NU&=\frac{\tfrac{1-\jq}{1+\jq}\jx}{1+\tfrac{1-\jq}{1+\jq}}.
\end{align*}
The integrals of the meromorphic differentials $d\ln\mu_i$ along all
cycles of $\curve$ are purely imaginary. Define
\begin{equation} \label{eq:omega-defn}
d\OMEGA\coloneq\left(\JE(\tfrac{1-\jq}{1+\jq})-\JK(\tfrac{1-\jq}{1+\jq})
\left(1-(\tfrac{1-\jq}{1+\jq})^2\jx^2\right)\right)
\frac{2d\jx}{\pi\mi\jy}.
\end{equation}
The cycle around the two branch points
$\lambda=\jq^{\pm 1}$ corresponds to the real period and the cycle
$\bbS^1$ to the imaginary period of Jacobi's elliptic functions.
The integral along the real period vanishes and due to Legendre's
relations \cite[17.3.13]{AbrS} the integral along the imaginary period
is equal to $2$.
In summary, we have found a linear combination of elliptic integrals of the first and second kind which obeys the conditions that characterize
$\OMEGA$.

In terms of the complete elliptic integrals
$\JK'=\JK'(\jq)=\JK(\sqrt{1-\jq^2})$ and
$\JE'=\JE'(\jq)=\JE(\sqrt{1-\jq^2})$ and the
functions $\lambda$ and $\NU$, and
the formulas \cite[17.3.29 and 17.3.30]{AbrS}
\begin{equation*}
    \JE(\tfrac{1-\jq}{1+\jq}) = (1+\tfrac{1-\jq}{1+\jq})\JE'+(1-\tfrac{1-\jq}{1+\jq})\JK'\,,\quad
    \JK(\tfrac{1-\jq}{1+\jq}) = \frac{2}{1+\tfrac{1-\jq}{1+\jq}}\JK'\,,
\end{equation*}
the differential $d\OMEGA$ in \eqref{eq:omega-defn} simplifies to
\begin{equation}\label{eq:omega}
    d\OMEGA=\frac{2\JE'-\jq\JK'(\lambda+\lambda^{-1})}{4\pi\NU}\frac{d\lambda}{\mi\lambda}\,.
\end{equation}
\begin{proposition}\label{th:alpha_less_beta}
The complete elliptic integrals
$\JK'$ and $\JE'$ satisfy
\begin{equation*}%\label{eq:alpha_less_beta}
    1\leq\frac{2\JE'}{1+\jq^2}<\JK'<\frac{\JE'}{|\jq|}\quad
    \mbox{for}\quad 0<|\jq|<1.
\end{equation*}
\end{proposition}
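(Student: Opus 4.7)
The plan is to substitute $k=\sqrt{1-\jq^2}\in(0,1)$ so that $\JK'=\JK(k)$, $\JE'=\JE(k)$, $|\jq|=\sqrt{1-k^2}$, and $1+\jq^2=2-k^2$. The three inequalities then read
\begin{equation*}
    \mathrm{(a)}\ \JE(k)\ge 1-\tfrac{k^2}{2},\qquad
    \mathrm{(b)}\ 2\JE(k)<(2-k^2)\JK(k),\qquad
    \mathrm{(c)}\ \sqrt{1-k^2}\,\JK(k)<\JE(k),
\end{equation*}
and I will handle these in turn.

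Part (a) is essentially free: since $\JE$ is strictly decreasing on $[0,1]$ with $\JE(1)=1$, we have $\JE(k)\ge 1\ge 1-k^2/2$ for every $k\in(0,1)$, and no elliptic identities are required.

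For (b) and (c) the plan is to introduce the auxiliary functions
\begin{equation*}
    \phi(k)\coloneq (2-k^2)\JK(k)-2\JE(k),\qquad \psi(k)\coloneq \JE(k)-\sqrt{1-k^2}\,\JK(k),
\end{equation*}
both of which vanish at $k=0$ since $\JK(0)=\JE(0)=\pi/2$, and then to show each is strictly increasing on $(0,1)$. Using the standard derivatives $\tfrac{d\JE}{dk}=(\JE-\JK)/k$ and $\tfrac{d\JK}{dk}=(\JE-(1-k^2)\JK)/(k(1-k^2))$, a direct calculation reduces $\phi'$ and $\psi'$, after the $\JE$- and $\JK$-contributions cancel, to
\begin{equation*}
\phi'(k)=\frac{k\bigl(\JE(k)-(1-k^2)\JK(k)\bigr)}{1-k^2},\qquad
\psi'(k)=\frac{\bigl(\JK(k)-\JE(k)\bigr)\bigl(1-\sqrt{1-k^2}\bigr)}{k\sqrt{1-k^2}}.
\end{equation*}

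Both derivatives are strictly positive on $(0,1)$, as one sees from the elementary identities
\begin{equation*}
\JE(k)-(1-k^2)\JK(k)=\int_0^{\pi/2}\frac{k^2\cos^2\theta\,d\theta}{\sqrt{1-k^2\sin^2\theta}}>0,\quad
\JK(k)-\JE(k)=\int_0^{\pi/2}\frac{k^2\sin^2\theta\,d\theta}{\sqrt{1-k^2\sin^2\theta}}>0,
\end{equation*}
together with $1-\sqrt{1-k^2}>0$ for $k>0$. Integrating from $0$ then yields $\phi(k),\psi(k)>0$ on $(0,1)$, which are (b) and (c). The only nontrivial step is computational: the reduction of $\psi'$ to the clean product form above depends on a careful cancellation between the $\JE$-derivative and $\JK$-derivative terms, and is where a slip would be most likely to occur; everything else is bookkeeping.
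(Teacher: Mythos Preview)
Your proof is correct. The derivative computations for $\phi'$ and $\psi'$ check out: after multiplying through by $k(1-k^2)$ (respectively $k\sqrt{1-k^2}$) the $\JE$- and $\JK$-coefficients collapse exactly as you claim, and the integral identities you quote establish positivity. Part (a) is handled the same way as in the paper, via $\JE'\ge 1$.

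However, your route for the two strict inequalities is genuinely different from the paper's. The paper does not differentiate combinations of $\JK$ and $\JE$ at all. Instead it exploits the differential $d\OMEGA$ defined just before the Proposition: since $d\OMEGA$ is skew under the involution $\tau$, its integral from $\lambda=\jq$ to $\lambda=\jq^{-1}$ vanishes, forcing the numerator $2\JE'-\jq\JK'(\lambda+\lambda^{-1})$ to have a real root $r\in(\jq,1)$. Then the constraint $2<r+r^{-1}<\jq+\jq^{-1}$, combined with $2\JE'=\jq\JK'(r+r^{-1})$, immediately yields both $\JE'>|\jq|\JK'$ and $2\JE'<(1+\jq^2)\JK'$. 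Your monotonicity argument is more self-contained and purely analytic, requiring only the standard derivative formulas for $\JK,\JE$; the paper's argument is shorter but leans on the surrounding setup, and has the bonus of locating the zeros of $d\OMEGA$ explicitly, which is exactly what feeds into the next Corollary about $d\nu$ and $d\OMEGA$ having no common zeros.
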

\begin{proof}
Assume first that $\jq\in(0,1)$. Since $\tau^*d\OMEGA = -d\OMEGA$, we have that
$$
    \int_{\lambda=\jq}^{\lambda=\jq^{-1}}d\OMEGA =
 2\int_{\lambda=\jq}^{\lambda=1}d\OMEGA = 0\,.
$$
Hence the function $2\JE'-\jq\JK'(\lambda+\lambda^{-1})$ has a real root in $\lambda\in(\jq,\,1)$, say at $r \in (\jq,\,1)$, and thus also at $r^{-1} \in (1,\,\jq^{-1})$. Then $2< r+1/r < k +1/k$, and together with $2\JE'-\jq\JK'(r + r^{-1})=0$ gives $2\JE' > 2\jq\JK'$ and $2\JE'<\JK'(1+\jq^2)$. Finally, from $2>\jq^2 + 1$ and $\JE' \geq 1$ (13.8.11 in \cite{Bat2}), we obtain $2\JE' \geq 1+\jq^2$.

For $\jq\in(-1,\,0)$, the function $2\JE'-\jq\JK'(\lambda+\lambda^{-1})$ has a pair of reciprocal real roots $s,\,s^{-1}$ with $-1<s<\jq^{-1}$, and analogous arguments prove the assertion in this case.
\end{proof}
\begin{corollary}\label{th:no_common_zeroes}
The differentials $d\NU$ and $d\OMEGA$ have no common zeroes.
\end{corollary}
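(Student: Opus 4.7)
The plan is to locate the zeros of $d\NU$ using formula~\eqref{eq:dnu}, and then show directly that the numerator of $d\OMEGA$ in \eqref{eq:omega} is nonzero there, using the inequalities of Proposition~\ref{th:alpha_less_beta}.

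First I would inspect \eqref{eq:dnu}: the factor $\lambda^{-1}-\lambda$ in the numerator vanishes exactly at $\lambda=\pm 1$, and a routine local-parameter computation at the four branch points $\lambda\in\{0,\infty,\jq,\jq^{-1}\}$ (using $\NU$, or $1/\NU$ at $\lambda=0,\infty$, as uniformizer) shows that the apparent singularity from $1/\NU$ is cancelled by the simple zero of $d\lambda$. Hence $d\NU$ is regular and nonzero at every branch point, and its zeros on $\curve$ are precisely the four points lying above $\lambda=\pm 1$, at each of which $\NU\neq 0$.

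Next, at any such zero of $d\NU$, formula \eqref{eq:omega} tells me that $d\OMEGA$ vanishes iff its numerator $2\JE'-\jq\JK'(\lambda+\lambda^{-1})$ vanishes. Specialising to $\lambda=\pm 1$ this numerator reduces to $2(\JE'\mp \jq\JK')$. The inequality $\JK'<\JE'/|\jq|$ from Proposition~\ref{th:alpha_less_beta} rearranges to $|\jq|\JK'<\JE'$, which immediately forces both $\JE'-\jq\JK'>0$ and $\JE'+\jq\JK'>0$ for $0<|\jq|<1$. Therefore $d\OMEGA$ is nonzero at every zero of $d\NU$, proving the corollary.

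I do not expect any substantive obstacle: once the zero set of $d\NU$ is written down, the whole argument collapses to one inequality supplied by Proposition~\ref{th:alpha_less_beta}. The only step that requires mild care is the local-parameter check at the branch points, which rules out any hidden zero of $d\NU$ coming from the $1/\NU$ factor in \eqref{eq:dnu}.
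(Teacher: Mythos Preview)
Your proof is correct and follows the same overall strategy as the paper: locate the zeros of $d\NU$ and check that $d\OMEGA$ does not vanish there. There is, however, a genuine discrepancy worth flagging. The paper asserts (both after~\eqref{eq:dnu} and in its proof of the corollary) that the roots of $d\NU$ lie at $\lambda=\jq,\jq^{-1}$, and then argues that the roots of $d\OMEGA$ found in the proof of Proposition~\ref{th:alpha_less_beta} lie strictly between these values. Your computation instead places the zeros of $d\NU$ at $\lambda=\pm 1$, and this is the correct statement: at the branch points $\lambda=\jq,\jq^{-1}$ the function $\NU$ itself is a local uniformizer, so $d\NU$ is nonvanishing there, while at $\lambda=\pm 1$ the factor $\lambda^{-1}-\lambda$ in~\eqref{eq:dnu} vanishes with $\NU\neq 0$. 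The degree count ($d\NU$ has double poles at $y^\pm$ and hence four simple zeros) confirms this. Your subsequent step is also slightly different in execution: rather than invoking the location of the roots of $d\OMEGA$ from the proof of Proposition~\ref{th:alpha_less_beta}, you evaluate the numerator $2\JE'-\jq\JK'(\lambda+\lambda^{-1})$ directly at $\lambda=\pm 1$ to get $2(\JE'\mp\jq\JK')$ and kill it with the single inequality $|\jq|\JK'<\JE'$. This is cleaner and avoids the paper's misstatement; the paper's argument still reaches the right conclusion because $\pm 1$ are excluded from the open intervals it names, but your version is the more accurate one.
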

\begin{proof}
While $d\NU$ in \eqref{eq:dnu} has only roots at $\lambda = \jq,\,\jq^{-1}$, we saw in the proof of Proposition \ref{th:alpha_less_beta} that $d\OMEGA$ has only roots $\lambda\in(\jq,\,1) \cup (1,\,\jq^{-1})$ when $\jq\in(0,1)$, and $\lambda\in(-1,\,\jq) \cup (\jq^{-1},\,-1)$ when $\jq\in(-1,\,0)$.
\end{proof}
%
%%%%%%%%%%%%%%%%%%%%%%%%%%%%%%%%%%%%%%%%%%%%%%%%%%%%%%%%%%%%%%%%%%
%
\begin{theorem} \label{thm:flow}
Every spectral genus one \cmc torus in $\bbS^3$ lies on an integral curve of the vector field
\begin{equation}\label{eq:torus-flow}
    \left( \begin{array}{c} \dot{\jq} \\ \dot{\jk} \\ \dot{\jh} \end{array} \right) = \left(
    \begin{array}{c} \jq\,(\JE'\,\jk - \jq\,\JK'\,\jh) \\ \frac{1-\jk^2}{1-\jq^2}\,((1 + \jq^2)\JE'-2\jq^2\JK') \\
    \jq\frac{1-\jh^2}{1-\jq^2}\,(2\JE'-(1+\jq^2)\JK')) \end{array} \right)\,.
\end{equation}
The vector field $(\XzeroDot,\,\XoneDot,\,\XtwoDot)$ is analytic on the set $\calD \coloneq \{(\Xzero,\,\Xone,\,\Xtwo)\in\bbR^3\suchthat \Xzero\not=0\}$, and its zero set is $\{\Xzero^2=1\}\cap\{\Xone=\Xzero\Xtwo\}\cap\calD$.
\end{theorem}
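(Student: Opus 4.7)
The strategy is to specialize the general flow equations \eqref{eq:diff_ode} and \eqref{eq:sym-point-deformation} to the spectral-genus-one setting of Section~\ref{sec:spectral_curve}, express everything in the coordinates $(\jq,\jk,\jh)$, and verify that the resulting system is \eqref{eq:torus-flow}. Since the winding numbers $p_{i0}$ in \eqref{eq:lnmu} are integers, they are constant along any continuous flow, so the only time-dependent data in $\ln\mu_i=\pi\mi(x_i\NU+p_{i0}\OMEGA)$ are the real scalars $x_i(t)$, the modular parameter $\jq(t)$ (entering through $\NU$ via \eqref{eq:nu}), and the \sym points $\lambda_{1,2}(t)$. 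Differentiating \eqref{eq:nu} at fixed $\lambda$ gives $\del_{\,t}\NU=\dot{\jq}(2\jq-\lambda-\lambda^{-1})/(8\NU)$, and the analogous treatment of \eqref{eq:omega} expresses $\del_{\,t}\OMEGA$ as $\dot{\jq}$ times a rational function of $\lambda$ with coefficients built from $\JE'$, $\JK'$ and their $\jq$-derivatives. Substituting these together with \eqref{eq:dnu} and \eqref{eq:omega} into the master identity \eqref{eq:diff_ode}, and matching Laurent coefficients against the right-hand side of \eqref{eq:c1c2}, determines $\dot{\jq}$ and pins down $\cc_1$ (equivalently, fixes the rescaling of the flow parameter).

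To compute $\dot{\jk}$ and $\dot{\jh}$, evaluate \eqref{eq:sym-point-deformation} at $\lambda=\lambda_j$ using $d\ln\mu_i=\pi\mi(x_i\,d\NU+p_{i0}\,d\OMEGA)$: this yields $\dot{\lambda}_j/\lambda_j$ as an explicit rational expression in $\jq$, $\JE'$, $\JK'$, $\lambda_j$, and $\lambda_j^{-1}$. Inserting these into \eqref{eq:qdot_hdot} and systematically applying the identities \eqref{eq:sqrt_q_formula} to convert symmetric functions of $\lambda_{1,2}^{\pm 1}$ into polynomials in $\jk$ and $\jh$ reproduces the right-hand side of \eqref{eq:torus-flow}. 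The main obstacle is the algebraic bookkeeping: the identities \eqref{eq:sqrt_q_formula} are precisely tailored to perform this rationalization, but making the many cross-terms collapse to the clean product form of \eqref{eq:torus-flow} requires careful organization and systematic use of $|\lambda_j|=1$.

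Analyticity on $\calD$ is a short check. The complete elliptic integrals $\JE'(\jq)=\JE(\sqrt{1-\jq^2})$ and $\JK'(\jq)=\JK(\sqrt{1-\jq^2})$ are real analytic on $\{0<|\jq|\le 1\}$, the sole singularity being the logarithmic divergence of $\JK'$ at $\jq=0$; this is exactly why $\calD$ excludes $\jq=0$. The apparent poles of $\dot{\jk}$ and $\dot{\jh}$ at $\jq=\pm 1$ from the factor $1/(1-\jq^2)$ are removable, because the expansions $\JE'=\pi/2-\tfrac{\pi}{8}(1-\jq^2)+\Order((1-\jq^2)^2)$ and $\JK'=\pi/2+\tfrac{\pi}{8}(1-\jq^2)+\Order((1-\jq^2)^2)$ show that both $(1+\jq^2)\JE'-2\jq^2\JK'$ and $2\JE'-(1+\jq^2)\JK'$ vanish to order two in $1-\jq^2$; dividing by $1-\jq^2$ then leaves an analytic function with boundary value zero at $\jq=\pm 1$.

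For the zero set, Proposition~\ref{th:alpha_less_beta} supplies the strict bounds $(1+\jq^2)\JE'-2\jq^2\JK'>(1-|\jq|)^2\JE'>0$ and $2\JE'-(1+\jq^2)\JK'<0$ on $0<|\jq|<1$. Hence on this slab $\dot{\jk}=\dot{\jh}=0$ forces $\jk^2=\jh^2=1$, and then $\dot{\jq}=0$ demands $\JE'=\pm\jq\JK'$, contradicting $\JK'<\JE'/|\jq|$ from the same proposition; so no zero lies in $0<|\jq|<1$. On $|\jq|=1$ the removability analysis makes $\dot{\jk}$ and $\dot{\jh}$ vanish automatically, while $\dot{\jq}=\jq\tfrac{\pi}{2}(\jk-\jq\jh)=0$ reduces to $\jk=\jq\jh$. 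This is precisely the asserted zero locus $\{\jq^2=1\}\cap\{\jk=\jq\jh\}\cap\calD$.
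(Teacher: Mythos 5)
Your proposal is correct and follows essentially the same route as the paper's proof: specialize \eqref{eq:diff_ode} and \eqref{eq:sym-point-deformation} to $\ln\mu_i=\pi\mi(x_i\,\NU+p_{i0}\,\OMEGA)$, compute $\del_{\,t}\NU$ and $\del_{\,t}\OMEGA$, read off $\dot{\jq}$, convert the \sym-point equations into $\dot{\jk},\dot{\jh}$ via \eqref{eq:sqrt_q_formula} and \eqref{eq:qdot_hdot}, and settle analyticity and the zero set with the expansions of $\JK',\JE'$ at $\jq=\pm1$ together with Proposition~\ref{th:alpha_less_beta}. The only incompleteness (shared with the paper's own wording) is that your zero-set check is carried out for $0<|\jq|\le 1$, whereas $\calD$ also contains $|\jq|>1$; that region reduces to the former via the reciprocal-modulus substitution $\jq\mapsto 1/\jq$.
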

\begin{proof}
We shall calculate the differential equations \eqref{eq:diff_ode} and
\eqref{eq:sym-point-deformation} in terms of the coordinates \eqref{eq:coor0}. From \eqref{eq:nu} we compute the derivative of $\NU$ with respect to $t$, and obtain
\begin{equation} \label{eq:nu dot}
    \dot{\NU} =\dot{\jq}\,\frac{2\jq - \lambda - \lambda^{-1}}{8\NU}\,.
\end{equation}
In agreement with the skew-symmetry of $\OMEGA$ with respect to
$\tau$ we set
\begin{equation*}
\dot{\OMEGA}=\cc_2\frac{\lambda-\lambda^{-1}}{4\pi\mi\NU}.
\end{equation*}
From \eqref{eq:lnmu} we obtain $\del_{\,t}\ln\mu_j = \pi\mi\,(\dot{x}_j\,\NU + x_j\dot{\NU} + p_{j0}\,\dot{\OMEGA})$ and $d\ln\mu_j = \pi\mi\,(x_j\,d\NU + p_{j0}\,d\OMEGA)$ for $j = 1,\,2$.
Putting all this together, the differential $\Omega = (\del_{\,t} \ln \mu_1)\,d\ln \mu_2 - (\del_{\,t} \ln \mu_2 ) \,d\ln \mu_1$ reads
\begin{equation*} \begin{split}
\Omega &=
\pi^2\bigl( (\dot{x}_1x_2-\dot{x}_2x_1)\,\NU\,\NU' + (\dot{x}_2 p_{10} - \dot{x}_1 p_{20})\,\NU\,\OMEGA' + (p_{10} x_2 - p_{20} x_1) (\dot{\NU}\,\OMEGA' - \NU'\dot{\OMEGA}) \bigr) \,\,\mbox{ with } \\
\NU\,\NU' &= \frac{\jq\,(\lambda^{-1} - \lambda)}{8\,\lambda}\,,\qquad
\NU\,\OMEGA' = \frac{2\,\JE' - \jq\,(\lambda + \lambda^{-1})\,\JK'}{4\,\pi\,\mi\,\lambda}\,,\\
\dot{\NU}\,\OMEGA' - \NU'\dot{\OMEGA} &= \frac{\cc_2\jq(\lambda - \lambda^{-1})^2 + \dot{\jq}\,(\lambda - 2\,\jq + \lambda^{-1})(\jq\,(\lambda+\lambda^{-1})\,\JK' - 2\,\JE')}{8\,\pi\,\mi\,\lambda\,(\lambda-\jq)(\lambda^{-1} - \jq)}\,.
\end{split}
\end{equation*}
Note that while $\dot{\NU}\,\OMEGA' - \NU'\dot{\OMEGA}$ has simple poles at $\lambda = \jq,\,\jq^{-1}$, but $\Omega$ does not, we conclude that the numerator of $\dot{\NU}\,\OMEGA' - \NU'\dot{\OMEGA}$ must vanish at $\lambda = \jq,\,\jq^{-1}$, giving
\begin{equation*}
    \cc_2 = \dot{\jq}\,\frac{(\jq^2 + 1)\,\JK' - 2\,\JE'}{\jq^2 - 1}\,.
\end{equation*}
The unique solution of equation \eqref{eq:diff_ode} can now be computed, and is given by
\begin{equation*} \begin{split}
&\dot{\jq} = \frac{4\pi \cc_1(\jq^2 - 1)(\jq\,\jh\,\JK' - \jk\,\JE')}
              {(x_1 p_{20} - x_2 p_{10} )(\JE'^2-\jq^2\JK'^2)}\,,\qquad
\dot{x}_1x_2-\dot{x}_2x_1 = \frac{8\cc_1\sqrt{1-\jk^2}}{\jq}\,,\\
&\dot{x}_1p_{20}-\dot{x}_2p_{10} = \frac{4\pi \cc_1(\jq\jk(\JE'-\JK') +  \jh(\JE'-\jq^2\JK'))}{\JE'^2-\jq^2\JK'^2}\,.
     \end{split}
\end{equation*}
Since $\dot{\OMEGA} = \partial_{\jq} \OMEGA\, \dot{\jq}$, the above imply that
\begin{equation} \label{eq:omega'}
    \frac{\del\OMEGA}{\del\jq} =
    \frac{(1+\jq^2)\JK'-2\JE'}{4\pi\mi(1-\jq^2)\NU}\,
    (\lambda-\lambda^{-1})\,.
\end{equation}
Now setting
\begin{equation*}
    \cc_1 \coloneq -\frac{\jq\,(x_1 p_{20} - x_2 p_{10})(\JE'^2-\jq^2\JK'^2)}
    {4\pi(\jq^2 - 1)}
\end{equation*}
gives $\dot{\jq} = \jq(\JE'\jk-\jq\JK'\jh)$ as required.

The deformation equations of the \sym points \eqref{eq:sym-point-deformation} read
\begin{equation*} \begin{split}
    \dot{\lambda}_j\,(x_1\,\NU' + p_{10}\,\OMEGA') &= -(\dot{x}_1\,\NU + x_1\,\dot\NU + p_{10}\,\dot\OMEGA ) \\
    \dot{\lambda}_j\,(x_2\,\NU' + p_{20}\,\OMEGA') &= -(\dot{x}_2\,
    \NU + x_2\,\dot\NU + p_{20}\,\dot\OMEGA )
    \end{split}
\end{equation*}
Multiplying the first equation by $p_{20}$ and the second equation by $p_{10}$ and subtracting gives
\begin{equation*}
    \dot{\lambda}_j =  -\left.\frac{(\dot{x}_1\,p_{20} - \dot{x}_2\,p_{10})\,\NU + (x_1\,p_{20} - x_2\,p_{10})\,\dot{\NU}}{(x_1\,p_{20} - x_2\,p_{10})\,\NU'}
                        \right|_{\lambda = \lambda_j}\,.
\end{equation*}
Using the above formulae as well as \eqref{eq:sqrt_q_formula} we obtain
\begin{equation*} \begin{split}
    \frac{\dot{\lambda}_1}{\lambda_1} + \frac{\dot{\lambda}_2}{\lambda_2}&=
    \frac{2\sqrt{\jk^2-1}}{\jh^2 - \jk^2} \left(
    \frac{\jq\jk(\JE'-\JK')+\jh(\JE'-\jq^2\JK')}{1-\jq^2}(2\jq\jk-\jh(\jq^2+1)) + (\jk\JE'-\jh\jq\JE')(\jk-\jh\jq) \right) \\
    &= \frac{2\sqrt{(\jk^2-1)}}{\jq^2 - 1} \left( (1+\jq^2)\JE' - 2\jq^2\JK' \right)\,, \\
    \frac{\dot{\lambda}_1}{\lambda_1} - \frac{\dot{\lambda}_2}{\lambda_2}&=
    \frac{2\sqrt{\jh^2-1}}{\jh^2 - \jk^2} \left(
    \frac{\jq\jk(\JE'-\JK')+\jh(\JE'-\jq^2\JK')}{1-\jq^2}((\jq^2+1)\jk - 2\jh\jq) + (\jk\JE'-\jh\jq\JE')(\jq\jk-\jh) \right) \\
    &= \frac{2\sqrt{\jh^2-1}}{\jq^2 - 1}\,\jq \left( 2\JE'-(1+ \jq^2) \JK' \right)
\end{split}
\end{equation*}
and putting these into \eqref{eq:qdot_hdot} gives the equations for $\dot\jk$ and $\dot\jh$, and concludes the proof of \eqref{eq:torus-flow}.

The elliptic integrals $\JK'$ and $\JE'$ are analytic on $\jq\in\Rstar$, and at $\jq=\pm1$ equal to $\frac{\pi}{2}$. Therefore the right hand sides of \eqref{eq:torus-flow} extend analytically to $\jq\in\Rstar$. Due to
Proposition \ref{th:alpha_less_beta}, for $\jk,\jh\in(-1,1)$ we have
\begin{align}\label{eq:monotonicity}
    \dot{\jk}>0&\mbox{ for }0<|\jq|<1\,,&
    \dot{\jh}>0&\mbox{ for }-1<\jq<0\,,&
    \dot{\jh}<0&\mbox{ for }0<\jq<1\,.
\end{align}
By the properties of $\JK'$ and $\JE'$ the
vector field $(\XzeroDot,\,\XoneDot,\,\XtwoDot)$ is analytic in
$\Xzero$ on $\Rstar$ and has simple zeros at $\Xzero=\pm 1$. Thus
the vector field is analytic on $\calD$. The zero set statement
follows from the fact that on $\{\Xzero\not=0\}$, the functions
$(1+\jq^2)\JE'-2\jq^2\JK'$ and $2\JE'-(1+\jq^2)\JK'$ have zeros only at
$\jq =\pm 1$, and $\JE'=\jq\JK'$ holds only at $\jq =1$. That $\jq =\pm 1$ is a simple root follows from the series expansions at $\jq =1$ (similarly at $\jq =-1$), given by
\begin{align*}
    \JK'(\jq) &= \pi \left( \tfrac{1}{2} -\tfrac{1}{4}(\jq - 1) + \tfrac{5}{32}(\jq-1)^2 - \tfrac{7}{64}(\jq-1)^3 + \mathrm{O}(\jq-1)^4 \right)\,, \\
    \JE'(\jq) &= \pi \left( \tfrac{1}{2} + \tfrac{1}{4}(\jq - 1) + \tfrac{1}{32}(\jq-1)^2 - \tfrac{1}{64}(\jq-1)^3 + \mathrm{O}(\jq-1)^4 \right)\,.
\end{align*}
\end{proof}

%
%----------------------------------------------------
%
\begin{figure}[t]
\centering
\includegraphics[width=5.4cm]{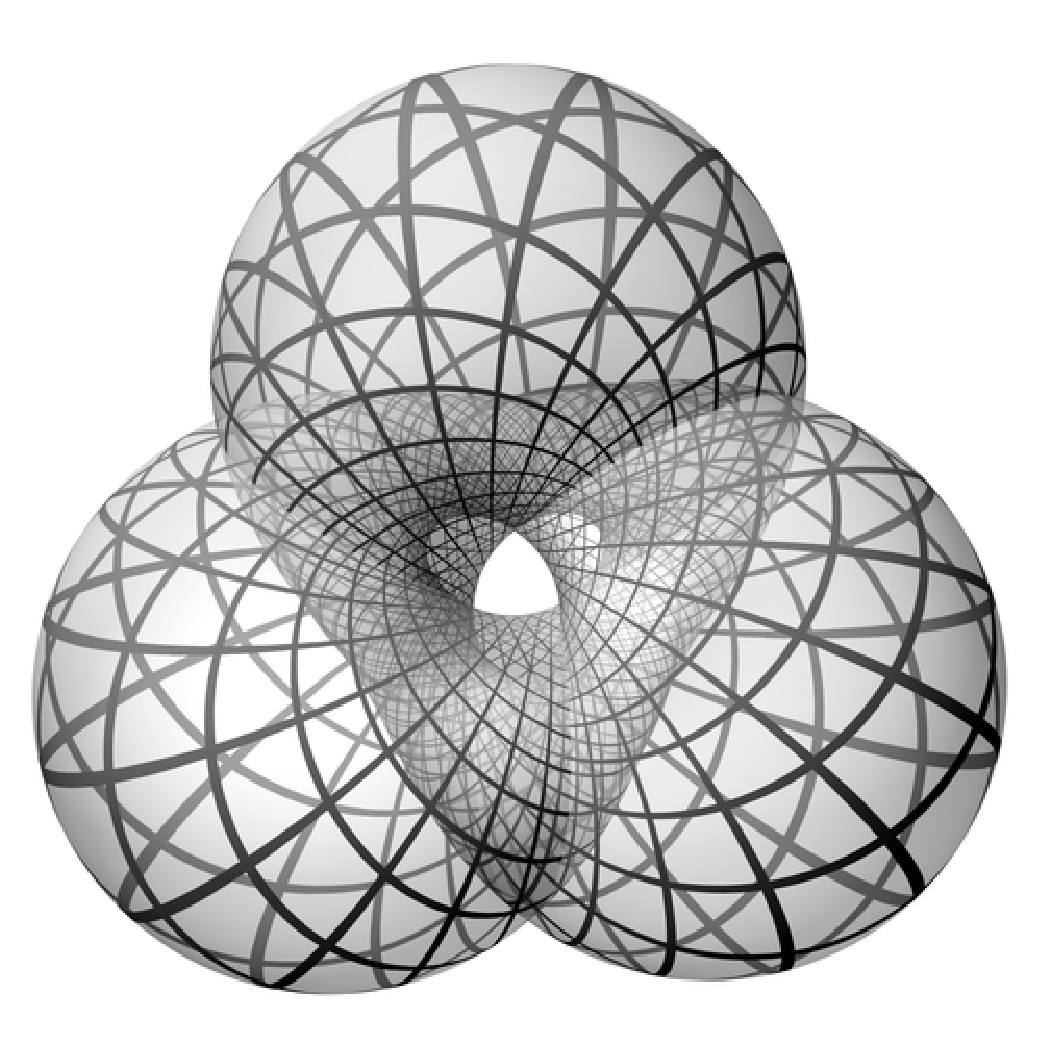}\hspace{0.0cm}
\includegraphics[width=5.4cm]{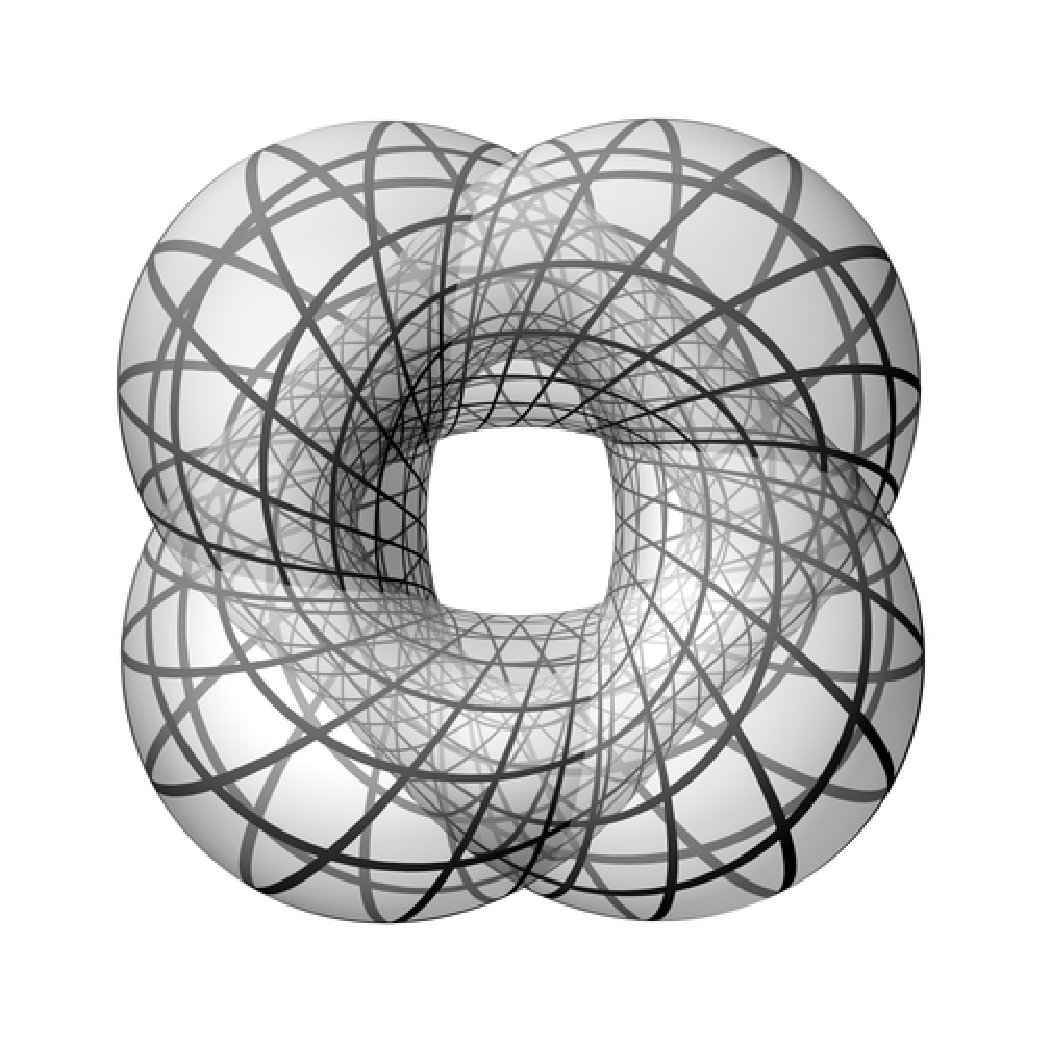}\hspace{0.0cm}
\includegraphics[width=5.4cm]{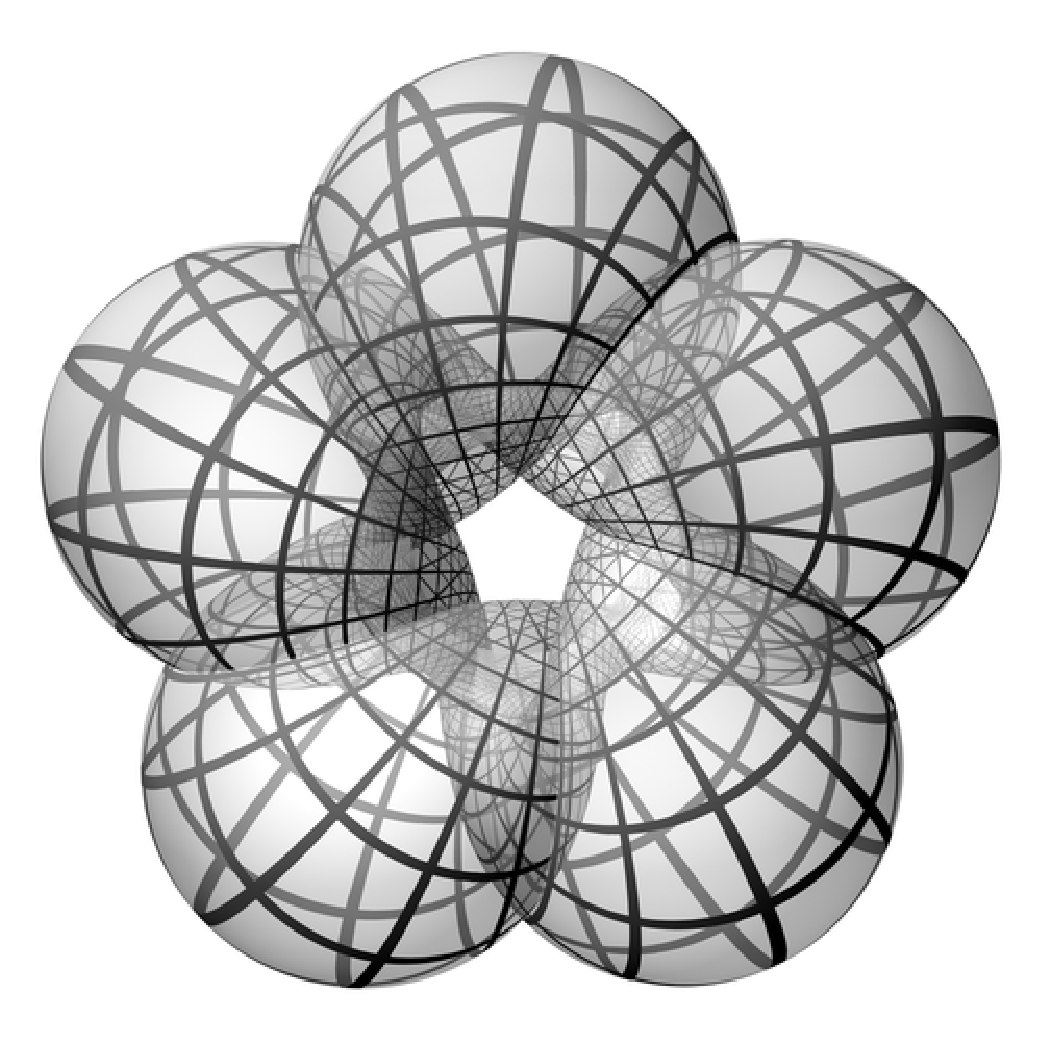}\hspace{0.0cm}
\caption{ \label{fig:twizzled2} Equivariant $(2,\,1,\,n)$ \cmc tori
($n=3,\,4,\,5$). By Proposition~\ref{th:minlobes}, there are no
twizzled tori with one or two major lobes. }
\end{figure}

%
%%%%%%%%%%%%%%%%%%%%%%
%
\subsection{Global behaviour of the integral curves}

The proof of the following Proposition~\ref{prop:torus-of-revolution} is deferred to section \ref{sec:frame}.
\begin{proposition} \label{prop:torus-of-revolution}
An equivariant \cmc torus in $\mathbb{S}^3$ is rotational if and only if the \sym points are reciprocal.
\end{proposition}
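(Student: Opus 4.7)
The plan is to invoke the extended frame / Sym--Bobenko formalism for cmc immersions in $\bbS^3$ developed in Section~\ref{sec:frame}, and translate the geometric rotational condition into an algebraic condition on the \sym points $\lambda_1,\lambda_2$.

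First I would write the equivariant cmc immersion via the Sym--Bobenko formula $f(z) = \Phi(z, \lambda_1)\Phi(z, \lambda_2)^{-1}$, where $\Phi(z, \lambda)$ is the extended frame. Equivariance in the orbit parameter gives $\Phi(z + t, \lambda) = \exp(t\,\xi(\lambda))\,\Phi(z, \lambda)$ for an $\mathfrak{su}_2$-valued loop $\xi(\lambda)$ (the infinitesimal generator of the polynomial Killing field), and substituting into Sym--Bobenko yields
\[
f(z + t) = \exp(t\,\xi(\lambda_1))\cdot f(z)\cdot \exp(-t\,\xi(\lambda_2))\,.
\]
Thus the one-parameter family of isometries preserving the surface is the image of $(\exp(t\,\xi(\lambda_1)),\,\exp(t\,\xi(\lambda_2)))$ under the double cover $\mathrm{SU}_2\times\mathrm{SU}_2\to\mathrm{SO}(4)$.

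Next I would use the standard fact that such an image is a rotation of $\bbR^4$ whose two rotation angles on its invariant 2-planes are the sum and the difference of the rotation angles of the two $\mathrm{SU}_2$ factors. The surface is rotational---i.e.\ one axis is pointwise fixed for \emph{all} $t$---if and only if one of these two angles vanishes identically in $t$, which in turn is equivalent to $\|\xi(\lambda_1)\|=\|\xi(\lambda_2)\|$ with respect to the standard norm on $\mathfrak{su}_2$.

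Third, I would identify $\|\xi(\lambda)\|^2=-\det\xi(\lambda)$, restricted to $\bbS^1$, with a fixed positive scalar multiple of the squared spectral eigenvalue $\NU^2(\lambda)$. Using \eqref{eq:nu}, for $\lambda\in\bbS^1$ (so $\lambda^{-1}=\bar\lambda$),
\[
4\NU^2(\lambda)=(\lambda-\jq)(\lambda^{-1}-\jq)=|\lambda-\jq|^2=1+\jq^2-\jq\,(\lambda+\lambda^{-1})\,.
\]
Hence the rotational condition reduces to $\lambda_1+\lambda_1^{-1}=\lambda_2+\lambda_2^{-1}$, which forces $\lambda_2\in\{\lambda_1,\,\lambda_1^{-1}\}$. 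The first alternative gives $H=\infty$ by \eqref{eq:cmc-I-II}, contradicting the immersed cmc hypothesis, and we are left with $\lambda_1\lambda_2=1$, as claimed. The spectral genus zero case is a limit of the above (with $\jq$ degenerate) and can also be read directly from the explicit genus zero formulas in subsection~2.1, where $\jk=\jh$ after the normalization $\lambda_1\equiv1$ and reciprocity aligns with rotational symmetry.

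The main obstacle is the identification in the third step: relating the $\mathfrak{su}_2$-norm of the polynomial Killing field $\xi(\lambda)$ at the \sym points to the spectral eigenvalue $\NU(\lambda)$. This demands the explicit correspondence between the loop-group monodromy and the branch polynomial $a(\lambda)$ that Section~\ref{sec:frame} will supply; once that correspondence is in place, the remaining algebra is elementary.
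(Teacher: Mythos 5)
Your argument is essentially the paper's: the paper likewise writes the equivariant action as $p\mapsto\exp(x\,\nu_1 e_0)\,p\,\exp(-x\,\nu_2 e_0)$ with $\nu_k=\NU(\lambda_k)$ coming from the equivariant frame of Section~\ref{sec:frame}, and characterizes "rotational" by a vanishing eigenvalue $\mi(\nu_1\pm\nu_2)$ of the extended action, i.e.\ $\nu_2=\pm\nu_1$, which via $4\NU^2=1+\jq^2-\jq(\lambda+\lambda^{-1})$ on $\bbS^1$ is exactly your norm/angle computation forcing $\lambda_1\lambda_2=1$. One minor caveat: your genus-zero aside is slightly off, since under the normalization $\lambda_1\equiv1$ the identity $\jk=\jh$ is not the reciprocity condition (that is $\jk^2=1$); the paper instead handles the flat case by observing that the unused rotation freedom in $\lambda$ always permits making the \sym points reciprocal for a flat (hence rotational) torus.
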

Special values of $\Xone$ and $\Xtwo$ include
\begin{equation*}
\begin{aligned}
    \Xone^2=1 &\iff \lambda_1=\lambda_2^{-1} (rotational)\spacecomma&
    \Xtwo^2=1 &\iff \lambda_1 = \lambda_2\ (H=\infty)\spacecomma\\
    \Xone=0 &\iff \lambda_1 = -\lambda_2^{-1}\spacecomma&
    \Xtwo=0 &\iff \lambda_1 = -\lambda_2\ (H=0)\spacecomma\\
    \Xone=\Xtwo &\iff \lambda_1=1\text{ or }\lambda_2 = 1\spacecomma&
    \Xone=-\Xtwo &\iff \lambda_1=-1\text{ or }\lambda_2 = -1 \spaceperiod
\end{aligned}
\end{equation*}
By the deformation equations \eqref{eq:torus-flow} if $\jk^2(t_0) =1$, for some $t_0$, then $\jk^2 \equiv 1$ throughout the flow. Hence rotational tori stay rotational during the flow. We begin the qualitative analysis of the flow by first considering equivariant \cmc tori which are not rotational ($\Xone^2<1$): we call these \emph{twizzled} tori.
The tori of revolution ($\Xone^2=1$) are treated subsequently in Proposition~\ref{prop:moduli-tori-of-revolution}.

The flow will be investigated in the open solid cuboid
\begin{equation*}
    \calB \coloneq
    \{(\jq,\jk,\jh)\in(-1,1)^3\suchthat\Xzero\not=0\}\spaceperiod
\end{equation*}
Due to \eqref{eq:monotonicity} $\jk-\sign(\jq)\jh$ is strictly monotonic on $\calB$ with the locally constant function $\sign(\jq)$.
%
%----------------------------------------------------------
% the level set
%----------------------------------------------------------
%
\begin{proposition} \label{prop:levelset}
Define the set
\begin{equation*} %\label{eq:levelset}
    \calL_c \coloneq \{
    (\Xzero,\,\Xone,\,\Xtwo)\in\calB\suchthat\quad
    (1-\Xone^2)(1-\Xtwo^2) =c(\tfrac{1+\jq^2}{2\jq}-\Xone\Xtwo)^2\}
    \spacecomma\interspace c\in\bbR_+ \spaceperiod
\end{equation*}
\begin{enumerate}
\item\label{item:levelset1} Then every integral curve in
$\calB$ lies in $\calL_c\cap\calB$ for some $c\in(0,\,1)$.

\item\label{item:levelset2} The following uniform estimates hold
  on the integral curve through $(\jq_0,\jk_0,\jh_0)\in\calL_c$:
\begin{equation*}%\label{eq:uniform}
|\jq| \geq \frac{\sqrt{c}}{2+2\sqrt{c}} \quad \mbox{ and } \quad
\min\{1-\jk^2,1-\jh^2\} \geq c\,(\min\{1-|\jk_0|,1-|\jh_0|\})^2
%1-\jh^2&\geq c(\min\{1-|\jk_0|,1-|\jh_0|\})^2
\end{equation*}
\item \label{prop:dq-sign} The function $\XzeroDot$ has at most one zero along any integral curve in $\calB$.
\end{enumerate}
\end{proposition}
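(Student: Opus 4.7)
The three parts will be handled in order. For (1) I would verify that
\[
\Phi(\jq,\jk,\jh)\coloneq\frac{(1-\jk^2)(1-\jh^2)}{\big(\tfrac{1+\jq^2}{2\jq}-\jk\jh\big)^2}
\]
is a first integral of the flow \eqref{eq:torus-flow} and determine the admissible range of $c=\Phi$. Part~(2) should fall out of the level-set equation together with elementary bounds on $\tfrac{1+\jq^2}{2\jq}$ and the monotonicities \eqref{eq:monotonicity}. Part~(3) will come from a sign analysis of $\ddot\jq$ at a critical point.

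\textbf{Part~(1).} Writing $A\coloneq(1+\jq^2)\JE'-2\jq^2\JK'$ and $B\coloneq 2\JE'-(1+\jq^2)\JK'$, so that $\dot\jk=\tfrac{1-\jk^2}{1-\jq^2}A$ and $\dot\jh=\jq\tfrac{1-\jh^2}{1-\jq^2}B$, I would compute $\tfrac{d}{dt}\log[(1-\jk^2)(1-\jh^2)]$ and $\tfrac{d}{dt}\log\big|\tfrac{1+\jq^2}{2\jq}-\jk\jh\big|$ separately, expand using the flow, and collect the coefficients of $\JE'$ and $\JK'$. Both sets of coefficients cancel after rearrangement, proving that $\Phi$ is constant along every integral curve. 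Positivity $c>0$ is immediate because $|\jk|,|\jh|<1$ on $\calB$. The strict upper bound $c<1$ comes from the algebraic identity
\[
4\jq^2\Big[\big(\tfrac{1+\jq^2}{2\jq}-\jk\jh\big)^2-(1-\jk^2)(1-\jh^2)\Big]=\big(2\jq\jk-(1+\jq^2)\jh\big)^2+(1-\jq^2)^2(1-\jh^2),
\]
verified by direct expansion; the right-hand side is strictly positive on $\calB$ thanks to the factor $(1-\jq^2)^2(1-\jh^2)>0$.

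\textbf{Part~(2).} From the level-set equation $\sqrt c\,\big|\tfrac{1+\jq^2}{2\jq}-\jk\jh\big|=\sqrt{(1-\jk^2)(1-\jh^2)}\leq 1$, the triangle inequality with $|\jk\jh|\leq 1$ gives $\big|\tfrac{1+\jq^2}{2\jq}\big|\leq 1+1/\sqrt c$; combined with $\big|\tfrac{1+\jq^2}{2\jq}\big|\geq \tfrac{1}{2|\jq|}$, this yields the first estimate. For the second, observe that on $\calB$ we have $\big|\tfrac{1+\jq^2}{2\jq}\big|\geq 1>|\jk\jh|$ and $\sign\big(\tfrac{1+\jq^2}{2\jq}-\jk\jh\big)=\sign\jq$, hence $\big|\tfrac{1+\jq^2}{2\jq}-\jk\jh\big|\geq 1-|\jk\jh|\geq 1-\min(|\jk|,|\jh|)$. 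Together with $\min\{1-\jk^2,1-\jh^2\}\geq (1-\jk^2)(1-\jh^2)=c\big(\tfrac{1+\jq^2}{2\jq}-\jk\jh\big)^2$, this gives the pointwise-in-$t$ inequality $\min\{1-\jk^2,1-\jh^2\}\geq c\,(\min\{1-|\jk|,1-|\jh|\})^2$. The remaining step is to promote this from current values to initial values, which is what I expect to be the hard part: one has to rule out $|\jk\jh|$ ever exceeding $\max(|\jk_0|,|\jh_0|)$, and here I would combine the strict monotonicity of $\jk$ with the level-set constraint, treating $\jq>0$ and $\jq<0$ separately.

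\textbf{Part~(3).} Since $\jq\neq 0$ on $\calB$, a critical point $t_*$ of $\jq$ is characterised by $F(t_*)=0$, where $F\coloneq\JE'\jk-\jq\JK'\jh$, and differentiating $\dot\jq=\jq F$ gives $\ddot\jq|_{t_*}=\jq\,\dot F|_{t_*}$. Substituting the flow equations into $\dot F|_{F=0}=\JE'\dot\jk-\jq\JK'\dot\jh$ and using the constraint $\JE'\jk=\jq\JK'\jh$, I would simplify the resulting expression and then invoke the strict inequalities of Proposition~\ref{th:alpha_less_beta}, the strict AM--GM bound $\JE'^2+\jq^2\JK'^2>2|\jq|\JE'\JK'$ (strict because $|\jq|<1$), and the level-set constraint to show that $\ddot\jq|_{t_*}$ has a sign determined only by $\sign\jq$. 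It then follows that every critical point of $\jq$ along an integral curve is a strict local extremum of the same type, so at most one such point can occur. This is the step I expect to be the main obstacle: the natural expression for $\dot F|_{F=0}$ is a difference of two positive terms, and establishing its sign unconditionally requires carefully marrying the elliptic-integral inequalities with the level-set equation $\calL_c$.
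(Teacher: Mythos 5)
Your strategy is the same as the paper's: the quantity $\Phi$ is exactly the conserved ratio the paper establishes (via a Wronskian computation from \eqref{eq:torus-flow}), your identity
$4\jq^2\bigl[(\tfrac{1+\jq^2}{2\jq}-\jk\jh)^2-(1-\jk^2)(1-\jh^2)\bigr]=(2\jq\jk-(1+\jq^2)\jh)^2+(1-\jq^2)^2(1-\jh^2)$
is correct and is a clean substitute for the paper's inequality chain giving $c<1$, and your derivation of $|\jq|\ge\sqrt{c}/(2+2\sqrt{c})$ is the paper's argument. The genuine gap is in the second estimate of part~(2): the passage from your pointwise-in-$t$ inequality to the stated bound in terms of the \emph{initial} values $(\jk_0,\jh_0)$ is the actual content of the claim, and you leave it open. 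Moreover, the route you sketch (monotonicity of $\jk$ plus the level-set constraint, to bound $|\jk\jh|$ by $\max(|\jk_0|,|\jh_0|)$) does not identify the mechanism: what is needed is the \emph{signed} monotonicity of both coordinates from \eqref{eq:monotonicity}, namely $\dot\jk>0$ and $\sign\dot\jh=-\sign\jq$, and the level set plays no role here. The paper's argument is: if $\sign(\jq)\,\jk\jh\le0$ at time $t$, then $(\tfrac{1+\jq^2}{2\jq}-\jk\jh)^2\ge1$ and the estimate is trivial because $\min\{1-|\jk_0|,1-|\jh_0|\}\le1$; if $\sign(\jq)\,\jk\jh>0$, then exactly one of $1-|\jk|$, $1-|\jh|$ is increasing and the other decreasing, and since $\dot\jk,\dot\jh$ have fixed signs and $\jk,\jh$ are strictly monotone (so each changes sign at most once), the one increasing at $t$ has been increasing for all earlier times and the one decreasing stays decreasing for all later times; hence at every $t$ at least one of $1-|\jk(t)|,1-|\jh(t)|$ is at least its value at $t_0$, which combined with your pointwise chain gives the claim. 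Without an argument of this kind, part~(2) is not proved.

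In part~(3) you also stop short, and the obstacle you anticipate is not there. At a zero of $\dot\jq$ one has $\ddot\jq=\jq\,(\JE'\dot\jk-\jq\JK'\dot\jh)$, and by \eqref{eq:monotonicity} $\dot\jk>0$ while $\jq\dot\jh<0$ (equivalently $A>0$ and $B<0$ by Proposition~\ref{th:alpha_less_beta}), so the bracket is a \emph{sum} of two positive terms, not a difference; hence $\sign\ddot\jq=\sign\jq$ at every critical point, $\sign(\jq)\dot\jq$ is strictly increasing at each of its zeros, and there is at most one such zero. No level-set information, no AM--GM bound, and no use of the constraint $\JE'\jk=\jq\JK'\jh$ beyond $\dot\jq=0$ itself is needed. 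In summary: part~(1) and the first estimate of part~(2) are complete and essentially identical to the paper's proof; the second estimate of part~(2) is genuinely missing and your sketched repair is misdirected; part~(3) needs the (easy) sign observation actually carried out.
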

\begin{proof}
(1) From~\eqref{eq:torus-flow} we compute $\calW_t[
(1-\Xone^2)(1-\Xtwo^2),\,
    (\frac{1+\jq^2}{2\jq}-\Xone \Xtwo)^2 ] = 0$, where
    $\calW_t[X,\,Y] \coloneq X \dot Y- \dot X Y$ is the Wronskian
with respect to the flow parameter $t$.

Since $(1-\Xone^2)(1-\Xtwo^2)$ and $(\frac{1+\jq^2}{2\jq}-\Xone \Xtwo)^2$ are strictly positive in $\calB$, then every integral curve in $\calB$ lies in $\calL_c$ for some $c\in\bbR_+$. In $\calB$, we have
$(1-\Xone^2)(1-\Xtwo^2) \le {(1-\Xone\Xtwo)}^2 <
(\frac{1+\jq^2}{2\jq}- \Xone \Xtwo)^2$,
so $c\in(0,\,1)$.

(2) For $(\jq,\jk,\jh)\in\calL_c$ the first inequality of
    %\eqref{eq:uniform}
    follows from
\begin{equation*}\frac{1}{|\jq|}\leq\frac{1+\jq^2}{|\jq|}
\leq 2\sqrt{\frac{(1-\jk^2)(1-\jh^2)}{c}}+2|\jk\jh|\leq
\frac{2+2\sqrt{c}}{\sqrt{c}}.
\end{equation*}
On each integral curve $\jq$ is either positive or negative.
The second inequality follows from
\begin{equation*}
\min\{1-\jk^2,1-\jh^2\}\geq(1-\jk^2)(1-\jh^2)\geq
c(1-\sign(\jq)\jk\jh)^2\geq c(\max\{1-|\jk|,1-|\jh|\})^2.
\end{equation*}
In fact, due to \eqref{eq:monotonicity} either
$\sign(\jq)\jk\jh\leq0$ or one of the functions
$1-|\jk|$ and $1-|\jh|$ is increasing and the other one
decreasing. Furthermore, if at some point $t$ with
$\sign(\jq)\jk\jh>0$ one of these functions is increasing, it
stays increasing for all $t_0\leq t$, and if it is decreasing, it
stays decreasing for all $t_0\geq t$, since the derivative of these
functions can change sign only at the maximal value $1$.

(3) If $\XzeroDot(t_0)=0$ for some value of
the flow parameter $t=t_0$, then due to \eqref{eq:torus-flow} and
\eqref{eq:monotonicity}
\begin{equation*}
\sign(\Xzero)\XzeroDdot=
\sign(\Xzero)
\tfrac{d}{dt}\Xzero(\JE'\Xone-\Xzero\JK'\Xtwo)=
|\Xzero|(\JE'\XoneDot-\Xzero\JK'\XtwoDot))>0
\end{equation*}
for $t=t_0$. Thus $\sign(\Xzero)\XzeroDot$ is increasing at
each of its zeros, and hence can have at most one zero.
\end{proof}

%moduli picture
%--------------------------------------------------------
%\infigure{figure/moduli}
%--------------------------------------------------------

%--------------------------------------------------------
% qualitative description of integral curves
%--------------------------------------------------------

The next result shows that one endpoint of each integral curve
corresponds to a flat \cmc torus.
\begin{proposition} \label{prop:moduli-space}
Every maximal integral curve of \eqref{eq:torus-flow} in $\calB$ is
defined on a finite interval and passes from a point in
$\{\Xzero=\pm1\}\cap\{\Xone-\Xzero\Xtwo<0\}\cap\boundary\calB$
to a point in
$\{\Xzero=\pm1\}\cap\{\Xone-\Xzero\Xtwo>0\}\cap\boundary\calB$ with the
same sign of $\Xzero$. Furthermore, the mean curvature
$t \mapsto H(t)$ is a diffeomorphism of the maximal interval of
definition onto a finite interval.
\end{proposition}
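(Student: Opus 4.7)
The plan is to combine the monotonicity and level-set structure established in Proposition~\ref{prop:levelset} with the description of zeros of the vector field in Theorem~\ref{thm:flow}. First I would fix an integral curve in $\calB$ and pick $c\in(0,1)$ so that it lies on $\calL_c$. By Proposition~\ref{prop:levelset}(2) this gives uniform bounds $|\jq|\geq\sqrt{c}/(2+2\sqrt{c})$ and $\max\{\jk^2,\jh^2\}\leq 1-c\delta_0^2<1$ along the entire curve, where $\delta_0=\min\{1-|\jk_0|,1-|\jh_0|\}>0$. In particular $\sign(\jq)$ is constant and the curve remains in a fixed compact subset of $\calD$.

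Next I would show $t_{\max}<\infty$ (and symmetrically $t_{\min}>-\infty$) by contradiction. If $t_{\max}=\infty$, then by Proposition~\ref{prop:levelset}(3) $\jq$ is eventually monotonic and by \eqref{eq:monotonicity} so are $\jk$ and $\jh$; being bounded, all three converge to a limit point $p_*=(\jq_*,\jk_*,\jh_*)\in\calD$. A standard $\omega$-limit argument forces the analytic vector field \eqref{eq:torus-flow} to vanish at $p_*$, so Theorem~\ref{thm:flow} gives $\jq_*=\pm 1$ and $\jk_*=\jq_*\jh_*$. Substituting this into the defining equation of $\calL_c$ at $\jq_*=\pm 1$ (where $\tfrac{1+\jq_*^2}{2\jq_*}=\jq_*$) reduces it to $(1-\jh_*^2)^2=c(1-\jh_*^2)^2$, forcing either $c=1$ (excluded) or $\jh_*^2=1$ (excluded by the uniform bound). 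Hence $t_{\max}<\infty$ and the curve approaches $\del\calB$ as $t\to t_{\max}$.

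To identify which face of $\del\calB$ is hit, the components $\{\jq=0\}$ and $\{|\jk|=1\}\cup\{|\jh|=1\}$ are ruled out by the uniform bounds of part~(2), leaving only $\{\jq=\pm 1\}$; constancy of $\sign(\jq)$ gives the same sign of $\jq$ at both endpoints. At such an endpoint $p_*$ the level-set identity again prohibits $\jk_*=\jq_*\jh_*$ by the same computation as above, so $\dot\jq|_{p_*}=\tfrac{\pi}{2}\jq_*(\jk_*-\jq_*\jh_*)\neq 0$, using $\JK'(\pm 1)=\JE'(\pm 1)=\pi/2$ from the series at the end of the proof of Theorem~\ref{thm:flow}. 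Reading off the sign of $\dot\jq$ from the direction in which the curve enters and leaves $\calB$ gives $\jk_--\jq_-\jh_-<0$ at $t_{\min}$ and $\jk_+-\jq_+\jh_+>0$ at $t_{\max}$, in agreement with the strict monotonicity of $\jk-\sign(\jq)\jh$ recorded just before Proposition~\ref{prop:levelset}.

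For the mean curvature statement, \eqref{eq:monotonicity} shows that $\dot\jh$ has constant sign $-\sign(\jq)$ along the curve, so $\jh$ is strictly monotonic on $(t_{\min},t_{\max})$. Composing with the diffeomorphism $\jh\mapsto\jh/\sqrt{1-\jh^2}$ from \eqref{eq:H_and_h} yields $H$ as a strictly monotone smooth function of $t$; its image is a finite open interval because the uniform bound $\jh^2\leq 1-c\delta_0^2$ carried to the endpoints forces $\jh_\pm\in(-1,1)$. The main obstacle I expect is the finiteness of $t_{\max}$: both ruling out asymptotic approach to an equilibrium in $\{\jq^2=1,\jk=\jq\jh\}$ and ruling out escape through $\{|\jk|=1\}$ or $\{|\jh|=1\}$ rest on the same translation of the level-set equation at $c<1$ into incompatible endpoint constraints, and the argument must be uniform in the limit point.
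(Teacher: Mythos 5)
Your proposal is correct and follows essentially the same route as the paper: both arguments rest on the uniform bounds and the single sign change of $\XzeroDot$ from Proposition~\ref{prop:levelset}, the strict monotonicity \eqref{eq:monotonicity}, and the zero-set description of the vector field at $\Xzero=\pm1$ from Theorem~\ref{thm:flow}, with the endpoint signs read off from $\dot\Xzero\propto\Xzero(\Xone-\Xzero\Xtwo)$ there. You merely make explicit two points the paper leaves terse — the $\omega$-limit argument for finiteness of the interval and the level-set computation showing the equilibrium set $\{\Xzero^2=1,\ \Xone=\Xzero\Xtwo\}$ is incompatible with $\calL_c$ for $c<1$ — which is the intended content of the paper's appeal to Proposition~\ref{prop:levelset}~\eqref{item:levelset2}.
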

\begin{proof}
Due to \eqref{eq:monotonicity} both functions $\Xone$ and $\Xtwo$
are strictly monotonic. Since $\XoneDot$ and $\XtwoDot$ have no
roots in $\calB$, the integral curve must hit the boundary of
$\calB$ at the end points (either infinite or finite) of the maximal
interval of definition. Due to
Proposition~\ref{prop:levelset}~\eqref{item:levelset2}, $\Xzero$
takes the same value $\pm 1$ at both endpoints and the vector
field~\eqref{eq:torus-flow} does not vanish there. Hence the maximal
interval of definition is finite. Due to
Proposition~\ref{prop:levelset} (3) the sign of $\XzeroDot$ changes once
and is proportional to $\Xone-\sign(\Xzero)\Xtwo$ at the end points.
Due to \eqref{eq:monotonicity} $\Xone-\sign(\Xzero)\Xtwo$ is
strictly increasing and the first claim follows. Since $\Xtwo\mapsto\Xtwo/\sqrt{1-\Xtwo^2}$ is
strictly increasing on $(-1,\,1)$, the mean curvature \eqref{eq:H_and_h} is strictly monotonic, and by
Proposition~\ref{prop:levelset}~\eqref{item:levelset2} it is bounded.
\end{proof}

\subsection{Global behaviour of the integral curves on the boundary}

The boundary $\boundary\calB$ consists of the three parts with
$\jq=\pm1$ or $\jq=0$, $\jk=\pm 1$ and $\jh=\pm 1$. The first set
contains the flat \cmc tori and the bouquets of spheres, which we consider later. The third set corresponds to the infinite mean curvature limit, that is \cmc tori in $\mathbb{R}^3$. In this limit the two \sym points coalesce and the differentials $d\ln\mu_i$ have zeroes there. Since $d\nu$ and $d\omega$ do not have common zeroes by Corollary \ref{th:no_common_zeroes}, there are no such examples with spectral genus zero or one. Therefore we treat only $\jk^2=1$ and $\jh\in(-1,1)$.

By Proposition~\ref{prop:torus-of-revolution} the tori of revolution
appear in the two-dimensional boundary $\{\Xone^2=1\}$ of the moduli
space of equivariant \cmc surfaces in $\mathbb{S}^3$. Since
$\dot\Xone \equiv 0$ along $\Xone^2 =1$, tori of revolution stay
tori of revolution throughout the flow. The
flow~\eqref{eq:torus-flow} thus describes one parameter
families of tori of revolution. Later in Theorem~\ref{thm:rev-mean-curvature} we describe the range
of mean curvature for the families in the flow, and exhibit those
that contain a minimal torus of revolution. A consequence of the
next result is that spectral genus $g=1$ tori of revolution lie in 1-parameter families with one endpoint at a flat \cmc torus and the other endpoint at a sphere bouquet. In the process we also show that the mean curvature stays bounded during the flow.

Since we often need to evaluate the functions $\NU$ and $\OMEGA$ at the two \sym points we set
\begin{equation} \label{eq:nu-omega-k}
    \NU_k = \NU(\lambda_k)  \mbox{ and }
    \OMEGA_k = \OMEGA(\lambda_k) \mbox{ for } k=1,\,2\,.
\end{equation}
%
%----------------------------------------------------

\begin{proposition}
\label{prop:moduli-tori-of-revolution}
On the integral curve of \eqref{eq:torus-flow} through
$(\jq_0,\jk_0,\jh_0)\in\boundary\calB$ with $\jk_0=\pm1$ and
$\jq_0,\jh_0\in(-1,1)$ the function $\jk$ is equal to $\pm1$ and
$1-\jh^2$ is bounded away from zero. The maximal interval of
definition is of the form $(-\infty,\,t_{\max})$ with
$$
    \lim_{t\downarrow -\infty}\jq=0 \quad \mbox{ and } \quad
    \lim_{t\uparrow t_{\max}}\jq=\pm1 \spaceperiod
$$
The mean curvature $t\mapsto H(t)$ is a diffeomorphism from
$(-\infty,\,t_{\max})$ onto a finite interval.
\end{proposition}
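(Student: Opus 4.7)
The plan is to first observe that the factor $1-\jk^2$ in the formula for $\dot\jk$ in \eqref{eq:torus-flow} makes the locus $\{\jk^2=1\}$ flow-invariant; starting from $\jk_0=\pm1$ forces $\jk\equiv\pm 1$ throughout. By the obvious sign symmetries of the flow I treat only the case $\jk\equiv 1$ with $\jq_0\in(0,1)$, the other three sign cases being analogous. The system \eqref{eq:torus-flow} then reduces to the planar one
$$
\dot\jq=\jq(\JE'-\jq\JK'\jh)\spacecomma\qquad \dot\jh=\jq\,\frac{1-\jh^2}{1-\jq^2}\bigl(2\JE'-(1+\jq^2)\JK'\bigr)\spaceperiod
$$
The lines $\jh=\pm1$ are invariant because $\dot\jh$ carries a factor $1-\jh^2$, so $\jh\in(-1,1)$ is preserved. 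Proposition~\ref{th:alpha_less_beta} gives $|\jq|\JK'<\JE'$, hence $|\jq\JK'\jh|<\JE'$ and $\dot\jq>0$, and also $2\JE'-(1+\jq^2)\JK'<0$, hence $\dot\jh<0$; so $\jq$ strictly increases in $(0,1)$ while $\jh$ strictly decreases.

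The endpoints of $\jq$ then follow from simple asymptotics. As $\jq\to 0^+$ one has $\JE'\to 1$ and $\jq\JK'\to 0$ (since $\JK'\sim\log(4/\jq)$), so $\dot\jq/\jq\to 1$ and $\jq$ decays exponentially in backward time, reaching $0$ only at $t=-\infty$. As $\jq\to 1^-$, $\JE'(1)=\JK'(1)=\pi/2$ together with $\jh\le\jh_0<1$ (by the strict decrease of $\jh$ in forward time) give $\dot\jq|_{\jq=1}\ge(\pi/2)(1-\jh_0)>0$, so $\jq$ reaches $1$ in finite forward time $t_{\max}$.

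The main obstacle is the uniform positive lower bound on $1-\jh^2$, which is what forces the one-sided limits of $\jh$ to lie strictly inside $(-1,1)$. My approach is to compute
$$
\frac{d}{dt}\log(1-\jh^2)=-\frac{2\jh\,\jq\,\bigl(2\JE'-(1+\jq^2)\JK'\bigr)}{1-\jq^2}
$$
and to bound its total $t$-integral by the substitution $dt=d\jq/\dot\jq$, reducing to a $\jq$-integral on $(0,1)$ with integrand dominated by $|2\JE'-(1+\jq^2)\JK'|/\bigl[(1-\jq^2)(\JE'-\jq\JK'\jh)\bigr]$. Near $\jq=0$ the integrand is of order $\JK'\sim\log(4/\jq)$ and is integrable. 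The delicate step is $\jq\to 1$: the series expansions of $\JK'$ and $\JE'$ at $\jq=1$ recorded at the end of the proof of Theorem~\ref{thm:flow} show that $(1+\jq^2)\JK'-2\JE'$ vanishes to second order, whereas $1-\jq^2$ vanishes only to first order, so the integrand is $O(\jq-1)$ and the integral converges. Thus $\log(1-\jh^2)$ has finite total variation and $1-\jh^2$ admits a positive lower bound on the whole integral curve.

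With this bound, both one-sided limits of $\jh$ lie in a compact subinterval of $(-1,1)$. Since $\jh\mapsto\jh/\sqrt{1-\jh^2}$ is a diffeomorphism of $(-1,1)$ onto $\bbR$ and $\jh(t)$ is a strictly monotone diffeomorphism of $(-\infty,t_{\max})$ onto its image, the mean curvature $t\mapsto H(t)$ in \eqref{eq:H_and_h} is a diffeomorphism onto a bounded interval, completing the proof.
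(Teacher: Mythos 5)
Your proposal is correct, and for the crucial claim --- the uniform lower bound on $1-\jh^2$ --- it takes a genuinely different route from the paper. The paper uses the closing data: since $\jk\equiv\pm1$ gives $\NU_1=\NU_2$, the quantity $\OMEGA_2-\OMEGA_1=\int_{\theta_1}^{\theta_2}d\OMEGA$ is conserved along the flow, and the pointwise estimate $|d\OMEGA|\le 2|d\theta|$ on $\bbS^1$ (proved in two cases from Proposition~\ref{th:alpha_less_beta} and $\JE'\le\pi/2$) turns this conserved quantity into $|\theta_2-\theta_1|\ge\tfrac12|\OMEGA_2-\OMEGA_1|$, hence the bound on $1-\jh^2$; this ties the bound to the winding integers and is close in spirit to what is reused for Lemma~\ref{thm:sphere-bouquet}. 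You instead bound the total variation of $\log(1-\jh^2)$ by quadrature in $\jq$, using that $\bigl|\tfrac{d}{d\jq}\log(1-\jh^2)\bigr|\le 2\,|2\JE'-(1+\jq^2)\JK'|/\bigl((1-\jq^2)(\JE'-\jq\JK'\jh)\bigr)$ is integrable: logarithmic growth $\JK'\sim\log(4/|\jq|)$ at $\jq=0$, and at $\jq=1$ the numerator $2\JE'-(1+\jq^2)\JK'=-\tfrac{\pi}{4}(\jq-1)^2+\mathrm{O}((\jq-1)^3)$ beats the simple zero of $1-\jq^2$. The one step that must not be dropped is exactly the one you invoke: near $\jq=1$ the $\jh$-uniform bound $\JE'-\jq\JK'\jh\ge\JE'-\jq\JK'$ is useless (that difference also vanishes to second order), and one needs $\jh\le\jh_0$ from forward monotonicity to keep the denominator $\ge\min(1,\,1-\jh_0)>0$. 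Two points are glossed, but no more than in the paper: that the backward limit of $\jq$ is $0$ rather than a positive non-equilibrium value, and that the case $\jk\equiv-1$ is only ``analogous'' after a time reversal (the paper's assertion that $\jq^2$ increases likewise presumes $\jk=+1$). Your argument is more elementary and independent of the closing conditions, at the price of not producing the explicit relation between $|\theta_2-\theta_1|$ and $\OMEGA_2-\OMEGA_1$ that the paper's method yields.
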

\begin{proof} By \eqref{eq:torus-flow} $\Xone \equiv \Xone_0$ is
constant throughout the flow. Let $\lambda_i = e^{2\mi\theta_i}$ for $i=1,\,2$ be the two \sym points. If $|\theta_2-\theta_1|$ is bounded away form zero, then $1-\Xtwo^2$ is bounded away from zero.
For $\jk=\pm1$ the values of $\NU$ at the \sym points
coincide.

By \eqref{eq:sym-point-deformation} we have that $\omega_2 - \omega_1 = \int_{\theta_1}^{\theta_2}d\omega$ is
constant throughout the flow. We claim that
\begin{equation*}%\label{eq:bound omega}
|d\omega | \leq 2|d\theta |\quad %Faktor 2 theta
\mbox{ for }\theta\in\mathbb{R}\mbox{ and }\jq\in[-1,1].
\end{equation*}
We will need to consider the two cases $\jq\cos(2\theta)\leq 0$
and $\jq\cos(2\theta)\geq 0$ separately. In the first case we use
$\JE'\leq\frac{\pi}{2}$ \cite[13.8.(11)]{Bat2}, Proposition \ref{th:alpha_less_beta} and \eqref{eq:nu} to obtain
$$
    |d\omega| = \left|
    \frac{\JE'-\jq\JK'\cos(2\theta)}{\pi\NU}\right|\,|d\theta |
    \leq\left|
    \frac{\JE'(1-\tfrac{2\jq}{1+\jq^2}cos(2\theta))}
                       {\pi\NU}\right|\,|d\theta|
    \leq\left|\frac{2\NU}{1+\jq^2}\right|\,|d\theta|
    \leq2|d\theta| \spaceperiod
$$
When $\jq\cos(2\theta)\leq 0$, then again by $\JE'\leq\frac{\pi}{2}$,
Proposition \ref{th:alpha_less_beta}, and
$2|\NU | \geq \sqrt{1+\jq^2} \geq 1$ \eqref{eq:nu} we get
$$
    |d\omega| = \left|
    \frac{\JE'-\jq\JK'\cos(2\theta)}{\pi\NU}\right|\,|d\theta |
    \leq\left|
    \frac{4\JE'}{\pi\sqrt{1+\jq^2}}\right|\,|d\theta|
    \leq2|d\theta| \spaceperiod
$$
Thus $|\theta_2-\theta_1|\geq\half|\omega_2-\omega_1|$.
Due to \eqref{eq:omega} and Proposition \ref{th:alpha_less_beta}, the differential $d\OMEGA$ has no root between $\theta_1$ and $\theta_2$. Therefore $|\theta_2-\theta_1|$ and $1-\jh^2$ are bounded away from zero.

Due to Proposition \ref{th:alpha_less_beta}, the function $\jq^2$ is strictly increasing for $0<|\jq|<1$. Therefore the maximal integral curve passes from $\jq=0$ to $\jq=\pm1$. The vector field is at the left end point of order $\Order(\jq)$ and at the right end point not zero. Therefore the maximal interval of definition has the form $(-\infty,\,t_{\max})$.

Since $\jh$ is strictly monotonic, and $1-\jh^2$ is bounded away from
zero, the mean curvature \eqref{eq:H_and_h} is a diffeomorphism from $(-\infty,\,t_{\max})$ onto a finite interval.
\end{proof}

\subsection{Regularity of spectral curve} A corollary of
Theorem~\ref{th:no_equi_bub} below is that equivariant \cmc tori
cannot be dressed to tori by simple factors \cite{KilSS,TerU}.
While there exist large families of \cmc cylinders which can be
dressed to cylinders with bubbletons \cite{SteW:bub}, it is still an open question raised by Bobenko \cite{Bob:tor}, whether there are \cmc tori with bubbletons. A \emph{double point} is a point on the spectral curve at which both logarithms $\mu_1$ and $\mu_2$ are
unimodular, but the spectral curve is not branched.
\begin{theorem}
\label{th:no_equi_bub} The spectral curve of an equivariant
{\sc{cmc}} torus has no double points in $\Cstar\setminus\bbS^1$.
\end{theorem}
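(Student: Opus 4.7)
The plan is to derive from a hypothetical double point $y_0=(\lambda_0,\NU_0)\in\curve$ with $\lambda_0\in\Cstar\setminus\bbS^1$ a pair of real linear conditions on the imaginary parts of the natural coordinates $\NU,\OMEGA$ at $y_0$, and then use the non-degeneracy of the singular parts of $d\ln\mu_1,d\ln\mu_2$ at $y^\pm$ (axiom~(3) of the spectral curve) together with the extra holomorphic involution $\tau$ of the equivariant spectral curve to exclude this possibility. Since an equivariant \cmc torus has spectral genus $0$ or $1$ by \cite{BurK}, we handle the two cases separately.

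For $g=0$ with $\NU^2=\lambda$, one has $\ln\mu_j=2\pi(\beta_j\lambda-\bar\beta_j)/\NU$ from the first subsection of Section~2. A direct substitution $\lambda=re^{\mi\phi}$, $\NU=r^{1/2}e^{\mi\phi/2}$ gives
\begin{equation*}
    \mathrm{Re}(\ln\mu_j) = (r^{1/2}-r^{-1/2})\mathrm{Re}(\beta_j)\cos(\phi/2) - (r^{1/2}+r^{-1/2})\mathrm{Im}(\beta_j)\sin(\phi/2)\,.
\end{equation*}
If $r\neq 1$, vanishing of this real part for both $j=1,2$ forces $\beta_1,\beta_2$ to be parallel over $\bbR$, contradicting $\mathrm{Im}(\beta_1\bar\beta_2)\neq 0$ (which is equivalent to the linear independence of the singular parts of $d\ln\mu_1,d\ln\mu_2$ at $y^\pm$).

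For $g=1$, the first step would be to show that the coefficients $x_1,x_2$ in \eqref{eq:lnmu} are real. From the real form $4\NU^2=(\lambda-\jq)(\lambda^{-1}-\jq)$ with $\jq\in\bbR$ one reads off $\NU\circ\eta=-\bar\NU$; a direct computation from \eqref{eq:omega} shows $\OMEGA\circ\eta\equiv -\bar\OMEGA\pmod{2\bbZ}$ (the constant is pinned down by $\OMEGA(1)=0$, itself forced by $\OMEGA\circ\tau=-\OMEGA$ at the $\tau$-fixed point $\lambda=1$). Combined with $\mu_j\circ\eta=\bar\mu_j$ these yield $x_j=\bar x_j$. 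Then the condition $|\mu_j(y_0)|=1$ reduces to $x_j\,\mathrm{Im}(\NU_0)+p_{j0}\,\mathrm{Im}(\OMEGA_0)=0$ for $j=1,2$, and the coefficient matrix is nondegenerate by axiom~(3), giving $\NU_0,\OMEGA_0\in\bbR$.

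What remains is to show that $\{\NU\in\bbR\}\cap\{\OMEGA\in\bbR\}\subset\curve$ is contained in $\bbS^1\subset\curve$ together with the branch points. The condition $\NU\in\bbR$ already restricts $\lambda_0$ either to $\bbS^1$ or to a real $\lambda$-oval $\{(\lambda-\jq)(\lambda^{-1}-\jq)\geq 0\}$, which is a single interval $[\jq,\jq^{-1}]$ for $\jq\in(0,1)$ and two components $\bbR_+\cup[\jq^{-1},\jq]$ for $\jq\in(-1,0)$. On any such real oval, $d\OMEGA$ from \eqref{eq:omega} is purely imaginary and, combined with $\OMEGA\circ\tau=-\OMEGA$ at the $\tau$-fixed points $\lambda=\pm 1$, the real part of $\OMEGA$ vanishes identically on the oval. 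It therefore suffices to check that $\mathrm{Im}(\OMEGA)$ vanishes along the oval only at $\lambda=\pm 1$ and the finite branch points of $\curve$. This is the main obstacle of the proof; I would resolve it by locating the sign changes of the integrand $2\JE'-\jq\JK'(\lambda+\lambda^{-1})$ via Proposition~\ref{th:alpha_less_beta} and using a piecewise monotonicity argument for $\mathrm{Im}(\OMEGA)$ starting from its known zeros at the $\tau$-fixed points $\lambda=\pm 1$ and at the branch points $\lambda=\jq,\jq^{-1}$. Since branch points are excluded from the definition of double point and $\lambda=\pm 1\in\bbS^1$, this rules out any double point over $\Cstar\setminus\bbS^1$.
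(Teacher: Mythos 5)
Your route is essentially the paper's: at a double point both $\mu_1,\mu_2$ are unimodular, and since $\ln\mu_j=\pi\mi\,(x_j\NU+p_{j0}\OMEGA)$ with $x_j$ real, $p_{j0}\in\bbZ$ and the coefficient matrix nondegenerate (else the singular parts of $d\ln\mu_1,d\ln\mu_2$ at $y^\pm$ would be $\bbR$-dependent), this forces $\NU$ and $\OMEGA$ to be real at the double point; the paper then simply records the real loci of $\NU$ and $\OMEGA$ from \eqref{eq:nu} and \eqref{eq:omega} and notes they meet only in $\bbS^1\cup\{\jq,\jq^{-1}\}$. Your additional steps (reality of $x_j$ via $\eta$, the nondegeneracy argument, the separate genus-zero computation instead of the paper's "$\jq=\pm1$" degeneration) correctly fill in what the paper leaves implicit, and the monotonicity argument you sketch for $\mathrm{Im}(\OMEGA)$ --- exactly one zero of $2\JE'-\jq\JK'(\lambda+\lambda^{-1})$ in each of $(\jq,1)$ and $(1,\jq^{-1})$ (by monotonicity of $\lambda+\lambda^{-1}$), together with $\int_{\jq}^{1}d\OMEGA=0$ from the proof of Proposition~\ref{th:alpha_less_beta} and reality of $\OMEGA$ on $\bbS^1$ --- is precisely what substantiates the paper's one-line assertion, so that step is completable as proposed.

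One concrete gap: for $\jq\in(0,1)$ the real locus of $\NU$ over real $\lambda$ is not the single interval $[\jq,\jq^{-1}]$; since $(\lambda-\jq)(\lambda^{-1}-\jq)>0$ also for $\lambda<0$, it is $[\jq,\jq^{-1}]\cup\bbR_-$ (the paper's proof lists $\bbS^1\cup[\jq,\jq^{-1}]\cup\bbR_-$), so as written you do not exclude a double point over negative real $\lambda$ when $\jq>0$ (you did include the analogous component $\bbR_+$ for $\jq<0$). The repair is immediate by your own method: on $\bbR_-$ with $\jq>0$ one has $\lambda+\lambda^{-1}\le-2$, so the numerator of $d\OMEGA$ is strictly positive, $d\OMEGA$ is purely imaginary and nonvanishing, and $\mathrm{Im}(\OMEGA)$ is strictly monotone along this component, vanishing only at $\lambda=-1\in\bbS^1$. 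Two harmless slips: in the genus-zero display both terms should carry the factor $r^{1/2}-r^{-1/2}$, i.e. $\mathrm{Re}\ln\mu_j=2\pi(r^{1/2}-r^{-1/2})\,\mathrm{Re}(\beta_j e^{\mi\phi/2})$, which still forces $\beta_1,\beta_2$ to be $\bbR$-parallel for $r\ne1$; and the $\tau$-fixed-point argument only gives $\OMEGA(\pm1)\in\bbZ$ (an integer constant for $\mathrm{Re}\,\OMEGA$ on each real component), not $\OMEGA(1)=0$, which is immaterial since only $\mathrm{Im}\,\OMEGA$ enters the unimodularity condition.
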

\begin{proof}
At a double point both logarithms $\mu_1$ and $\mu_2$ are
unimodular. Therefore it suffices to prove that the set of all
$\lambda\in\Cstar$, where $\mu_1$ and $\mu_2$ are unimodular is the set $\bbS^1\cup\{\jq,\,\jq^{-1}\}$. This set coincides with the
subset of $(\lambda,\,\nu)\in\curve$, such that $\nu$ and $\omega$ are real. Due to \eqref{eq:nu} and \eqref{eq:omega} we have for
$\jq\in(0,1]$: $\nu\in\bbR$ if and only if $\lambda\in\bbS^1\cup[\jq,\,\jq^{-1}]\cup\bbR_-$,
and $\lambda\in\bbR$ and $\omega\in\bbR$ if and only if $\lambda\in[0,\,\jq]\cup[\jq^{-1},\,\infty)$.
For $\jq\in[-1,\,0)$: $\nu\in\bbR$ if and only if
$\lambda\in\bbS^1\cup[\jq^{-1},\,\jq]\cup\bbR_+$; $\lambda\in\bbR$ and $\omega\in\bbR$ if and only if $\lambda\in(-\infty,\,\jq^{-1}]\cup[\jq,\,0]$.
With $\jq=\pm1$ we cover the genus zero case.
\end{proof}
%

%%%%%%%%%%%%%%%%%%%%%%%%%%%%%%%%%%%%%%%%%%%%%%%%%%%%%%%%%%%%%%%%%%%%%%%%%%%%%%
%%%%%%%%%%%%%%%%%%%%%%%%%%%%%%%%%%%%%%%%%%%%%%%%%%%%%%%%%%%%%%%%%%%%%%%%%%%%%%
%%%%%%%%%%%%%%%%%%%%%%%%%%%%%%%%%%%%%%%%%%%%%%%%%%%%%%%%%%%%%%%%%%%%%%%%%%%%%%

\section{Equivariant extended frames}
\label{sec:frame}

To obtain a more geometric description of the above deformation we compute the frames of the surfaces. We do this in a slightly broader context by studying equivariant maps  $\bbC^2 \to \matSL{2}{\bbC}$ with complex mean curvature as in~\cite{DorKP}. This generalized
framework for complex equivariant surfaces is then applicable to not
only \cmc immersions into 3-dimensional space forms , but also
pseudospherical surfaces in $\mathbb{R}^3$, \cmc surfaces in
Minkowski space, and other integrable surfaces in many others spaces by
performing an appropriate reduction.

Let $\Sigma\subset\bbC^2$ be an open and simply-connected domain with complex
coordinates $(z,\,w)$, and define $\ast$ on $1$-forms on $\Sigma$ by $\ast dz = \mi dz$, $\ast dw
= -\mi dw$ extended complex linearly.

Let $\matsl{2}{\bbC}=\Diagonal\oplus\OffDiagonal$ be a Cartan
decomposition. For a $1$-form $\alpha$ on $\Sigma$, we will write
$\alpha = \alpha_\Diagonal + \alpha_\OffDiagonal$ where with
$\alpha_\Diagonal\in\Diagonal$ and
$\alpha_\OffDiagonal\in\OffDiagonal$.
If we equip the matrix Lie algebra $\matsl{2}{\bbC}$ with a multiple of the Killing form $\DOTC{X}{Y} \coloneq -\half\trace(XY)$, then we may choose a basis $e_0,\,e_1,\,e_2$ for $\matsl{2}{\bbC}$ satisfying
$e_0\in\Diagonal$, $\DOTC{e_0}{e_0}=1$ and $[e_0,\,e_1] = e_2$, $[e_1,\,e_2] = e_0$ and $[e_2,\,e_0] = e_1$,
and set
$$
    \epsilon_1 = \half(e_1-\mi e_2) \spacecomma\interspace
    \epsilon_2 = \half(e_1+\mi e_2)\,.
$$
Let $f:\Sigma \to \matSL{2}{\bbC}$ be a conformal immersion such that for the smooth function $v:\Sigma\to\bbC^\ast$ we have $\DOTC{f_z}{f_z} = 0 = \DOTC{f_w}{f_w}$ and $2\DOTC{f_z}{f_w} = v^2$. The remaining invariants of $f$ are smooth functions $H,\,Q,\,R:\Sigma\to\bbC$ such that $2\DOTC{f_{zz}}{N} = Q$, $2\DOTC{f_{zw}}{N} = v^2H$ and $2\DOTC{f_{ww}}{N} = R$, and the normal map is $N = -\mi v^{-2}[f_z,\,f_w]$. Further, there exists a unique pair of maps $F,\,G:\Sigma\to\matSL{2}{\bbC}$ which frame $f$ in the sense that
\begin{equation} \label{eq:framing}
    f = FG^{-1}\spacecomma\interspace f_z =
    vF\epsilon_1G^{-1}\spacecomma\interspace f_w =
    vF\epsilon_2G^{-1}\spacecomma\interspace N = Fe_0G^{-1}\spaceperiod
\end{equation}
Then $\alpha_{+1}  \coloneq  F^{-1}dF$ and $\alpha_{-1} \coloneq
G^{-1}dG$, are smooth $1$-forms on $\Sigma$ with values in
$\matsl{2}{\bbC}$ given by
\begin{equation} \label{eq:maurer-cartan-general}
\begin{aligned}
    \alpha_{\sigma} &= A_{\sigma} dz +
    B_{\sigma} dw \quad \mbox{where } \sigma\in\{\pm 1\} \quad \mbox{and}\\
    2\mi A_{\sigma} &\coloneq  v^{-1}v_z e_0 + v (H+\mi\sigma) \ep_1 -
    v^{-1}Q\ep_2
    \spacecomma\\
    2\mi B_{\sigma} &\coloneq -v^{-1}v_w e_0 + v^{-1}R \ep_1 -
    v(H-\mi\sigma)\ep_2 \spaceperiod
\end{aligned}
\end{equation}
Integrability $2 d\alpha_\sigma+[\alpha_\sigma\wedge\alpha_\sigma]=0$ is equivalent to the Gauss-Codazzi equations
\begin{subequations} \label{eq:gauss-codazzi}
\begin{gather}
\label{eq:gauss}
    {(\log v)}_{zw} - v^2(H^2+\sigma^{-2}) + v^{-2}QR=0\spacecomma\\
    \label{eq:codazzi1} v^2 H_w = R_z\spacecomma\interspace v^2
    H_z=Q_w\spaceperiod
\end{gather}
\end{subequations}
Conversely, given smooth functions $v,\,H,\,Q,\,R:\Sigma\to\bbC$
with $v$ non-vanishing, satisfying the integrability
equations~\eqref{eq:gauss-codazzi}, a conformal immersion $f$
with these invariants can be recovered by integrating \eqref{eq:maurer-cartan-general} to obtain maps
$F,\,G:\Sigma\to\matSL{2}{\bbC}$ which are unique up to transforms
$(F,\,G)\mapsto(AF,\,BG)$, $A,\,B\in\matSL{2}{\bbC}$. By the form of
$\alpha_\sigma$, the maps $F$ and $G$ frame $f \coloneq FG^{-1}$ in
the sense of~\eqref{eq:framing}, and $f$ has the specified
invariants. The map $f$ is unique up to $f\mapsto
AFB^{-1}$, $A,\,B\in\matSL{2}{\bbC}$.
%
%%%%%%%%%%%%%%%%%%%%%%%%%%%%%%%%%%%%%
%
\subsection{Equivariance}
Let us first introduce new coordinates $(x,\,y)$ on $\Sigma$ such that
\begin{equation} \label{eq:xyzw}
    z=x + \mi\,y\, ,\qquad w = x -\mi\,y\spaceperiod
\end{equation}
We say a smooth map $F=F(x,\,y):\bbC^2\to\matSL{2}{\bbC}$ is
$\bbC$-equivariant, or simply equivariant if it is of the form
$F(x,\,y) = \exp(x\,A)\,P(y)$ for some $A\in\matsl{2}{\bbC}$ and smooth map $P:\bbC\to\matSL{2}{\bbC}$.
What characterizes an equivariant map is that
$F^{-1}dF = P^{-1}A\,P\,dx + P^{-1}P'\,dy$ is $x$-independent~\cite{BurK}.
We first compute the logarithmic derivative of an equivariant map, and then integrate this to obtain the equivariant map itself.
\begin{theorem} \label{thm:equivariant-frame}
{\rm{(1)}} The $x$-independent solution
$\alpha = \alpha_\mathfrak{k} + \alpha_\mathfrak{p}$ to
\begin{equation} \label{eq:harmonic-map-equations}
\begin{split}
    d\alpha_\mathfrak{k}&+
        \half[\alpha_\mathfrak{p}\wedge\alpha_\mathfrak{p}]
        = 0 \spacecomma\\
    d\alpha_\mathfrak{p}&+
        [\alpha_\mathfrak{k}\wedge\alpha_\mathfrak{p}]
        = 0 \spacecomma\\
    d(\ast\alpha_\mathfrak{p})&+
        [\alpha_\mathfrak{k}\wedge(\ast\alpha_\mathfrak{p})]
        = 0
\end{split}
\end{equation}
are $\alpha = g^{-1}\Omega\,g + g^{-1}dg$, where
$g$ is a smooth $x$-independent map from $\Sigma$ into the Lie group
of $\Diagonal$, and $\Omega = \Omega_1 \,dz + \Omega_2 \,dw$, where
\begin{equation} \label{eq:equivariant-harmonic-maurer-cartan}
\begin{aligned}
    2\mi\Omega_1 &\coloneq -\tfrac{\mi}{2} \Metric^{-1}\Metric' e_0
    + a_2 \Metric \epsilon_1 - b_1 \Metric^{-1} \epsilon_2 \spacecomma\\
    2\mi\Omega_2 &\coloneq -\tfrac{\mi}{2} \Metric^{-1}\Metric' e_0
    + b_2 \Metric^{-1}\epsilon_1 - a_1 \Metric\epsilon_2 \spacecomma
\end{aligned}
\end{equation}
with $a_1,\,a_2,\,b_1,\,b_2,\,\Vconst\in\bbC$, and
$\Metric=\Metric(y):\bbC\to\bbC$ is defined by
\begin{equation} \label{eq:metric}
\begin{aligned}
    &{(\Metric^{-1}\Metric')}^2 + (a_1 \Metric + b_1 \Metric^{-1})(a_2
    \Metric + b_2 \Metric^{-1}) = 4\Vconst^2 \spacecomma\\
    &{(\Metric^{-1}\Metric')}' + a_1 a_2 \Metric^2 - b_1 b_2
    \Metric^{-2} = 0 \spaceperiod
\end{aligned}
\end{equation}
{\rm{(2)}} A solution to $dF=F\Omega$ is given by
\begin{equation}
\begin{gathered}
\label{eq:frame-parts}
\FEQ(x,\,y)  \coloneq
 \exp\left((x\,\Vconst + \half\,\AngleZero) e_0\right)
 \exp\left( \half \AngleOne e_1 \right)
 \exp\left( \half \AngleTwo e_0 \right) \spacecomma \\ \mbox{ {\rm{with}} }
\interspace \AngleZero  \coloneq 2\mi\Vconst \int_0^y
(J_1(t)-J_2(t))dt\spacecomma \\ \AngleOne  \coloneq
\arccos(-\half\Vconst^{-1}\Metric^{-1}\Metric') \spacecomma \interspace
\AngleTwo  \coloneq  \tfrac{\mi}{2} \log (X_1^{-1}X_2) \spacecomma \\
X_1  \coloneq  a_1 \Metric + b_1 \Metric^{-1} \spacecomma \interspace
X_2  \coloneq  a_2 \Metric + b_2 \Metric^{-1} \spacecomma
J_1 \coloneq b_1\Metric^{-1}X_1^{-1} \spacecomma \interspace
J_2 \coloneq b_2\Metric^{-1}X_2^{-1} \spaceperiod
\end{gathered}
\end{equation}
\end{theorem}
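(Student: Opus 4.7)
The $x$-independence of $\alpha$ forces $\alpha = A(y)\,dz + B(y)\,dw$ for maps $A,B\colon\bbC\to\matsl{2}{\bbC}$, and the Cartan decomposition (with $\Diagonal$ one-dimensional spanned by $e_0$) yields $A_\Diagonal = \alpha_A(y)\,e_0$, $B_\Diagonal = \alpha_B(y)\,e_0$, while $A_\OffDiagonal, B_\OffDiagonal$ are spanned by $\epsilon_1,\epsilon_2$. Using $dy\wedge dz = \tfrac{1}{2\mi}dz\wedge dw = dy\wedge dw$, the three equations of~\eqref{eq:harmonic-map-equations} collapse to a finite ODE system in $y$ for these six coefficient functions. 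The residual gauge freedom $\alpha\mapsto g^{-1}\alpha g + g^{-1}dg$ by a $y$-dependent map $g$ into $\exp(\Diagonal)$ preserves $x$-independence; choosing $g$ so that the $\Diagonal$-part of $\alpha$ is normalized to $g^{-1}dg$ introduces the single function $\Metric(y)$, and $\Ad(g^{-1})$ rescales the $\epsilon_1,\epsilon_2$ coefficients by $\Metric$ and $\Metric^{-1}$, producing the constants $a_1,a_2,b_1,b_2$. What remains of the harmonic map equations collapses to the two equations~\eqref{eq:metric} on $\Metric$, with $\Vconst$ the constant of integration of the first (which is the first integral of the second).

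\textbf{Part (2).} I would verify directly that $F$ in~\eqref{eq:frame-parts} satisfies $F^{-1}dF = \Omega$. Write $F = F_0 F_1 F_2$ with $F_0 = \exp((x\Vconst + \tfrac{1}{2}\AngleZero)e_0)$, $F_1 = \exp(\tfrac{1}{2}\AngleOne e_1)$, $F_2 = \exp(\tfrac{1}{2}\AngleTwo e_0)$. Only $F_0$ depends on $x$ and $e_0$ commutes with $F_0$, so
\begin{equation*}
F^{-1}\del_{\,x}F = \Vconst\,\Ad(F_2^{-1})\Ad(F_1^{-1})\,e_0.
\end{equation*}
Using $\ad(e_1)^2 e_0 = -e_0$ and $\ad(e_0)^2 e_2 = -e_2$, the inner $\Ad$ evaluates to $\cos(\tfrac{\AngleOne}{2})\,e_0 + \sin(\tfrac{\AngleOne}{2})\,e_2$ and the outer rotates $e_2$ within $\Span(e_1,e_2)$. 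Substituting $\cos\AngleOne = -\tfrac{1}{2}\Vconst^{-1}\Metric^{-1}\Metric'$ and $e^{-2\mi\AngleTwo} = X_1^{-1}X_2$ from~\eqref{eq:frame-parts} and comparing with $\tfrac{1}{2}(\Omega_1 + \Omega_2)$ (the $dx$-coefficient of $\Omega$ read from~\eqref{eq:equivariant-harmonic-maurer-cartan} via $dz = dx + \mi dy$, $dw = dx - \mi dy$) yields the required match. The $\del_{\,y}$-component is handled in the same spirit: differentiating the definitions of $\AngleZero,\AngleOne,\AngleTwo$ yields $\AngleZero' = 2\mi\Vconst(J_1-J_2)$ and expressions for $\AngleOne', \AngleTwo'$ in terms of $\Metric,\Metric',\Metric'',X_k,J_k$; the ODEs~\eqref{eq:metric} then rearrange these into $\tfrac{1}{2\mi}(\Omega_1-\Omega_2)$, the $dy$-coefficient of $\Omega$.

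The main obstacle is the detailed bookkeeping in Part~(2): tracking the iterated $\Ad$-actions, establishing the half-angle identities relating $\AngleOne,\AngleTwo$ to the algebraic expressions $X_k,J_k$, and using both equations in~\eqref{eq:metric} together must conspire to reproduce $\Omega$ exactly. Part~(1) is conceptually straightforward but requires care in separating the gauge artifacts from the intrinsic data $a_1,a_2,b_1,b_2,\Vconst$ parameterizing the solution.
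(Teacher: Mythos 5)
Your outline follows the paper's own route: in (1) you gauge away the $dy$-component of $\alpha_\Diagonal$ by an $x$-independent diagonal gauge, introduce $\Metric$ from the remaining diagonal coefficient, and reduce \eqref{eq:harmonic-map-equations} to the ODEs \eqref{eq:metric}; in (2) you verify $dF=F\Omega$ directly by checking the $x$- and $y$-derivatives separately, exactly as the paper checks $\Vconst P^{-1}e_0P=\Omega_x$ and $P^{-1}P'=\Omega_y$ with $P=\exp(\half\AngleZero e_0)\exp(\half\AngleOne e_1)\exp(\half\AngleTwo e_0)$.

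Three of your stated intermediate formulas would, however, derail the verification if used literally. First, with $z=x+\mi y$, $w=x-\mi y$ the coefficients of $\Omega=\Omega_1\,dz+\Omega_2\,dw$ are $\Omega_x=\Omega_1+\Omega_2$ and $\Omega_y=\mi(\Omega_1-\Omega_2)$, not $\tfrac12(\Omega_1+\Omega_2)$ and $\tfrac{1}{2\mi}(\Omega_1-\Omega_2)$. Second, the conjugation computation must yield full angles: the identity that closes the argument is $P^{-1}e_0P=\cos\AngleOne\,e_0+\mi e^{-\mi\AngleTwo}\sin\AngleOne\,\ep_1-\mi e^{\mi\AngleTwo}\sin\AngleOne\,\ep_2$, so that the defining relations $\cos\AngleOne=-\half\Vconst^{-1}\Metric^{-1}\Metric'$ and $e^{\mi\AngleTwo}=X_1^{1/2}X_2^{-1/2}$ can be substituted; your half-angle expression $\cos(\tfrac{\AngleOne}{2})e_0+\sin(\tfrac{\AngleOne}{2})e_2$ (coming from taking the bracket relations with $\ad(e_1)^2e_0=-e_0$ at face value) would leave $\cos(\AngleOne/2)$ where the definition of $\AngleOne$ supplies $\cos\AngleOne$, and the match with $\Omega_x$ fails; the normalization implicit in the paper (consistent with $\bbi=\exp((\pi/2)e_0)$, so that $\Ad(\exp(\half\AngleTwo e_0))$ produces the phases $e^{\pm\mi\AngleTwo}$ on $\ep_1,\ep_2$) must be used throughout. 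Third, in (1) the gauge cannot "produce the constants" $a_1,a_2,b_1,b_2$: after normalizing, these are a priori functions of $y$ (the coefficients of $\Metric^{\pm1}\ep_j$), and it is the equations \eqref{eq:harmonic-map-equations} themselves that force $a_1'=b_1'=a_2'=b_2'=0$ together with the second equation of \eqref{eq:metric}; the first equation of \eqref{eq:metric} then enters only as its first integral, defining $\Vconst$, as you correctly note. With these corrections your plan coincides with the paper's proof.
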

\begin{remark} The second equation in~\eqref{eq:metric} is the derivative of the first, and eliminates spurious constant solutions which would appear if only the first equation were present.
\end{remark}
\begin{proof}
(1) Write $\alpha_\Diagonal = ae_0\, dx + be_0\, dy$ for some
functions $a=a(y)$ and $b=b(y)$, and let $g = \exp(-(\int^y b(t)dt)
e_0)$. Then $dg\, g^{-1} = -b e_0 dy$, so the form $g^{-1}\alpha g
+g^{-1}dg = a e_0 dx + g^{-1}\alpha_\OffDiagonal g$ is a multiple of
$dx = \tfrac{1}{2}(dz+dw)$. Then $\alpha_\Diagonal =
\tfrac{\mi}{4}f(dz+dw)e_0$ for some function $f=f(y)$. Let $v =
\exp(\int^y f(t)dt)$, so $\alpha_\Diagonal = \tfrac{\mi}{4}v^{-1}v'
e_0(dz+dw)$, where prime is differentiation with respect to $y$.
Then there exist functions $a_1,\,b_1,\,a_2,\,b_2$ of $y$ such that
$2\alpha_\OffDiagonal =
 (a_2 v \epsilon_1 - b_1 v^{-1}\epsilon_2)\,dz
+ (b_2 v \epsilon_1 - a_1 v^{-1}\epsilon_2)\,dw$.
By a calculation, ~\eqref{eq:harmonic-map-equations} is equivalent
to the second equation of~\eqref{eq:metric}, along with
$a_1'=b_1'=a_2'=b_2'=0$. Hence $\alpha$ is of the required form, and
concludes the proof of (1).

To prove (2), we have $\Omega = \Omega_x dx + \Omega_y dy$
in~\eqref{eq:equivariant-harmonic-maurer-cartan}, where $2\mi\Omega_x
\coloneq -\mi\Metric^{-1}\Metric' e_0 +(a_2 \Metric+b_2
\Metric^{-1})\ep_1 -(a_1 \Metric + b_1 \Metric^{-1})\ep_2$ and
$2\Omega_y \coloneq (a_2 \Metric - b_2 \Metric^{-1})\ep_1 + (a_1
\Metric-b_1 \Metric^{-1})\ep_2$. To compute $F^{-1}dF$, we will
first show that $P(x) \coloneq \exp\left( \half \AngleZero e_0
\right) \exp\left( \half \AngleOne e_1 \right) \exp\left( \half
\AngleTwo e_0 \right)$ satisfies
\begin{subequations}
\label{eq:finalframe-P}
\begin{align}
\label{eq:finalframe-P1a-X}
    \Vconst P^{-1}e_0P &= \Omega_x
\spacecomma
\\
\label{eq:finalframe-P2a-X}
    P^{-1}P' &= \Omega_y \spaceperiod
\end{align}
\end{subequations}
We have $P^{-1}e_0P =
  \cos\AngleOne e_0 + \mi e^{-\mi\AngleTwo} \sin\AngleOne\ep_1
  - \mi e^{\mi\AngleTwo}\sin\AngleOne \ep_2$ and
$\cos\AngleOne = -\half\Vconst^{-1}\Metric^{-1}\Metric'$,
$\sin\AngleOne = -\half\Vconst^{-1}X_1^{\frac{1}{2}}X_2^{\frac{1}{2}}$,
$e^{\mi\AngleTwo} = X_1^{\frac{1}{2}}X_2^{-\frac{1}{2}}$ and thus
$2\mi\Vconst P^{-1}e_0P  =\mi\Metric^{-1}\Metric'e_0 - X_2 \ep_1 +
X_1 \ep_2 = -2(\Omega_1-\Omega_2) = 2\mi\Omega_x$,
proving~\eqref{eq:finalframe-P1a-X}.

Now $2P^{-1} P' = (\AngleZero'\cos\AngleOne + \AngleTwo')e_0
  +e^{-\mi\AngleTwo}(\mi\AngleZero'\sin\AngleOne + \AngleOne')\ep_1
  -e^{\mi\AngleTwo}(\mi\AngleZero'\sin\AngleOne - \AngleOne')\ep_2$.
With $\cc_5 \coloneq a_1 b_2 - a_2 b_1$ we have
\begin{equation} \label{eq:angle-deriv}
    \AngleZero' = -\frac{2\mi\Vconst \cc_5}{X_1X_2}
    \spacecomma\quad \AngleOne' = \frac{a_1 a_2 v^2 - b_1 b_2
    v^{-2}}{\sqrt{X_1}\sqrt{X_2}} \spacecomma\quad \AngleTwo' =
    -\frac{\mi \cc_5 v'}{v X_1 X_2} \spaceperiod
\end{equation}
Hence $2P^{-1} P' = (a_2 \Metric - b_2 \Metric^{-1})\ep_1 + (a_1
\Metric - b_1 \Metric^{-1})\ep_2 = 2\Omega_y$,
proving~\eqref{eq:finalframe-P2a-X}.

Thus $F^{-1}dF = F^{-1} F_x dx +
F^{-1} F_y dy = \Vconst P^{-1}e_0 P dx + P^{-1} P' dy =
\Omega_x dx + \Omega_y dy = \Omega$.
\end{proof}
\begin{example} \label{sec:vacuum}
The vacuum is the case in which the function $\Metric$
in~\eqref{eq:metric} is constant. By the second equation
in~\eqref{eq:metric}, $\Metric \equiv v_0$ with $v_0^4 \coloneq (b_1
b_2)/(a_1 a_2)$. The potential for the vacuum is
$\Omega_{\mathrm{V}} \coloneq \Omega_z dz + \Omega_w dw$, where
$2\mi\Omega_z \coloneq a_2 v_0\epsilon_1 - b_1 v_0^{-1} \epsilon_2$
and $2\mi\Omega_w \coloneq b_2 v_0^{-1}\epsilon_1 - a_1
v_0\epsilon_2$. Since $[\Omega_z,\,\Omega_w]=0$, the extended frame
of the vacuum is $F = \exp(\Omega_z z + \Omega_w w)$ with
eigenvalues $\exp\bigl(\pm\tfrac{\mi}{2}(rz+sw)\bigr)$ where
$r,\,s\in\bbC$ are determined by $r^2=a_2b_1$, $s^2=a_1b_2$, $rs =
a_1a_2v_0^2$. This differs from the frame in
Theorem~\ref{thm:equivariant-frame} by left multiplication by a
$z$-and $w$-independent element of $\matSL{2}{\bbC}$.
\end{example}

%
%----------------------------------------------------
\begin{figure}[t]
\centering
\includegraphics[width=8.15cm]{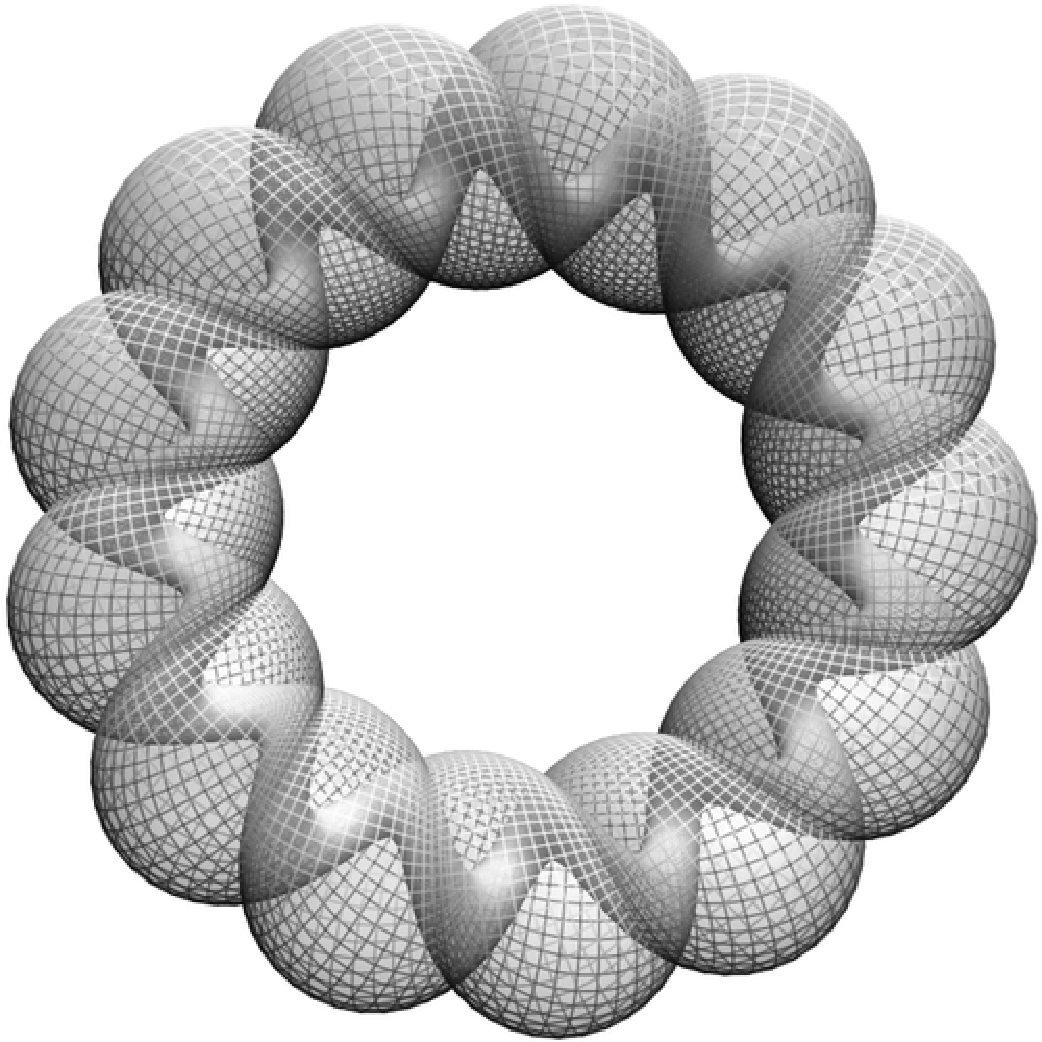}
\includegraphics[width=8.15cm]{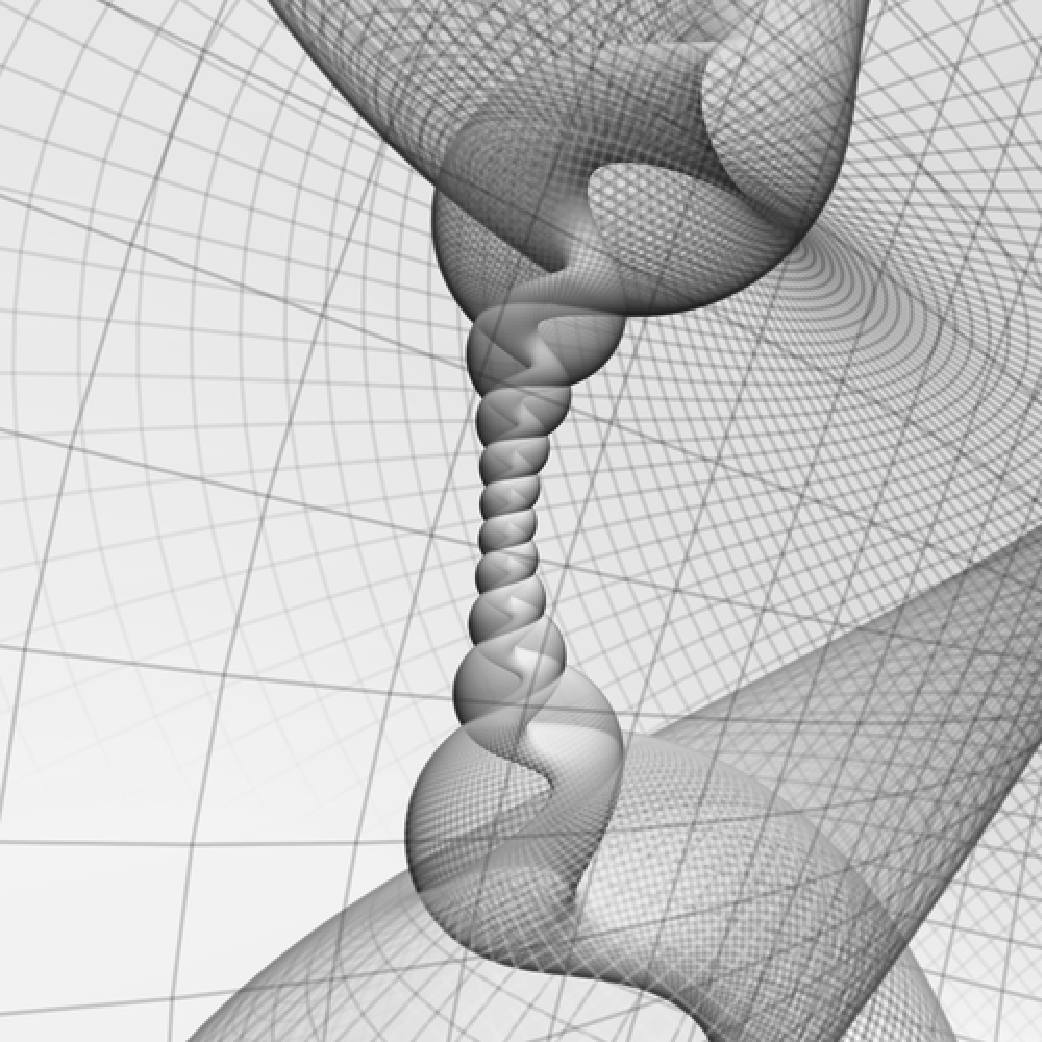}
\caption{ \label{fig:13lobe} Two views of an equivariant \cmc
$(2,\,1,\,13)$ torus in $\bbS^3$. }
\end{figure}
To describe equivariant \cmc immersions into $\bbS^3 = \matSU{2}{}$ we
specialize the above formulas.
We first make the reduction in \eqref{eq:xyzw} that $w = \bar{z}$,
which is equivalent to $x,\,y \in \bbR$. Given an extended frame
$F_\lambda :\bbR^2 \to \mathrm{SU}_2$ and two distinct \sym points
$\lambda_1,\,\lambda_2 \in \bbS^1$, then
\begin{equation} \label{eq:sym_s3}
    f \coloneq F_{\lambda_1}F^{-1}_{\lambda_2}
\end{equation}
is a conformal immersion $f:\bbR^2 \to \bbS^3$ with
constant mean curvature $H$ given in \eqref{eq:cmc-I-II}.
For the translation $\tau_\gamma: \bbC \to \bbC,\,p\mapsto p + \gamma$
we write $\tau_\gamma^* f = f \circ \tau_\gamma$.
If $F_\lambda^{-1}dF_\lambda$ is periodic with period $\gamma$,
then we define the monodromy $M_\lambda$ of $F_\lambda$ with
respect to $\gamma$ as
\begin{equation} \label{eq:monodromy}
    M_\lambda(\gamma) = \tau_\gamma^*F_\lambda \, F^{-1}_\lambda\,.
\end{equation}
The closing condition $\tau_\gamma^* f = f$ with respect to a
translation is equivalent to
\begin{equation} \label{eq:closing-s3}
    M_{\lambda_1}(\gamma)= M_{\lambda_2}(\gamma) = \id \OR
    M_{\lambda_1}(\gamma)= M_{\lambda_2}(\gamma) = -\id\spaceperiod
\end{equation}
If $\mu_1$ and $\mu_2 = \mu_1^{-1}$ denote the eigenvalues of the monodromy, then \eqref{eq:closing-s3} reads $\mu_j(\lambda_k) = \pm 1$, or equivalently that there exist four integers $p_{jk} \in \bbZ$ such that for $j,\,k=1,\,2$ we have
\begin{equation}\label{eq:pjk}
  \ln \mu_j(\lambda_k) = \mi\pi\,p_{jk} \quad \mbox{ and } \quad
   p_{j1}-p_{j2}\in 2\bbZ \spaceperiod
\end{equation}
The torus is embedded if and only if the \emph{winding numbers} $p_{jk}$ all have absolute value equal to one.

%%%%%%%%%%%%%%%%%%%%%%%%%%%%%%%%%%%%%%%%%%%%%%%%%%%%%%%%%%%%%%%%%
%%%%%%%%%%%%%%%%%%%%%%%%%%%%%%%%%%%%%%%%%%%%%%%%%%%%%%%%%%%%%%%%%
%%%%%%%%%%%%%%%%%%%%%%%%%%%%%%%%%%%%%%%%%%%%%%%%%%%%%%%%%%%%%%%%%

\subsection{Flat cmc tori in $\bbS^3$.}

Integrating $\Omega$~\eqref{eq:equivariant-harmonic-maurer-cartan} with
$(a_1,\,b_1,\,a_2,\,b_2)  \coloneq (\lambda,\,1,\,\lambda^{-1},\,1),\,
v \equiv 1$ yields an extended frame $F_\lambda$
of a flat \cmc surface in $\bbS^3$.
By Theorem~\ref{thm:equivariant-frame} and Example \ref{sec:vacuum},
the extended frame of any flat \cmc surface (up to isometry and
conformal change of coordinates) is
\begin{equation} \label{eq:flat-frame}
    F_\lambda = \exp \bigl(
        \pi\,\mi \left( (z \lambda^{-1} +\ol{z})\epsilon_1 - (z+\ol{z}\lambda) \epsilon_2 \right) \bigr)\spaceperiod
\end{equation}
Then also (up to isometry and conformal change of coordinates)
any flat \cmc immersion $f:\bbR^2 \to \bbS^3$ is of the form
$f = F_{\lambda_1}F_{\lambda_2}^{-1}$ with extended frame
$F_\lambda$ as in \eqref{eq:flat-frame} and two distinct \sym points
$\lambda_1,\,\lambda_2 \in \bbS^1$.
The eigenvalues of $F_\lambda$ are $\mu^{\pm 1}$ with
\begin{equation} \label{eq:eigenvalues}
    \mu(z,\,\lambda) = \exp( \pi\,\mi (z\lambda^{-\frac{1}{2}}+
    \ol{z}\lambda^{\frac{1}{2}}))\,.
\end{equation}
Define
\begin{equation}
    \DOTR{x}{y} \coloneq \tfrac{1}{2}\,(x\,\ol y + y\,\ol x ) = \Re(x\ol y) \spaceperiod
\end{equation}
Then by equation \eqref{eq:pjk} the immersion $f$
factors through the lattice $\Gamma = \gamma_1\,\bbZ + \gamma_2\,\bbZ$
if and only if
\begin{equation} \label{eq:closing-dot}
    \DOTR{\gamma_j}{\lambda_k^{1/2}}\in\bbZ \AND
    \DOTR{\gamma_j}{\lambda_1^{1/2} - \lambda_2^{1/2}}\in 2\bbZ\,.
\end{equation}
The dual of a lattice $\Gamma$ in $\bbC$ is the lattice
$\Gamma^\ast = \{\kappa\in\bbC \mid \DOTR{\kappa}{\gamma}\in\bbZ
\text{ for all }\gamma\in\Gamma\}$.
\begin{proposition} \label{prop:flat-torus}
\mbox{}

{\rm{(i)}} A flat \cmc immersion $f = F_{\lambda_1}F^{-1}_{\lambda_2}$
with extended frame $F_\lambda$~\eqref{eq:flat-frame} is closed with
respect to a lattice $\Gamma\subset\bbC$ if and only if
$\,\Gamma \subseteq \Lambda^\ast$, where
$\Lambda \coloneq \kappa_1\bbZ+\kappa_2\bbZ$ and
$$
    \kappa_1\coloneq \half(\sql{1}+\sql{2}) \AND \kappa_2\coloneq
\half(\sql{1}-\sql{2})\,.
$$
{\rm{(ii)}} The torus is rectangular and embedded
if and only if $\,\Gamma=\Lambda^\ast$.

{\rm{(iii)}} Every flat \cmc torus is isogenic to a rectangular
embedded flat \cmc torus.
\end{proposition}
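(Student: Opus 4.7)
The plan is to prove (i) by a direct identification of the closing condition \eqref{eq:closing-dot} with dual-lattice membership, then to derive (ii) from (i) together with a short perpendicularity computation, and finally to obtain (iii) from (i) and (ii) by an elementary covering-space argument.

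For (i), I rewrite \eqref{eq:closing-dot}: the two requirements $\DOTR{\gamma}{\sql{k}}\in\bbZ$ for $k=1,2$ together with $\DOTR{\gamma}{\sql{1}-\sql{2}}\in 2\bbZ$ say precisely that the two integers $m_k:=\DOTR{\gamma}{\sql{k}}$ have the same parity, equivalently that $\DOTR{\gamma}{\sql{1}\pm\sql{2}}\in 2\bbZ$. Dividing by $2$, this becomes $\DOTR{\gamma}{\kappa_j}\in\bbZ$ for $j=1,2$, i.e.\ $\gamma\in\Lambda^\ast$. The argument is reversible, so the closing condition on $\Gamma$ is equivalent to $\Gamma\subseteq\Lambda^\ast$.

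For (ii), I first verify that $\Lambda$ is rectangular by showing $\kappa_1\perp\kappa_2$. Expanding $\kappa_1\overline{\kappa_2}=\tfrac14(\sql{1}+\sql{2})(\sqli{1}-\sqli{2})$ and using $|\sql{k}|=1$, the diagonal terms $\sql{k}\sqli{k}=1$ cancel and leave $\tfrac14(\sql{2}\sqli{1}-\sql{1}\sqli{2})$, the difference of a complex number and its conjugate, hence purely imaginary. Therefore $\DOTR{\kappa_1}{\kappa_2}=\Re(\kappa_1\overline{\kappa_2})=0$, and since the dual of a rectangular lattice is rectangular, $\Lambda^\ast$ is rectangular as well. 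For the embeddedness equivalence: by (i), $\Lambda^\ast$ is the full period lattice of $f$. When $\Gamma=\Lambda^\ast$, the induced map $\bbC/\Lambda^\ast\to\bbS^3$ is a one-to-one immersion---using the explicit formula \eqref{eq:flat-frame} the image is identified with a standard Clifford-type product $\bbS^1\times\bbS^1\subset\bbS^3$ parameterized by the two perpendicular directions $\kappa_1,\kappa_2$, so $f$ embeds. Conversely, if the torus is embedded then its period lattice must be maximal, hence equal to $\Lambda^\ast$.

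For (iii), by (i) $\Gamma\subseteq\Lambda^\ast$ is an inclusion of rank-two lattices in $\bbC$ and therefore of finite index. The canonical projection $\bbC/\Gamma\to\bbC/\Lambda^\ast$ is then a finite-sheeted covering onto the rectangular embedded torus from (ii), which is the required isogeny. The main obstacle is the embeddedness direction of (ii): maximality of the period lattice does not on its own force injectivity of $f$. This is to be checked directly from the explicit form \eqref{eq:flat-frame}, under which the image of $f$ becomes the standard embedding of a flat $2$-torus (a maximal torus of $\mathrm{SU}_2$) in $\bbS^3$ corresponding to the perpendicular directions $\kappa_1,\kappa_2$.
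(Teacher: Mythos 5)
Your parts (i) and (iii), and the computation showing $\DOTR{\kappa_1}{\kappa_2}=0$, are correct and essentially the paper's own argument (the paper simply notes $\sql{k}=\kappa_1\pm\kappa_2$ and $\kappa_1/\kappa_2\in\mi\bbR$). The genuine gap is the one you flag yourself: the implication $\Gamma=\Lambda^\ast\Rightarrow$ embedded is never proved. Asserting that the image of $f$ is a Clifford-type product torus does not suffice, since a multiply wrapped immersion has the same image; what is needed is injectivity of $f$ on $\bbC/\Lambda^\ast$, and that is exactly the step you defer (``to be checked directly from the explicit form''). Your description of the would-be check is also off: the image is an orbit of a maximal torus of $\matSU{2}{}\times\matSU{2}{}$ acting by $P\mapsto FPG^{-1}$, not ``a maximal torus of $\matSU{2}{}$,'' which is a circle. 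The converse direction (embedded $\Rightarrow\Gamma=\Lambda^\ast$) is fine, since by (i) the map $f$ is $\Lambda^\ast$-periodic, so a proper sublattice forces non-injectivity.

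The paper closes this hole by a different route: it invokes the winding-number criterion stated after \eqref{eq:pjk}, namely that the torus is embedded if and only if all $p_{jk}=\DOTR{\gamma_j}{\sql{k}}$ have absolute value one. For the basis of $\Lambda^\ast$ dual to $\kappa_1,\kappa_2$ one gets $(p_{11},p_{12},p_{21},p_{22})=(1,1,1,-1)$, hence embedded; conversely $|p_{jk}|=1$ together with the parity condition in \eqref{eq:closing-dot} forces $\gamma_1,\gamma_2$ to be, up to signs and swapping, the dual basis, so $\Gamma=\Lambda^\ast$. If you prefer your direct route, it can be completed as follows: since in \eqref{eq:flat-frame} the exponent equals $\pi\mi\,(z\lambda^{-1}+\ol z)(\epsilon_1-\lambda\epsilon_2)$, the map $z\mapsto F_\lambda(z)$ is a homomorphism, so $f(z)=f(z')$ is equivalent to $F_{\lambda_1}(z-z')=F_{\lambda_2}(z-z')$; for $\lambda_1\neq\lambda_2$ the matrices $\epsilon_1-\lambda_k\epsilon_2$ have disjoint pairs of eigenlines, so a common value other than $\pm\id$ is impossible, and $F_{\lambda_1}(z-z')=F_{\lambda_2}(z-z')=\pm\id$ with the same sign is, by the computation in (i), exactly $z-z'\in\Lambda^\ast$. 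Supplying either argument would make your proof complete.
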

\begin{proof} (i) Since $\lambda_k^{1/2}=\kappa_1\pm\kappa_2$
the condition \eqref{eq:closing-dot} is equivalent to
$\,\Gamma \subseteq \Lambda^\ast$.

(ii) The torus is embedded if and only if
$\DOTR{\gamma_j}{\lambda_k^{1/2}}\in \{ \pm 1\}$,
and rectangular if and only if $\gamma_1/\gamma_2 \in \mi\,\bbR$.
The corresponding periods $\gamma_1$ and $\gamma_2$ are dual to
$\kappa_1$ and $\kappa_2$ and generate $\Lambda^\ast$.
Clearly $\Lambda$, and consequently $\Lambda^\ast$ are rectangular,
since $\kappa_1/\kappa_2 \in \mi\,\bbR$.

(iii) From (i) we know that a lattice $\Gamma$ of any torus is a sublattice
of $\Lambda^\ast$, which by (ii) is the lattice of the embedded rectangular
torus. Hence there is an isogeny taking $\Lambda^\ast$ to $\Gamma$.
\end{proof}
\begin{proposition} \label{th:clifford}
The lattice of an embedded flat \cmc torus is square if and only if it the mean curvature is zero. Swapping the \sym points does not affect the period lattice of a flat \cmc torus.
\end{proposition}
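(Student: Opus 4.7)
The plan is to read off both statements directly from the explicit description of the lattice given in Proposition~\ref{prop:flat-torus}. Recall from that result that every embedded flat \cmc torus corresponds to the lattice $\Lambda^\ast$, where $\Lambda=\kappa_1\bbZ+\kappa_2\bbZ$ with $\kappa_1=\half(\sql{1}+\sql{2})$, $\kappa_2=\half(\sql{1}-\sql{2})$, and $\Lambda$ (and hence $\Lambda^\ast$) is already known to be rectangular. For a rectangular lattice, being square is equivalent to the two generators having equal length; so the first statement will reduce to a comparison of $|\kappa_1|$ and $|\kappa_2|$.

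First I would compute $|\kappa_1|^2$ and $|\kappa_2|^2$. Using $|\lambda_j|=1$, hence $\overline{\sql{j}}=\sqli{j}$, one quickly obtains
\[
|\kappa_1|^2=\tfrac{1}{2}\bigl(1+\Re\sqrt{\lambda_1/\lambda_2}\bigr),\quad
|\kappa_2|^2=\tfrac{1}{2}\bigl(1-\Re\sqrt{\lambda_1/\lambda_2}\bigr).
\]
Then $|\kappa_1|=|\kappa_2|$ iff $\Re\sqrt{\lambda_1/\lambda_2}=0$, i.e.\ iff $\lambda_1=-\lambda_2$. By the table of special values of $\Xone,\Xtwo$ preceding Proposition~\ref{prop:levelset}, the equation $\lambda_1=-\lambda_2$ is precisely the $H=0$ condition (equivalently $\Xtwo=0$, and then via \eqref{eq:H_and_h}). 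Finally, duality sends a square lattice to a square lattice (orthogonal generators of equal length go to orthogonal generators of equal length), so the squareness of $\Lambda^\ast$ is equivalent to that of $\Lambda$, and hence to $H=0$.

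For the second statement, I would simply observe that swapping $\lambda_1$ and $\lambda_2$ sends $(\kappa_1,\kappa_2)\mapsto(\kappa_1,-\kappa_2)$, which leaves $\Lambda=\kappa_1\bbZ+\kappa_2\bbZ$ invariant, hence also $\Lambda^\ast$. The only minor subtlety worth mentioning is the choice of square roots $\sql{j}$, but the two sign choices of each $\sql{j}$ likewise only flip generators and preserve the lattice, so the argument is sign-independent.

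The main (very mild) obstacle is bookkeeping: matching the algebraic condition $\lambda_1=-\lambda_2$ with the geometric condition $H=0$ through \eqref{eq:cmc-I-II} (or \eqref{eq:H_and_h}), and being careful that the notion of ``square'' is really equality of rectangular side-lengths. Neither raises any real difficulty, so the proof should be very short.
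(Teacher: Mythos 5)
Your argument is correct, and it runs along a slightly different (dual) track from the paper's. The paper proves the first claim by solving the closing conditions \eqref{eq:pjk} explicitly for the periods, obtaining \eqref{eq:periods_general}, specializing to the embedded winding numbers $p_{11}=p_{12}=p_{21}=-p_{22}=1$, and computing the side ratio $|\gamma_1|/|\gamma_2|=\bigl|\pm\sqrt{1+H^2}+H\bigr|$, which equals $1$ exactly when $H=0$; the swap statement is then read off because interchanging $\lambda_1\leftrightarrow\lambda_2$ only permutes the second indices of the $p_{jk}$ and leaves \eqref{eq:periods_general} unchanged. You instead work upstairs in the lattice $\Lambda=\kappa_1\bbZ+\kappa_2\bbZ$ of Proposition~\ref{prop:flat-torus}, compute $|\kappa_1|^2=\tfrac12(1+\Re\sqrt{\lambda_1/\lambda_2})$, $|\kappa_2|^2=\tfrac12(1-\Re\sqrt{\lambda_1/\lambda_2})$, and pass through duality; since $\Re\sqrt{\lambda_1/\lambda_2}=\jh$, equality of the side lengths is exactly $\lambda_1=-\lambda_2$, i.e.\ $H=0$ via \eqref{eq:cmc-I-II} or \eqref{eq:H_and_h}, and swapping the \sym points (or changing square-root branches) only flips signs of the $\kappa_j$, so $\Lambda$ and hence $\Lambda^\ast$ are unchanged. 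What you gain is brevity and independence from the explicit period formula; what the paper's computation buys is the quantitative ratio $\bigl|\pm\sqrt{1+H^2}+H\bigr|$ and the explicit Clifford generators $\gamma_1=1/\sqrt2$, $\gamma_2=\mi/\sqrt2$, and it handles the swap directly at the level of the actual periods determined by the winding numbers rather than only of the maximal period lattice. One small point you use implicitly: that an embedded torus must have $\Gamma=\Lambda^\ast$ (a proper sublattice would make the immersion a multiple cover of the rectangular torus, hence non-injective); this is consistent with how the paper itself uses Proposition~\ref{prop:flat-torus}, but is worth a sentence.
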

\begin{proof}
Solving the four equations \eqref{eq:pjk} for the periods gives
\begin{equation} \label{eq:periods_general}
    \gamma_1 = \frac{\lambda_1^{1/2}\lambda_2\,p_{11} -
    \lambda_1\lambda_2^{1/2}\,p_{12}}
    {\lambda_2 - \lambda_1}\,,\quad
    \gamma_2 = \frac{\lambda_1^{1/2}\lambda_2\,p_{21} -
    \lambda_1\lambda_2^{1/2}\,p_{22}}
    {\lambda_2 - \lambda_1}\,.
\end{equation}
In particular, setting $p_{11} = p_{12} = p_{21} = -p_{22} = 1$, a computation shows that the generators of a lattice of an embedded flat \cmc torus satisfy
$$
\frac{|\gamma_1|}{|\gamma_2|} =  \left| \pm\sqrt{1+H^2} + H \right|\,.
$$
Thus $|\gamma_1|=|\gamma_2|$ if and only if the mean curvature is zero. Setting $\lambda_1 = \mi,\,\lambda_2 = -\mi$ we obtain the generators $\gamma_1 = 1/\sqrt{2}$ and $\gamma_2 = \mi /\sqrt{2}$ of the square lattice of the Clifford torus.

Swapping the \sym points $\lambda_1 \leftrightarrow \lambda_2$, results in the integers $p_{jk}$ swapping second indices: $p_{11} \leftrightarrow p_{12},\,p_{21} \leftrightarrow p_{22}$. This does not change the periods $\gamma_1,\,\gamma_2$ in \eqref{eq:periods_general}.
\end{proof}

%
%%%%%%%%%%%%%%%%%%%%%%%%%%%%%%%%%%%%%%%%%%%%%%%%%%%%%%%%%%%%%%%%%%%%%%%%
%
\subsection{Spectral genus one}
Consider the $1$-form $\Omega = \Omega_1 \,dz + \Omega_2 \,d\bar{z}$, where $\Omega_j$ are given in equation~\eqref{eq:equivariant-harmonic-maurer-cartan} with $\NU$ as in \eqref{eq:nu} for some $0<|\Bpoint|\leq 1$ and $(a_1,\,b_1,\,a_2,\,b_2)\coloneq (\lambda,\,-\Bpoint,\,\lambda^{-1},\,-\olBpoint)$.
Integrating the $1$-form $\Omega$ with these choices then yields an $\bbR$-equivariant extended frame $F_\lambda(x,\,y) = \exp(x\,A_\lambda)\,P_\lambda(y)$ for smooth maps $\lambda \mapsto A_\lambda,\,\bbS^1 \to \matsu{2}{}$ and $(\lambda,\,y) \mapsto P_\lambda(y),\,\bbS^1 \times \bbR \to \matSU{2}{}$. Now let $\lambda_1,\,\lambda_2 \in \mathbb{S}^1$ be two distinct \sym points. The immersion $f:\bbC \to \matSU{2}{}$ in \eqref{eq:sym_s3} is then a conformal equivariant {\sc{cmc}} immersion with mean curvature \eqref{eq:cmc-I-II}.

Equivariance reduces the Gauss equation~\eqref{eq:metric} to $(\Metric')^2 + (\Metric^2 - 1)(\Metric^2 - \Bpoint^2) = 0$. A solution to this equation is the \emph{square root of the conformal factor} $\Metric:\bbR\to\bbR_+$ given by $\Metric (y) =  \jacobiDN(y,\,1-\Bpoint^2)$ where $\jacobiDN(y,\,m)$ is the Jacobi elliptic function. All other
solutions are of the form $\Metric(y+y_0)$, $y_0\in\bbR$. The
function $\Metric$ is even and has no zeros on $\bbR$. The range of
$\Metric$ is $[\min(1,\,\abs{\Bpoint}),\,\max(1,\,\abs{\Bpoint})]$.
The function satisfies $\Metric(0)=1$ and $\Metric'(0)=0$, and
limits to $\lim_{\abs{\Bpoint}\to 1}\Metric(y)=1$ and
$\lim_{\Bpoint\to 0}\Metric(y)=\sech y$. The period of $\Metric$
depends on the parameter $0<|\jq|\leq1$ and is equal to $2\JK'(\Bpoint)$. Now a straightforward calculation shows that the fundamental forms of such an equivariant conformal \cmc immersion are
\begin{subequations} \label{eq:cmc-equi-I-II}
\begin{gather}
    -\frac{ {(\lambda_2-\lambda_1)}^2 }{4\lambda_1\lambda_2}\,
     \Metric^2\,(dx^2 + dy^2) \spacecomma\\
    (v^2 H + \Real(Q))\,dx^2 -\Imag(Q)\,dx\, dy + (v^2 H - \Real(Q))\,dy^2
    \spacecomma
\end{gather}
\end{subequations}
with $H$ as in \eqref{eq:cmc-I-II} and Hopf differential $Q \,dz^2$ with $Q\coloneq \tfrac{\mi}{4} \jq(\lambda_2^{-1}-\lambda_1^{-1})$.
\begin{proposition}
A period of an equivariant extended frame is of the form
\begin{equation} \label{eq:period_gamma}
    \gamma =  x\,\pi + 2\mi\, p\,\JK'
\end{equation}
where $x\in\bbR$, $p\in\bbZ$, and $\Period$ is the period of
$\Metric$. The monodromy \eqref{eq:monodromy} with respect to such a period is
\begin{equation} \label{eq:equi-monodromy}
    M_\lambda (\gamma) = \exp(\pi(\,x\,\NU + p\,\OMEGA\,)\,e_0 )
    \spaceperiod
\end{equation}
\end{proposition}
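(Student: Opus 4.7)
The plan is to identify the periods from the symmetries of the Maurer--Cartan form $\Omega=F_\lambda^{-1}dF_\lambda$ and then to compute the monodromy from the explicit factored form of the frame given in Theorem~\ref{thm:equivariant-frame}.

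For the characterization of periods: the form $\Omega$ in~\eqref{eq:equivariant-harmonic-maurer-cartan} specialized to $(a_1,b_1,a_2,b_2)=(\lambda,-\jq,\lambda^{-1},-\jq)$ is manifestly $x$-independent by equivariance, and depends on $y$ only through $\Metric(y)=\jacobiDN(y,\sqrt{1-\jq^2})$ and $\Metric'(y)$, both of which have period $2\JK'$. Hence a translation $(x,y)\mapsto(x+\gamma_1,y+\gamma_2)$ preserves $\Omega$ precisely when $\gamma_1\in\bbR$ and $\gamma_2\in 2\JK'\,\bbZ$, and for such $\gamma=\gamma_1+\mi\gamma_2$ the monodromy $M_\lambda(\gamma)$ of~\eqref{eq:monodromy} is base-point independent. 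Reparametrizing $\gamma_1=\pi x$ and $\gamma_2=2p\JK'$ gives the form~\eqref{eq:period_gamma}.

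For the monodromy, rewrite the frame from Theorem~\ref{thm:equivariant-frame} as $F_\lambda(x,y)=\exp(x\,\nu\,e_0)\,Q(y)$ with $Q(y)=\exp(\tfrac{1}{2}\chi_0(y)\,e_0)\exp(\tfrac{1}{2}\chi_1(y)\,e_1)\exp(\tfrac{1}{2}\chi_2(y)\,e_0)$. The shift $x\mapsto x+\pi x$ immediately contributes $\exp(\pi x\,\nu\,e_0)$ on the left. For the shift $y\mapsto y+2p\JK'$, the functions $\chi_1$ and $\chi_2$ depend on $y$ only through $\Metric$ and $\Metric'$, so they return modulo $2\pi$ to their initial values after each period and the corresponding exponentials return up to a sign that ultimately cancels; the surviving contribution comes from $\chi_0=2\mi\nu\int_0^y(J_1-J_2)\,dt$, whose per-period increment $2\pi\omega^\ast$ is a $y$-independent constant depending only on $\lambda$ and $\jq$. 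Since all surviving factors lie in the one-parameter subgroup $\exp(\bbC\,e_0)$ they commute, yielding a diagonal monodromy $M_\lambda(\gamma)=\exp\bigl(\pi(x\,\nu+p\,\omega^\ast)\,e_0\bigr)$.

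The main obstacle is the identification $\omega^\ast=\omega$, where $\omega$ is the abelian integral normalized by~\eqref{eq:omega}. With $X_i=a_i\Metric+b_i\Metric^{-1}$ and $J_i=b_i\Metric^{-1}X_i^{-1}$ evaluated at the chosen $a_i,b_i$, the difference $J_1-J_2$ becomes a rational function of $\Metric^2=\jacobiDN^2(y,\sqrt{1-\jq^2})$, and $\int_0^{2\JK'}(J_1-J_2)\,dt$ reduces via standard elliptic identities to a linear combination of the complete integrals $\JK'$ and $\JE'$. Comparing this with~\eqref{eq:omega}---whose normalization was chosen precisely so that $\int_{\bbS^1}d\omega=2$ and so that $d\omega$ is skew-symmetric under $\tau$---confirms $\omega^\ast=\omega(\lambda)$ and completes the derivation of~\eqref{eq:equi-monodromy}.
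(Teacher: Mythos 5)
Your first two steps track the paper's argument: the paper likewise characterizes periods by the requirement that the imaginary part be a period of $\Metric$ (so $\gamma=x\pi+2\mi p\JK'$), and computes the monodromy from the factored frame \eqref{eq:frame-parts} by noting that the middle and right factors are $y$-periodic and $x$-independent, leaving $M_\lambda(\gamma)=\exp\bigl((\pi x\NU+\half\AngleZero(2p\JK'))e_0\bigr)$ with $\AngleZero(2p\JK')=p\,\AngleZero(2\JK')$. (Your parenthetical "up to a sign that ultimately cancels" is unnecessary: $\AngleOne$ and $\AngleTwo$ are genuinely $2\JK'$-periodic, so no sign arises.) The genuine gap is in your final paragraph, which is the actual content of the proposition: the identity $\AngleZero(2\JK',\jq,\lambda)=2\pi\OMEGA(\jq,\lambda)$, i.e.\ your $\omega^\ast=\OMEGA$, is asserted rather than proved, and the mechanism you propose would not work as described. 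With $(a_1,b_1,a_2,b_2)=(\lambda,-\jq,\lambda^{-1},-\jq)$ one has $J_1-J_2=-\jq(\lambda^{-1}-\lambda)\Metric^2\big/\bigl((\lambda\Metric^2-\jq)(\lambda^{-1}\Metric^2-\jq)\bigr)$, so $\int_0^{2\JK'}(J_1-J_2)\,dt$ is a complete elliptic integral of the \emph{third} kind with a $\lambda$-dependent parameter; it does not "reduce via standard elliptic identities to a linear combination of the complete integrals $\JK'$ and $\JE'$", and $2\pi\OMEGA(\lambda)$ is itself an incomplete abelian integral, not such a combination. Equating the two is a nontrivial identity of Legendre/Jacobi-zeta type, and simply "comparing with \eqref{eq:omega}" via the normalization of $\OMEGA$ presupposes that you have verified that your per-period increment $\omega^\ast$, as a function of $\lambda$, has the properties (differential, symmetry, normalization) that characterize $\OMEGA$ uniquely -- which your sketch never does.

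The paper supplies exactly this missing device: differentiate $\AngleZero(2\JK')$ with respect to $\theta$ (where $\lambda=e^{\mi\theta}$), rewrite the integrand via \eqref{eq:dJ1}--\eqref{eq:dJ2}, and observe that the exact terms $\tfrac{d^2}{dt^2}\log J_k$ integrate to zero over a period because $J_k$ is $2\JK'$-periodic; this collapses the third-kind contributions and yields $d\AngleZero(2\JK')=\tfrac{1}{4\NU}\bigl(4\JE'-2\JK'\jq(\lambda+\lambda^{-1})\bigr)d\theta=2\pi\,d\OMEGA$ as differentials in $\theta$. The remaining constant of integration is fixed by the skew-symmetry $\AngleZero(2\JK',\jq,\lambda^{-1})=-\AngleZero(2\JK',\jq,\lambda)$, so that $\AngleZero(2\JK')/2\pi$ shares the properties determining $\OMEGA$ uniquely, whence \eqref{eq:equi-monodromy}. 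Without this differentiation-in-$\lambda$ argument (or an explicit evaluation of the complete third-kind integral, e.g.\ through Jacobi's zeta function), your proposal establishes the shape $\exp(\pi(x\NU+p\,\omega^\ast)e_0)$ of the monodromy but not the claimed formula with $\OMEGA$.
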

\begin{proof}
The imaginary part of a frame period has to be a period of the square root of the conformal factor $\Metric:\bbR\to\bbR_+,\,y \mapsto   \jacobiDN(y,\,1-\Bpoint^2)$. Since $\Metric$ has period $2\JK'(\Bpoint)$, the imaginary part of $\gamma$ in \eqref{eq:period_gamma} has to be of the form $2\, p\,\JK'$ for some $p \in \bbZ$. From \eqref{eq:frame-parts} the extended frame of an equivariant \cmc torus is of the form $\FEQ(x,\,y)  \coloneq \exp\left((x\,\Vconst + \half\,\AngleZero) e_0\right) \exp\left( \half \AngleOne e_1 \right) \exp\left( \half \AngleTwo e_0 \right)$. The middle and right factor are periodic in $y$ and do not depend on $x$. Hence both these factors have trivial monodromy if $y \in 2\,\JK'\bbZ$, and the monodromy with respect to a translation by $\gamma =  x\,\pi + 2\mi\, p\,\JK'$ is
$$
    \FEQ(x\pi,\,2n\JK') = \exp( (\pi\,x\,\NU + \half\,\AngleZero(2p\JK'))\,e_0 )
$$
Clearly $\AngleZero(2p\JK')=p\AngleZero(2\JK')$ so it suffices to show that $\AngleZero(2\JK',\,\jq,\,\lambda) = 2\pi \omega(\jq,\,\lambda)$ to conclude the proof. Let $\lambda=e^{i\theta}$, then a calculation yields
\begin{align}   \label{eq:dJ1}
    d(\NU J_1) &=
    -\frac{\tfrac{d}{d\theta}(-\jq(\lambda+\lambda^{-1}))}
    {4\NU\jq(\lambda^{-1}-\lambda)}
    \left(
    \frac{-\jq\lambda^{-1}}{J_1} - \frac{1}{2} \frac{d^2}{dt^2}\log J_1(t)
    \right)\,d\theta
    \spacecomma
\\
\label{eq:dJ2}
    d(\NU J_2) &=\frac{\tfrac{d}{d\theta}(-\jq(\lambda+\lambda^{-1}))}
    {4\NU\jq(\lambda^{-1}-\lambda)}
    \left(
    \frac{-\jq\lambda}{J_2} - \frac{1}{2} \frac{d^2}{dt^2}\log J_2(t)
    \right)\,d\theta \spaceperiod
\end{align}
Since $J_1$ and $J_2$ are functions of $\Metric$,
they are also periodic with period $\Period$,
so
\begin{equation*} %\label{eq:frame-0.02}
    \int_0^{\Period} \frac{d^2}{dt^2}\log J_k dt
    =
    \left.\frac{\tfrac{d}{dt}J_k}{J_k}\right|_{0}^{\Period} = 0\spaceperiod
\end{equation*}
Subtracting~\eqref{eq:dJ2} from~\eqref{eq:dJ1}
and integrating over the interval $[0,\,2\JK']$ gives
\begin{equation*}
    d\AngleZero =
    \frac{1}{4\NU}
    \left( 4\JE' -  \Period\jq\,(\lambda+\lambda^{-1})
    \right)\,d\theta  = 2\pi\,d\OMEGA \spaceperiod
\end{equation*}
Further, since
\begin{equation*}
    \AngleZero(2\JK',\,\jq,\,\lambda) = 2\mi \NU \int_0^{2\JK'} \frac{\jq}{v(\lambda^{-1}v - \jq v^{-1})} -  \frac{\jq}{v(\lambda v - \jq v^{-1})} \,dt
\end{equation*}
and $\NU(\lambda^{-1}) = \NU(\lambda)$ it follows that $\AngleZero(2\JK',\,\jq,\,\lambda^{-1}) = - \AngleZero(2\JK',\,\jq,\,\lambda)$. Thus $\AngleZero$ shares the properties of $\OMEGA$ which determine it uniquely. Hence $\AngleZero(2\JK',\,\jq,\,\lambda) = 2\pi \omega(\jq,\,\lambda)$.
\end{proof}
 By \eqref{eq:equi-monodromy} we have $\tau_{\gamma} f = M_{\lambda_1}(\gamma)\,f\,M_{\lambda_2}^{-1}(\gamma)
= \exp(\pi( x\,\NU_1 + n\,\OMEGA_1)\,e_0 )\,f\,
\exp( -\pi( x\,\NU_2 + n\, \OMEGA_2)\,e_0 )$,
so translation by a period $\gamma$ \eqref{eq:period_gamma}
induces an ambient isometry. The \emph{equivariant action} is the action of the 1-parameter group
$\GK$ of isometries of $\bbS^3$ defined by
\begin{equation} \label{eq:equi_action}
    \GK = \{ g_x \in \mathrm{Iso}(\mathbb{S}^3) \mid g_x(p) =
    \exp(x\,\Vconst_1\,e_0)\,
    p \, \exp(-x\,\Vconst_2\,e_0)\,, x \in \bbR \}\,.
\end{equation}
Since $\NU_1\not=0\not=\NU_2$ the commutator of the equivariant
action \eqref{eq:equi_action}
\begin{equation}\label{eq:commut}
    \hat{\GK} = \{ g \in \mathrm{Iso}(\mathbb{S}^3) \mid gk = kg \mbox{
    for all } k\in\GK\}
\end{equation}
in the group $\mathrm{Iso}(\bbS^3)$ of orientation preserving
isometries of $\bbS^3$ is a two-dimensional torus. With the exception of two geodesics the orbits of $\hat{\GK}$ are two-dimensional embedded tori. These geodesics, which we call the \emph{axes} of the surface, are linked, and are situated so that every geodesic 2-sphere through one is orthogonal to the other. Every orbit of the equivariant action~\eqref{eq:equi_action}, with the exception of the two axes, is a $(m,\,n)$-torus knot in the corresponding orbit of $\hat{\GK}$, with
\begin{equation} \label{eq:pq_nu}
    \frac{m}{n}=\frac{\NU_1-\NU_2}{\NU_1+\NU_2}\,.
\end{equation}
If we identify
$$
    \bbS^3=\matSU{2}{} = \left\{ \left. \begin{pmatrix} a & b\\
    -\bar{b} & \bar{a} \end{pmatrix} \,\, \right| \,\, |a|^2+|b|^2 =1 \right\} \quad \mbox{and choose}
    \quad e_0 = \begin{pmatrix} \mi & 0 \\ 0 & -\mi \end{pmatrix} \spacecomma
$$
then the equivariant action extends to an action on $(a,b)\in\bbC^2$ given by $R(s,\,t)(a,\,b)=(e^{\mi s}a,\,e^{\mi t}b)$, called the \emph{extended action}. In particular, the translation $\tau_\gamma$ by a period $\gamma$ \eqref{eq:period_gamma} induces the ambient isometry $R(s,\,t)$ with
$s= x\,(\NU_1-\NU_2) + p\,(\OMEGA_1-\OMEGA_2)$ and
$t= x\,(\NU_1+\NU_2) + p\,(\OMEGA_1+\OMEGA_2)$.
\begin{proof}[Proof of Proposition \ref{prop:torus-of-revolution}]
For a flat \cmc torus this can always be achieved since we have not used the freedom of the M\"obius transformation.
A spectral genus one torus is a surface of revolution, if and only if the equivariant action is the rotation around a geodesic, or
equivalently if \eqref{eq:equi_action} fixes point wise one geodesic
of $\bbS^3$. The generator of the extended action has eigenvalues
$\mi(\Vconst_1\pm\Vconst_2)$. Thus there exists a zero eigenvalue, if and only if $\Vconst_2=\pm\Vconst_1$, which is equivalent to $\lambda_2 = \lambda_1^{-1}$.
\end{proof}
\begin{proposition} \label{prop:closing-conditions}
A spectral genus one \cmc surface in $\bbS^3$ is closed along two
independent periods if and only if there exists a
$T\in\bbZ^3\setminus\{0\}$ with $T\cdot X = 0 = T\cdot Y$ for
$X\coloneq \left( 0,\,\NU_1 ,\,\NU_2 \right)$ and $Y\coloneq \left(1,\,\OMEGA_1,\,\OMEGA_2 \right)$. %defined as
%\begin{equation*} %\label{eq:XY}
%    X \coloneq \left( 0,\,\NU_1 ,\,\NU_2 \right)\,,\qquad
%   Y \coloneq \left(1,\,\OMEGA_1,\,\OMEGA_2 \right)
%    \spaceperiod
%\end{equation*}
%
\end{proposition}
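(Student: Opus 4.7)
The plan is to convert the statement into integer linear algebra via~\eqref{eq:equi-monodromy}: every equivariant period takes the form $\gamma = x\pi + 2\mi p\JK'$ and, by~\eqref{eq:closing-s3}, $\gamma$ closes the surface iff $x\NU_k + p\OMEGA_k \in \bbZ$ for $k=1,2$ with the two integers of matching parity. The parity condition is absorbed by replacing $\gamma$ with $2\gamma$ and does not change the $\bbR$-rank of the period lattice, so both directions of the proposition reduce to deciding when the set of $(x,p) \in \bbR \times \bbZ$ for which $x\NU_k + p\OMEGA_k \in \bbZ$ ($k=1,2$) has $\bbR$-rank two. Crucially, $\NU_1, \NU_2 \ne 0$ in spectral genus one, since the branch points $\lambda = \jq,\,\jq^{-1}$ satisfy $|\jq|<1$ and thus do not meet the circle $|\lambda|=1$ on which the \sym points lie.

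For the forward implication, given two independent periods $\gamma_j = x_j\pi + 2\mi p_j\JK'$ ($j=1,2$) with $D := x_1 p_2 - x_2 p_1 \ne 0$, I would set $q_{jk} := x_j\NU_k + p_j\OMEGA_k \in \bbZ$ and invert the $2\times 2$ coefficient matrix of $(\NU_k, \OMEGA_k)$ to obtain $D\NU_k = p_2 q_{1k} - p_1 q_{2k}$ and $D\OMEGA_k = x_1 q_{2k} - x_2 q_{1k}$. Reading these formulas suggests the candidate
\[
    T = \bigl(q_{11}q_{22} - q_{12}q_{21},\; p_2 q_{12} - p_1 q_{22},\; p_1 q_{21} - p_2 q_{11}\bigr) \in \bbZ^3,
\]
and a short substitution then yields $T\cdot X = 0 = T\cdot Y$. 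The main subtlety is to exclude $T = 0$: vanishing of $t_1, t_2$ makes the integer vectors $(p_j, q_{j1}, q_{j2})$ proportional, after which the closing equations together with $\NU_k \ne 0$ propagate this to proportionality of $(x_j, p_j)$, contradicting $D \ne 0$.

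For the converse, given $T \in \bbZ^3 \setminus \{0\}$ with $T\cdot X = T\cdot Y = 0$, the relation $t_1\NU_1 + t_2\NU_2 = 0$ together with $\NU_k \ne 0$ forces $t_1, t_2 \ne 0$. Summing $t_k$ times the $k$-th closing equation uses these two constraints to collapse the compatibility of the system to the single Diophantine condition
\[
    t_0 p + t_1 q_1 + t_2 q_2 = 0,
\]
whose integer solution set is a rank-two sublattice of $\bbZ^3$ and projects injectively into $\bbZ^2$ via $(p,q_1,q_2) \mapsto (p,q_1)$ (since $t_2 \ne 0$ determines $q_2$). The plan is then to pick two $\bbZ$-independent solutions, recover $x_j := (q_{j1} - p_j\OMEGA_1)/\NU_1$, and observe that $x_1 p_2 - x_2 p_1 = (q_{11} p_2 - q_{21} p_1)/\NU_1 \ne 0$; the two resulting periods $\gamma_j$ are $\bbR$-independent and, after doubling if needed, close the surface. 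The hardest step in both directions is the non-degeneracy check, and both are settled by the non-vanishing of $\NU_1, \NU_2$ in spectral genus one.
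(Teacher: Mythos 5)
Your argument is correct and follows essentially the same route as the paper: your candidate $T$ is exactly the cross product of the two integer closing vectors $(p_j,\,q_{j1},\,q_{j2})$ from the system \eqref{eq:closing2}, and your converse works with the same rank-two lattice $\{v\in\bbZ^3 \suchthat T\cdot v=0\}$, reconstructing the real parts of the periods from it. The only difference is that you spell out details the paper leaves implicit (non-vanishing of $T$, $\bbR$-independence of the reconstructed periods via the injective projection, and the parity issue handled by doubling), all of which are correct.
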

\begin{proof}
Suppose we have two $\bbR$-independent periods
$\gamma_j = x_j\pi + 2\mi p_{j0}\JK' \in \bbC^\times$ for some
$x_j\in\bbR$ and $p_{j0}\in\bbZ$, $j=1,\,2$. Let $M_\lambda(\gamma_j)$ be the respective frame monodromies with eigenvalues $\mu_j^{\pm 1}$.
Then there exist four further integers $p_{jk} \in \bbZ$ as in \eqref{eq:pjk} for $j,\,k=1,\,2$. Hence $p_{jk} =  x_j \, \NU_k + p_{j0}\,\OMEGA_k$, and we write this system as
\begin{equation} \label{eq:closing2}
\begin{pmatrix}
    x_1 & p_{10} \\ x_2 & p_{20}
\end{pmatrix}
\begin{pmatrix}
    X\\Y
\end{pmatrix}
=
\begin{pmatrix}
    p_{10} & p_{11} & p_{12}\\
    p_{20} & p_{21} & p_{22}
\end{pmatrix}
\coloneq
\begin{pmatrix}
    P\\Q
\end{pmatrix}
\spaceperiod
\end{equation}
Hence $T\coloneq P\times Q$ is in
$\bbZ^3\setminus\{0\}$ and satisfies $T\cdot X=0$ and $T\cdot Y=0$.

Conversely suppose that there exists $T\in \bbZ^3\setminus\{0\}$
satisfying $T\cdot X=0$ and $T\cdot Y=0$. Let $\{P,\,Q\}$ be a basis
for the lattice $\Lambda=\{P\in\bbZ^3\suchthat T\cdot P=0\}$. Then
there exist $x_1,\,x_2\in\bbR$ and $p_{10},\,p_{20}\in\bbZ$ such
that~\eqref{eq:closing2} holds. Then $\gamma_j =  x_j\pi + 2\mi\, p_{j0}\JK'\in\Cstar$ generate a lattice with respect to which the surface is doubly periodic.
\end{proof}
%
%%%%%%%%%%%%%%%%%%%%%%%%%%%%%%%%%%%%%%%%%%%%%%%%%%%%%%%%%%%%%%%%%%%%
%
\begin{remark}\rm{
The closing conditions in Proposition~\ref{prop:closing-conditions} can be used to describe the intersection of the zero sets of two functions on the parameter space $(\jq,\,\jk,\,\jh)$. The curve
forming the intersection of two level sets then integrates to the vector field~\eqref{eq:torus-flow}. For $X,\,Y$ as in Proposition~\ref{prop:closing-conditions}, there exists
$s=(s_0,\,s_1,\,s_2)\in\bbZ^3$ such that $s\cdot X=0$ and $s\cdot
Y=0$. The closing conditions are thus $F=0,\,G=0$, where
$F\coloneq s\cdot X$ and $G\coloneq s\cdot Y$. If we set
$\lambda_k= e^{2\mi\theta_k}$, then the system of
implicit flow equations is
\begin{equation*}
    \begin{pmatrix}
    \frac{\del F}{\del\Bpoint} & \frac{\del F}{\del\theta_1} &
    \frac{\del F}{\del\theta_2}\\
    \frac{\del G}{\del\Bpoint} & \frac{\del G}{\del\theta_1} &
    \frac{\del G}{\del\theta_2}
    \end{pmatrix}
    \begin{pmatrix}
    \dot\Bpoint \\ \dot \theta_1 \\ \dot \theta_2
    \end{pmatrix}
    = 0\spacecomma
\end{equation*}
of which we next compute the matrix on the right hand side. Up
to scale, $(\dot \Bpoint,\, \dot \theta_1,\, \dot \theta_2)$ is
a cross product of its rows. Hence
\begin{equation*}
    \begin{pmatrix}
    \frac{\del F}{\del \Bpoint} & \frac{\del F}{\del \theta_1} &
    \frac{\del F}{\del \theta_2}\\
    \frac{\del G}{\del \Bpoint} & \frac{\del G}{\del \theta_1} &
    \frac{\del G}{\del \theta_2}
    \end{pmatrix}
    =
    \begin{pmatrix}
    s_1\frac{\del\NU_1}{\del \Bpoint} + s_2\frac{\del\NU_2}{\del
    \Bpoint} & s_1\frac{\del\NU_1}{\del\theta_1} &
    s_2\frac{\del\NU_2}{\del\theta_2}
    \\
    s_1\frac{\del\OMEGA_1}{\del \Bpoint} + s_2\frac{\del\OMEGA_2}{\del
    \Bpoint} & s_1\frac{\del\OMEGA_1}{\del\theta_1} &
    s_2\frac{\del\OMEGA_2}{\del\theta_2}
    \end{pmatrix}\spaceperiod
\end{equation*}
Since $G=s_1\NU_1+s_2\NU_2=0$, this matrix is a scalar multiple of
\begin{equation*}
    \begin{pmatrix}
    V_1\\V_2
    \end{pmatrix}
    \coloneq
    \begin{pmatrix}
    \NU_2\frac{\del\NU_1}{\del \Bpoint} -\NU_1\frac{\del\NU_2}{\del
    \Bpoint} & \NU_2\frac{\del\NU_1}{\del\theta_1} &
    -\NU_1\frac{\del\NU_2}{\del\theta_2}
    \\
    \NU_2\frac{\del\OMEGA_1}{\del \Bpoint}
    -\NU_1\frac{\del\OMEGA_2}{\del \Bpoint} &
    \NU_2\frac{\del\OMEGA_1}{\del\theta_1} &
    -\NU_1\frac{\del\OMEGA_2}{\del\theta_2}
    \end{pmatrix}\spaceperiod
\end{equation*}
Hence $(\dot \Bpoint,\,\dot\theta_1,\,\dot\theta_2)$ is a scalar
multiple of $V_1\cross V_2$. The derivatives of $\NU$ and $\OMEGA$
with respect to $\Bpoint$ and $\theta$, where
$2\mi\,\theta \coloneq \ln\lambda$ were computed in \eqref{eq:omega}, \eqref{eq:nu dot} and \eqref{eq:omega'}. A calculation yields that $V_1\cross V_2$ is a scalar multiple of
\begin{equation*}
    \begin{pmatrix}
    2\Bpoint(\JE'\cos(\theta_1+\theta_2)-
            \Bpoint\JK'\cos(\theta_1-\theta_2) ) \\
    -\frac{(1+\jq^2)\JE'-2\JK'}{1-\jq^2}
      \sin\bigl(\theta_1+\theta_2\bigr)
    +\frac{2\JE'-(1+\jq^2)\JK'}{1-\jq^2}
      \sin\bigl(\theta_1-\theta_2\bigr) \\
    -\frac{(1+\jq^2)\JE'-2\JK'}{1-\jq^2}
      \sin\bigl(\theta_1+\theta_2\bigr)
    -\frac{2\JE'-(1+\jq^2)\JK'}{1-\jq^2}
      \sin\bigl(\theta_1-\theta_2\bigr)
    \end{pmatrix}
    \spaceperiod
\end{equation*}
Changing variables from $(\Bpoint,\,\theta_1,\,\theta_2)$ to
$(\Xzero,\,\Xone,\,\Xtwo)$, and rescaling,
yields~\eqref{eq:torus-flow}.}
\end{remark}
%----------------------------------------------------
\begin{figure}[t]
\centering
\includegraphics[width=5.35cm]{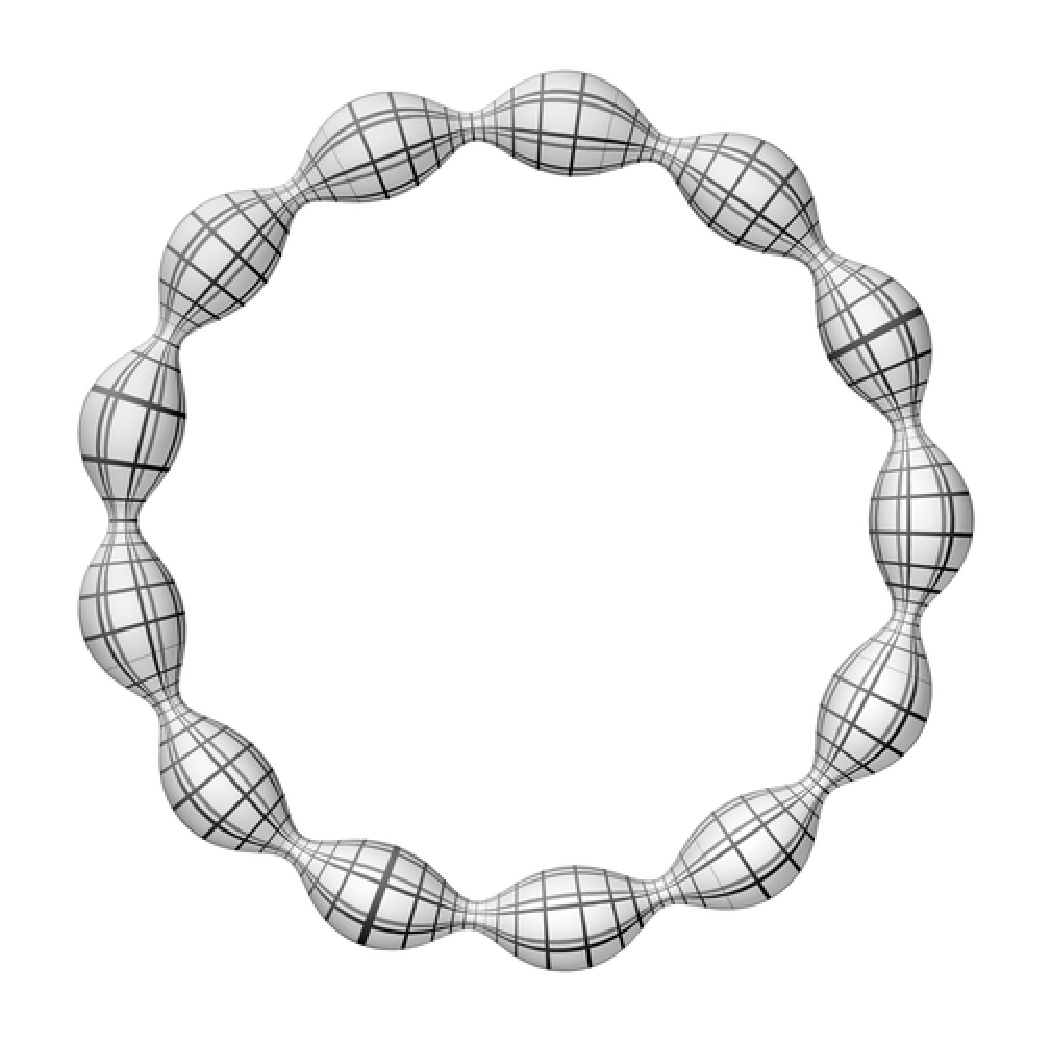}
\includegraphics[width=5.35cm]{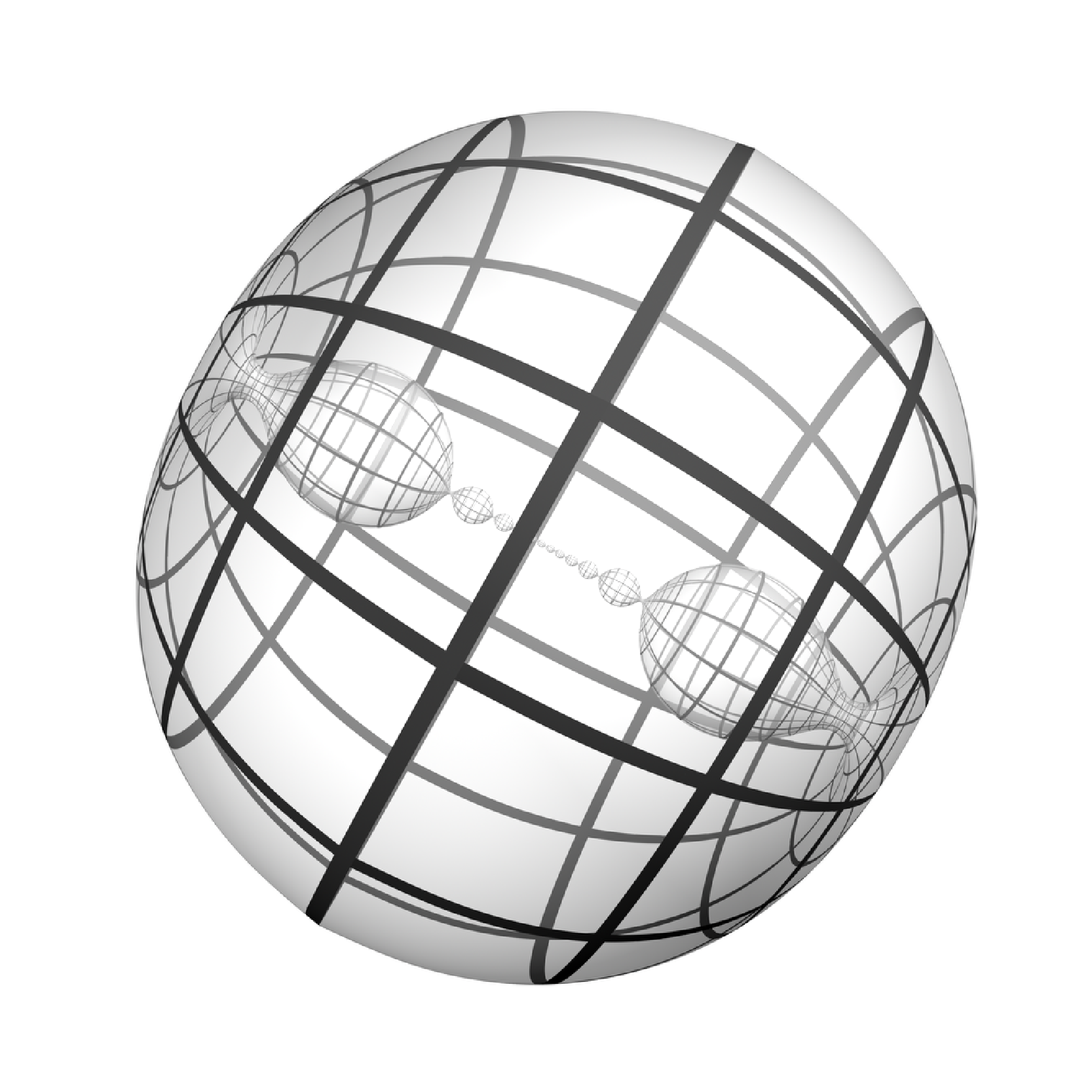}
\includegraphics[width=5.35cm]{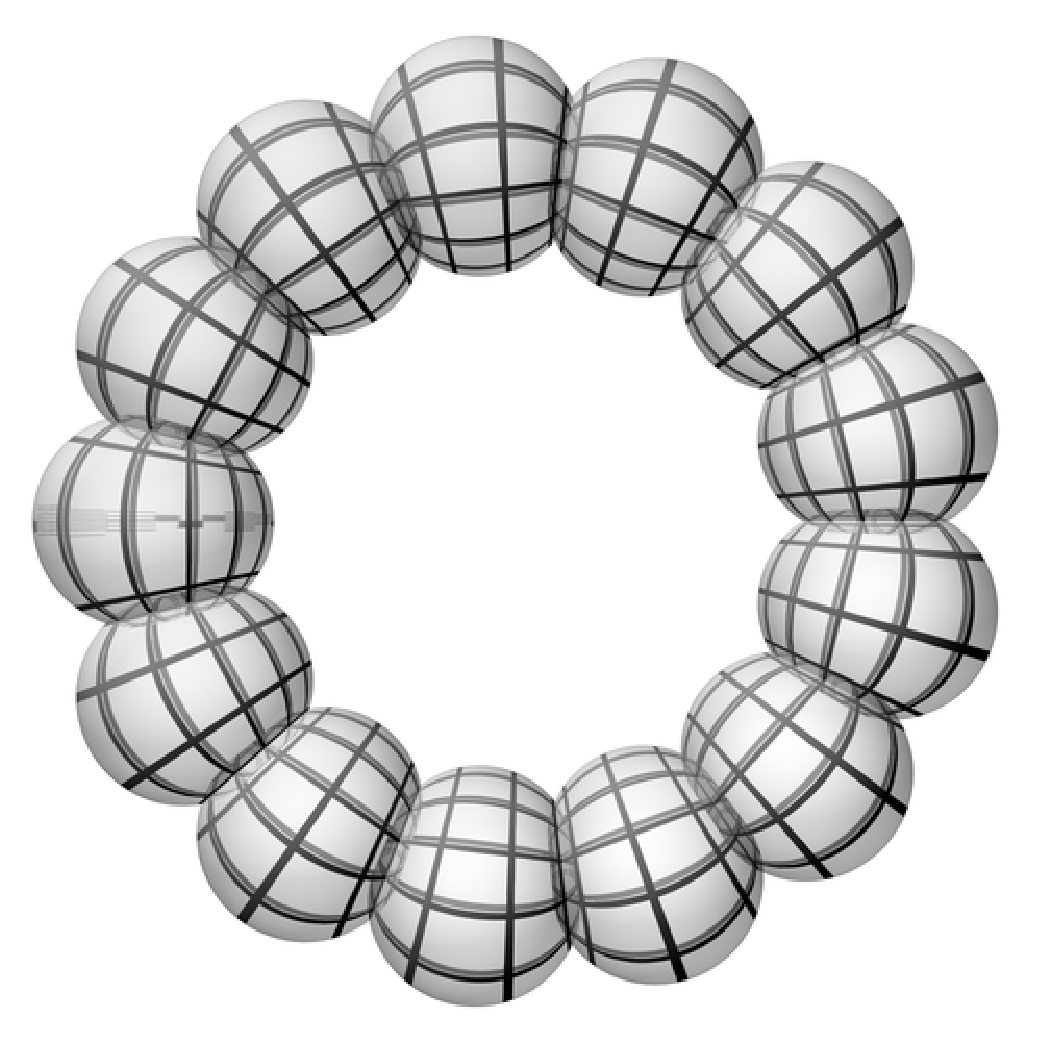}
\caption{ \label{fig:thirteenlobe} A sampling of $(k,\,13)$ \cmc
tori of revolution in $\bbS^3$. }
\end{figure}

%%%%%%%%%%%%%%%%%%%%%%%%%%%%%%%%%%%%%%%%%%%%%%%%%%%%%%%%%%%%%%%%%%%%%%%%%%%%%%
%%%%%%%%%%%%%%%%%%%%%%%%%%%%%%%%%%%%%%%%%%%%%%%%%%%%%%%%%%%%%%%%%%%%%%%%%%%%%%
%%%%%%%%%%%%%%%%%%%%%%%%%%%%%%%%%%%%%%%%%%%%%%%%%%%%%%%%%%%%%%%%%%%%%%%%%%%%%%

\section{The moduli space of equivariant \cmc tori in $\mathbb{S}^3$}

We next determine those flat \cmc tori which allow a bifurcation into genus $g=1$ \cmc tori. These are precisely those flat \cmc tori whose spectral curves have a double point on the unit circle.
%After possibly applying the transformation $(\lambda,\,\nu,\,\Bpoint)\mapsto(-\lambda,\,\nu,\,-\Bpoint)$ we may assume $\Bpoint\in(-1,0)$ without loss of generality.
We will show in Theorem~\ref{thm:moduli-space} that the spectral genus one tori lie in flow families which begin at a flat \cmc torus with a double point on $\bbS^1$. These flat \cmc tori hence serve as initial conditions for the spectral genus $g=1$ flow~\eqref{eq:torus-flow}. The purpose of this section is to classify the flat \cmc tori in $\bbS^3$ with a double point on $\bbS^1$.

We will show that three integers determine a primitive flat \cmc torus with a double point on $\bbS^1$ up to the $\bbS^1$ action and complex conjugation on the spectral curve. This is done by parameterizing the configuration space of three marked points on $\bbS^1$ as follows: Let
$\Delta \subset \bbT^3 \coloneq \bbS^1 \times \bbS^1 \times \bbS^1$
denote the union of all triples of unimodular numbers in which two of the three entries coincide. Let $\calT= (\bbT^3 \setminus\Delta)/\thicksim$, where we identify triples obtained by the three transformations:
\begin{itemize}
\item[(A)] Rotation
    $(\lambda_0,\,\lambda_1,\,\lambda_2)\mapsto\delta(\lambda_0,\,\lambda_1,\,\lambda_2)$
    with $\delta\in\bbS^1$.
    \item[(B)] Inversion
    $(\lambda_0,\,\lambda_1,\,\lambda_2)\mapsto
    (\lambda_0^{-1},\,\lambda_1^{-1},\,\lambda_2^{-1})$.
    \item[(C)] Swapping of the \sym points $(\lambda_0,\,\lambda_1,\,\lambda_2)\mapsto(\lambda_0,\,\lambda_2,\,\lambda_1)$.
\end{itemize}
Now we define a mapping
\begin{equation}\label{eq:phi}
\begin{split}
    \phi:\,(\lambda_0,\,\lambda_1,\,\lambda_2)&\mapsto
    \left(\frac{|s_0|}{\max\{|s_1+s_2|,\,|s_1-s_2|\}},\,
    \frac{\min\{|s_1+s_2|,\,|s_1-s_2|\}}{\max\{|s_1+s_2|,\,|s_1-s_2|\}}
    \right)\quad \mbox{ where }\\
    (s_0,\,s_1,\,s_2)&=
    \mi(\lambda_0^{1/2},\,\lambda_1^{1/2},\,\lambda_2^{1/2})\times
    (\lambda_0^{-1/2},\,\lambda_1^{-1/2},\,\lambda_2^{-1/2})
    \end{split}
\end{equation}
\begin{proposition} \label{thm:tau}
The map~\eqref{eq:phi} is bijective from $\calT$ onto $\calR=\{(t_0,\,t_1)\in [0,\,1)^2 \mid t_1<t_0\}$.
\end{proposition}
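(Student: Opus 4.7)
The plan is to reduce everything to an explicit trigonometric computation and exhibit the inverse of $\phi$ by hand.

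First I would verify that $\phi$ is well-defined on $\calT$, i.e.\ invariant under (A), (B), (C) as well as under the $\pm$-ambiguity in each $\lambda_j^{1/2}$. The cross-product formula makes (A) immediate, since the $\delta^{\pm 1/2}$ factors cancel component-wise. Under (B) the two factors of the cross product swap, so $(s_0,s_1,s_2)\mapsto -(s_0,s_1,s_2)$ and $|s_0|,|s_1\pm s_2|$ are preserved. Under (C) a direct computation gives $(s_0,s_1,s_2)\mapsto(-s_0,-s_2,-s_1)$, leaving $|s_0|$ and the unordered pair $\{|s_1+s_2|,|s_1-s_2|\}$ unchanged. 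Finally, flipping the sign of a single $\lambda_j^{1/2}$ interchanges $|s_1+s_2|$ with $|s_1-s_2|$, so the pair $(\max,\min)$ is untouched.

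Next I would normalize $\lambda_0=1$ via (A), write $\lambda_k=e^{2\mi\theta_k}$, and set $\alpha=\tfrac{1}{2}(\theta_1-\theta_2)$, $\beta=\tfrac{1}{2}(\theta_1+\theta_2)$. Standard sum-to-product identities yield
\[
|s_0|=4|\sin\alpha\cos\alpha|,\quad |s_1+s_2|=4|\cos\beta\sin\alpha|,\quad |s_1-s_2|=4|\sin\beta\cos\alpha|.
\]
Passing to $p=\sin^2\alpha$ and $r=\sin^2\beta$ gives $|s_0|^2=16\,p(1-p)$, $|s_1+s_2|^2=16\,p(1-r)$, $|s_1-s_2|^2=16\,r(1-p)$. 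The elementary identities $p(1-r)-r(1-p)=p-r$, $p(1-p)-p(1-r)=p(r-p)$ and $p(1-p)-r(1-p)=(p-r)(1-p)$ show that $|s_0|^2$ lies strictly between $|s_1+s_2|^2$ and $|s_1-s_2|^2$, with the ordering controlled by the sign of $p-r$; all strict inequalities hold precisely because the triple lies outside $\Delta$. Hence $\phi$ maps into $\calR$.

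For bijectivity I would invert $\phi$ explicitly. In the case $p>r$ the equations $t_0^2=|s_0|^2/\max$ and $t_1^2=\min/\max$ are solved by
\[
p=\frac{1-t_0^2}{1-t_1^2},\qquad r=\frac{t_1^2(1-t_0^2)}{t_0^2(1-t_1^2)},
\]
which satisfy $p,r\in[0,1)$ and $p>r$ iff $(t_0,t_1)\in\calR$; the case $p<r$ produces the only other solution, $(1-p,1-r)$. The two algebraic solutions are related by the involution $(p,r)\mapsto(1-p,1-r)$, which on the $(\alpha,\beta)$-level is the shift $(\alpha,\beta)\mapsto(\alpha+\tfrac{\pi}{2},\beta+\tfrac{\pi}{2})$, i.e.\ precisely the sign change $\lambda_j^{1/2}\mapsto-\lambda_j^{1/2}$ already built into $\phi$. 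Since $(p,r)$ together with the signs of $\sin\alpha,\cos\alpha,\sin\beta,\cos\beta$ (absorbed by (B) and (C)) determines $(\theta_1,\theta_2)$ modulo $\pi$, this defines a two-sided inverse $\calR\to\calT$. The main technical point is the last one: matching the two algebraic $(p,r)$-solutions to the two branches of the square-root ambiguity, and checking that the boundary case $r=0$ (equivalently $t_1=0$) genuinely corresponds to a triple in $\bbT^3\setminus\Delta$ rather than to a degeneration in $\Delta$.
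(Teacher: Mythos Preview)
Your proposal is correct and takes a genuinely different route from the paper's argument.

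The paper works intrinsically with the real vector $s=\mi m\times\bar m\in\bbR\bbP^2$ and the quartic
\[
\maptau(s)=-(s_0+s_1+s_2)(-s_0+s_1+s_2)(s_0-s_1+s_2)(s_0+s_1-s_2),
\]
showing $\maptau(s)<0$ on $\bbT^3\setminus\Delta$, tracking how the transformations (A)--(C) and the sign flips act on $s$, and then recovering the triple from $s$ by solving an explicit quadratic (their formula~\eqref{eq:t-vector}). The identification $\maptau(s)=4(\TurnZero^2-\TurnOne^2)(\TurnZero^2-\TurnTwo^2)$ immediately converts $\maptau(s)<0$ into the inequality $0\le\TurnOne<\TurnZero<\TurnTwo$ defining $\calR$.

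You instead break the symmetry at the outset by normalizing $\lambda_0=1$, pass to the half-angle parameters $(\alpha,\beta)$ and then to $(p,r)=(\sin^2\alpha,\sin^2\beta)$, and invert $\phi$ by solving two rational equations for $(p,r)$. Your computations are right: the key identities $|s_0|^2=16p(1-p)$, $|s_1+s_2|^2=16p(1-r)$, $|s_1-s_2|^2=16r(1-p)$ do place $|s_0|^2$ strictly between the other two whenever $p\ne r$, and the non-degeneracy conditions $p\in(0,1)$, $p\ne r$ correspond exactly to the triple avoiding $\Delta$. The fiber analysis is also right: with $\lambda_0=1$ fixed, the four $(\pm\alpha_0,\pm\beta_0)$ solutions to $(p,r)$ are precisely a single $(\mathrm{B}),(\mathrm{C})$-orbit, and the residual ambiguity $(p,r)\leftrightarrow(1-p,1-r)$ is the change of square root of a single $\lambda_j$, which leaves the underlying triple unchanged. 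One small imprecision: the shift $(\alpha,\beta)\mapsto(\alpha+\tfrac{\pi}{2},\beta+\tfrac{\pi}{2})$ corresponds to $\theta_1\mapsto\theta_1+\pi$ alone (not all $\lambda_j^{1/2}$ flipping), but this is harmless since the triple $(\lambda_0,\lambda_1,\lambda_2)$ itself is unchanged.

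What each approach buys: the paper's method is symmetric in $\lambda_0,\lambda_1,\lambda_2$, avoids trigonometry entirely, and produces the explicit inverse~\eqref{eq:t-vector} that is reused in Theorem~\ref{thm:flat-torus-integers}. Your approach is more elementary and self-contained, and makes the shape of $\calR$ (the strict ordering $t_1<t_0<1$) fall out of a one-line comparison of $p(1-p)$, $p(1-r)$, $r(1-p)$; but you would need to redo some work to recover the paper's formula~\eqref{eq:t-vector} if it is needed downstream.
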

\begin{proof}
For $(\lambda_0,\lambda_1,\lambda_2)\in\bbT^3$ choose square roots
and set $m := (\lambda_0^{1/2},\,\lambda_1^{1/2},\,\lambda_2^{1/2})$. Then
$s = \mi m\cross \ol m$ and $\ol s = -\mi \ol m\cross m =\mi
m\cross\ol m = s$, so $s$ is real. If $s=0$, then $\ol
m$ is contained in the $\bbS^1$ orbit of $m$, and
$\lambda_0=\lambda_1=\lambda_2$ so
$(\lambda_0,\lambda_1,\lambda_2)\in\Delta$. This shows $s\ne 0$,
and $s\in\bbR\bbP^2$.

For the map $\maptau: \bbR\bbP^3 \to \bbR$ defined by
\begin{align*} \label{eq:tau}
    \maptau(s_0,\,s_1,\,s_2)& =
    -(s_0+s_1+s_2)(-s_0+s_1+s_2)(s_0-s_1+s_2)(s_0+s_1-s_2)\\
&=(\lambda_0^{1/2}\lambda_1^{-1/2}-\lambda_1^{1/2}\lambda_0^{-1/2})^2
(\lambda_0^{1/2}\lambda_2^{-1/2}-\lambda_2^{1/2}\lambda_0^{-1/2})^2
(\lambda_1^{1/2}\lambda_2^{-1/2}-\lambda_2^{1/2}\lambda_1^{-1/2})^2
\end{align*}
we have $\maptau(s) \leq 0$ since each of the three squared factors is non-positive. Furthermore, $\maptau(s)=0$ is equivalent to
$(\lambda_0,\,\lambda_1,\,\lambda_2)\in\Delta$. Hence we have
$\maptau(s)<0$. Define
\begin{itemize}
\item[(A')] Rotation
    $(\lambda_0^{1/2},\,\lambda_1^{1/2},\,\lambda_2^{1/2})\mapsto \delta(\lambda_0^{1/2},\,\lambda_1^{1/2},\,\lambda_2^{1/2})$ with $\delta\in\bbS^1$.
    \item[(B')] Inversion
    $(\lambda_0^{1/2},\,\lambda_1^{1/2},\,\lambda_2^{1/2})\mapsto
    (\lambda_0^{-1/2},\,\lambda_1^{-1/2},\,\lambda_2^{-1/2})$.
    \item[(C')] Swapping of the \sym points
    $(\lambda_0^{1/2},\,\lambda_1^{1/2},\,\lambda_2^{1/2})\mapsto
    (\lambda_0^{1/2},\,\lambda_2^{1/2},\,\lambda_1^{1/2})$.
\item[(D')] Changing the signs of the entries of $(\lambda_0^{1/2},\,\lambda_1^{1/2},\,\lambda_2^{1/2})$ independently.
\end{itemize}
The transformations (A')-(B') do not change $s\in\bbR\bbP^2$, while
(C') negates signs and swaps $s_1$ and $s_2$, while (D') negates two signs in $s$ for each sign change. If $s_j =0$ for at least one $j\in\{0,\,1,\,2\}$ then
$\maptau(s_0,\,s_1,\,s_2) \geq 0$. Hence if $s\in\bbR\bbP^2$, then
$\maptau(s)<0$ implies $s_j\ne 0$ for all $j\in\{0,\,1,\,2\}$. Then
$s=\mi m \times \ol m$ if and only if $s \cdot m = 0$ and $s \cdot \ol m = 0$. These are equivalent to
\begin{equation*}
\lambda_1 - 2A_1 \lambda_1^{1/2}\lambda_0^{1/2} + \lambda_0 =
0\spacecomma\interspace \lambda_2 - 2A_2 \lambda_2^{1/2}\lambda_0^{1/2} + \lambda_0 = 0\spacecomma\interspace
s_0 \lambda_0^{1/2} + s_1 \lambda_1^{1/2} + s_2 \lambda_2^{1/2}
= 0\spacecomma
\end{equation*}
where
\begin{equation*}
    A_1 = \frac{-s_0^2-s_1^2+s_2^2}{2s_0s_1}
    = 1+\frac{\maptau(s)}{{(2s_0s_1)}^2}
    \AND A_2 = \frac{-s_0^2+s_1^2-s_2^2}{2s_0s_1}
    = 1+\frac{\maptau(s)}{{(2s_0s_2)}^2}
\spaceperiod
\end{equation*}
Thus $s\in\bbR\bbP^2$ with $\maptau(s)<0$ determines the following
elements of $\bbT^3$ uniquely up to (A') and (B'):
\begin{equation}   \label{eq:t-vector}
    (\lambda_0^{1/2},\,\lambda_1^{1/2},\,\lambda_2^{1/2}) =
    \left( 1,\, %
    \frac{-s_0^2-s_1^2+s_2^2 \pm \sqrt{\maptau(s)}}{2s_0s_1},\ %
    \frac{-s_0^2+s_1^2-s_2^2 \mp \sqrt{\maptau(s)}}{2s_0s_2}
    \right)\spaceperiod
\end{equation}
Furthermore the numbers
\begin{equation}\label{eq:integers-flat}
    (\TurnZero,\,\TurnOne,\,\TurnTwo)=\tfrac{1}{2}\,
    (|s_0|,\,\min\{|s_1+s_2|,|s_1-s_2|\},\,
    \max\{|s_1+s_2|,\,|s_2-s_2|\})
\end{equation}
determine $(s_0,\,s_1,\,s_2)$ up to the transformations $C'$ and $D'$. Due to $\maptau(s) = (s_0^2-(s_1-s_2)^2)(s_0^2-(s_1+s_2)^2) =
4(\TurnZero^2-\TurnOne^2)(\TurnZero^2-\TurnTwo^2)$ the condition $\maptau(s)<0$ is equivalent to $0\leq\TurnOne<\TurnZero<\TurnTwo$. This shows that $\phi$ is surjective from $\bbT^3\setminus\Delta$ onto $\calR$, whose pre-images are uniquely determined up to (A)-(C).
\end{proof}
The {\emph{spectral data}} of a flat \cmc torus with a double point is a triple $(\lambda_0,\,\lambda_1,\,\lambda_2)\in\calT$ of values of the spectral parameter at the double point and the two \sym points. We identify triples obtained by the transformations~(A)-(C). Hence the set of spectral data is isomorphic to $\calR \cap \bbQ^2$.

The \emph{turning number} of a plane curve is the degree
of its Gauss map; we take the turning number to be unsigned. The
\emph{total turning number} of a collection of immersed curves is
the sum of their turning numbers. For twizzled surfaces, a \emph{profile curve set} of the surface with respect to one of its axes $A$ is constructed as follows: Let $\bbS^2_A$ be a geodesic 2-sphere containing $A$. The axis $A$ divides $\bbS^2_A$ into two hemispheres. Then a profile curve set of the torus with respect to $A$ is the intersection of the surface with $\bbS^2_A$ or with one of the hemispheres of $\bbS^2_A$. By the equivariant action, all profile curve sets associated with an axis are isometric. For surfaces of revolution, there is a profile curve set with respect to the axis which is not the revolution axis.

%----------------------------------------------------
\begin{figure}[t]
\centering
\includegraphics[width=8.15cm]{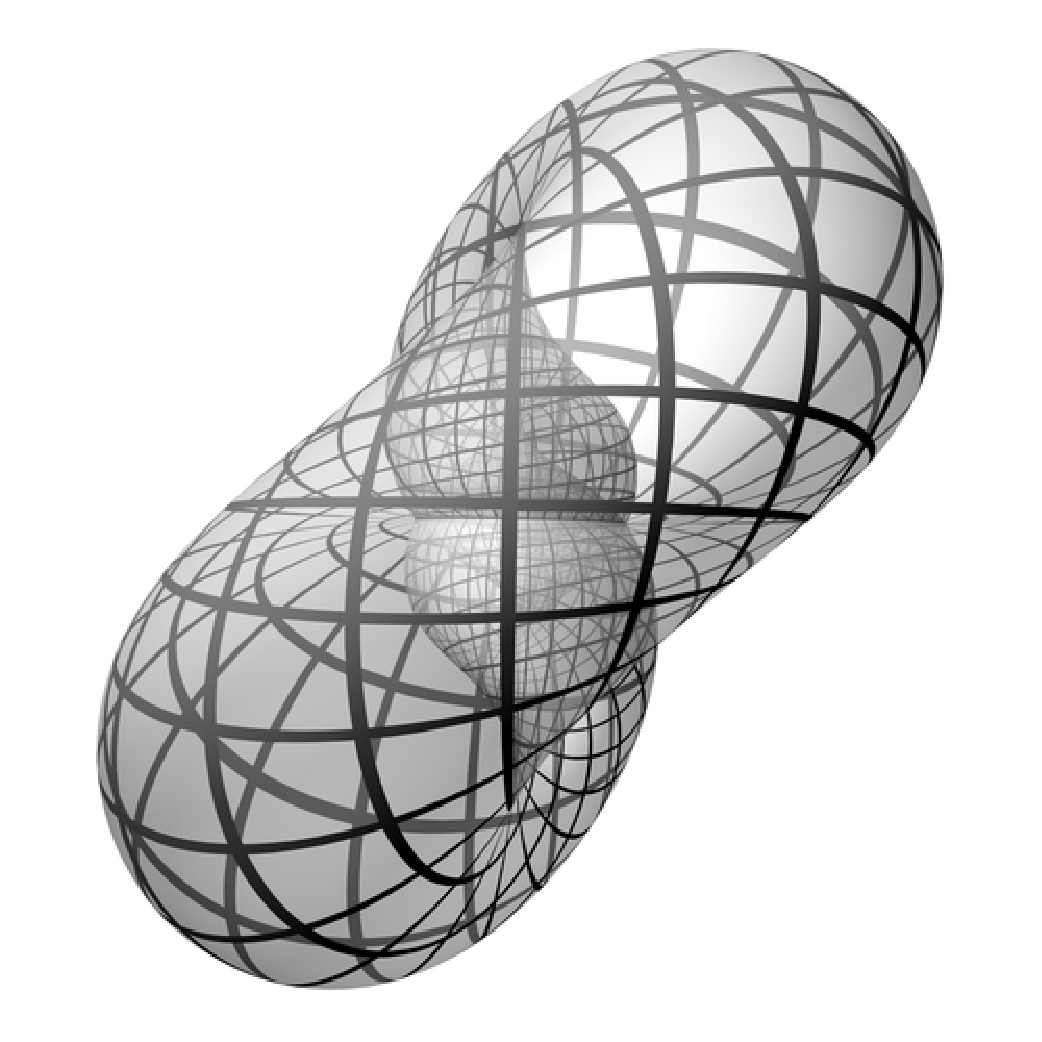}
\includegraphics[width=8.15cm]{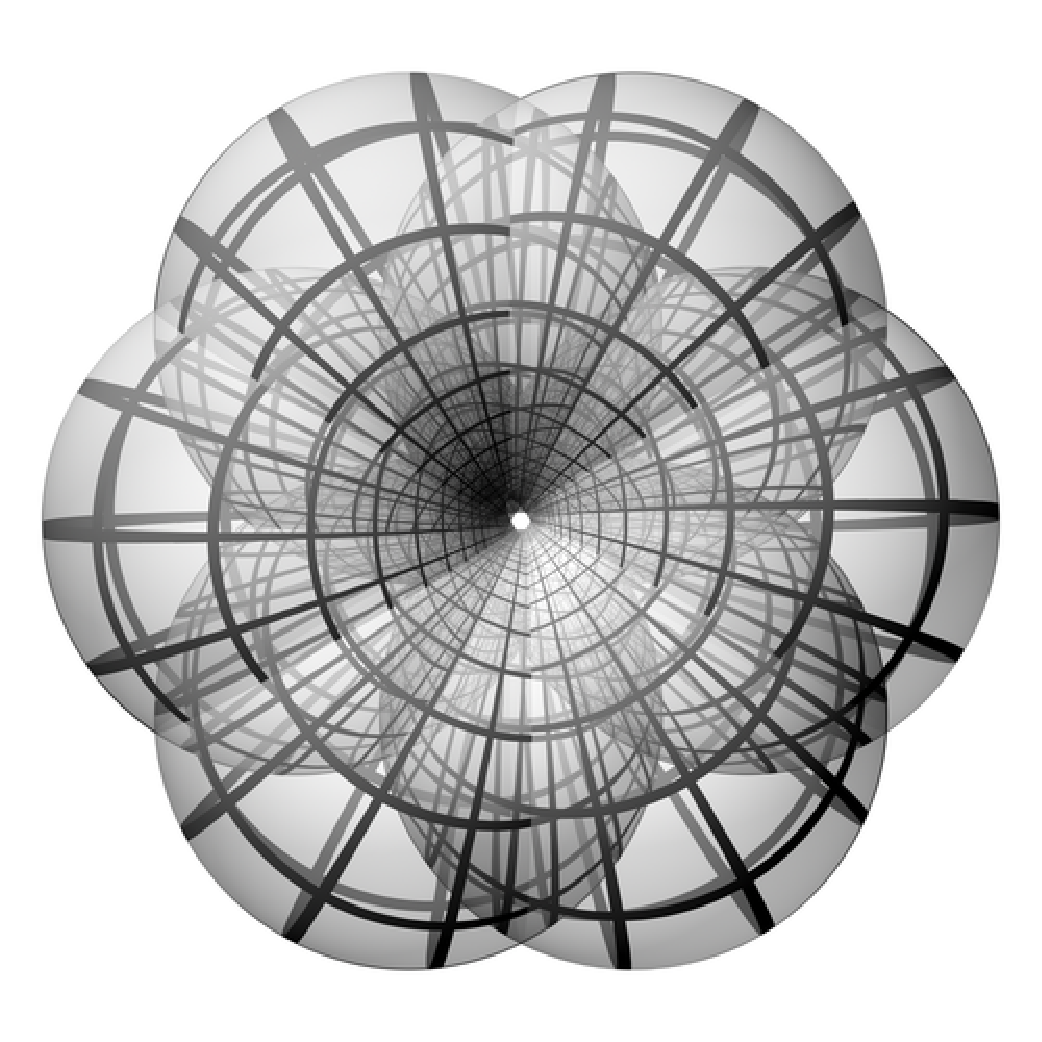}
\caption{ \label{fig:minimal}
The minimal $(2,\,1,\,3)$ and $(3,\,1,\,6)$ tori.
Stereographically projected from $\bbS^3$
to $\bbE^3$.
}
\end{figure}

\begin{theorem} \label{thm:flat-torus-integers}
\mbox{}

{\rm{(1)}} \label{item:integer-inequality}
The set of spectral data of flat \cmc tori with a double point on $\bbS^1$ up to transformations~(A)-(C) is in one-to-one correspondence with integer triples
\begin{equation}\label{eq:triple}
	(\TurnZero,\,\TurnOne,\,\TurnTwo)\in\bbZ^3 \mbox{ with }
	\gcd(\TurnZero,\,\TurnOne,\,\TurnTwo)=1 \mbox{ satisfying }
	0 \le \TurnOne < \TurnZero < \TurnTwo\,.
\end{equation}
{\rm{(2)}} \label{item:flat-torus-covering}
Let $\bbT^2$ be a $(\TurnZero,\,\TurnOne,\,\TurnTwo)$ flat \cmc torus. Then $\bbT^2$ covers its underlying flat embedded rectangular \cmc torus $\TurnZero$ times. Each profile curve set $\calC_k$ ($k=1,\,2$) of $\bbT^2$ has total turning number $\Turn_0$. The set is a union of $\gcd(\TurnZero,\,\Turn_k)$ coinciding circles, and each circle is wrapped $\Turn_0/\gcd(\TurnZero,\,\Turn_k)$ times.

{\rm{(3)}} \label{item:flat-torus-rev} The case $\TurnOne=0$ occurs if and only if the torus is rotational.
\end{theorem}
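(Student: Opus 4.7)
The three parts will be proved in turn, in each case via the bijection $\phi\colon\calT\to\calR$ already established in Proposition~\ref{thm:tau} (which builds in the quotient by (A)--(C)). For \textbf{(1)}, the observation is that a double point at $\lambda_0\in\bbS^1$ imposes exactly the same type of closing condition on the period lattice as a \sym point, namely $\DOTR{\gamma}{\sql{k}}\in\bbZ$ for $k=0,1,2$, by~\eqref{eq:closing-dot}. Since $\sql{0},\,\sql{1},\,\sql{2}\in\bbS^1\subset\bbR^2$, these three vectors satisfy a unique (up to scale) $\bbR$-linear relation, which by the cross-product computation in the proof of Proposition~\ref{thm:tau} is
\begin{equation*}
	s_0\,\sql{0}+s_1\,\sql{1}+s_2\,\sql{2}=0,\qquad s=\mi\,m\times\bar m,\quad m=(\sql{0},\sql{1},\sql{2}).
\end{equation*}
A rank-two lattice $\Gamma$ satisfies all three integrality conditions simultaneously if and only if the linear map $\gamma\mapsto(\DOTR{\gamma}{\sql{k}})_{k=0,1,2}$ takes values in a rationally defined two-dimensional subspace of $\bbR^3$, i.e.\ iff $(s_0\!:\!s_1\!:\!s_2)\in\bbQ\bbP^2$. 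Through~\eqref{eq:phi}, this rationality is $\phi(\lambda_0,\lambda_1,\lambda_2)\in\bbQ^2\cap\calR$; passing to a primitive integer representative via \eqref{eq:integers-flat} produces the claimed triple $(\TurnZero,\TurnOne,\TurnTwo)$ with $0\le\TurnOne<\TurnZero<\TurnTwo$ and $\gcd(\TurnZero,\TurnOne,\TurnTwo)=1$.

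For \textbf{(2)}, the underlying embedded rectangular torus has lattice $\Lambda^*$ by Proposition~\ref{prop:flat-torus}(ii), corresponding to imposing integrality only at $\sql{1},\sql{2}$. Rewriting the third condition $\DOTR{\gamma}{\sql{0}}\in\bbZ$ via the linear relation as the congruence $s_1\DOTR{\gamma}{\sql{1}}+s_2\DOTR{\gamma}{\sql{2}}\equiv 0\pmod{|s_0|}$ on $\Lambda^*$, a direct index computation (using primitivity of $(\TurnZero,\TurnOne,\TurnTwo)$ and the factor of two in \eqref{eq:integers-flat}) cuts $\Lambda^*$ down to a sublattice of index $\TurnZero$, proving the covering claim. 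For the profile curves, one uses the explicit flat frame \eqref{eq:flat-frame} and the period formula \eqref{eq:periods_general}: the $k$-th profile traces an orbit whose winding numbers along the two generators of $\Lambda^*$ are $\TurnZero$ and $\Turn_k$. Standard $(p,q)$-torus-knot combinatorics on the rectangular quotient then expresses this orbit as $\gcd(\TurnZero,\Turn_k)$ coincident circles each wrapped $\TurnZero/\gcd(\TurnZero,\Turn_k)$ times, with total turning number $\TurnZero$.

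For \textbf{(3)}, Proposition~\ref{prop:torus-of-revolution} identifies rotational flat tori as those with $\lambda_1\lambda_2=1$, i.e.\ $\sql{1}=\pm\bar{\sql{2}}$. Substituting this reciprocal relation into the cross-product formula for $s$ factors each of $s_1+s_2$ and $s_1-s_2$ as a product of a real part and an imaginary part evaluated on $\sql{0}$ and $\sql{1}$; applying rotation~(A) to normalize a suitable entry to a real or purely imaginary unit then sends one of these factors to zero, forcing $\min\{|s_1+s_2|,|s_1-s_2|\}=0$ and hence $\TurnOne=0$. Conversely, $\TurnOne=0$ gives $s_1=\pm s_2$, and the uniqueness (up to scale) of the linear relation forces $\sql{1}=\pm\bar{\sql{2}}$ modulo (A)--(C), recovering the rotational condition $\lambda_1\lambda_2=1$.

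\textbf{Main obstacle.} The delicate step is the sublattice-index computation in~(2): one must show that the congruence $s_1\DOTR{\gamma}{\sql{1}}+s_2\DOTR{\gamma}{\sql{2}}\equiv 0\pmod{|s_0|}$ has index exactly $\TurnZero$ on $\Lambda^*$, not some proper multiple. This requires careful bookkeeping of the common factor between $(s_0,s_1,s_2)$ and its primitive representation $(2\TurnZero,\pm(\TurnTwo\pm\TurnOne))$ from \eqref{eq:integers-flat}, together with the parity constraints coming from the $\bmod\,2$ half of \eqref{eq:closing-dot}. I would carry this out by direct matrix manipulation in the basis $\{\kappa_1,\kappa_2\}$, and pin down the profile-curve decomposition in~(2) by tracking the frame \eqref{eq:flat-frame} along each of the two axes.
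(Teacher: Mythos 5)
Your overall route coincides with the paper's: the integer vector $s$ proportional to $\mi\,m\times\ol m$, the bijection of Proposition~\ref{thm:tau} for part (1), and in part (2) the congruence $s_1\DOTR{\gamma}{\sql{1}}+s_2\DOTR{\gamma}{\sql{2}}\equiv 0\pmod{|s_0|}$, which is exactly the paper's sublattice $\Gamma_s$ of \eqref{eq:sublattice}--\eqref{eq:sublattices}, of index $\TurnZero$ in $\Lambda^\ast$ because $\gcd(\TurnZero,\TurnOne,\TurnTwo)=1$; part (1) as you sketch it is essentially the paper's argument (and the parity half of \eqref{eq:closing-dot} is automatic on sublattices of $\Lambda^\ast$, since $\sql{1}-\sql{2}=2\kappa_2$). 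However, two steps do not hold up as written. In (3) you propose to ``apply rotation (A) to normalize a suitable entry'' so that one factor of $s_1\pm s_2$ vanishes; but $s=\mi\,m\times\ol m$ is invariant under (A) (it multiplies $m$ by $\delta$ and $\ol m$ by $\ol\delta$), so no rotation can change $|s_1\pm s_2|$. Moreover ``the \sym points are reciprocal'' is not an (A)-invariant condition, so invoking Proposition~\ref{prop:torus-of-revolution} as ``rotational iff $\lambda_1\lambda_2=1$'' is only meaningful after fixing the double point. The repair is the paper's order of operations: first use (A) to put $\lambda_0=1$; then rotational is equivalent to $\lambda_2=\lambda_1^{-1}$, and in your factorization the factor $\Im(\sql{0})$ is zero, so $s_1-s_2$ (up to the sign choices of square roots) vanishes identically --- no further rotation is needed; the converse should likewise be run in this normalization, via \eqref{eq:t-vector} as in the paper.

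In (2) the component and wrapping counts rest on the claim that $\calC_k$ ``traces an orbit whose winding numbers along the two generators of $\Lambda^\ast$ are $\TurnZero$ and $\Turn_k$''. That is not correct: $\calC_k$ is the full preimage of the embedded profile circle under the degree-$\TurnZero$ covering $\bbC/\Gamma_s\to\bbC/\Lambda^\ast$, and its total class is $\TurnZero\,\gamma_k^\ast$, with no second winding number, so the $(p,q)$-torus-knot count with $(p,q)=(\TurnZero,\Turn_k)$ has no justification here even though it happens to reproduce the right numbers. The counts must come from the congruence lattice itself: the smallest period in $\Gamma_s\cap\gamma_k^\ast\bbZ$ is $\bigl(\TurnZero/\gcd(\TurnZero,\Turn_k)\bigr)\gamma_k^\ast$, giving the wrapping of each component, and the number of components is $[\Lambda^\ast:\Gamma_s+\gamma_k^\ast\bbZ]=\gcd(\TurnZero,\Turn_k)$ (this is where $\gcd(\TurnZero,\TurnOne,\TurnTwo)=1$ enters), so that components times wrapping equals the index $\TurnZero=|\Lambda^\ast/\Gamma_s|$, which is the paper's argument via \eqref{eq:det_is_l0} and \eqref{eq:fours}. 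This is precisely the bookkeeping you defer as the ``main obstacle''; it is the substance of part (2) and needs to be carried out in place of the winding-number heuristic.
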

\begin{proof}
Let $m = (\lambda_0^{1/2},\,\lambda_1^{1/2},\,\lambda_2^{1/2})$ be the square roots of spectral data of a flat \cmc torus, and assume the torus has period lattice $\Gamma = \gamma_1\bbZ \times \gamma_2 \bbZ$. We can frame the torus by an extended frame \eqref{eq:flat-frame}, and then the logarithmic eigenvalues of the monodromy with respect to these periods are $\ln \mu(\gamma_j,\,\lambda_k)$ as in \eqref{eq:eigenvalues}. Then there exist six integers $p_{jk} \in \bbZ$ such that $\ln \mu(\gamma_j,\,\lambda_k) = \pi\mi p_{jk}$ for $j=1,\,2$ and $k=0,\,1,\,2$. As in Proposition~\ref{prop:flat-torus} let $\Lambda= \kappa_1\bbZ \times \kappa_2\bbZ$, and $\Lambda^\ast = \gamma^*_1\bbZ \times \gamma^*_2\bbZ$ the lattice of the underlying embedded rectangular torus with orthogonal basis $\DOTR{\gamma^*_j}{\kappa_k} = \delta_{jk}$. Since $\Gamma \subset \Lambda^\ast$, we can expand
\begin{equation} \label{eq:flat-2.1}
    \begin{pmatrix} \gamma_1 \\ \gamma_2 \end{pmatrix} = \begin{pmatrix} \DOTR{\gamma_1}{\kappa_1} & \DOTR{\gamma_1}{\kappa_2}\\
    \DOTR{\gamma_2}{\kappa_1} & \DOTR{\gamma_2}{\kappa_2} \end{pmatrix} \,\begin{pmatrix}
    \gamma^*_1 \\ \gamma^*_2 \end{pmatrix} \,.
\end{equation}
Since $p_{jk} = \DOTR{\gamma_j}{\lambda_k^{1/2}}$, the determinant of the above change of basis is
\begin{equation}  \label{eq:det_is_l0}
    \left|\,\det\left(\DOTR{\gamma_j}{\kappa_k}\right)\, \right| = \tfrac{1}{2}\left| \,p_{12}p_{21} - p_{11}p_{22} \,\right|\,.
\end{equation}
(1) Define $(\tilde{s}_0,\, \tilde{s}_1,\,\tilde{s}_2) \coloneq (p_{10},\,p_{11},\,p_{12}) \times (p_{20},\,p_{21},\,p_{22})$. By construction  $(\tilde{s}_0,\,\tilde{s}_1,\,\tilde{s}_2) \in \bbZ^3$. A computation gives $(\tilde{s}_0,\,\tilde{s}_1,\,\tilde{s}_2) = (\overline{\gamma}_1\gamma_2- \gamma_1\overline{\gamma}_2) \,m \times \ol m$. Note that
$\overline{\gamma}_1\gamma_2 - \gamma_1\overline{\gamma}_2 \in \mi
\bbR$, and is never zero, since $\gamma_1,\,\gamma_2$ are not
collinear. From \eqref{eq:closing-dot} we additionally have that $p_{11}-p_{12} \in 2\,\bbZ$ and $p_{21}-p_{22} \in 2\,\bbZ$. Hence it follows that $\tilde{s}_0,\,\tilde{s}_1 + \tilde{s}_2,\,\tilde{s}_1 - \tilde{s}_2 \in 2\,\bbZ$. There exists a unique representative
$(s_0,\,s_1,\,s_2)\in\bbZ^3$ of
$(\tilde{s}_0,\,\tilde{s}_1,\,\tilde{s}_2)\in\bbR\bbP^2$ with
$\gcd(s_0,\,s_1 + s_2,\,s_1 - s_2) =2$. The corresponding
numbers $(\TurnZero,\,\TurnOne,\,\TurnTwo)$ defined in \eqref{eq:integers-flat} then obey \eqref{eq:triple}.
We thus have a map from spectral data to integer triples obeying~\eqref{eq:triple}, and by Proposition~\ref{thm:tau} this map is bijective.

(2) The vector $s= (s_0,\,s_1,\,s_2)$ determines a lattice $\Gamma_s\subset\Lambda^\ast$ defined by
\begin{equation}\label{eq:sublattice}
    \Gamma_s=\left\{
    \tfrac{p_1+p_2}{2}\gamma_1^\ast+\tfrac{p_1-p_2}{2}\gamma_2^\ast\mid
    (p_0,\tfrac{p_1+p_2}{2},\tfrac{p_1-p_2}{2})\in\mathbb{Z}^3
    \mbox{ with }s_0p_0+s_1p_1+s_2p_2=0\right\} \spaceperiod
\end{equation}
The lattice $\Gamma_s$ contains all the periods of the torus with respect to which the logarithmic eigenvalues of the monodromy do not change their values at the double point. The lattice $\Gamma_s$ does not change if $s$ is multiplied by some integer, or the sign of $s_0$ is changed. Due to \eqref{eq:integers-flat} the integers $s$ are determined by $(\TurnZero,\,\TurnOne,\,\TurnTwo)$ up to transformations (C') and (D'). Two co-linear $s$ correspond to the same lattice $\Gamma_s$. Switching the signs of $s_0$ does not change $\Gamma_s$. Therefore the lattices $\Gamma_s$ of the flat \cmc tori with triple $(\TurnZero,\,\TurnOne,\,\TurnTwo)$ satisfying \eqref{eq:triple} are those $\Gamma_s$ with one of the following vectors $s$ given by
\begin{equation}\label{eq:fours} \begin{split}
    &(2\TurnZero,\TurnOne+\TurnTwo,\TurnOne-\TurnTwo)\,,\qquad
    (2\TurnZero,\TurnOne+\TurnTwo,\TurnTwo-\TurnOne)\,,\\
    &(2\TurnZero,\TurnOne-\TurnTwo,\TurnOne+\TurnTwo)\,,\qquad
    (2\TurnZero,\TurnTwo-\TurnOne,\TurnOne+\TurnTwo)\,.
    \end{split}
\end{equation}
The transformations (C') and (D') act on sublattices $\Upsilon\subset\Lambda^\ast$ as follows:
\begin{itemize}
\item[(C'')] $C''\Upsilon = \left\{ n_1\gamma_1^\ast+n_2\gamma_2^\ast \mid n_1\gamma_1^\ast - n_2\gamma_2^\ast\in\Upsilon \right\}$,
\item[(D'')] $D''\Upsilon = \left\{ n_1\gamma_1^\ast+n_2\gamma_2^\ast \mid n_2\gamma_1^\ast + n_1\gamma_2^\ast\in\Upsilon \right\}$.
\end{itemize}
Since $p_02\TurnZero+p_1(\TurnOne+\TurnTwo)+p_2(\Turn_1-\TurnTwo)= 2(p_0\TurnZero+\frac{p_1+p_2}{2}\TurnOne+\frac{p_1+p_2}{2}\TurnTwo)$ the lattices corresponding to the four vectors $s$ in \eqref{eq:fours} are respectively equal to
\begin{equation}\label{eq:sublattices} \begin{split}
    &\Gamma_{[\TurnZero,\,\TurnOne,\,\TurnTwo]}=
    \{n_1\gamma_1^\ast+n_2\gamma_2^\ast\mid
    n_1\TurnOne+n_2\TurnTwo\in\TurnZero\,\bbZ\,\}\,,\quad
    C''\;\Gamma_{[\TurnZero,\,\TurnOne,\,\TurnTwo]}\,,\\
    &D''\;\Gamma_{[\TurnZero,\,\TurnOne,\,\TurnTwo]}\, \quad \mbox{ and } \quad
    D''\; C''\;\Gamma_{[\TurnZero,\,\TurnOne,\,\TurnTwo]}\,.
\end{split}
\end{equation}
For flat \cmc tori the two-dimensional group $\hat{K}$~\eqref{eq:commut} acts on the torus. Since the axes do not depend on the subgroup $K\subset\hat{K}$~\eqref{eq:equi_action}, the two subgroups corresponding to the rotations of the embedded torus act on a geodesic sphere containing one of the axes. Hence the smallest periods in $\Gamma_s\cap\gamma_1^\ast\,\bbZ$ and $\Gamma_s\cap\gamma_2^\ast\,\bbZ$ represent components of the profile curve sets. These wrapping numbers are $\TurnZero/\gcd(\TurnZero,\TurnOne)$ and $\TurnZero/\gcd(\TurnZero,\TurnTwo)$. The number of components times these wrapping numbers is equal to $|\Lambda^\ast/\Gamma_s|$. But  $\TurnZero =|\Lambda^\ast/\Gamma_s|$ by \eqref{eq:det_is_l0}. Hence the total turning number is equal to $\TurnZero$. Moreover, the corresponding embedded torus is covered $\TurnZero$ times.

(3) Clearly $\TurnOne=0$ holds if and only if $s_1 = \pm s_2$. We may assume that $\lambda_0 = 1$ and then by \eqref{eq:t-vector} this holds if and only if $\lambda_2 = \lambda_1^{-1}$. By Proposition \ref{prop:torus-of-revolution} we conclude that $\TurnOne=0$ holds if and only if the torus is rotational.
\end{proof}

%%%%%%%%%%%%%%%%%%%%%%%%%%%%%%%%%%%%%%%%%%%%%%%%%%%%%%%%%%%%%%%%%%%%%%%%%%%

\begin{proposition} \label{prop:involution}
The family of non-rotational spectral genus 1 tori starting at the
$(\TurnZero,\,\TurnOne,\,\TurnTwo)$ flat \cmc torus ends at the
$(\TurnOne+\TurnTwo-\TurnZero,\,\TurnOne,\,\TurnTwo)$ flat \cmc torus.
\end{proposition}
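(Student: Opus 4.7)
The plan is to analyze the endpoints of the non-rotational integral curve by combining the preserved integer structure of the flow with the explicit degenerate forms of $\nu$ and $\omega$ at $\jq = \pm 1$, and then compute the transformation of the triple via Theorem~\ref{thm:flat-torus-integers}.

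First, by Proposition~\ref{prop:moduli-space} the maximal integral curve has both endpoints on $\{\jq = \pm 1\}$ with the same sign of $\jq$, so WLOG $\jq \to 1$ at both ends; the two genus-$1$ branch points $\jq, \jq^{-1}$ coalesce at $\lambda = 1$, so each endpoint flat \cmc torus carries its double point at $\lambda = 1$. Throughout the flow the integer closing data of Proposition~\ref{prop:closing-conditions}, namely the vectors $P = (p_{10}, p_{11}, p_{12})$ and $Q = (p_{20}, p_{21}, p_{22})$, is preserved: the monodromies $\mu_j(\lambda_k)$ at the sym points and the winding integers $p_{j0}$ of $\omega$ are integers varying continuously, hence constant.

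Second, pass to the flat limit. From~\eqref{eq:nu} and~\eqref{eq:omega} one computes at $\jq = 1$ that for $\lambda = e^{2\mi\theta}$ one has $\nu(\theta) = \sin\theta$ and $\omega(\theta) = 1 - \cos\theta$, so the closing conditions $p_{jk} = x_j \nu_k + p_{j0}\omega_k$ become the explicit trigonometric system
\begin{equation*}
p_{jk} = x_j \sin\theta_k + p_{j0}(1 - \cos\theta_k), \qquad j, k \in \{1, 2\}.
\end{equation*}
For the fixed preserved integers this system admits several solution branches in $(x_1, x_2, \theta_1, \theta_2)$; the start and end of the flow realize two distinct branches, as guaranteed by the strict monotonicity of $\jk, \jh$ from~\eqref{eq:monotonicity}.

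Third, identify the involution relating the two branches. The substitution $u_k = \tan(\theta_k/2)$ converts each closing equation into a quadratic $(p_{jk} - 2p_{j0})u_k^2 - 2x_j u_k + p_{jk} = 0$; demanding a common root for $j = 1, 2$ at each fixed $k$ pins down the possible $u_k$, and Vieta's relations relate the start-branch and end-branch roots. Combined with the fact (Proposition~\ref{prop:moduli-space}) that the time-reversed flow is again an integral curve of~\eqref{eq:torus-flow}, the assignment start $\mapsto$ end is an involution on the branches and hence on the resulting triples.

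Finally, translate the involution back to the spectral triple. Using the formula $s = \mi\, m\times\bar m$ with $m = (1, \lambda_1^{1/2}, \lambda_2^{1/2})$ from Proposition~\ref{thm:tau} and the primitive normalization in~\eqref{eq:integers-flat}, compute the new $(\TurnZero', \TurnOne', \TurnTwo')$ in terms of the endpoint sym point positions. Since $(\TurnOne, \TurnTwo)$ arise from $\{|s_1 + s_2|, |s_1 - s_2|\}$ whose combinations are invariant under the algebraic involution, they are preserved; the only freedom is in $\TurnZero$, and the unique non-identity involution on the set of valid triples~\eqref{eq:triple} with $(\TurnOne, \TurnTwo)$ fixed is the affine reflection $\TurnZero \mapsto \TurnOne + \TurnTwo - \TurnZero$.

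The main obstacle is the final algebraic step of showing that the endpoint spectral data, though carrying the same integer vectors $P, Q$ of the flow, corresponds to a flat \cmc torus whose distinguished double point is \emph{not} the trivial coalescence point $\lambda = 1$ but rather an opened-up alternative double point, and that evaluating $s$ at this alternative configuration precisely realizes the affine reflection on $\TurnZero$.
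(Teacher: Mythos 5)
There is a genuine gap, and it sits exactly where you flagged your ``main obstacle''. Your second step asserts that the integer vectors $P=(p_{10},p_{11},p_{12})$ and $Q=(p_{20},p_{21},p_{22})$ are constant along the flow because they are integers varying continuously. But the representation $p_{jk}=x_j\NU_k+p_{j0}\OMEGA_k$ involves $\OMEGA$, which is \emph{multi-valued} on $\bbS^1$ with period $2$ (Legendre's relation), so the statement only makes sense after a branch choice. The paper cuts $\bbS^1$ at $\lambda=\sign(\Xzero)$ (the point where the two branch points coalesce at either end of the flow) and then shows, using Proposition~\ref{prop:moduli-space}, that $\Xone-\sign(\Xzero)\Xtwo$ changes sign, which forces the \sym point $\lambda_2$ to cross $\lambda=\sign(\Xzero)$ during the flow; at that crossing the last column of the integer matrix jumps by $\pm2(p_{10},p_{20})^t$. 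This jump is the entire mechanism behind the proposition: at both endpoints the double point of the limiting flat torus \emph{is} the coalescence point $\lambda=\sign(\Xzero)$ (not an ``alternative'' double point, as you speculate), and if your literal reading (constant integers, same double point, same formula) were correct, the computation \eqref{eq:integers nu omega} would return the \emph{same} triple at both ends, contradicting the statement. The paper instead evaluates $s=P\times Q$ at the two ends as $(0,\NU_1,\NU_2)\times(1,\OMEGA_1,\OMEGA_2)$ and $(0,\NU_1,\NU_2)\times(1,\OMEGA_1,\OMEGA_2\pm2)$, obtaining $\TurnZero\mapsto|\TurnZero\pm(\TurnOne+\TurnTwo)|$ and excluding the plus sign by $\TurnZero<\TurnTwo$.

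Your final step is also not a proof as written: the claim that ``the unique non-identity involution on the set of valid triples with $(\TurnOne,\TurnTwo)$ fixed is the affine reflection'' is false — the set of admissible $\TurnZero$ strictly between $\TurnOne$ and $\TurnTwo$ admits many involutions, and nothing in your argument shows the start-to-end map is non-trivial (indeed it can have fixed points, e.g.\ when $2\TurnZero=\TurnOne+\TurnTwo$), nor that it is an involution induced by time reversal in the specific algebraic sense you need. The Vieta/common-root analysis of the degenerate trigonometric system at $\jq=\pm1$ likewise does not by itself distinguish the two endpoint branches; the distinguishing datum is precisely the $\pm2$ shift of $\OMEGA_2$ (equivalently of $p_{j2}$) accumulated when $\lambda_2$ passes the cut, which your argument never produces. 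To repair the proof you would need to reintroduce this branch bookkeeping, at which point you are essentially reproducing the paper's argument.
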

\begin{proof}
Due to Proposition~\ref{prop:levelset}~\eqref{item:levelset2} the
difference $(\cos(2\theta_1)-\cos(2\theta_2))^2=4(1-\jk^2)(1-\jh^2)$
is uniformly bounded away from zero. After possibly swapping the
\sym points we may assume $\sign(\Xzero)\cos(2\theta_1)<\sign(\Xzero)\cos(2\theta_2)$ during the flow. We remark that $\sign(\Xzero)$ is constant
throughout the flow. In particular, $\lambda_1$ cannot pass through
$\lambda_1=\sign(\Xzero)$ and $\lambda_2$ cannot pass through
$\lambda_2=-\sign(\Xzero)$. Due to Proposition~\ref{prop:moduli-space} the function $\Xone-\sign(\Xzero)\Xtwo$ changes the sign, and the flow
passes through a root of this function. This implies that $\lambda_2$ has to pass through $\lambda_2=\sign(\jq)$. We want to describe how the final six integers in \eqref{eq:closing2} are related to
the corresponding initial six integers. Since $\OMEGA$ is multi-valued on the fixed point set $\bbS^1$ of $\varrho$, we cut this circle at $\lambda=\sign(\Xzero)$. At the end points of the flow with $\Xzero=\pm1$ this point is a double point. This is a good choice for the cut, since at the end points of the flow the \sym points cannot sit there. The multi-valued function $\OMEGA$
is single valued on $\bbS^1\setminus\{\sign(\Xzero)\}$.
The difference between the two boundary values of $\OMEGA$ on this
interval is equal to $\pm2$ due to Legendre's relation \cite[17.3.13]{AbrS}. With $p_{jk} =  x_j \, \NU_k + p_{j0}\,\OMEGA_k$ with respect to the periods $\gamma_j = x_j \pi + 2\mi p_{j0}\JK'$ as in \eqref{eq:closing2}, the final and initial integers are related by
\begin{equation*}
\left.\begin{pmatrix}
    p_{10} & p_{11} & p_{12}\\
    p_{20} & p_{21} & p_{22}
\end{pmatrix}\right|_{t=t_{\max}}=
\left.\begin{pmatrix}
    p_{10} & p_{11} & p_{12}\\
    p_{20} & p_{21} & p_{22}
\end{pmatrix}\right|_{t=t_{\min}}\pm 2
\begin{pmatrix}
    0 & 0 & p_{10}\\
    0 & 0 & p_{20}
\end{pmatrix} \spaceperiod
\end{equation*}
We remark that due to our choice these integers change their values
only, when $\lambda_2$ passes through $\lambda_2=\sign(\Xzero)$.
Now let $(\NU_1,\,\OMEGA_1)$ and $(\NU_2,\,\OMEGA_2)$ denote the values of $(\NU,\,\OMEGA)$ at the \sym points. After possibly independent
hyperelliptic involutions, we have $0<\NU_2<\NU_1$ in agreement with
our choice of the \sym points.

Note that $(s_0,\,s_1,\,s_2) = (p_{10},\,p_{11},\,p_{12}) \times (p_{20},\,p_{21},\,p_{22}) = (x_1 p_{20}-x_2 p_{10})\,(0,\,\NU_1,\,\NU_2) \times (1,\,\OMEGA_1,\,\OMEGA_2)$. Since the integers change only, when $\lambda_2$ passes through $\lambda_2=\sign(\Xzero)$, we can calculate the change of the $s_j$ in terms of the change of the values of $\NU$ and $\OMEGA$ at $\lambda_2=\sign(\Xzero)$.

At the beginning and end of the flow we have
$\left. (s_0,\,s_1,\,s_2)\right|_{t=t_{\min}} =  (0,\,\NU_1,\,\NU_2) \times (1,\,\OMEGA_1,\,\OMEGA_2)$ and $\left. (s_0,\,s_1,\,s_2) \right|_{t=t_{\max}} = (0,\,\NU_1,\,\NU_2) \times (1,\,\OMEGA_1,\,\OMEGA_2 \pm 2)$. Due to $0<\NU_2<\NU_1$ we have
\begin{equation}\label{eq:integers nu omega}
    (\TurnZero,\,\TurnOne,\,\TurnTwo) = |\,x_1 p_{20} - x_2 p_{10}\,|\,(|\NU_1\OMEGA_2-\NU_2\OMEGA_1|,\,
    \NU_1-\NU_2,\,\NU_1+\NU_2)\,.
\end{equation}
Therefore the final values are
$(|\TurnZero \pm(\TurnOne+\TurnTwo)|,\,\TurnOne,\,\TurnTwo)$ in terms of the initial values. The inequality $\TurnZero<\TurnTwo$ excludes the plus sign, and concludes the proof.
\end{proof}
\subsection{Formulae}
In a few places we will need formulae for equivariant \cmc tori and their profile curves in terms of the extended frame \eqref{eq:frame-parts}. Identifying the unit quaternions with $\bbi \coloneq \exp((\pi/2) e_0)$, $\bbj \coloneq \exp((\pi/2) e_1)$, $\bbk \coloneq \exp((\pi/2)e_2)$, and identifying $\bbi = \sqrt{-1}$, a computation gives
\begin{gather} \label{eq:twizzled-profile}
    f = \alpha_1\alpha_2^{-1}\beta_1\beta_2^{-1}
    (\gamma_1\gamma_2^{-1}c_1c_2+\gamma_1^{-1}\gamma_2 s_1s_2)
    + \alpha_1\alpha_2\beta_1\beta_2
    (\gamma_1^{-1}\gamma_2 s_1 c_2 - \gamma_1\gamma_2^{-1}c_1 s_2)\bbj \spacecomma\\
    \alpha = \exp(\mi x\Vconst) \spacecomma\interspace
    \beta = \exp( \tfrac{\mi}{2}\AngleZero) \spacecomma\interspace
    \gamma = \exp(\tfrac{\mi}{2}\AngleTwo) \spacecomma\\
    c = \cos(\half\AngleOne) \spacecomma\interspace
    s = \sin(\half\AngleOne) \spacecomma\\
    \alpha_k = \alpha(\lambda_k) \spacecomma\quad \beta_k =
    \beta(\lambda_k) \spacecomma\quad \gamma_k = \gamma(\lambda_k)
    \spacecomma\quad c_k = c(\lambda_k) \spacecomma\quad s_k =
    s(\lambda_k) \spacecomma \quad k=1,\,2 \spaceperiod
\end{gather}
With $\tau$~\eqref{eq:involtau}, we have $\tau^\ast\NU =
\NU$, $\tau^\ast\chi_0 = -\chi_0$, $\tau^\ast\chi_1 = \chi_1$,
$\tau^\ast\chi_2 = -\chi_2$. Applying these symmetries to the formula~\eqref{eq:twizzled-profile} for the immersion at $y=0$ shows the profile curve of an equivariant \cmc surface of revolution in $\mathbb{S}^3$ is $f_0 =\exp(\mi\AngleZero)(\cos\AngleTwo +
\mi\cos\AngleOne\sin\AngleTwo)+\mi\sin\AngleOne\sin\AngleTwo\bbj$. More explicitly,
\begin{gather} \label{eq:rev-profile-curve2}
    f_0 =\exp(\mi\AngleZero)(g_1+ \mi g_2) + g_{0}\bbk \spacecomma \\
    g_0 \coloneq\half\NU^{-1}\Metric\sin(2\theta_1)
    \spacecomma\interspace g_1
    \coloneq \cc^{-1}(\Metric\cos(2\theta_1) - \Metric^{-1}\jq)
    \spacecomma\interspace g_2\coloneq
    \half \cc^{-1}\NU^{-1}\Metric'\sin(2\theta_1)\spacecomma \\
    \cc^2 \coloneq \Metric^{-1}\Metric' -4\Vconst^2 =
    \Metric^2-2\Bpoint\cos(2\theta_1)+\Bpoint^2\Metric^{-2} \spaceperiod
\end{gather}

\subsection{Sphere bouquets}
\label{sec:sphere-bouquet}

%%%%%%%%%%%%%%%%%%%%%%%%%%%%%%%%%%%%%%%%%%%%%%%%%%%%%%%%%%%%%%%%%%%%%%%%%%%
%----------------------------------------------------
%
\begin{figure}[t]
\centering
\includegraphics[height=5cm]{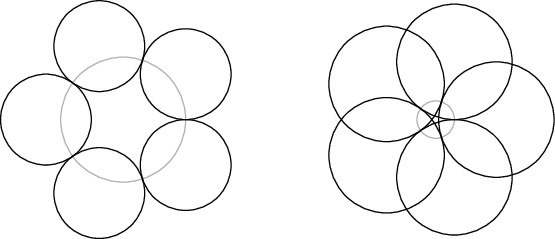}
\caption{
\label{fig:sphere}
The profile curves of the
two five-lobed sphere bouquets $(1,\,5)$ and $(2,\,5)$,
stereographically projected to $\bbE^2$.
The gray central circles are the axes of revolution.
}
\end{figure}
For $m,\,n\in\bbZ$ with $1 \le m < n$ and $\gcd(m,\,n)=1$, the
$(m,\,n)$ \emph{sphere bouquet} is constructed as follows.
With $\alpha=\exp(2\pi\mi m/n)\in\Cstar$, let $C$ be the union of the $n$ circles through $\alpha^k$ and $\alpha^{k+1}$ perpendicular to
$\bbS^1$ ($k=0,\dots,n-1$). Inverse stereographic projection of $C$
to a geodesic 2-sphere and revolution about the image of $\bbS^1$ as
axis produces the $(m,\,n)$ sphere bouquet, consisting of $n$
pairwise tangent spheres forming a necklace.
Since the radius of each circle is $r = \pi m/n$, the
mean curvature of the $(m,\,n)$ sphere bouquet is $\cot(\pi
m/n)$.
Since the $(m,\,n)$ sphere bouquet is the same as the $(n-m,\,m)$
sphere bouquet, then for $n>2$, the number of distinct sphere
bouquets with $n$ spheres is half the number of generators of
$\bbZ_n$. The $(1,\,2)$ sphere bouquet is a special case because its
two spheres coincide. In the rotational case we write $(\TurnZero,\,0,\,\TurnTwo)=(\TurnZero,\,\TurnTwo)$ for brevity.
%
%----------------------------------------------------
% convergence to sphere bouquet
%----------------------------------------------------
%
\begin{lemma} \label{thm:sphere-bouquet}
The $(\TurnZero,\,\TurnTwo)$ family of rotational tori converges to
the $(\TurnZero,\,\TurnTwo)$ sphere bouquet as $\Bpoint\to 0$.
\end{lemma}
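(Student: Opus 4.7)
In the rotational family ($\jk \equiv \pm 1$, equivalently $\lambda_2 = \lambda_1^{-1}$) the involution $\tau$ of~\eqref{eq:involtau} gives $\NU_2 = \NU_1$ and $\OMEGA_2 = -\OMEGA_1$; write $\lambda_1 = e^{2\mi\theta_1}$. The plan has three steps: (i) fix the asymptotic value of $\theta_1$ via a conservation law coming from the closing condition, (ii) compute the limit of the profile curve~\eqref{eq:rev-profile-curve2} as $\Bpoint \to 0$, and (iii) reassemble the limiting beads of the torus into the claimed bouquet via the $y$-period monodromy.

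For (i), the vector $T = (T_0, T_1, T_2) \in \bbZ^3 \setminus \{0\}$ of Proposition~\ref{prop:closing-conditions} satisfies $T\cdot X = 0 = T\cdot Y$. Under $\NU_2 = \NU_1$ and $\OMEGA_2 = -\OMEGA_1$ these collapse to $T_2 = -T_1$ and $T_0 + 2T_1\OMEGA_1 = 0$, so $\OMEGA_1 \in \bbQ$ is constant along the entire integral curve; specialising~\eqref{eq:integers nu omega} to $\NU_1 = \NU_2$ identifies this invariant, up to sign, with $\TurnZero/\TurnTwo$ in lowest terms.

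For (ii), $\JE'(0) = 1$ and $\Bpoint\JK'(\Bpoint) \to 0$ reduce~\eqref{eq:nu} and~\eqref{eq:omega} to $\NU \to \tfrac{1}{2}$ and $d\OMEGA \to \tfrac{1}{\pi\mi}\tfrac{d\lambda}{\lambda}$, so $\OMEGA \to 2\theta/\pi$ on $\bbS^1 = \{e^{2\mi\theta}\}$; Step~(i) then pins $2\theta_1^\infty = \pi\TurnZero/\TurnTwo$ up to sign. Meanwhile $\Metric = \jacobiDN(\,\cdot\,, 1-\Bpoint^2) \to \sech$ locally uniformly in $y$, and $\AngleZero \to 0$ pointwise on compacta since the constants $b_1, b_2$ in~\eqref{eq:frame-parts} are proportional to $\Bpoint$. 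Substituting into~\eqref{eq:rev-profile-curve2} yields the limit profile
\begin{equation*}
    f_0^\infty(y) = \cos(2\theta_1^\infty) \,-\, \mi\,\sin(2\theta_1^\infty)\tanh(y) \,+\, \sin(2\theta_1^\infty)\sech(y)\,\bbk\,,
\end{equation*}
a round curve lying in the 2-plane $\{a_2 = 0,\ a_0 = \cos(2\theta_1^\infty)\} \cap \bbS^3$, running from $(\cos(2\theta_1^\infty),\sin(2\theta_1^\infty),0,0)$ at $y = -\infty$ to $(\cos(2\theta_1^\infty),-\sin(2\theta_1^\infty),0,0)$ at $y = +\infty$. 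Revolving under the equivariant action (rotation of the $(a_2, a_3)$-plane) sweeps out the geodesic 2-sphere $\{a_0 = \cos(2\theta_1^\infty)\} \cap \bbS^3$ of spherical radius $2\theta_1^\infty = \pi\TurnZero/\TurnTwo$, which is one sphere of the claimed bouquet.

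For (iii), by~\eqref{eq:equi-monodromy} the monodromy over one $y$-period $2\mi\JK'$ equals $\exp(\pi\OMEGA_k e_0)$ at $\lambda_k$, and the induced ambient isometry $M_{\lambda_1} f M_{\lambda_2}^{-1}$ (using $\OMEGA_2 = -\OMEGA_1$) reduces to $(a, b) \mapsto (e^{2\pi\mi\OMEGA_1}a,\,b)$, i.e.\ rotation by $2\pi\OMEGA_1 = \pm 2\pi\TurnZero/\TurnTwo$ about the second axis $\{a_0 = a_1 = 0\}$. Successive beads of $\Metric$ therefore limit to copies of the sphere above, rotated by multiples of $2\pi\TurnZero/\TurnTwo$. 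Since $\gcd(\TurnZero,\TurnTwo) = 1$, the set of rotation images has exactly $\TurnTwo$ elements, giving $\TurnTwo$ spheres whose centres are evenly distributed on the revolution axis at angles $2\pi k \TurnZero/\TurnTwo$; a direct calculation shows adjacent spheres are tangent on that axis at the midpoints of consecutive centres, which are spaced by the spherical diameter $2\cdot 2\theta_1^\infty = 2\pi\TurnZero/\TurnTwo$, reproducing the $(\TurnZero,\TurnTwo)$ bouquet. The mean curvature $H = -\cot(2\theta_1^\infty) = \pm\cot(\pi\TurnZero/\TurnTwo)$ agrees with the bouquet's $\cot(\pi\TurnZero/\TurnTwo)$ for the appropriate sign of $\OMEGA_1$.

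The principal obstacle is the non-uniform convergence in step~(iii): since $2\JK'(\Bpoint) \to \infty$, the map $f_\Bpoint$ does not converge on a fixed fundamental domain. One must verify that the closing period of the torus contains $\TurnTwo$ beads in the limit (not fewer), and that the limiting tangencies between adjacent beads occur precisely on the revolution axis, i.e.\ at the decay tails of $\sech y$ where $\Metric \to 0$, and nowhere else.
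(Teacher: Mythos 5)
Your proposal is correct and follows essentially the same route as the paper: the rotational closing condition forces $s_2=-s_1$, $s_0+2s_1\OMEGA_1=0$ and hence $|\OMEGA_1|=\TurnZero/\TurnTwo$, the limits $\Metric\to\sech$, $\AngleZero\to 0$ (on compacta) and the degeneration of $\OMEGA$ turn the profile curve \eqref{eq:rev-profile-curve2} into a circle, and the discrete rotational symmetry (which you make explicit via the monodromy \eqref{eq:equi-monodromy}) assembles the beads into the bouquet. The only discrepancy is your normalization $\OMEGA\to 2\theta/\pi$ versus the paper's $1-2\theta/\pi$, which merely exchanges $(\TurnZero,\,\TurnTwo)$ with $(\TurnTwo-\TurnZero,\,\TurnTwo)$ and yields the same bouquet, exactly as the paper notes at the end of its own proof.
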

\begin{proof} For rotational tori we have $(\lambda_2,\,\NU_2,\,\OMEGA_2) = (\lambda_1^{-1},\,-\NU_1,\,-\OMEGA_1)$. By Proposition~\ref{prop:closing-conditions} there exists $(s_0,\,s_1,\,s_2)\in\bbZ^3$ that is perpendicular to $(0,\,\NU_1,\,\NU_2)$ and $(1,\,\OMEGA_1,\,\OMEGA_2)$. Then
$s_2=-s_1$ and $s_0+2s_1\OMEGA_1=0$. Hence $\TurnOne=0$ and $\TurnZero/\TurnTwo = \half\abs{s_0/s_1} = \abs{\OMEGA_1}$.

Define $\theta_1:\bbR \to \bbR$ by $2\mi\,\theta_1\coloneq \ln\lambda_1$. The limiting profile curve as $\jq\to 0$ can be computed from the profile curve for tori of revolution $f_0$ in~\eqref{eq:rev-profile-curve2}. Let $\jq(t)$, $\theta_1(t)$ vary with the flow parameter $t\in[t_{\min},\,t_{\max}]$, and assume without loss of generality that $\lim_{t\to t_{\min}}\jq(t)=0$.

From $\lim_{\jq\to 0}\OMEGA = 1- 2\theta/\pi$ we conclude that $\theta_0 = \lim_{t\to t_{\min}}\theta_1(t) =\tfrac{\pi}{2}(1 - \TurnZero/\TurnTwo)$.

With $\lim_{\jq\to 0}\Metric = \sech x$ and $\lim_{\jq\to 0}{2\Vconst}=1$ we have
\begin{equation*}
    \lim_{\Bpoint\to 0}g_0 = \sin(2\theta_0) \sech x
    \spacecomma\interspace
    \lim_{\Bpoint\to 0}g_1 = \cos(2\theta_0)
    \spacecomma\interspace
    \lim_{\Bpoint\to 0}g_2 = -\sin(2\theta_0) \tanh x
    \spaceperiod
\end{equation*}
Since the integrand in $\AngleZero$ goes to $0$ as $\jq\to 0$, then
$\lim_{\jq\to 0}\AngleZero = 0$. The limiting profile curve as $\jq\to 0$ is thus $\gamma_0(x) \coloneq \lim_{\jq\to 0}f_0 =
(\cos(2\theta_0)-\mi\sin(2\theta_0)\tanh x)+(\sin(2\theta_0)\sech x)\bbj$. Since $\cos(2\theta_0)-\mi\sin(2\theta_0)\tanh x$
traces out a straight line
segment in $\bbC$, then $\gamma_0(x)$ traces out a circle. The
discrete rotational symmetry of $\gamma$ implies that the limiting
set as $\jq\to 0$ is a sphere bouquet.

The limiting points are $\gamma_0(\pm\infty)=(e^{2\mp\mi\theta_0},\,0)$ on $\bbS^1$. The angle between the radii is
$4\theta_0 = 2\pi(1-\TurnZero/\TurnTwo)$.
Hence the limiting curve
is the $(\TurnTwo-\TurnZero,\,\TurnTwo)$ sphere bouquet, which is
the same as the $(\TurnZero,\,\TurnTwo)$ sphere bouquet. Note that
in the case $(\TurnZero,\,\TurnTwo)=(1,\,2)$ the circle is a
geodesic.
\end{proof}
We bring together Proposition~\ref{prop:moduli-space},
Proposition~\ref{prop:moduli-tori-of-revolution} and
Proposition~\ref{prop:involution} in the following Theorem.
\begin{theorem} \label{thm:moduli-space}
Spectral genus 1 \cmc tori lie in 1-parameter families with monotonic mean curvature. The family starting at the $(\TurnZero,\,\TurnOne,\,\TurnTwo)$ flat \cmc torus ends at the $(\TurnOne+\TurnTwo-\TurnZero,\,\TurnOne,\,\TurnTwo)$ flat \cmc torus.
\end{theorem}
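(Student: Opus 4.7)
The plan is to assemble this theorem as the capstone of the three preceding propositions in this section; essentially no new calculation is required once those results are in hand. First I would invoke Theorem~\ref{thm:flow} to identify any spectral genus one \cmc torus with a point on an integral curve of the vector field \eqref{eq:torus-flow} in the coordinates $(\jq,\jk,\jh)$. This immediately produces the ambient one-parameter family and explains why every such torus is ``non-isolated'' in moduli.

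Second, monotonicity of the mean curvature splits into two cases according to whether the surface is rotational or twizzled. For twizzled tori the integral curve stays inside the open cuboid $\calB$, and Proposition~\ref{prop:moduli-space} already asserts that $t\mapsto H(t)$ is a diffeomorphism from the (finite) maximal interval of definition onto a finite interval; in particular $H$ is strictly monotonic. For rotational tori ($\Xone^2=1$) the integral curve lies in the invariant boundary face of $\calB$, and the corresponding diffeomorphism statement is supplied by Proposition~\ref{prop:moduli-tori-of-revolution}. Either way, $H$ is strictly monotonic along the flow, giving the first sentence of the theorem.

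Third, I would identify the endpoints. Proposition~\ref{prop:moduli-space} already places both endpoints of a twizzled maximal integral curve on $\{\Xzero^2=1\}\cap\boundary\calB$, forcing $\jq=\pm1$ there; by \eqref{eq:nu} the spectral curve degenerates to genus zero, and by the frame computation in Section~\ref{sec:frame} the limiting surface is a flat \cmc torus. The precise transformation of the triple $(\TurnZero,\TurnOne,\TurnTwo)$ classifying these flat endpoints is exactly the content of Proposition~\ref{prop:involution}: along one maximal integral curve the starting triple $(\TurnZero,\TurnOne,\TurnTwo)$ is taken to $(\TurnOne+\TurnTwo-\TurnZero,\TurnOne,\TurnTwo)$. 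Concatenating this with the monotonicity above completes the proof.

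The only subtle ingredient is Proposition~\ref{prop:involution} itself, which does the real work by tracking the winding integers $p_{jk}$ across the branch cut of $\OMEGA$ at $\lambda=\sign(\Xzero)$. If I had to reprove that from scratch, the main obstacle would be controlling how the multivaluedness of $\OMEGA$ on $\bbS^1$ alters these integers when the \sym point $\lambda_2$ crosses the cut, which in turn requires knowing that such a crossing must occur; this is guaranteed by combining the sign-change statement for $\Xone-\sign(\Xzero)\Xtwo$ from Proposition~\ref{prop:moduli-space} with the level-set control of Proposition~\ref{prop:levelset}, and the jump is then pinned down to $\pm 2$ by Legendre's relation.
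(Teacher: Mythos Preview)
Your outline is essentially correct for the twizzled case but has a genuine gap in the rotational case. You correctly invoke Proposition~\ref{prop:moduli-tori-of-revolution} for the monotonicity of $H$ when $\Xone^2=1$, but then in your third step you rely solely on Proposition~\ref{prop:involution} to identify the endpoint triple. That proposition is stated only for \emph{non-rotational} spectral genus one tori, and for good reason: by Proposition~\ref{prop:moduli-tori-of-revolution} a single maximal rotational integral curve runs from $\jq=\pm1$ (a flat \cmc torus) to $\jq=0$ (a sphere bouquet), not from one flat torus to another. So a rotational flow family, taken on its own, does not have two flat-torus endpoints at all, and the transformation $(\TurnZero,\TurnOne,\TurnTwo)\mapsto(\TurnOne+\TurnTwo-\TurnZero,\TurnOne,\TurnTwo)$ is not yet established when $\TurnOne=0$.

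The missing ingredient is Lemma~\ref{thm:sphere-bouquet}: the $(\TurnZero,\TurnTwo)$ and the $(\TurnTwo-\TurnZero,\TurnTwo)$ rotational families both converge, as $\jq\to 0$, to the same $(\TurnZero,\TurnTwo)$ sphere bouquet. This lets you glue the two half-families together across the (singular) sphere-bouquet limit into a single one-parameter family running from the $(\TurnZero,0,\TurnTwo)$ flat torus to the $(\TurnTwo-\TurnZero,0,\TurnTwo)$ flat torus, which is exactly the $\TurnOne=0$ instance of the asserted formula. The paper's proof makes this gluing step explicit; once you add it, your argument matches the paper's.
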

\begin{proof}
By Propositions~\ref{prop:moduli-space} and \ref{prop:moduli-tori-of-revolution} the mean curvature is monotonic.
By Proposition~\ref{prop:moduli-space}, every flow starts and ends
at a flat \cmc torus with a double point on $\bbS^1$. The integers
associated to these two flat \cmc tori endpoints are computed next. As shown in Lemma~\ref{thm:sphere-bouquet}, the two flows ending at
the $(\TurnZero,\,0,\,\TurnTwo)$ and
$(\TurnTwo-\TurnZero,\,0,\,\TurnTwo)$ flat \cmc tori at $\Bpoint=1$ start at the same sphere bouquet at $\Bpoint=0$. Because only tori of revolution flow to sphere bouquets, we conclude that every sphere bouquet is the limit of these two flows and no others. While the flow is singular at $\Bpoint=0$, Proposition~\ref{prop:involution} nevertheless holds for the family constructed by gluing these two families together along the sphere bouquet.
\end{proof}

%%%%%%%%%%%%%%%%%%%%%%%%%%%%%%%%%%%%%%%%%%%%%%%%%%%%%%%%%%%%%%%%%%%%%%%%%%%%%%
%%%%%%%%%%%%%%%%%%%%%%%%%%%%%%%%%%%%%%%%%%%%%%%%%%%%%%%%%%%%%%%%%%%%%%%%%%%%%%
%%%%%%%%%%%%%%%%%%%%%%%%%%%%%%%%%%%%%%%%%%%%%%%%%%%%%%%%%%%%%%%%%%%%%%%%%%%%%%

\section{Geometry}
\label{sec:torus}
\subsection{The torus knot and symmetry group}
\label{sec:symmetry}

Every orbit of the equivariant action~\eqref{eq:equi_action} with the exception of the two axes is a $(p,\,q)$-torus knot in the corresponding orbit of $\hat{\GK}$. Due to \eqref{eq:pq_nu} and \eqref{eq:integers nu omega} this implies that the orbit of a point on $(\TurnZero,\,\TurnOne,\,\TurnTwo)$ torus is generically a torus knot in the corresponding orbit of $\hat{\GK}$. If a $(\TurnZero,\,\TurnOne,\,\TurnTwo)$ torus does not meet the axes, then the linking numbers of the $\GK$ orbit of a
point on the torus and the two axes are $\left( \tfrac{\TurnOne}{\gcd(\TurnOne,\TurnTwo)},\,
    \tfrac{\TurnTwo}{\gcd(\TurnOne,\TurnTwo)}\right)$.
%

%
%----------------------------------------------------
% symmetry group
%----------------------------------------------------
%
\begin{proposition} \label{thm:symmetry}
With $n\coloneq\gcd(\TurnOne,\,\TurnTwo)$, the
symmetry group of an $(\TurnZero,\,\TurnOne,\,\TurnTwo)$ cohomogeneity one \cmc torus is a semidirect product of $\bbS^1\times\bbZ_n$ and $\bbZ_2$ if it is twizzled, and a semidirect product of $\bbS^1\times\bbZ_n$ and $\bbZ_2\times\bbZ_2$ if it is a torus of revolution.
\end{proposition}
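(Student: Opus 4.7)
The plan is to produce the symmetry group by first exhibiting the continuous part, then the two layers of discrete symmetries (commuting and non-commuting with the equivariant $\bbS^1$-action), and finally argue maximality.

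First, I would identify the continuous part. The equivariant action $\GK\cong\bbS^1$ from \eqref{eq:equi_action} preserves the surface by definition, and since the surface is cohomogeneity one, this is the connected component of the identity of the symmetry group (for the rotational case this $\bbS^1$ is literally the rotation about an axis). Within the commutator $\hat\GK$, which by \eqref{eq:commut} is a $2$-torus, the generic $\hat\GK$-orbit is a $2$-torus and the $\GK$-orbit of a surface point is a $(\TurnOne/n,\TurnTwo/n)$-torus knot inside it (with $n=\gcd(\TurnOne,\TurnTwo)$), following from the linking number discussion in section~7 together with \eqref{eq:pq_nu} and \eqref{eq:integers nu omega}. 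The cyclic group of rotations of order $n$ in the direction transverse to $\GK$ preserves every such knot, hence preserves the surface; conversely any element of $\hat\GK$ preserving the surface must preserve the torus knot structure, and only this $\bbZ_n$ does so. This gives $\bbS^1\times\bbZ_n\subseteq\hat\GK$ as the subgroup of $\hat\GK$ preserving the surface.

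Second, I would find the extra discrete symmetries outside $\hat\GK$. The key observation is the parity of the data entering \eqref{eq:frame-parts}: since $\Metric(y)=\jacobiDN(y,1-\Bpoint^2)$ is even in $y$, the integrand defining $\AngleZero$ is odd, and $\AngleTwo$ is odd while $\AngleOne$ is even. Feeding these parities into the closed-form immersion \eqref{eq:twizzled-profile} shows that $(x,y)\mapsto(x,-y)$ is realized by an ambient isometry $\sigma$ (a reflection through a specific geodesic $2$-sphere through both axes). This $\sigma$ satisfies $\sigma g\sigma^{-1}=g^{-1}$ for $g\in\GK$, so together with the previous step we obtain a semidirect product $(\bbS^1\times\bbZ_n)\rtimes\bbZ_2$, which is the claimed twizzled symmetry group.

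Third, for the rotational case I would exhibit the additional $\bbZ_2$. When $\lambda_2=\lambda_1^{-1}$ we have $\NU_2=-\NU_1$, $\OMEGA_2=-\OMEGA_1$, and from \eqref{eq:rev-profile-curve2} the profile curve admits an independent reflection that exchanges the two sides of the revolution axis while fixing each $\hat\GK$-orbit set-wise (geometrically, a reflection through a geodesic $2$-sphere containing the axis of revolution; equivalently the ambient isometry induced by complex conjugation on the spectral parameter, which is compatible with $f=F_{\lambda_1}F_{\lambda_2}^{-1}$ only when $\lambda_2=\lambda_1^{-1}$). This extra symmetry commutes with $\sigma$, so the two $\bbZ_2$ factors combine into $\bbZ_2\times\bbZ_2$.

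The main obstacle is maximality: one must show no other ambient isometry preserves the surface. The plan is to note that any such isometry permutes the two axes (they are the only closed geodesics intrinsically distinguished by being the singular $\hat\GK$-orbits, cf.~section~\ref{sec:frame}), and hence sits in the normalizer $N(\hat\GK)$ in $\mathrm{Iso}(\bbS^3)$, which is $\hat\GK\rtimes(\bbZ_2\times\bbZ_2)$ generated by axis-swap and the reflection $\sigma$. A case-by-case check against the explicit immersion formula \eqref{eq:twizzled-profile} then shows precisely which of these outer elements preserve $f$: the axis-swap element works only in the rotational case (where the two axes play symmetric roles via $\lambda_1\leftrightarrow\lambda_1^{-1}$), accounting for the extra $\bbZ_2$, while $\sigma$ always works. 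This verification is the technical heart of the argument.
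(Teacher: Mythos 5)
Your route is extrinsic (ambient isometries, normalizer of $\hat{\GK}$) where the paper argues intrinsically, exhibiting the conformal and anti-conformal automorphisms of the domain torus that preserve the fundamental forms \eqref{eq:cmc-equi-I-II}: $z\mapsto z+t\gamma_1$ ($t\in\bbR$), $z\mapsto z+\gamma_2/n$, $z\mapsto -z$, and, only for tori of revolution, $z\mapsto\ol z$. Measured against either formulation, your second step contains a genuine error. The map $(x,y)\mapsto(x,-y)$ is $z\mapsto\ol z$; it preserves the first fundamental form (as $\Metric$ is even) but preserves the second fundamental form only when the Hopf coefficient $Q=\tfrac{\mi}{4}\jq(\lambda_2^{-1}-\lambda_1^{-1})$ is real, which happens exactly when $\lambda_2=\lambda_1^{-1}$, i.e.\ in the rotational case (Proposition~\ref{prop:torus-of-revolution}). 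So the involution you propose as the universal $\bbZ_2$ is precisely the \emph{extra} rotational symmetry and is absent for twizzled tori. Your parity bookkeeping points to the same problem: $\AngleTwo$ is a function of $\Metric$ alone and hence even in $y$ (not odd), while $\AngleOne(-y)=\pi-\AngleOne(y)$; inserting the correct parities into \eqref{eq:twizzled-profile}, with $f=\phi_1\psi_1+\phi_2\psi_2\bbj$ as in Lemma~\ref{thm:twizzled-profile-curve}, gives $f(x,-y)=\phi_1\ol{\psi_1}-\phi_2\ol{\psi_2}\,\bbj$, and undoing the conjugations would require the $x$-dependent factors $\phi_k^{2}$, so no fixed ambient isometry realizes $(x,y)\mapsto(x,-y)$ on a twizzled torus. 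The involution that does exist for every equivariant torus is $z\mapsto -z$, i.e.\ $(x,y)\mapsto(-x,-y)$: it is conformal, preserves $\Metric$ and $Q\,dz^2$, and genuinely inverts the $\GK$-translations; note that your own relation $\sigma g\sigma^{-1}=g^{-1}$ is incompatible with a map fixing the $x$-coordinate, since such a map commutes with $z\mapsto z+t\gamma_1$.

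Two further points. First, your justification of the $\bbZ_n$ is not sound as stated: an element of $\hat{\GK}$ preserving the surface need not preserve each $\GK$-orbit, it may permute the knots lying in one $\hat{\GK}$-orbit, so ``preserves every such knot'' is neither what happens nor what is needed; the clean argument is the paper's, namely that in the basis with $\gamma_1\in\bbR$ and $\gamma_2=\pi x_2+2\mi\, n\,\JK'$, $n=\gcd(\TurnOne,\TurnTwo)$, the translation $z\mapsto z+\gamma_2/n$ shifts $y$ by the full period $2\JK'$ of $\Metric$ and therefore preserves both forms \eqref{eq:cmc-equi-I-II}, its ambient realization lying in $\hat{\GK}$. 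Second, your maximality scheme misassigns the rotational $\bbZ_2$ to an axis swap: the additional rotational symmetry fixes both axes set-wise (it reflects the profile curve; intrinsically $z\mapsto\ol z$), and an isometry exchanging the rotation axis with the other axis does not in general preserve a torus of revolution. Be aware also that the paper only exhibits the generators and does not carry out a maximality argument, so if you pursue the normalizer route you must first justify that a symmetry normalizes $\GK$ (hence its centralizer $\hat{\GK}$) and then test the finitely many outer classes against the corrected involutions above.
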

\begin{proof}
Let $T\coloneq (t_0,\,t_1,\,t_2)\in\bbZ^3$ with $\gcd(t_0,\,t_1,\,t_2)=1$ and
$t_0\ne 0$, and let $n\coloneq \gcd(t_1,\,t_2)$. We first show that
$\calZ\coloneq \{ n_0\in\bbZ \suchthat T\cdot(n_0,\,n_1,\,n_2) = 0
\text{ for some $n_1,\,n_2\in\bbZ$} \} = n\bbZ$.
If $T \cdot (n_0,\,n_1,\,n_2)=0$, then since $n\divides t_1$ and
$n\divides t_2$, then $n\divides(t_0n_0)$. Since
$\gcd(t_0,\,t_1,\,t_2)=1$, then $\gcd(t_0,\,n)=1$. Hence $n\divides
n_0$. Thus $n$ divides every element of $\calZ$. Since
$\gcd(t_1/n,\,t_2/n)=1$, by the Euclidean algorithm, there exist
$x_1,\,x_2\in\bbZ$ such that $n + t_1x_1+t_2x_2 = 0$. Hence with
$N=t_0(1,\,x_1,\,x_2)$, then $T\cdot N=0$. This shows that
$n\in\calZ$. Hence $\calZ = n\bbZ$.

There exists a basis $\gamma_1,\,\gamma_2\in\Cstar$ for the torus
lattice so that $\DOTR{\gamma_1}{\sql{0}}=0$ and $p_{20} \coloneq
\DOTR{\gamma_2}{\sql{0}}=\gcd(\TurnOne,\,\TurnTwo)$, where $\sql{0}=\mi$. Then $\gamma_1\in\bbR$ and $\gamma_2 = \pi x_2 + 2\mi p_{20}\JK'$ for some $x_2\in\bbR$.

By~\eqref{eq:cmc-equi-I-II}, the first fundamental form is preserved
if and only if $\Metric$ is preserved. The symmetry group thus
contains the three conformal automorphisms $z\mapsto z + t\gamma_1,\
t\in\bbR$, $z\mapsto z + \gamma_2/n_2$ and $z\mapsto -z$. For tori of revolution, the \sym points satisfy $\lambda_2 = \lambda_1^{-1}$ by
Proposition~\ref{prop:torus-of-revolution}. Hence the coefficient
$\half Q$ of the Hopf differential in~\eqref{eq:cmc-equi-I-II} is
real. Since the mean curvature $H$ is real, the second fundamental
form is preserved under complex conjugation. Hence in this case
there is a further anti-conformal automorphism $z\mapsto \ol{z}$.
\end{proof}

\subsection{Lobe counts}

%----------------------------------------------------
% lobe counts
%----------------------------------------------------

The two lobe counts are the orders of the two orientation-preserving
cyclic subgroups of the symmetry group which fix one or the other
axis point wise.
\begin{proposition} \label{thm:profile-curve-symmetry}
The lobe counts of a twizzled $(\TurnZero,\,\TurnOne,\,\TurnTwo)$
torus are $\TurnOne$ and $\TurnTwo$, and for a rotational torus it
is $\TurnTwo$.
\end{proposition}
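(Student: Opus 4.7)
To prove the proposition I identify, for each axis, the cyclic subgroup of the symmetry group (Proposition~\ref{thm:symmetry}) consisting of rotations that fix that axis pointwise, and compute its order.

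Identify $\bbS^3=\matSU{2}{}\subset\bbC^2$ as in~\S\ref{sec:frame}, with axes $A_-=\{(0,b):|b|=1\}$ and $A_+=\{(a,0):|a|=1\}$. Translation $\tau_\gamma$ by $\gamma=\pi x+2\mi p\JK'$ induces the ambient isometry $R(s,t):(a,b)\mapsto(e^{\mi s}a,e^{\mi t}b)$ with
\begin{equation*}
s=\pi\bigl[x(\NU_1-\NU_2)+p(\OMEGA_1-\OMEGA_2)\bigr],\qquad
t=\pi\bigl[x(\NU_1+\NU_2)+p(\OMEGA_1+\OMEGA_2)\bigr],
\end{equation*}
read off from~\eqref{eq:equi-monodromy}. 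So $R(s,t)$ fixes $A_-$ pointwise iff $t\in 2\pi\bbZ$, and $A_+$ pointwise iff $s\in 2\pi\bbZ$. I work in the lattice basis chosen in the proof of Proposition~\ref{thm:symmetry}: $\gamma_1=\pi x_1\in\bbR$ (so $p_{10}=0$) and $\gamma_2=\pi x_2+2\mi n\JK'$ with $p_{20}=n=\gcd(\TurnOne,\TurnTwo)$, and parametrize $\bbS^1\times\bbZ_n$ by $(x_e,k)\in(\bbR/x_1\bbZ)\times(\bbZ/n\bbZ)$, the element $(x_e,k)$ corresponding to translation by $\pi x_e+k\gamma_2/n$.

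Using the closing identities $p_{1k}=x_1\NU_k$ and $p_{2k}=x_2\NU_k+n\OMEGA_k$ to eliminate $\NU_k,\OMEGA_k$, the condition $t\in 2\pi\bbZ$ becomes, in the variable $u=x_e/x_1\in\bbR/\bbZ$,
\begin{equation*}
(p_{11}+p_{12})\,u+\tfrac{k}{n}(p_{21}+p_{22})\in 2\bbZ.
\end{equation*}
The matching-signs closing condition~\eqref{eq:closing-s3} forces $p_{11}\equiv p_{12}\pmod 2$, so $p_{11}+p_{12}$ is even; when nonzero, for each of the $n$ values of $k$ this congruence has exactly $(p_{11}+p_{12})/2$ solutions in $u$, giving cyclic subgroup order $n(p_{11}+p_{12})/2$ fixing $A_-$. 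The analogous count for $s\in 2\pi\bbZ$ gives $n|p_{11}-p_{12}|/2$ fixing $A_+$. Both subgroups are cyclic as finite subgroups of the circle of rotations about the respective axis. With $p_{10}=0,\,p_{20}=n$ the cross-product in~\eqref{eq:integers-flat} is $(s_1,s_2)=(np_{12},-np_{11})$, so $|s_1\pm s_2|=n|p_{11}\mp p_{12}|$, and the min/max normalization in~\eqref{eq:integers-flat} identifies the unordered pair of counts with $\{\TurnOne,\TurnTwo\}$. The $\bbZ_2$ (or $\bbZ_2\times\bbZ_2$) factors from Proposition~\ref{thm:symmetry} come from the involutions $z\mapsto -z$ (and $z\mapsto\bar z$), which are not rotations about either axis, so they do not enlarge these cyclic subgroups.

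For a rotational torus Proposition~\ref{prop:torus-of-revolution} gives $\lambda_2=\lambda_1^{-1}$, hence $\NU_2=-\NU_1$ and $\OMEGA_2=-\OMEGA_1$, equivalently $p_{12}=-p_{11}$ and $p_{22}=-p_{21}$. Then $p_{11}+p_{12}=0$ and the condition $t\in 2\pi\bbZ$ holds identically on $\bbS^1\times\bbZ_n$; this is the revolution axis, carrying continuous rather than cyclic symmetry, with no finite lobe count. The count for $A_+$ becomes $n|p_{11}-p_{12}|/2=n|p_{11}|$, which equals the unique nonzero entry $\TurnTwo$ (since $\TurnOne=0$ by Theorem~\ref{thm:flat-torus-integers}(3)). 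The principal obstacle in this proof is careful bookkeeping between the integer triples $(p_{jk})$, the cross-product $(s_0,s_1,s_2)$, and the canonical triple $(\TurnZero,\TurnOne,\TurnTwo)$ under the min/max normalization of~\eqref{eq:integers-flat}, along with verifying the parity hypothesis extracted from~\eqref{eq:closing-s3}.
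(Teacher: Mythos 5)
Your argument is correct and, at bottom, is the same strategy as the paper's: both proofs compute, for each axis, the order of the finite group of orientation-preserving symmetries of the torus fixing that axis pointwise, working inside the translational symmetry group $\bbS^1\times\bbZ_n$ of Proposition~\ref{thm:symmetry} with the basis $p_{10}=0$, $p_{20}=n=\gcd(\TurnOne,\TurnTwo)$. The difference is in execution. The paper argues abstractly: it sets $H=G\cap\GK$, gets $\ord(G/H)=n$ from Proposition~\ref{thm:symmetry}, and gets $\ord(H)=\TurnOne/n$ (resp.\ $\TurnTwo/n$) from the torus-knot type of the $\GK$-orbits, so $\ord(G)=\ord(G/H)\,\ord(H)=\TurnOne$ (resp.\ $\TurnTwo$). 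You instead make the count explicit: you write the induced isometry $R(s,t)$ via the monodromy \eqref{eq:equi-monodromy}, reduce the pointwise-fixing conditions $s\in2\pi\bbZ$, $t\in2\pi\bbZ$ to congruences in $(u,k)\in(\bbR/\bbZ)\times\bbZ_n$ using the closing integers $p_{jk}$, and count $\tfrac n2|p_{11}\mp p_{12}|$ solutions, matching these with $\{\TurnOne,\TurnTwo\}$ through the cross product and \eqref{eq:integers-flat}. Your version buys transparency (the congruence count makes visible exactly where the parity $p_{11}\equiv p_{12}\pmod 2$ and the primitivity of the period lattice enter, and it reproduces the paper's $n$ and $\TurnOne/n$ as the fiber and base of the projection to $\bbZ_n$), while the paper's version is shorter because it outsources the arithmetic to the linking numbers of the equivariant orbit. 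Two small caveats, neither fatal: the identification of the raw cross product of your chosen basis with the normalized triple of \eqref{eq:integers-flat} implicitly uses that the torus is primitive (its lattice is the full closing lattice), which is also implicit in the paper; and in the rotational case the relations should be $(\NU_2,\OMEGA_2)=(-\NU_1,\OMEGA_1)$ up to hyperelliptic involution, so $p_{22}=-p_{21}+2p_{20}\OMEGA_1$ rather than $p_{22}=-p_{21}$ — hence the condition $t\in2\pi\bbZ$ holds only on $\bbS^1\times\{0\}$, not on all of $\bbS^1\times\bbZ_n$; this does not affect your count $n|p_{11}-p_{12}|/2=\TurnTwo$ for the other axis, which is the assertion being proved.
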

\begin{proof}
Let $\hat{\GK}$ be the two-dimensional torus \eqref{eq:commut}.
Let $G$ be a subgroup of the orientation preserving subgroup of the
isometry group of the torus which fixes
one axis of the equivariant action point wise. Such groups are
homeomorphic to $\mathbb{S}^1$, and thus closed. Closed subgroups of
$\mathbb{S}^1$ are either finite or all of $\mathbb{S}^1$, and since
we are not considering surfaces of revolution, the group $G$ is
finite, and thus cyclic. Let $H \subset G$ be a subgroup which fixes
every orbit of the equivariant action set wise. We compute
$\ord(G/H)$ and $\ord(H)$. From the proof of Proposition~\ref{thm:symmetry} we conclude that $n_2 = \ord(G/H) =
\gcd(\ell_1,\,\ell_2)$, since $H=G \cap \GK$. Now
$G\subset \{ (1,\,e^{is}) \mid s \in \bbR \}$ and
$\GK=\{(e^{\mi\ell_1 t},\,e^{\mi\ell_2 t}) \mid t \in \bbR\}$, and
$\ord(\{(1,\,e^{is}) \mid s \in \bbR \}\cap
\{ (e^{\mi\ell_1 t},\,e^{\mi\ell_2t}) \mid t \in \bbR \})$ is equal to $\ell_1/n$, and generated by $(1,\,e^{2\pi\mi n/\ell_1}) \in G$,
the order coincides with the order of $\GK \cap G$.
Hence $\ord(G) = \ord(G/H) \ord(G) = \ell_1$. Similarly for the other axis. This proves the claim for the twizzled case. For the rectangular case a similar argument holds, and concludes the proof.
\end{proof}

In view of Proposition~\ref{thm:profile-curve-symmetry} we call
$\TurnOne$ and $\TurnTwo$ respectively the \emph{minor and major
lobe counts}. The tori shown in Figure~\ref{fig:twizzled2} have
major and minor lobe counts $n$ and $1$ respectively. By Theorem~\ref{thm:moduli-space} and Theorem~\ref{thm:flat-torus-integers} we have
\begin{proposition}\label{th:minlobes}
The major lobe count of a non-rotational spectral genus 1 \cmc torus is at least 3, and that of a rotational spectral genus 1 \cmc torus of revolution is at least 2.
\end{proposition}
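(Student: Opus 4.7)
The plan is to trace an arbitrary spectral genus one \cmc torus back to one of its flat endpoints along the flow~\eqref{eq:torus-flow} and invoke the strict inequality $\TurnOne<\TurnZero<\TurnTwo$ guaranteed by the classification of flat \cmc tori with a double point on $\bbS^1$.

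Concretely, given a spectral genus one \cmc torus in $\bbS^3$, Theorem~\ref{thm:moduli-space} places it in a one-parameter family of equivariant \cmc tori whose two endpoints are flat \cmc tori labeled by triples $(\TurnZero,\,\TurnOne,\,\TurnTwo)$ and $(\TurnOne+\TurnTwo-\TurnZero,\,\TurnOne,\,\TurnTwo)$ in the sense of Theorem~\ref{thm:flat-torus-integers}. The integers $(\TurnOne,\,\TurnTwo)$ are constant throughout the family (only the first entry changes between the two endpoints), and Proposition~\ref{thm:profile-curve-symmetry} identifies $\TurnTwo$ as the major lobe count of every member of the family, including the torus under consideration. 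Thus it suffices to estimate $\TurnTwo$ in terms of the triple inequality $0\le\TurnOne<\TurnZero<\TurnTwo$ from Theorem~\ref{thm:flat-torus-integers}\,(1).

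In the non-rotational (twizzled) case, Theorem~\ref{thm:flat-torus-integers}\,(3) forces $\TurnOne\ge 1$, whence $\TurnZero\ge\TurnOne+1\ge 2$ and $\TurnTwo\ge\TurnZero+1\ge 3$, giving the major lobe count $\TurnTwo\ge 3$. In the rotational case we have $\TurnOne=0$, so $\TurnZero\ge 1$ and $\TurnTwo\ge\TurnZero+1\ge 2$, giving the (major) lobe count $\TurnTwo\ge 2$.

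There is no real obstacle here beyond bookkeeping: the substantive content sits in the three results already in hand (the moduli-space description Theorem~\ref{thm:moduli-space}, the integer classification Theorem~\ref{thm:flat-torus-integers}, and the symmetry count Proposition~\ref{thm:profile-curve-symmetry}). The only point to take care of is ensuring that the labeling triple $(\TurnZero,\,\TurnOne,\,\TurnTwo)$ assigned to the flow endpoint genuinely determines the major lobe count of the generic (non-endpoint) torus in the family; this is precisely the content of Proposition~\ref{thm:profile-curve-symmetry} combined with the fact that, throughout the flow, the equivariant action and its commutant $\hat{\GK}$ are unchanged, so the linking numbers $(\TurnOne/n,\,\TurnTwo/n)$ (with $n=\gcd(\TurnOne,\,\TurnTwo)$) of a generic orbit are invariant.
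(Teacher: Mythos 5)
Your argument is correct and is essentially the paper's own: the paper deduces the proposition directly from Theorem~\ref{thm:moduli-space} and Theorem~\ref{thm:flat-torus-integers} (with Proposition~\ref{thm:profile-curve-symmetry} identifying $\TurnTwo$ as the major lobe count), and your write-up just makes that chain of inequalities $0\le\TurnOne<\TurnZero<\TurnTwo$ explicit in the two cases. Only your closing aside that the equivariant action and $\hat{\GK}$ are ``unchanged'' along the flow is loosely phrased (the action itself varies with $\NU_1,\,\NU_2$), but this is harmless since the constancy of $(\TurnOne,\,\TurnTwo)$ along the family is already supplied by Proposition~\ref{prop:involution} and Theorem~\ref{thm:moduli-space}.
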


\subsection{Profile curve sets}

%
%----------------------------------
\begin{figure}[t]
\centering
{\includegraphics[width=5cm]{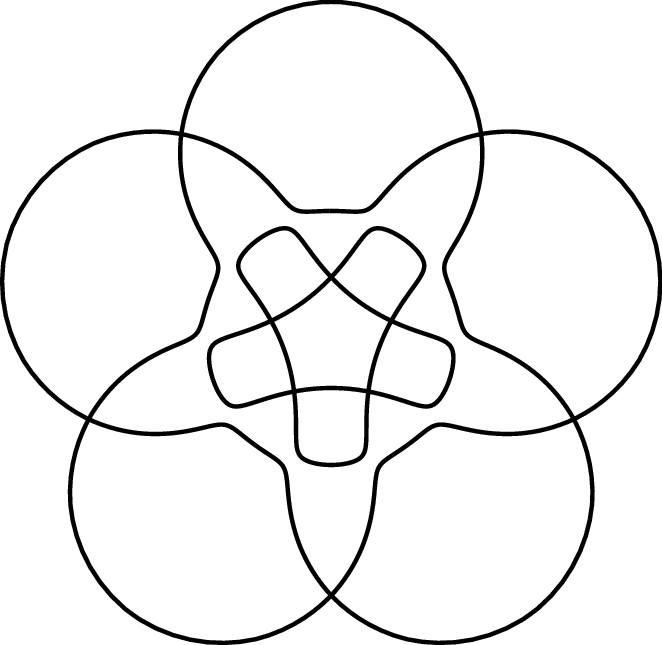}}\hspace{\PSPACE}
{\includegraphics[width=5cm]{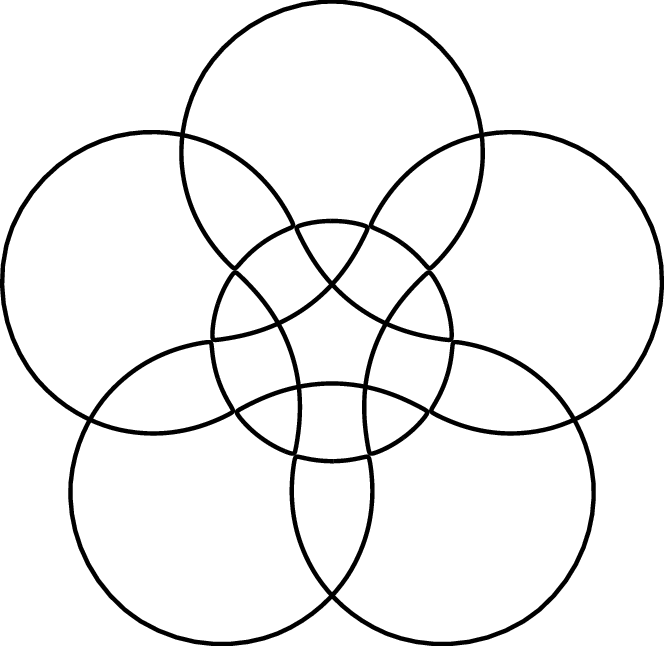}}\hspace{\PSPACE}
{\includegraphics[width=5cm]{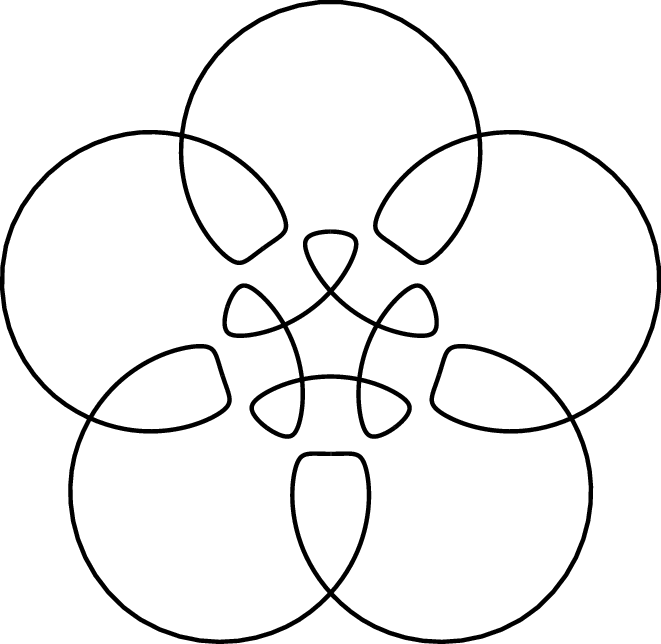}} \caption{
\label{fig:profile-twizzled} Profile curves of the
$(\TurnZero,\,\TurnOne,\,\TurnTwo) = (2,\,1,\,5)$ twizzled torus
family as the torus flows through its axis. The turning number of
the inner profile curve jumps from $\TurnZero=2$ to
$\TurnOne+\TurnTwo-\TurnZero=4$. Figure~\ref{fig:twizzled2} shows a
$5$-lobed torus in the family of which these are cross-sections. }
\end{figure}
The discrepancy between the two endpoints of the $g=1$ flow in
Proposition~\ref{prop:involution} is associated to the fact that at
two points during the flow, the corresponding torus intersects one
and then the other of its axes. At each of these two tori, one of
the torus knots degenerates to a circle. The combinatorics of the profile curve sets of equivariant tori are almost invariant during the flow: they are invariant on two disjoint intervals. When the torus intersects its axis, the connectivity and turning numbers of the profile curve set jumps as described in Lemma~\ref{thm:twizzled-profile-curve}. This phenomenon is
depicted in Figure~\ref{fig:profile-twizzled}.
\begin{lemma} \label{thm:twizzled-profile-curve}
If a profile curve set is immersed then $(\Xtwo\Bpoint -
\Xone)(\Xtwo - \Bpoint\Xone) \neq 0$. The total turning number of
each profile curve set of a non-flat twizzled
$(\TurnZero,\,\TurnOne,\,\TurnTwo)$ \cmc torus is $\TurnZero$ or
$\TurnOne+\TurnTwo-\TurnZero$.
\end{lemma}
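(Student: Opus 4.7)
The plan is to prove the two assertions in order: first the immersion criterion $(\jh\jq-\jk)(\jh-\jq\jk)\neq 0$, then the turning-number dichotomy.

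\textbf{Step 1 (immersion criterion).} A profile curve set is the intersection of the torus with a geodesic $2$-sphere $\bbS^2_A$ containing one of the two axes of the equivariant action. Its tangent vector at a regular point is a nonzero multiple of the component of $f_y$ orthogonal to the equivariant direction. I would start from \eqref{eq:twizzled-profile} together with the derivative formulas in \eqref{eq:frame-parts} and \eqref{eq:angle-deriv} for $\AngleZero,\AngleOne,\AngleTwo$, and show that the squared length of this tangent vector factorizes. After substituting $\lambda_k=e^{2\mi\theta_k}$ and applying the identities \eqref{eq:sqrt_q_formula} relating $(\theta_1,\theta_2)$ to $(\jk,\jh)$, the two axis-specific factors reduce, up to strictly positive quantities on $\calB$, to $\jh\jq-\jk$ and $\jh-\jq\jk$. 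Contrapositively, if a profile curve set is immersed then $(\jh\jq-\jk)(\jh-\jq\jk)\neq 0$.

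\textbf{Step 2 (isotopy invariance and endpoint values).} The total turning number of a closed immersed $1$-manifold in the $2$-sphere is an isotopy invariant, so it is locally constant on any open subinterval of the flow on which the profile curve set stays immersed; by Step 1 these subintervals are bounded by zeros of $(\jh\jq-\jk)(\jh-\jq\jk)$. Theorem~\ref{thm:moduli-space} identifies the two endpoints of a twizzled spectral genus one integral curve as flat \cmc tori with integer data $(\TurnZero,\TurnOne,\TurnTwo)$ and $(\TurnOne+\TurnTwo-\TurnZero,\TurnOne,\TurnTwo)$, and Theorem~\ref{thm:flat-torus-integers}(2) pins down the profile-curve-set turning numbers at these endpoints as $\TurnZero$ and $\TurnOne+\TurnTwo-\TurnZero$ respectively.

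\textbf{Step 3 (ruling out intermediate values).} It remains to show that no third value can appear. Differentiating $\jh\jq-\jk$ and $\jh-\jq\jk$ along the flow \eqref{eq:torus-flow} and combining the strict monotonicity \eqref{eq:monotonicity} of $\jk$ and $\jh$ with Proposition~\ref{prop:levelset}(3), the derivative of each factor has a definite sign at any of its zeros, so each factor vanishes at most once along the flow. Geometrically, each of these events is the surface meeting one of its two axes, and each axis is met exactly once, as illustrated in Figure~\ref{fig:profile-twizzled}. Hence the turning number of each profile curve set jumps at most once during the flow, and can therefore take only the two endpoint values $\TurnZero$ or $\TurnOne+\TurnTwo-\TurnZero$, proving (2).

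\textbf{Main obstacle.} The principal difficulty is the explicit simplification in Step 1 producing the compact factored expression $(\jh\jq-\jk)(\jh-\jq\jk)$ for the squared tangent length; this requires careful bookkeeping through the $y$-derivatives \eqref{eq:angle-deriv} of the frame parameters, combined with the trigonometric identities \eqref{eq:sqrt_q_formula}. Step 3 is then relatively straightforward: the derivatives of the two factors mix all three of $\dot\jq,\dot\jk,\dot\jh$, but the required monotonicity is already encoded in \eqref{eq:monotonicity} and the level-set analysis of Proposition~\ref{prop:levelset}.
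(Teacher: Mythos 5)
Your overall plan — locate the parameter locus where the profile curve sets can degenerate, then combine regular-homotopy invariance of the turning number with the endpoint identification from Theorem~\ref{thm:moduli-space} and Theorem~\ref{thm:flat-torus-integers}(2) — is the same strategy as the paper's. The concrete gap is in Step 1: the squared length of the tangent to a profile curve depends on the point along the curve (through $\Metric(y)$ and the phases), while $(\jq\jh-\jk)(\jh-\jq\jk)$ is constant along the curve, so it cannot equal the latter times a strictly positive quantity. If it did, the vanishing of one factor would make the entire profile curve singular, whereas in fact the curve degenerates only at isolated points when the torus crosses an axis (cf.\ Figure~\ref{fig:profile-twizzled}). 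The correct structure, and the paper's route, is pointwise: writing $f=f_1+f_2\bbj$ as in \eqref{eq:twizzled-profile}, the sets $\calC_k$ are the level sets $\Real f_k=0$ with $f_k=\phi_k(x)\psi_k(y)$; a singular point forces $\psi_k(y)=0$, and since $\psi_k$ is a sum of two terms with unimodular phase and real amplitudes this requires \emph{simultaneously} $\Metric'(y)=0$ (so $\Metric\in\{1,\jq\}$) and the phase condition $\Xtwo\Metric^2=\Xone\jq$; only the conjunction of these two conditions at a single $y$ produces $(\Xtwo\jq-\Xone)(\Xtwo-\jq\Xone)=0$. Note also that what your Steps 2--3 actually use is the implication ``factor $\neq0$ $\Rightarrow$ immersed'' (this is what the paper's Claim~1 proves, and what the corollary on non-embeddedness needs), not the literal direction you aim at in Step 1; your global factorization would have given both directions at once, but it is not available.

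Your Step 3 contains a claim the paper leaves implicit, namely that each factor vanishes at most once along a flow line; the claim is true, but it does not follow from \eqref{eq:monotonicity} together with Proposition~\ref{prop:levelset}(3) as cited. One must differentiate along \eqref{eq:torus-flow} and use Proposition~\ref{th:alpha_less_beta}: at a zero of $\jq\jh-\jk$ one computes $\tfrac{d}{dt}(\jq\jh-\jk)=\jq^2\JK'-\JE'<0$, and at a zero of $\jh-\jq\jk$ one computes $\tfrac{d}{dt}(\jh-\jq\jk)=\jq\,(\JE'-\JK')$, which has constant sign since $\jq$ does not change sign along the flow; hence each factor crosses zero at most once. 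Finally, to conclude that each $\calC_k$ takes only the two endpoint values $\TurnZero$ and $\TurnOne+\TurnTwo-\TurnZero$ you also need that each of the two axis-crossings affects only one of the two profile curve sets (otherwise a single curve could jump twice and acquire a third value on a middle interval). You assert this geometrically; the paper encodes the same assertion in its choice of the regions $\calI_k$, $\widehat\calI_k$ without detailed verification, but in a complete write-up it should be extracted from the sign analysis of $\psi_1$ versus $\psi_2$ at the zeros of $\Metric'$.
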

\begin{proof}
Claim 1: If a profile curve set is not immersed, then $(\Xtwo\Bpoint - \Xone)(\Xtwo - \Bpoint\Xone) = 0$. To prove the claim, let $f$ be the immersion of the torus as in
\eqref{eq:twizzled-profile}. Writing $f=f_1+f_2\bbj$, the two
profile curve sets are defined implicitly by $\Real f_1=0$ and
$\Real f_2=0$ respectively. The profile curves are singular wherever
$\Real f_1$, ${(\Real f_1)}_x$ and ${(\Real f_1)}_y$ all vanish or
$\Real f_2$, ${(\Real f_2)}_x$ and ${(\Real f_2)}_y$ all vanish. The
function $f_k$ decouples into $f_k = \phi_k(x)\psi_k(y)$, where
$\gamma=\gamma_1\gamma_2^{-1}$ and $\phi_1 = \alpha_1\alpha_2^{-1}$ and $\psi_1 = \beta_1\beta_2^{-1} (\gamma c_1 c_2+\gamma^{-1}s_1s_2)$, and
$\phi_2 = \alpha_1\alpha_2$ and $\psi_2 = \beta_1 \beta_2
(\gamma^{-1} s_1 c_2 - \gamma c_1s_2)$. Then
\begin{equation*}
    2\Real f_k = \phi_k\psi_k + \ol\phi_k\ol\psi_k
    \spacecomma\interspace 2{(\Real f_k)}_x = {(\phi_k)}_x\psi_k +
    \ol{(\phi_k)}_x \ol\psi_k \spacecomma\interspace k=1,\,2
    \spaceperiod
\end{equation*}
For $k=1,\,2$, Since ${(\phi_k)}_x$ never vanishes, it follows that
$\Real f_k$ and ${(\Real f_k)}_x$ vanish if and only if $\psi_k$
vanishes. The additional condition that ${(\Real f_k)}_y$ vanishes
is ignored; it specifies for which values of $x$, if any, the curve
fails to be immersed. Since $\beta_1$ and $\beta_2$ are unimodular
and $c_j$ and $s_j$ are real, this occurs if $\gamma^4=1$ and either
$c_1^2c_2^2 - s_1^2 s_2^2=0$ or $s_1^2c_2^2 - c_1^2 s_2^2=0$.
We have
\begin{align*}
    2(c_1^2c_2^2 - s_1^2 s_2^2) &= \cos(\AngleOne(\lambda_2)) +
    \cos(\AngleOne(\lambda_1)) =
    \Metric^{-1}\Metric'(\half
                        (\Vconst^{-1}(\lambda_2)+\Vconst^{-1}(\lambda_1)))
    \spacecomma \\
    2(s_1^2c_2^2 - c_1^2 s_2^2) &= \cos(\AngleOne(\lambda_2)) -
    \cos(\AngleOne(\lambda_1)) =
    \Metric^{-1}\Metric'(\half(
                        \Vconst^{-1}(\lambda_2)-\Vconst^{-1}(\lambda_1)))
    \spaceperiod
\end{align*}
The zero sets of each of these expressions is the zero set of
$\Metric'$. At the zeros of $\Metric'$, $\Metric=1$ or
$\Metric=\Bpoint$.
A computation shows that the zero set of $\gamma^4-1$ is the zero
set of $\Xtwo\Metric^2-\Xone\Bpoint$. Hence if the curve is not
immersed, then either $\Metric=\jq$ and $\Xtwo\Bpoint=\Xone$, or else
$\Metric=1$ and $\Xtwo=\Xone\Bpoint$. This proves claim 1.

Let
\begin{gather*}
    \calI_1=\{\Xone-\Xzero\Xtwo<0\}
    \spacecomma\quad \widehat\calI_1=\{\Xone-\Xzero\Xtwo>0\}
    \spacecomma\quad \calI_2=\{\Xtwo-\Xzero\Xone>0\} \spacecomma\quad
    \widehat\calI_2=\{\Xtwo-\Xzero\Xone<0\} \spacecomma\\
    (\tilde\TurnZero,\,\tilde\TurnOne,\,\tilde\TurnTwo) =
    (\TurnOne+\TurnTwo-\TurnZero,\,\TurnOne,\,\TurnTwo)
    \spacecomma\interspace c_{jk} = \gcd(\Turn_j,\,\Turn_k) \spacecomma
    \interspace \hat c_{jk} = \gcd(\hat\Turn_j,\,\hat\Turn_k)
    \spaceperiod
\end{gather*}
Claim 2: on $\calI_k$ (respectively $\widehat\calI_k$), $\calC_k$ has $c_k$ (respectively $\hat c_k$) components. Each component of $\calC_k$ has turning number $\TurnZero/c_k$ (respectively $\hat\TurnZero/c_k$). The total turning number of $\calC_k$ is $\TurnZero$ on each of $\calI_k$ and $\widehat\calI_k$. By Theorem~\ref{thm:flat-torus-integers} (2), at the flat \cmc torus at the beginning (respectively end) of the flow, the profile curve sets are $\TurnZero$ (respectively $\TurnOne+\TurnTwo-\TurnZero$) wrapped circles. Since turning numbers of the immersed profile curves are homotopy invariants, the total turning number of $\calC_1$ is preserved in $\calI_1$ and $\widehat\calI_1$. Similarly the total turning number of $\calC_2$ is preserved in $\calI_2$ and $\widehat\calI_2$.
\end{proof}
Lemma~\ref{thm:twizzled-profile-curve} simplifies in the case of
tori of revolution. By Theorem~\ref{thm:flat-torus-integers}, the profile curve at the flat \cmc torus is an $\TurnZero$-wrapped circle, with turning number $\TurnZero$. Since the flow
$\Xzero\in(0,\,1]$\ induces a regular homotopy of the profile curve,
then by the Whitney-Graustein theorem, every profile curve in the
flow has turning number $\TurnZero$. Figure~\ref{fig:profile-3lobe}
illustrates the profile curves of the $3$-lobed tori of revolution.
%
%----------------------------------------------------
\begin{figure}[t]
\centering
\frame{\includegraphics[width=3.35cm]{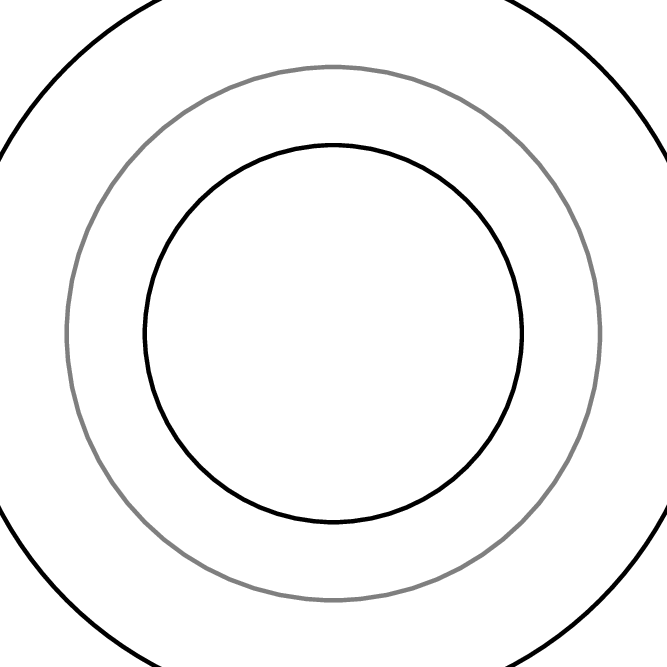}}
\frame{\includegraphics[width=3.35cm]{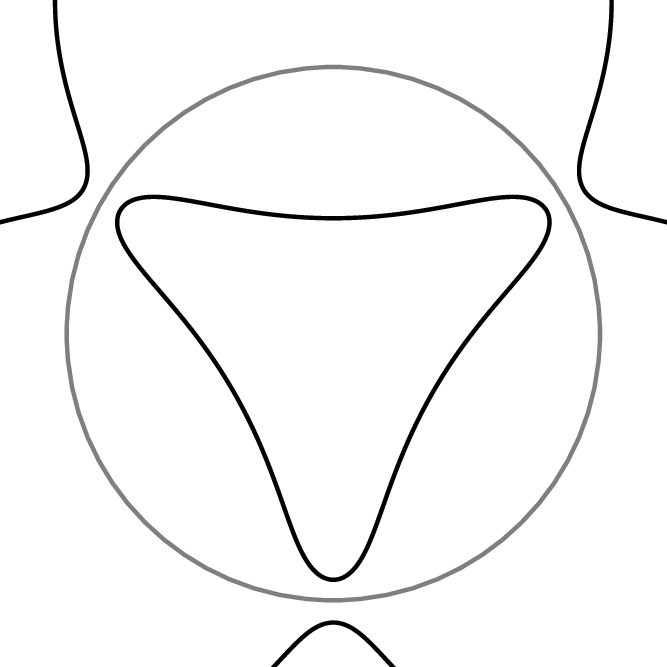}}
\frame{\includegraphics[width=3.7cm]{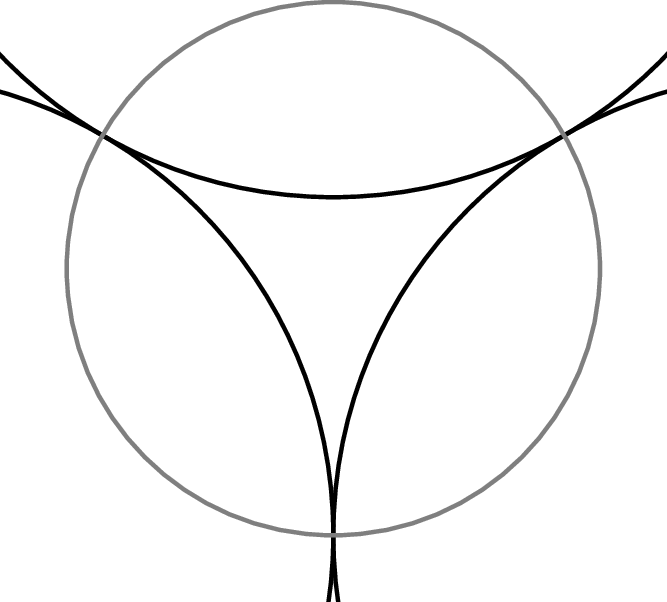}}
\frame{\includegraphics[width=3.7cm]{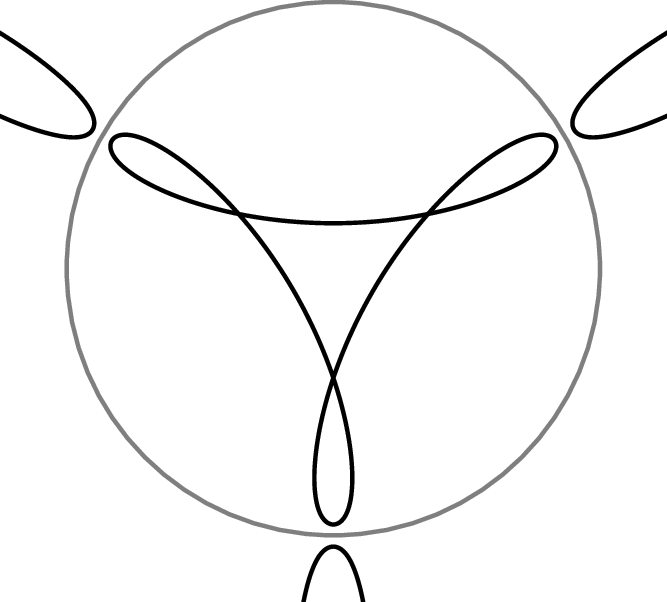}}
\frame{\includegraphics[width=3.45cm]{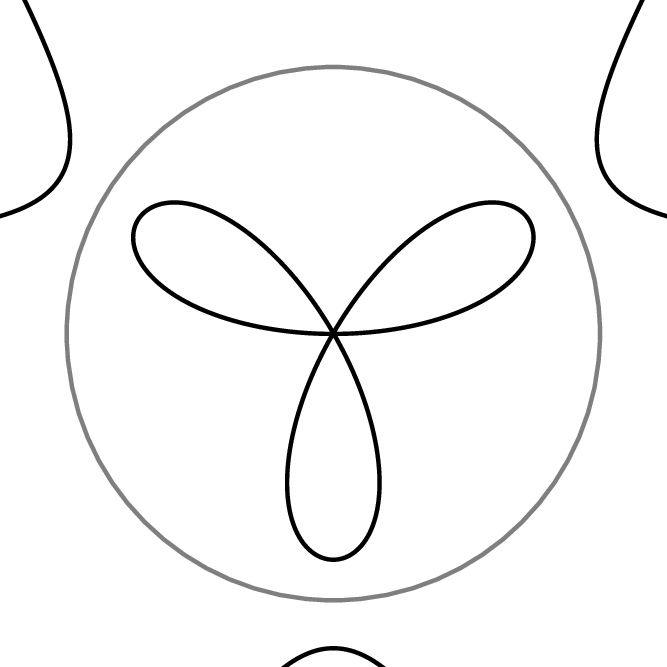}}
\frame{\includegraphics[width=3.78cm]{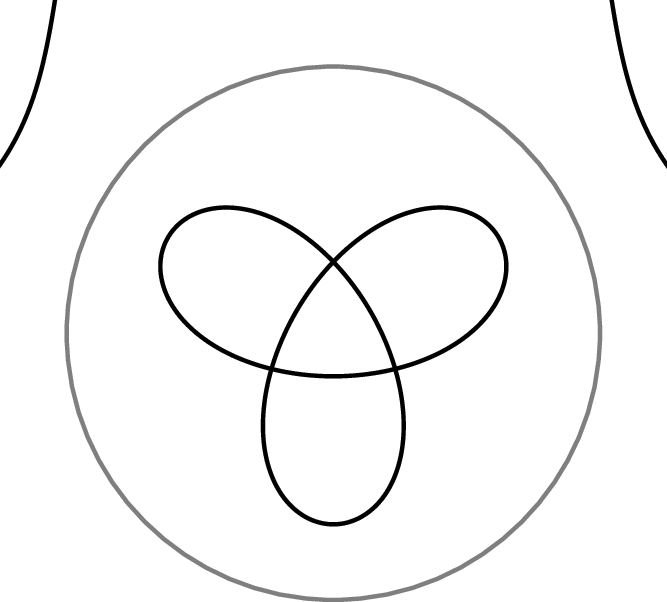}}
\frame{\includegraphics[width=3.45cm]{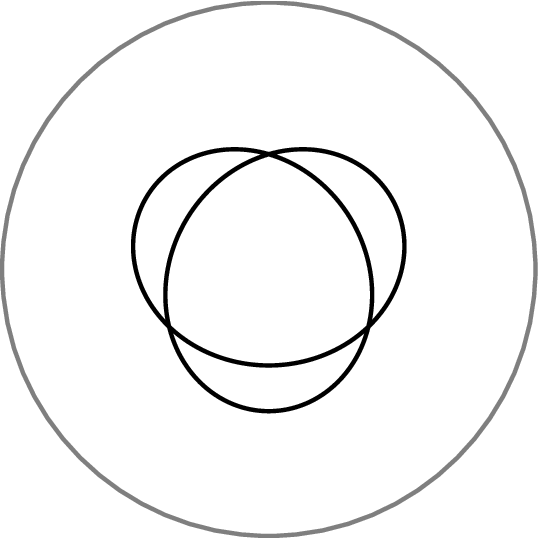}}
\frame{\includegraphics[width=3.45cm]{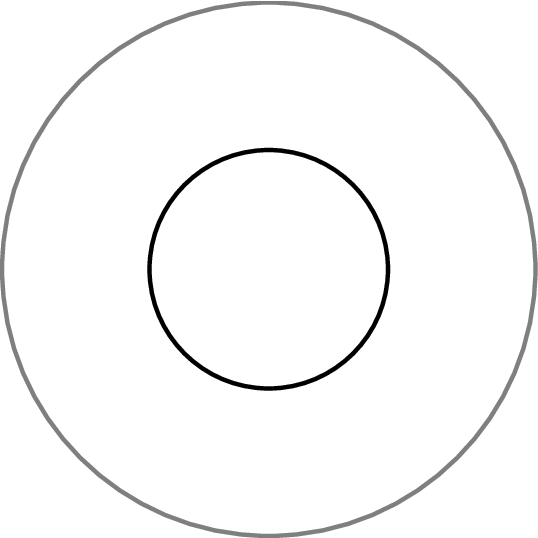}}
\caption{ \label{fig:profile-3lobe} The flow through the $3$-lobed
tori of revolution. Starting at a singly-wrapped flat \cmc torus, the inner profile curve of the embedded $(1,\,3)$ tori flows to the
$(1,\,3)$ sphere bouquet in the third frame. It continues through
the non-embedded $(2,\,3)$ tori, crosses itself in the fifth frame,
and ends at a doubly-wrapped flat \cmc torus. As the inner profile curve passes through the origin, and the outer curve through infinity, their turning numbers remain fixed but their winding numbers jump. The curves are stereographically projected to $\bbE^2$; the central gray circle is the axis of revolution. }
\end{figure}
%-----------------------------------------------------------
%

\subsection{Embeddedness}

We show that twizzled \cmc tori are never embedded, and classify embedded \cmc tori of revolution. As a corollary of
Lemma~\ref{thm:twizzled-profile-curve} we have
\begin{corollary} \label{cor:twizzled-nonembedded}
A non-rotational spectral genus one \cmc torus in $\bbS^3$ is never embedded.
\end{corollary}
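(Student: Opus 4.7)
The plan is to combine Lemma~\ref{thm:twizzled-profile-curve} with the strict integer inequalities from Theorem~\ref{thm:flat-torus-integers} to rule out embeddedness via a turning-number argument applied to the profile curve set associated with the minor axis.

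First I would note that if a twizzled \cmc torus is embedded in $\bbS^3$, then for each of its two axes $A$ the intersection of the torus with a geodesic 2-sphere $\bbS^2_A$ is a disjoint union of embedded simple closed curves. By the Whitney--Graustein theorem each such simple closed curve has turning number $\pm 1$. The equivariant $\bbS^1$-action from Proposition~\ref{thm:symmetry} permutes the components of the profile curve set $\calC_k$ transitively, so the components share a common turning number, and consequently the absolute value of the total turning number of $\calC_k$ equals the number of its components.

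Next I would invoke Lemma~\ref{thm:twizzled-profile-curve}, which on $\calI_1$ (respectively $\widehat\calI_1$) gives the total turning number of $\calC_1$ as $\TurnZero$ (respectively $\TurnOne+\TurnTwo-\TurnZero$) with $\gcd(\TurnZero,\TurnOne)$ (respectively $\gcd(\TurnOne+\TurnTwo-\TurnZero,\TurnOne)$) components. Embeddedness of $\calC_1$ therefore forces either $\TurnZero\divides\TurnOne$ or $(\TurnOne+\TurnTwo-\TurnZero)\divides\TurnOne$. By Theorem~\ref{thm:flat-torus-integers}~(1), together with the twizzled hypothesis $\TurnOne>0$ supplied by Theorem~\ref{thm:flat-torus-integers}~(3), we have $0<\TurnOne<\TurnZero<\TurnTwo$. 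The first alternative fails since $\TurnZero>\TurnOne\ge 1$, and since $\TurnTwo>\TurnZero$ gives $\TurnOne+\TurnTwo-\TurnZero>\TurnOne\ge 1$ the second alternative fails too. Hence $\calC_1$ is not a disjoint union of simple closed curves, and the torus is not embedded.

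The main obstacle is to pick out the correct profile curve set on which to run the argument: the analogous conditions imposed by $\calC_2$ are divisibility statements into $\TurnTwo$ and are often compatible with embeddedness, so the obstruction must be driven by $\calC_1$. It is precisely the strict positivity of $\TurnOne$ (the twizzled hypothesis) combined with the inequality $\TurnOne<\min\{\TurnZero,\,\TurnOne+\TurnTwo-\TurnZero\}$ from Theorem~\ref{thm:flat-torus-integers} that simultaneously defeats both divisibility options.
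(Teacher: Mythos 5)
Your core strategy is the paper's: combine the turning-number statement of Lemma~\ref{thm:twizzled-profile-curve} with the strict inequalities $0<\TurnOne<\TurnZero<\TurnTwo$ of Theorem~\ref{thm:flat-torus-integers}, and the observation that the obstruction lives in the profile curve set $\calC_1$ is consistent with what the paper needs (the paper phrases it as each relevant turning number being at least $2$, you phrase it as the divisibility conditions $\TurnZero\divides\TurnOne$ or $(\TurnOne+\TurnTwo-\TurnZero)\divides\TurnOne$ failing; these are equivalent bookkeeping). However, there is a genuine gap in the step where you pass from ``the torus is embedded in $\bbS^3$'' to ``the profile curve set is a disjoint union of embedded simple closed curves to which the turning-number count applies.'' Turning numbers are only defined for immersed curves, and the profile curve sets of a twizzled torus are \emph{not} always immersed: at two isolated tori in each flow family the surface passes through one of its axes, the profile curve set is singular there, and its connectivity and turning numbers jump (this is exactly the content of the first sentence of Lemma~\ref{thm:twizzled-profile-curve} and of Figure~\ref{fig:profile-twizzled}). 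Your argument silently excludes these tori, so as written it does not prove the corollary for them. The paper handles this in two steps you are missing: away from the degenerate locus $(\Xtwo\Bpoint-\Xone)(\Xtwo-\Bpoint\Xone)=0$ the profile curve is immersed and the inverse function theorem is used to transfer embeddedness of the surface to embeddedness of the profile curve (your assertion that the intersection of an embedded torus with a geodesic $2$-sphere is automatically a union of embedded circles needs precisely this kind of justification); and at the degenerate tori it argues by openness of embeddedness along the flow, since non-embeddedness has already been established on a neighbourhood.

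A secondary point: your claim that the equivariant $\bbS^1$-action permutes the components of $\calC_1$ transitively (so that all components have turning numbers of the same sign) is asserted without proof, and the symmetries fixing an axis do not obviously preserve the particular geodesic $2$-sphere defining the profile curve set. This can be avoided entirely: the proof of Lemma~\ref{thm:twizzled-profile-curve} (Claim~2) already gives that each component of $\calC_1$ has turning number $\TurnZero/\gcd(\TurnZero,\TurnOne)$ respectively $(\TurnOne+\TurnTwo-\TurnZero)/\gcd(\TurnOne+\TurnTwo-\TurnZero,\TurnOne)$, each of which is at least $2$ by the inequalities you quote, so no component can be a simple closed curve; this removes the need for both the transitivity claim and the total-versus-componentwise accounting.
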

\begin{proof}
By Lemma~\ref{thm:twizzled-profile-curve}, the profile curve sets
of a $(\TurnZero,\,\TurnOne,\,\TurnTwo)$ twizzled \cmc torus have
total turning number $\TurnZero$ or $\TurnOne+\TurnTwo-\TurnZero$.
By Theorem~\ref{thm:flat-torus-integers}, each of these turning
numbers is strictly bigger than $1$. Hence the profile curve sets are not embedded. To show the surface is not embedded assume first that $(\Xtwo\Bpoint - \Xone)(\Xtwo - \Bpoint\Xone) \neq 0$. If $f$ where embedded, then by Lemma~\ref{thm:twizzled-profile-curve} the profile curve is immersed. By the inverse function theorem the inverse image under $f$ of the profile curve is embedded, hence if it where embedded the profile curve would be embedded, which is not true since its turning number is at least two by Lemma~\ref{thm:twizzled-profile-curve}, giving the contradiction. Because embeddedness is an open condition, during the flow $(\Xtwo\Bpoint - \Xone)(\Xtwo - \Bpoint\Xone) \neq 0$ away from isolated points, the surface is embedded also at the zeroes.
\end{proof}
%

%----------------------------------------------------
% embeddedness
%----------------------------------------------------
%
\begin{theorem} \label{thm:embedded} A $(\TurnZero,\,\TurnTwo)$
torus of revolution is embedded if and only if $\TurnZero=1$.
\end{theorem}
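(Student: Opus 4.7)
My plan is to treat the two directions separately, with the forward implication being a direct analogue of Corollary~\ref{cor:twizzled-nonembedded} and the reverse implication requiring a topological continuity argument along the flow.

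For the forward direction (embedded implies $\TurnZero=1$), I would mimic the inverse-function-theorem argument in the proof of Corollary~\ref{cor:twizzled-nonembedded}. If the $(\TurnZero,\,\TurnTwo)$ rotational torus $f$ is embedded, then the preimage under $f$ of any totally geodesic $2$-sphere $\bbS^2_A$ containing the rotation axis is an embedded $1$-submanifold of the abstract torus, and consequently the profile curve $f(\bbR^2)\cap\bbS^2_A$ must be an embedded closed immersed curve in $\bbS^2_A$. Any such curve has turning number $\pm 1$. By Theorem~\ref{thm:flat-torus-integers}(2) the profile curve at the flat \cmc torus endpoint has turning number $\TurnZero$, and by the Whitney–Graustein homotopy argument in the paragraph following Lemma~\ref{thm:twizzled-profile-curve}, this turning number is preserved under the regular homotopy induced by the flow $\Xzero\in(0,\,1]$. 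Hence $\TurnZero=1$.

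For the reverse direction ($\TurnZero=1$ implies embedded), I consider the $(1,\,\TurnTwo)$ family. By Theorem~\ref{thm:moduli-space} and Lemma~\ref{thm:sphere-bouquet}, the flow starts at $\Bpoint=1$ at the singly-wrapped flat \cmc torus (embedded by Theorem~\ref{thm:flat-torus-integers}(2)) and limits as $\Bpoint\to 0$ to the $(1,\,\TurnTwo)$ sphere bouquet. Let $\calE\subseteq(0,\,1]$ be the set of parameters for which the torus is embedded. Then $\calE$ is open (embeddedness is open), nonempty (contains $\Bpoint=1$), and I aim to show $\calE=(0,\,1]$. Assume for contradiction that $\Bpoint_\ast\coloneq\inf\calE>0$. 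Then the torus at $\Bpoint_\ast$ is a $C^\infty$-limit of embedded tori and therefore has at least one tangential self-intersection. By the $\bbS^1$-rotational symmetry, such a self-intersection is either a tangency of the profile curve with the axis of revolution, or a tangential self-intersection of the immersed profile curve.

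The first case is ruled out because axis crossings correspond to $\lambda_1\in\{\pm1\}$ (where $g_0$ in~\eqref{eq:rev-profile-curve2} vanishes), which in the rotational case happens only at the sphere bouquet limit $\Bpoint\to 0$ (this is the content of the argument in Proposition~\ref{prop:involution} and Lemma~\ref{thm:sphere-bouquet}). The main obstacle is the second case: ruling out tangential self-intersections of an immersed profile curve with turning number $1$. Here I would exploit the $\TurnTwo$-fold discrete rotational symmetry from Proposition~\ref{thm:profile-curve-symmetry} together with the explicit formula~\eqref{eq:rev-profile-curve2}. Concretely, I plan to verify from the formulas for $g_0,\,g_1,\,g_2$ that throughout the $(1,\,\TurnTwo)$ portion of the flow the profile curve is star-shaped with respect to the intersection point of the two axes. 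A star-shaped closed immersed curve with turning number $1$ is automatically a simple closed curve, giving embeddedness of the profile curve and hence of the surface, contradicting the assumption that $\Bpoint_\ast>0$.
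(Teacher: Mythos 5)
Your forward direction (embedded $\Rightarrow$ $\TurnZero=1$) is essentially the paper's argument: embeddedness of the surface forces the profile curve to be embedded, hence of turning number $1$, while Theorem~\ref{thm:flat-torus-integers}(2) together with the Whitney--Graustein homotopy invariance along the flow $\Xzero\in(0,\,1]$ pins the turning number at $\TurnZero$.

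The reverse direction, however, has a genuine gap, concentrated exactly where you say you ``plan to verify'' star-shapedness. First, the proposed center is ill-defined: the two axes of a rotational equivariant torus are disjoint linked geodesics in $\bbS^3$, so there is no ``intersection point of the two axes''; presumably you mean the projection of the non-rotational axis, i.e.\ the origin of the plane of the orthographic projection $re^{\mi\psi}$. But star-shapedness about that point fails in general: by \eqref{eq:theta-prime} the sign of $\psi'$ is governed by the factor $\Metric^2\cos(2\theta_1)-\Bpoint$, which can vanish along the curve, and indeed near the sphere-bouquet end of the $(1,\,\TurnTwo)$ family the profile curve is close to a circle orthogonal to the rotation axis that does not even enclose the origin (compare the caption of Figure~\ref{fig:profile-3lobe}, where winding numbers about the origin jump during the flow). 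So the one claim your argument hinges on is unsubstantiated and appears false. There is also a structural flaw in the continuity scaffolding: showing $\inf\calE>0$ leads to a contradiction only proves $\inf\calE=0$, not $\calE=(0,\,1]$, since $\calE$ need not be an interval; and conversely, if you could establish the star-shapedness (or any pointwise simplicity criterion) along the whole family, the limiting/tangency argument would be superfluous. The paper avoids all of this by a direct computation from \eqref{eq:rev-profile-curve2}: the curvature of the orthographic projection of the profile curve is $\kappa=8\,\cc^{-3}\Vconst^2\,\Bpoint\,\Metric^{-1}>0$, so for turning number $\TurnZero=1$ the curve is convex and hence simple, and $g_0>0$ (since $\Metric>0$ and $\sin(2\theta_1)>0$) keeps it off the rotation axis; together these give embeddedness of the surface at every parameter, with no limiting argument needed. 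To repair your proof you would need to replace the star-shapedness claim by such a convexity (or monotonicity) statement verified from the explicit formulas, at which point you are essentially reproducing the paper's computation.
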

\begin{proof}
A surface of revolution in $\bbS^3$ is embedded if and only if its
profile curve is embedded and does not meet the revolution axis. To
show the embeddedness, we show that the curvature of the orthographic projection of the profile curve $f_0 =\exp(\mi\AngleZero)(g_1 + \mi g_2) + g_{0}\bbk$ in \eqref{eq:rev-profile-curve2} is strictly positive. Write $\exp(\mi\chi_0)(g_1+\mi g_2) = r\exp(\mi\psi)$ and $s=g_0$ so the profile curve is $f_0 = re^{\mi\psi} + s\bbk$. To compute $\psi'$, note that $\psi = \AngleZero +
\arg(g_1+\mi g_2)$. The expression for $\AngleZero'$
in~\eqref{eq:angle-deriv} yields after a calculation
\begin{equation} \label{eq:theta-prime}
    \psi' = \AngleZero' +
    \frac{g_1g_2'-g_1'g_2}{g_1^2+g_2^2} =
    \frac{2\Vconst \sin(2\theta_1) %Faktor 2 theta
    (\Metric^2\cos(2\theta_1) - \jq)} %Faktor 2 theta
    {\Metric^2\sin^2(2\theta_1) -4\Vconst^2} \spaceperiod %Faktor 2 theta
\end{equation}
with $2\mi\theta_1 = \ln\lambda_1$.
The curvature of the plane curve $re^{\mi\psi}$ is $\kappa =8
\cc^{-3}\Vconst^2 \jq\Metric^{-1} $. Note that the plane curve
$re^{\mi\psi}$ is an orthographic projection
of the hemisphere to $\bbR^2$, not stereographic; the curvature of
the stereographic projection may change sign, as seen in
Figure~\ref{fig:profile-3lobe}.

We next show that the profile curve does not meet the revolution
axis for $\Bpoint\in(0,\,1]$. Since the range of $\Metric$ is
$[\Bpoint,\,1]$, then the range of $s$ is
$[\half\Vconst^{-1}\Bpoint\sin(2\theta_1),\,
\half\Vconst^{-1}\sin(2\theta_1)]$. Hence
$s>0$, because $\Vconst>0$, $\Bpoint>0$ and $\sin(2\theta_1)>0$.
Hence $\abs{r}<1$, so the profile curve does not meet the axis of
revolution.

If the profile curve is embedded, then its turning number is $1$.
But by the discussion after
Theorem~\ref{thm:twizzled-profile-curve}, its turning number is
$\TurnZero$. Hence $\TurnZero=1$.

Conversely, assume $\TurnZero=1$, so its turning number is $1$. The
curvature $\kappa$ of the orthographic projection $re^{\mi\psi}$ of
the profile curve to $\bbC$ computed above is strictly positive, so
the profile curve is convex and hence embedded (see e.g \cite[5-7, Proposition 1]{DoC:dg1}).
\end{proof}
\begin{theorem}
An equivariant {\sc{cmc}} torus in $\bbS^3$ is Alexandrov embedded
if and only if it is a surface of revolution and singly wrapped with respect to the rotational period.
\end{theorem}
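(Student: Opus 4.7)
The plan is to prove both implications, with the sufficient direction short and the necessary direction requiring an equivariant filling argument.

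For the sufficient direction, a rotational equivariant {\sc{cmc}} torus that is singly wrapped is in fact embedded: in the spectral genus zero case by Proposition~\ref{prop:flat-torus}(ii) (singly wrapped rectangular means $\Gamma = \Lambda^*$), and in the spectral genus one case by Theorem~\ref{thm:embedded} ($\ell_0 = 1$). Since every embedded torus in $\bbS^3$ bounds a solid torus, it is Alexandrov embedded.

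For the necessary direction I would argue contrapositively that the following three classes of equivariant {\sc{cmc}} tori are never Alexandrov embedded: (i)~twizzled spectral genus one tori; (ii)~rotational spectral genus one tori with $\ell_0 \geq 2$; and (iii)~multiply covered flat {\sc{cmc}} tori ($\Gamma \subsetneq \Lambda^*$). The common strategy is: if $T$ admitted an Alexandrov filling $F : V \to \bbS^3$ of a solid torus $V$, then $V$ could be chosen equivariant under the symmetry group $K$ of $T$, because the mean curvature normal of $T$ specifies the filling side unambiguously and this choice is $K$-invariant. Intersecting equivariant $V$ with a geodesic two-sphere $\bbS^2_A$ containing one of the axes of $T$ then yields an immersed disk in a hemisphere whose boundary is a profile curve set of $T$. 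But a plane curve bounds an immersed disk only if its turning number is $\pm 1$, whereas by Lemma~\ref{thm:twizzled-profile-curve} together with the constraints of Theorem~\ref{thm:flat-torus-integers} in case~(i), by the discussion following Theorem~\ref{thm:embedded} in case~(ii), and by Proposition~\ref{prop:flat-torus} in case~(iii), the profile curve sets all have total turning number at least $2$, giving a contradiction.

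The hard part will be rigorously justifying the equivariance of the Alexandrov filling, particularly in the twizzled case where $K$ acts helically rather than by rotation about a single axis. To handle this I would exploit the commuting two-torus $\hat K$ from Section~\ref{sec:frame}, whose orbits foliate $\bbS^3$ minus the two axes of $T$ by embedded two-tori; this provides enough ambient symmetry to consistently identify the filling side across the self-intersections of $f$, after which a careful bookkeeping should upgrade the filling to a genuinely $K$-equivariant one. Once that step is in place, the topological conclusion that an immersed disk has boundary turning number $\pm 1$ finishes the proof uniformly across all three cases.
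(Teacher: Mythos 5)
Your proposal in effect proves the different statement ``Alexandrov embedded $\iff$ embedded'', and that is not what the theorem says. The phrase \emph{singly wrapped with respect to the rotational period} refers to the wrapping of the torus in the direction of the rotational period (the orbit direction of the revolution), not to the wrapping number $\TurnZero$ of the profile curve. Consequently your class (ii) --- rotational spectral genus one tori with $\TurnZero\ge 2$ are never Alexandrov embedded --- is false inside the paper's own framework: the $(k,\,5)$ tori of revolution with $k=2,\,3,\,4$ shown in Figure~\ref{fig:revtorusfive} are Alexandrov embedded, although by Theorem~\ref{thm:embedded} they are not embedded, since their profile curves have turning number $k\ge 2$. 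The topological step that leads you astray is the reduction to an immersed \emph{disk}: the cross-section of an Alexandrov filling of a rotational torus by a half geodesic $2$-sphere is an immersed annulus (half-strip) whose inner boundary lies on the axis, possibly multiply wrapped, and whose outer boundary is the profile curve; it is not a disk, so the Whitney/Blank criterion ``turning number $\pm1$'' does not apply. Indeed the paper's proof goes the other way: Alexandrov embeddedness of a rotational torus is equivalent to the existence of an immersion $\bbS^1\times[0,1]\to\bbS^2_+$ sending one boundary circle to the equator (the axis) and the other onto the profile curve, and rotating this strip produces the immersed solid torus; such a strip exists for locally convex profile curves of any turning number $\TurnZero\ge1$. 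For the same reason your sufficiency argument is incomplete: it only treats the embedded case $\Gamma=\Lambda^\ast$, resp.\ $\TurnZero=1$, while the ``if'' direction of the theorem also covers the non-embedded rotational tori that are singly wrapped rotationally.

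The necessity of single wrapping (in the rotational direction) is obtained in the paper not from turning numbers but from covering-space theory: a flat \cmc torus covers an embedded flat torus, an Alexandrov filling would be a compact covering of the solid torus bounded by the embedded torus, and since $\pi_1$ of a solid torus is $\bbZ$ only coverings trivial in the rotational direction extend; this property is then propagated along the deformation families away from the sphere bouquets. So even if you succeed in making the filling equivariant in the twizzled case (the step you flag as hard), the argument as you have set it up would be proving the wrong dichotomy; what is actually needed there is to rule out Alexandrov fillings of twizzled tori and of tori multiply wrapped in the rotational direction, not of all tori with $\TurnZero\ge2$.
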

\begin{proof}
A rotational torus is Alexandrov embedded if and only if there
exists an immersion $\bbS^1 \times [0,\,1] \to \bbS^2_+$ into a
hemisphere $\bbS^2_+$ such that $\bbS^1 \times \{0\}$ is mapped to
the equator of $\bbS^2_+$ and $\bbS^1 \times \{1 \}$ is mapped onto
a profile curve of the torus. The resulting 3-manifold, obtained by
rotating the strip is then the Alexandrov embedding.

Every flat \cmc torus is a covering of an embedded flat \cmc torus. Hence the 3-manifold is always a solid torus, and thus has fundamental group $\bbZ$. The compact coverings correspond to proper subgroups of $\bbZ$. Hence flat Alexandrov embedded \cmc tori have to be singly wrapped. This condition is stable under continuous deformations which stay away from bouquets of spheres.
\end{proof}
\begin{figure}[t]
\centering
\includegraphics[width=4cm]{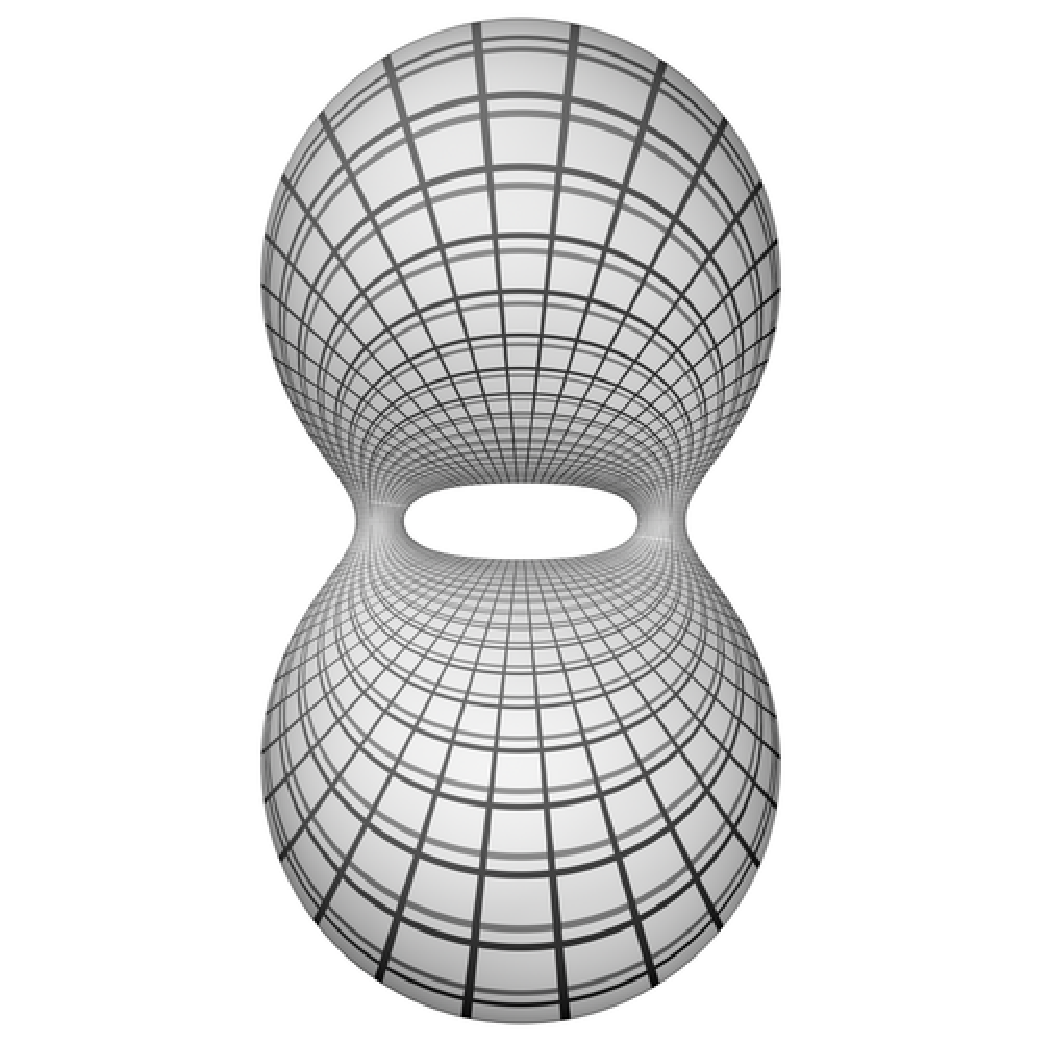}
\includegraphics[width=4cm]{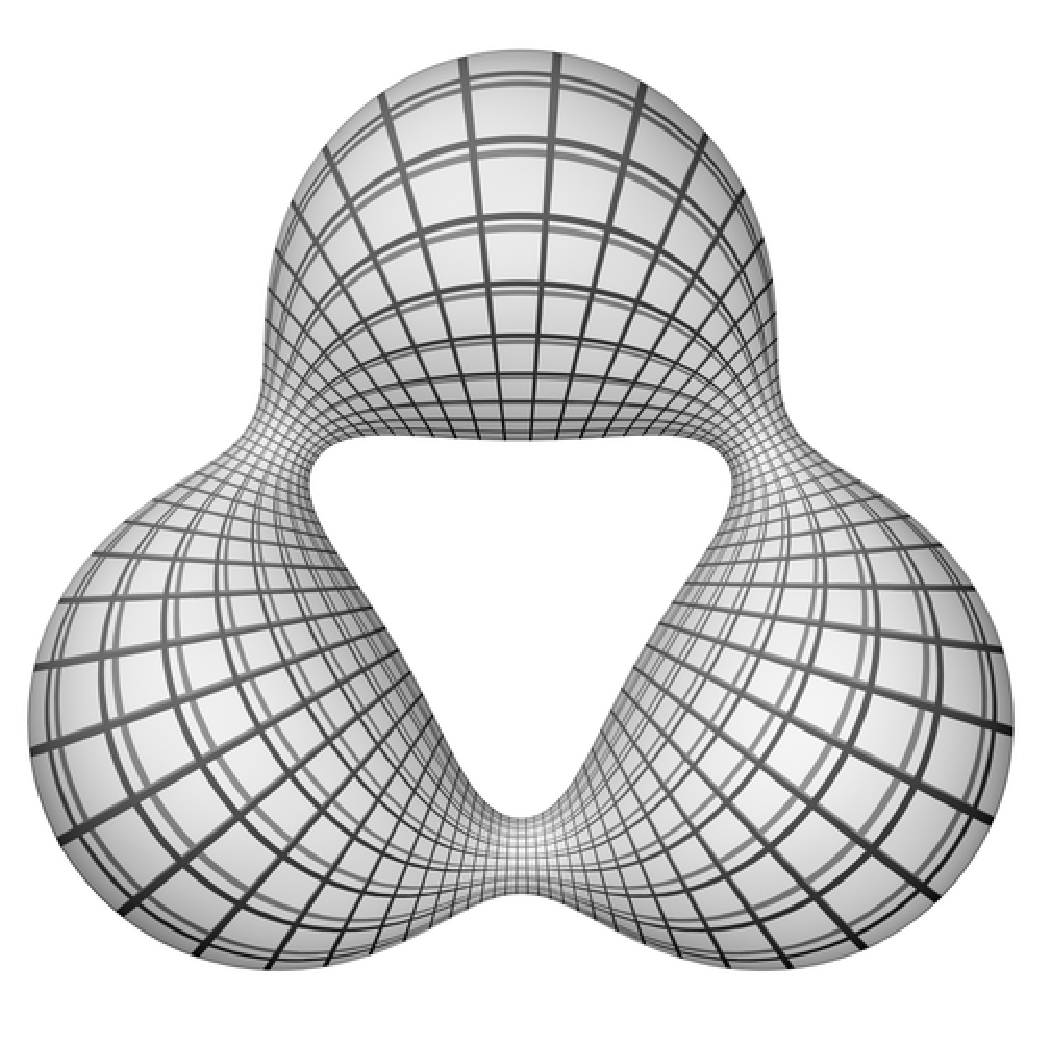}
\includegraphics[width=4cm]{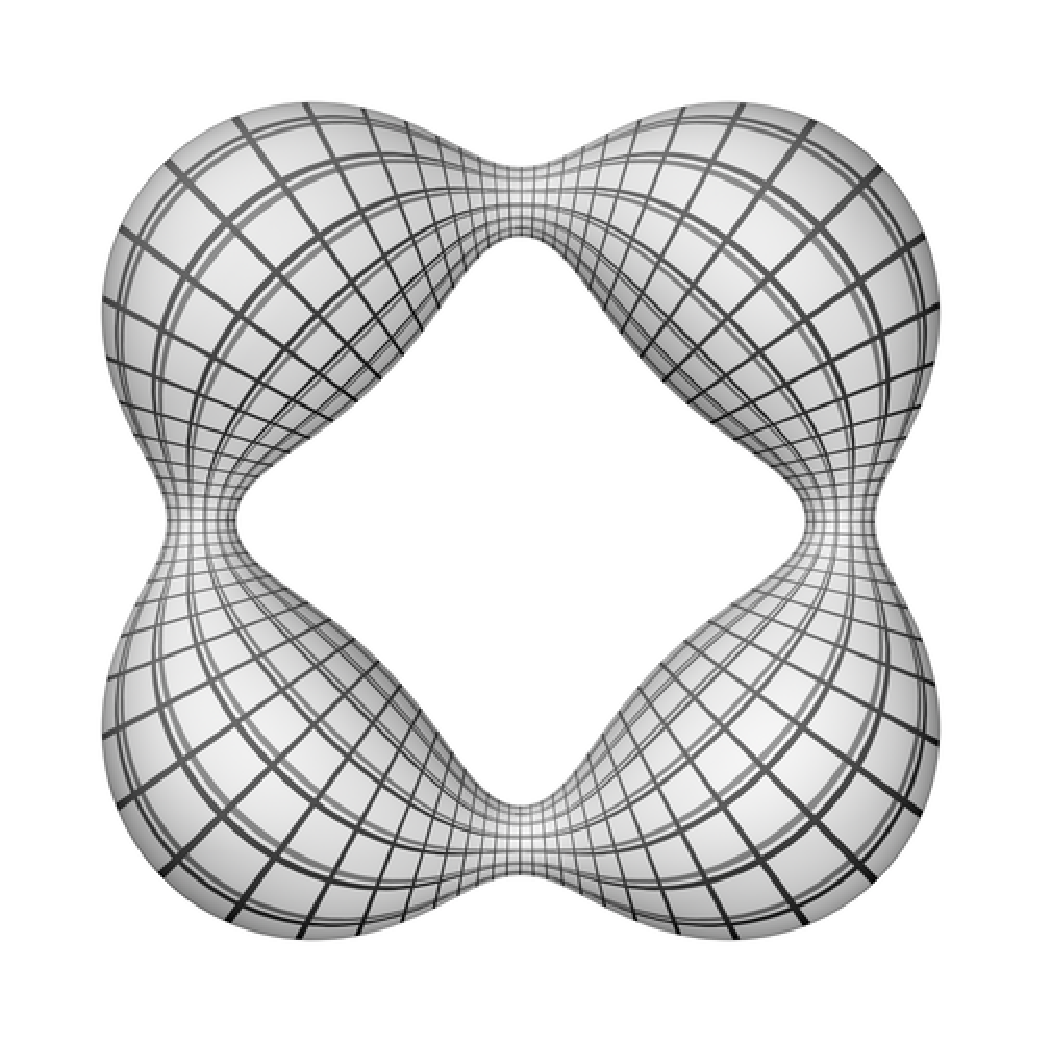}
\includegraphics[width=4cm]{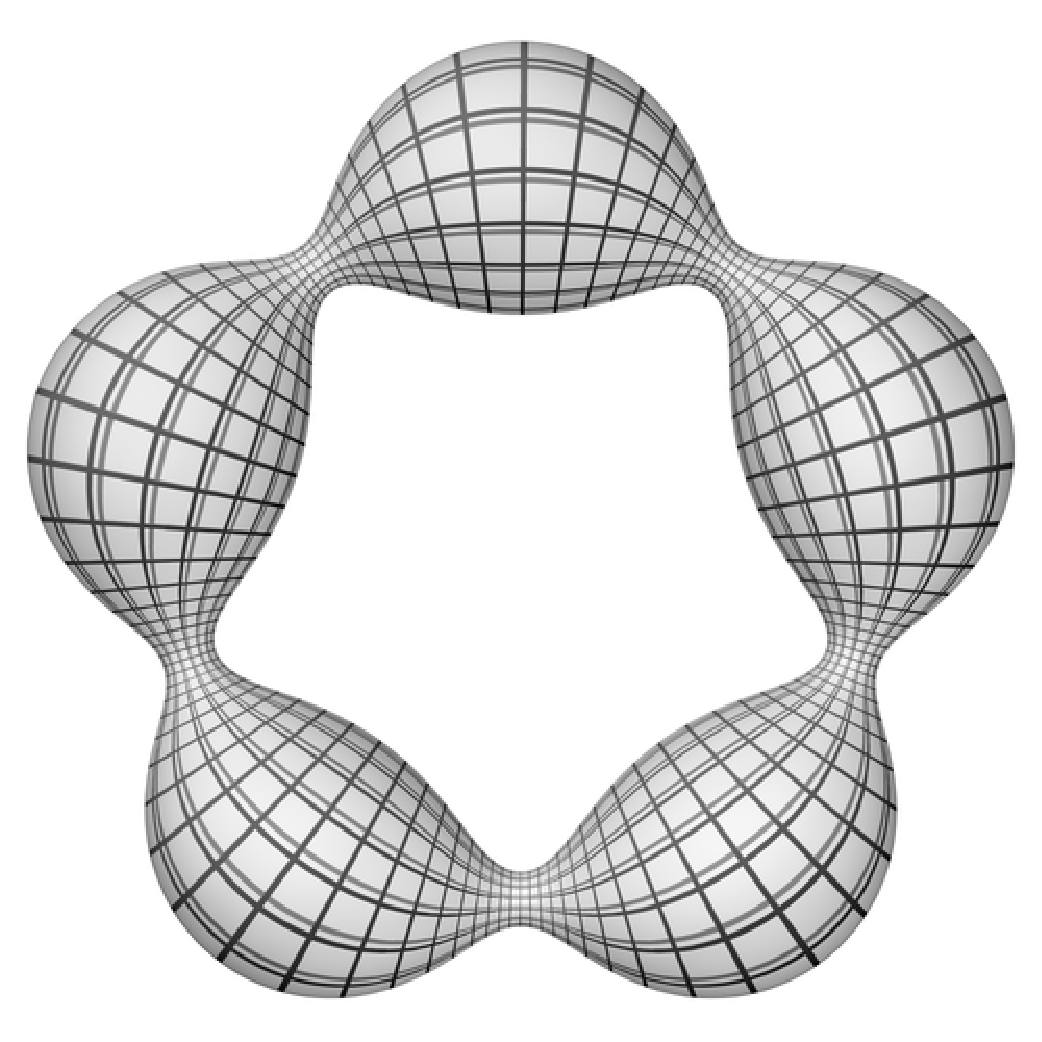}
\caption{ \label{fig:revtorus} Embedded $(1,\,n)$ \cmc tori of
revolution in $\bbS^3$, with $n=2,\,3,\,4,\,5$. }
\end{figure}
\subsection{Mean curvature and minimal tori}

There are infinitely many minimal tori in $\bbS^3$ \cite{Car:tor}. There are in fact already infinitely many minimal equivariant ones \cite{HsiL}. Theorem~\ref{thm:mean-curvature} shows the existence of infinitely many minimal twizzled tori. For example the
$(\TurnZero,\,\TurnOne,\,\TurnTwo)=(n-k,\,n,\,n+k)$ flow family with
$0<k<n$ is a fixed point of the involution of
Proposition~\ref{prop:involution}. The flow starts and ends at the
same flat \cmc torus with opposite mean curvature, and hence it contains a minimal torus. A (non-minimal) example from the
$(\TurnZero,\,\TurnOne,\,\TurnTwo)=(2,\,1,\,3)$ family is shown in
Figure~\ref{fig:twizzled2}.
%
%---------------------------------------------------------------
% families containing minimal tori
%---------------------------------------------------------------
%
\begin{lemma} \label{thm:mean-curvature}
A spectral genus 1 flow family with endpoints $(\TurnZero,\,\TurnOne,\,\TurnTwo)$ and $(\TurnOne+\TurnTwo-\TurnZero,\,\TurnOne,\,\TurnTwo)$ contains exactly one minimal torus if $(\TurnOne^2+\TurnTwo^2)^{1/2} \ge \sqrt{2} \max\{\TurnZero,\,\TurnOne+\TurnTwo-\TurnZero\}$ and no minimal tori otherwise.
\end{lemma}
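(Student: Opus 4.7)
The plan is to reduce the claim to an elementary sign computation at the two flat \cmc torus endpoints of the flow. By Propositions~\ref{prop:moduli-space} and~\ref{prop:moduli-tori-of-revolution}, the mean curvature $H$ is a strictly monotonic diffeomorphism from the flow interval onto a bounded interval, so at most one minimal torus lies on the family, and a minimal torus exists precisely when the endpoint limits $H_{\mathrm{start}}$, $H_{\mathrm{end}}$ satisfy $H_{\mathrm{start}}\,H_{\mathrm{end}}\le 0$. Hence it suffices to compute these endpoint limits and compare their signs.

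For the endpoint computation, write $\lambda_k = e^{2\mi\theta_k}$ so that~\eqref{eq:cmc-I-II} gives $H = \cot(\theta_2-\theta_1)$. At a flat endpoint the spectral data is determined, via~\eqref{eq:t-vector} and~\eqref{eq:integers-flat} with $(s_0,s_1,s_2)=(2\TurnZero,\TurnOne+\TurnTwo,\TurnTwo-\TurnOne)$ (after normalizing the double point to $\lambda_0=1$), by the triple $(\TurnZero,\TurnOne,\TurnTwo)$. A direct calculation of $\cos\theta_k$ and $\sin\theta_k$ from this formula and the angle-subtraction identities yields
\[
\cos(\theta_2-\theta_1) = \frac{2\TurnZero^2-\TurnOne^2-\TurnTwo^2}{\TurnTwo^2-\TurnOne^2}, \qquad |\sin(\theta_2-\theta_1)| = \frac{2\sqrt{(\TurnZero^2-\TurnOne^2)(\TurnTwo^2-\TurnZero^2)}}{\TurnTwo^2-\TurnOne^2},
\]
so that
\[
H^2 = \frac{\bigl(2\TurnZero^2-\TurnOne^2-\TurnTwo^2\bigr)^2}{4(\TurnZero^2-\TurnOne^2)(\TurnTwo^2-\TurnZero^2)}
\]
at either endpoint, with $\TurnZero$ replaced by $\TurnZero^\ast\coloneq\TurnOne+\TurnTwo-\TurnZero$ at the other.

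The key subtlety is the relative sign. I would argue that the two endpoints use \emph{opposite} branches of the square root: by the analysis in the proof of Proposition~\ref{prop:involution}, during the flow the \sym point $\lambda_2$ crosses once through $\lambda=\mathrm{sign}(\Bpoint)$, which at a flat endpoint coincides with the double point, so $\theta_2$ passes through $0$ modulo $\pi$ and $\sin(\theta_2-\theta_1)$ flips sign. This is corroborated by the remark preceding the statement: in the fixed-point case $2\TurnZero=\TurnOne+\TurnTwo$ the two endpoints are the same flat torus with opposite mean curvatures, exactly matching the opposite-sign relation. Granting this,
\[
H_{\mathrm{start}}\,H_{\mathrm{end}}\le 0 \iff (2\TurnZero^2-\TurnOne^2-\TurnTwo^2)\,(2(\TurnZero^\ast)^2-\TurnOne^2-\TurnTwo^2)\ge 0.
\]
The ``both factors nonnegative'' case would require $\min\{\TurnZero,\TurnZero^\ast\}^2\ge\tfrac12(\TurnOne^2+\TurnTwo^2)$, but this is excluded by $\min\{\TurnZero,\TurnZero^\ast\}\le\tfrac12(\TurnZero+\TurnZero^\ast)=\tfrac12(\TurnOne+\TurnTwo) < \sqrt{(\TurnOne^2+\TurnTwo^2)/2}$, the last inequality being strict since $\TurnOne<\TurnTwo$. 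Thus the remaining case $2\max\{\TurnZero,\TurnZero^\ast\}^2\le \TurnOne^2+\TurnTwo^2$, which is exactly the claimed inequality, is the one in which the flow contains a minimal torus. The main obstacle in carrying out this plan is making the opposite-sign claim rigorous: the spectral-data computation only determines $H^2$ at the endpoints, and controlling the continuous motion of $\lambda_2$ through the double point during the flow is what is needed to pin down the relative sign.
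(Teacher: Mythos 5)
Your overall skeleton matches the paper's proof: monotonicity of $H$ (Propositions~\ref{prop:moduli-space} and~\ref{prop:moduli-tori-of-revolution}) reduces the question to the signs of $H$ at the two flat endpoints, the endpoint value of $|H|$ is expressed in terms of the triple (your formula for $H^2$ agrees with \eqref{eq:mean-curvature-interval} squared), and your concluding arithmetic (the two quantities $\TurnOne^2+\TurnTwo^2-2\TurnZero^2$ and $\TurnOne^2+\TurnTwo^2-2\hat\TurnZero^2$ cannot both be negative) is equivalent to the paper's observation that their sum is $4(\TurnTwo-\TurnZero)(\TurnZero-\TurnOne)>0$.

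The genuine gap is exactly the step you flag: the relative sign of $H$ at the two endpoints. Since the triple only determines the endpoint spectral data up to the transformations (A)--(C), and both inversion and swapping the \sym points negate $H$ in \eqref{eq:cmc-I-II}, the endpoint data alone gives only $H^2$; the relative sign must come from following the flow, and your proposed mechanism for it is wrong. With $\lambda_k=e^{2\mi\theta_k}$, the function $\sin(\theta_2-\theta_1)$ vanishes only when $\lambda_1=\lambda_2$, i.e.\ $\Xtwo^2=1$, $H=\infty$, which is excluded along the flow because $H$ stays bounded; so $\lambda_2$ crossing $\sign(\Xzero)$ (the cut of $\OMEGA$) does not flip $\sin(\theta_2-\theta_1)$ — at that crossing $H$ is continuous and only the integers $p_{j2}$ jump, which is how the triple changes to $(\hat\TurnZero,\TurnOne,\TurnTwo)$. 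The consistency check with the $(n-k,\,n,\,n+k)$ families also cannot substitute for the general case: there the opposite signs follow merely from strict monotonicity plus equality of $|H|$ at the two ends, which says nothing when the end triples differ. The paper closes the gap differently: it derives the signed formula \eqref{eq:mean-curvature-with-sign}, $H = \sign(\Xone+\Xtwo)\,\tfrac{\TurnOne^2+\TurnTwo^2-2\TurnZero^2}{2\sqrt{(\TurnTwo^2-\TurnZero^2)(\TurnZero^2-\TurnOne^2)}}$, by expressing $\TurnOne/\TurnZero$ and $\TurnTwo/\TurnZero$ through $\Xone,\Xtwo$ at the flat endpoints, and then reads off the sign factor at the two ends from the flow analysis (the monotone sign change recorded in Proposition~\ref{prop:moduli-space}/Theorem~\ref{thm:moduli-space}, combined with Proposition~\ref{prop:involution} identifying the final triple), which yields \eqref{eq:mean-curvature-interval} with the explicit relative minus sign. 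To complete your argument you would need to supply an equivalent piece of bookkeeping; as written, the decisive sign claim is asserted rather than proved, and the heuristic offered for it does not hold.
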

\begin{proof}
Consider the flow from a flat \cmc torus to a flat torus through spectral genus 1 tori as described in Theorem~\ref{thm:moduli-space}. Since the mean curvature \eqref{eq:H_and_h} is monotonic, the flow contains a minimal torus if and only if the mean curvature of the flat \cmc tori have opposite signs, or if one of these flat \cmc tori is minimal.

By a calculation, $\TurnOne = \TurnZero\min\{X,\,Y\}$, $\TurnTwo =
\TurnZero\max\{X,\,Y\}$, where $X = \sqrt{(1+\Xone)/(1-\Xtwo)}$ and
$Y = \sqrt{(1-\Xone)/(1+\Xtwo)}$, and then
\begin{equation} \label{eq:mean-curvature-with-sign}
    H = \sign(\Xone+\Xtwo)\tfrac{\TurnOne^2+\TurnTwo^2-2\TurnZero^2}%
    {2\sqrt{(\TurnTwo^2-\TurnZero^2)(\TurnZero^2-\TurnOne^2)}}
    \spaceperiod
\end{equation}
Since $\Xone+\Xtwo$ is negative at the beginning of the flow and
positive at the end (Theorem~\ref{thm:moduli-space}), by
Proposition~\ref{prop:involution}, mean curvatures $H_0$ and $H_1$
of the flat \cmc tori at the beginning and end of the flow are
respectively
\begin{equation} \label{eq:mean-curvature-interval}
    H_0 \coloneq \tfrac{\TurnOne^2+\TurnTwo^2-2\TurnZero^2}
    {2\sqrt{(\TurnTwo^2-\TurnZero^2)(\TurnZero^2-\TurnOne^2)}}
    \spacecomma\interspace
    H_1 \coloneq -\tfrac{\TurnOne^2+\TurnTwo^2-2\hat\TurnZero^2}
    {2\sqrt{(\TurnTwo^2-\hat\TurnZero^2)(\hat\TurnZero^2-\TurnOne^2)}}
\end{equation}
where $\hat\TurnZero\coloneq \TurnOne+\TurnTwo-\TurnZero$.
The flow contains a minimal torus if and only if
$\TurnOne^2+\TurnTwo^2-2\TurnZero^2$ and
$\TurnOne^2+\TurnTwo^2-2(\TurnOne+\TurnTwo-\TurnZero)^2$
have the same sign, or either is $0$. Since the sum of these two integers
is equal to $4(\TurnTwo-\TurnZero)(\TurnZero-\TurnOne)>0$, they
are not both negative. The condition that they
have the same sign, or either is $0$, is then equivalent to the
asserted inequality.
\end{proof}
Consider the $(\TurnZero,\,\TurnTwo)$ family of tori of revolution,
and let $\Ratio \coloneq \TurnZero/\TurnTwo$. The mean curvature for
the flat \cmc tori is chosen to be positive for
$\Ratio\in(0,\,1/\sqrt{2})$ and negative for
$\Ratio\in(1/\sqrt{2},\,1)$.

%----------------------------------------------------
% mean curvature of tori of revolution
%----------------------------------------------------
%
\begin{lemma} \label{thm:rev-mean-curvature}
For spectral genus 1 \cmc tori of revolution, the mean curvature decreases monotonically from $H_0 = (1-2\Ratio^2)/(2\Ratio\sqrt{1-\Ratio^2})$
at the flat \cmc torus to $H_s = \cot\pi\Ratio$ at the sphere bouquet. This family contains exactly one minimal torus if
$\Ratio\in(\frac{1}{2},\,1/\sqrt{2}]$, and no minimal tori otherwise.
\end{lemma}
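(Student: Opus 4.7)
The plan is to pin down the two endpoints of the flow as values of $H$, invoke the monotonicity of $H$ along the flow, and then count zeros of $H$ in the resulting range.

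By Theorem~\ref{thm:moduli-space} and Lemma~\ref{thm:sphere-bouquet}, the $(\TurnZero,\,\TurnTwo)$ family has one endpoint at the flat $(\TurnZero,\,0,\,\TurnTwo)$ \cmc torus (at $\Bpoint\to\pm 1$) and the other at the $(\TurnZero,\,\TurnTwo)$ sphere bouquet (at $\Bpoint\to 0$). Specialising~\eqref{eq:mean-curvature-interval} to $\TurnOne=0$ and dividing numerator and denominator by $\TurnTwo^2$ gives $H_0 = (1-2\Ratio^2)/(2\Ratio\sqrt{1-\Ratio^2})$, while the sphere-bouquet endpoint has mean curvature $\cot(\pi\TurnZero/\TurnTwo) = \cot(\pi\Ratio) = H_s$ by the construction in Section~\ref{sec:sphere-bouquet}. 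Strict monotonicity of $t\mapsto H(t)$ along the flow is immediate from Proposition~\ref{prop:moduli-tori-of-revolution}, which identifies $H$ with a diffeomorphism onto a finite interval.

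To fix the direction of this monotonicity as decreasing, I would verify the global inequality $H_0>H_s$ on $\Ratio\in(0,\,1)$. At the sample values one has $H_0(1/2)=1/\sqrt{3}>0=H_s(1/2)$ and $H_0(1/\sqrt{2})=0>\cot(\pi/\sqrt{2})=H_s(1/\sqrt{2})$; asymptotics at $\Ratio\to 0^+$ yield $H_0-H_s\sim(\pi-2)/(2\pi\Ratio)>0$, and at $\Ratio\to 1^-$ the pole $-1/(\pi(1-\Ratio))$ of $H_s$ dominates the weaker $-1/(2\sqrt{2(1-\Ratio)})$ in $H_0$. An elementary calculus argument on each of the subintervals $(0,\,1/2)$, $(1/2,\,1/\sqrt{2})$, $(1/\sqrt{2},\,1)$ then excludes any sign change of $H_0-H_s$, so $H$ decreases from $H_0$ to $H_s$ along the flow.

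Finally, since $H$ is strictly decreasing from $H_0$ to $H_s$, the equation $H=0$ has a (necessarily unique) solution along the flow if and only if $H_0\geq 0\geq H_s$. From the explicit formulas, $H_0\geq 0\iff\Ratio\leq 1/\sqrt{2}$ and $H_s\leq 0\iff\Ratio\geq 1/2$, so a zero occurs precisely for $\Ratio\in[1/2,\,1/\sqrt{2}]$. The left endpoint $\Ratio=1/2$ must be excluded because there $H=0$ is attained only in the sphere-bouquet limit, which is not a torus; the right endpoint $\Ratio=1/\sqrt{2}$ is included because there $H_0=0$ is attained at the flat \cmc torus endpoint of the flow, which is a bona fide member of the family (though irrelevant for integer $\TurnZero,\,\TurnTwo$ since $1/\sqrt{2}$ is irrational). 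The main obstacle is proving the global inequality $H_0>H_s$ on all of $(0,\,1)$, since the two functions share sign and singular behaviour at both boundary values, so a pointwise comparison requires the asymptotic analysis sketched above.
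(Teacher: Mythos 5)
Your proposal follows essentially the same route as the paper's proof: the endpoint value $H_0$ comes from \eqref{eq:mean-curvature-interval} specialised to $\TurnOne=0$, the value $H_s=\cot\pi\Ratio$ comes from Lemma~\ref{thm:sphere-bouquet} together with the sphere-bouquet construction of Section~\ref{sec:sphere-bouquet}, strict monotonicity of $H$ along the family is Proposition~\ref{prop:moduli-tori-of-revolution} (the paper cites Theorem~\ref{thm:moduli-space}, which rests on it), and the minimal-torus count reduces to whether $0$ lies between the endpoint values; your treatment of the borderline cases $\Ratio=\tfrac12$ and $\Ratio=1/\sqrt2$ is consistent with the paper's convention of counting the flat endpoint. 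The one soft spot is the step you yourself call the main obstacle: checking sample values and boundary asymptotics does not exclude an interior sign change of $H_0-H_s$ on $(0,\tfrac12)$ or $(1/\sqrt2,1)$, and the promised ``elementary calculus argument'' is never actually supplied (the paper sidesteps the issue, deducing the range purely from monotonicity and the two endpoint values). The inequality is easy to close, though: since $1-2\Ratio^2=\cos(2\arcsin\Ratio)$ and $2\Ratio\sqrt{1-\Ratio^2}=\sin(2\arcsin\Ratio)$, one has $H_0=\cot(2\arcsin\Ratio)$ while $H_s=\cot(\pi\Ratio)$; convexity of $\arcsin$ on $[0,1]$ with $\arcsin 0=0$ gives $2\arcsin\Ratio<\pi\Ratio$ on $(0,1)$, and $\cot$ is strictly decreasing on $(0,\pi)$, so $H_0>H_s$ for all $\Ratio\in(0,1)$, which settles the direction of monotonicity without any case-by-case analysis.
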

\begin{proof}
By~\eqref{eq:mean-curvature-interval}, the mean curvature of the
flat \cmc torus at the end of the flow ($\jq=\pm1$) is $H_0$ as in the assertion. By Lemma~\ref{thm:sphere-bouquet}, the $(\TurnZero,\,\TurnTwo)$ family of tori of revolution converges to the $(\TurnZero,\,\TurnTwo)$ sphere bouquet as $\jq\to 0$. The limiting sphere bouquet has mean curvature $H_s = \cot\pi\Ratio$. The mean curvature is monotonic by Theorem~\ref{thm:moduli-space}
and hence has the specified range. The family contains a minimal torus if and only if the mean curvature has different signs at the endpoints of the flow. This occurs if and only if $\Ratio\in (\frac{1}{2},\,1/\sqrt{2}]$.
\end{proof}
\begin{figure}[t]
\centering
\includegraphics[width=5.4cm]{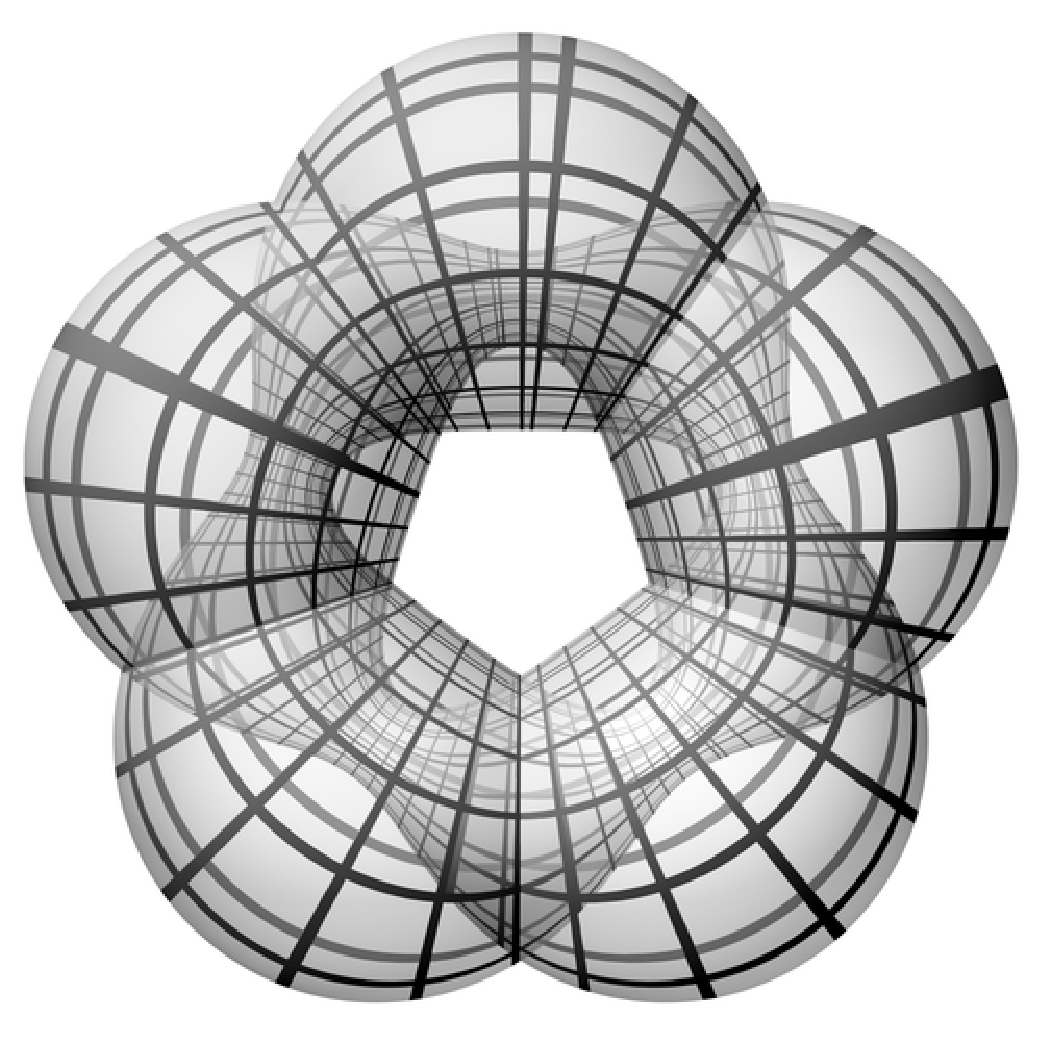}
\includegraphics[width=5.4cm]{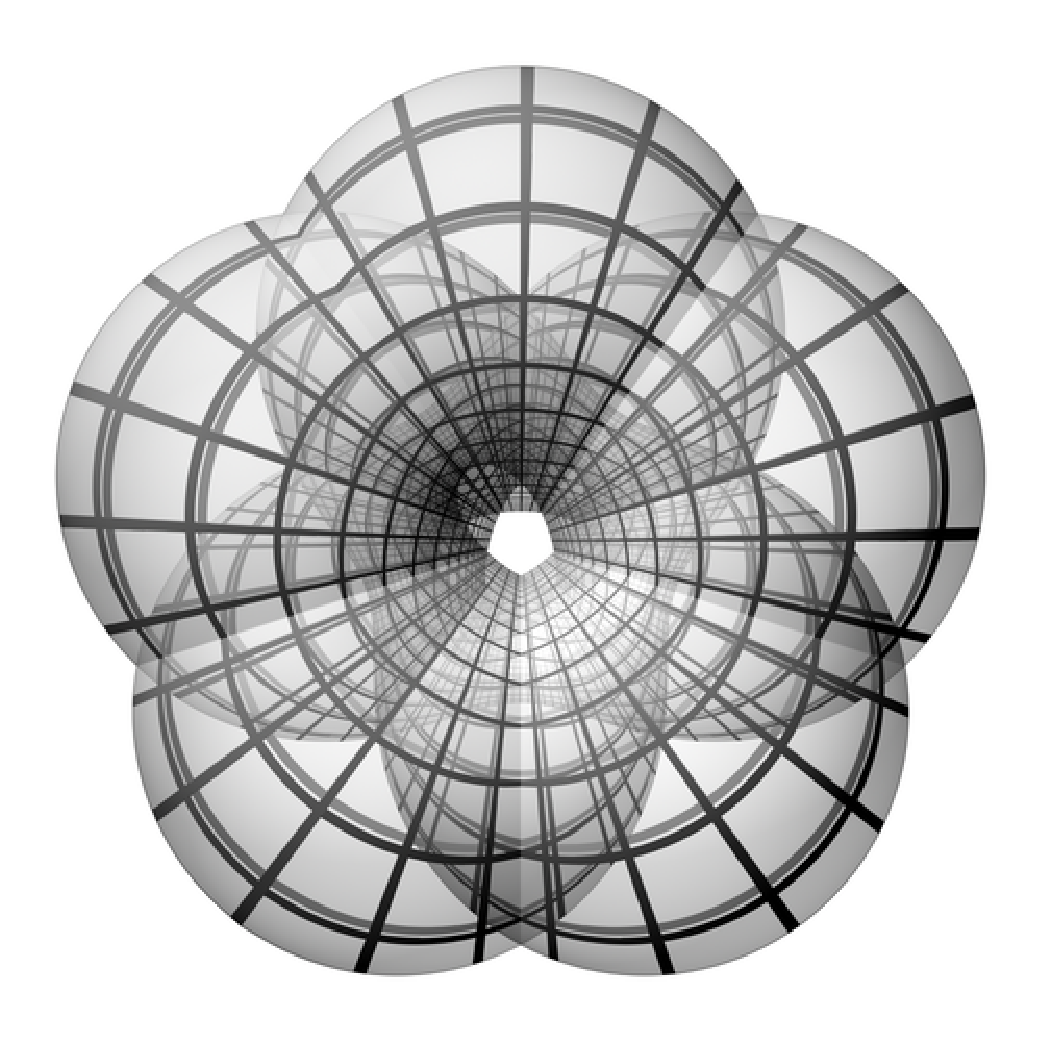}
\includegraphics[width=5.4cm]{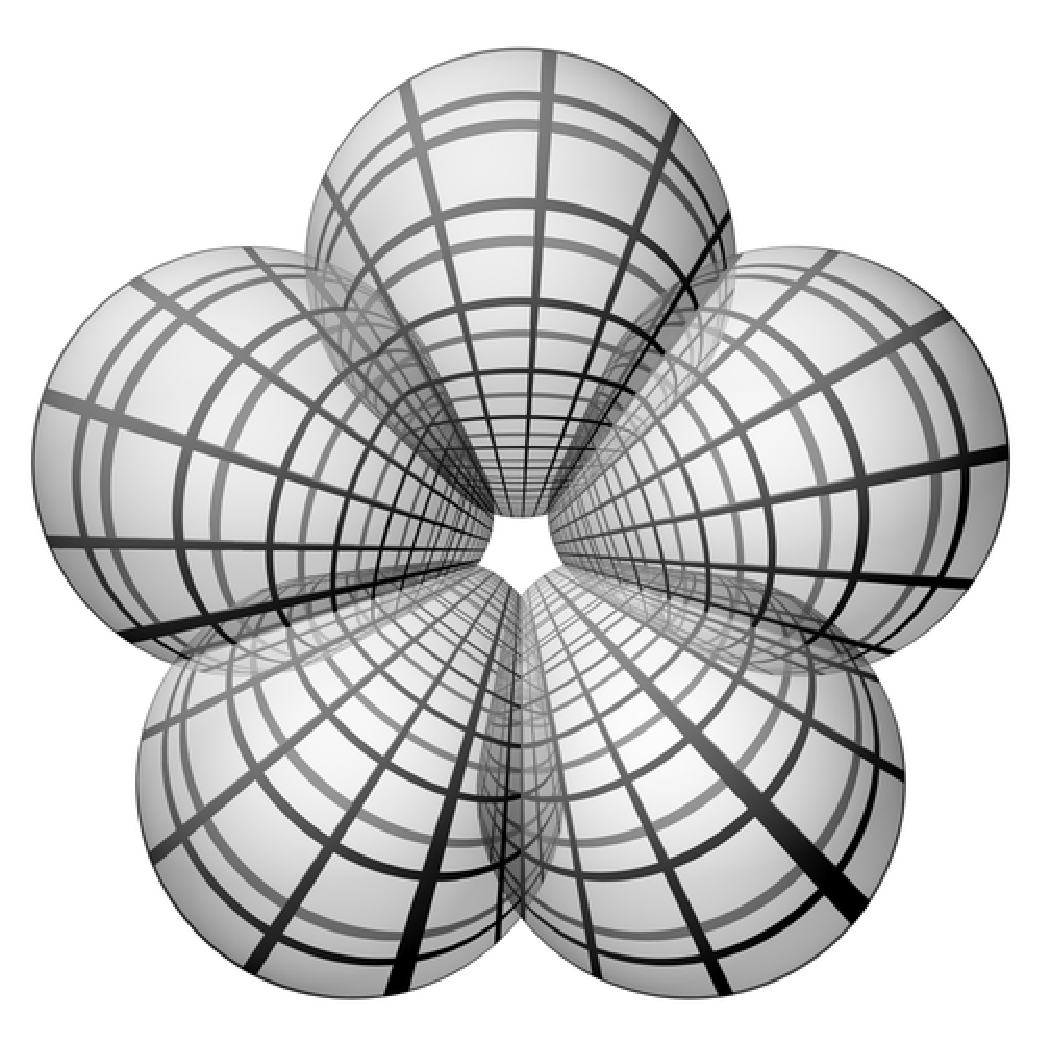}
\caption{ \label{fig:revtorusfive} Alexandrov embedded five-lobed $(k,\,5)$ \cmc tori of revolution in $\bbS^3$. The turning number of the inner profile curve is $k = 2,\,3,\,4$. }
\end{figure}
\begin{corollary}
The Clifford torus is the only minimal embedded rotational torus in the 3-sphere.
\end{corollary}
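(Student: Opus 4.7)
The plan is to split by spectral genus. Every rotational \cmc torus in $\bbS^3$ is equivariant, and equivariant \cmc surfaces have spectral genus zero or one by \cite{BurK}, so the task reduces to two cases.

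In the spectral genus zero (flat) case, I would appeal directly to Proposition~\ref{th:clifford}: the period lattice of an embedded flat \cmc torus is square precisely when $H = 0$. The proof of that proposition exhibits the explicit generators $\gamma_1 = 1/\sqrt{2}$ and $\gamma_2 = \mi/\sqrt{2}$ arising from \sym points $\lambda_1 = \mi,\,\lambda_2 = -\mi$, which pins down the unique (up to ambient isometry) embedded flat minimal torus as the Clifford torus.

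In the spectral genus one case, I would combine Theorem~\ref{thm:embedded} with Lemma~\ref{thm:rev-mean-curvature}. By Theorem~\ref{thm:embedded}, an embedded $(\TurnZero,\TurnTwo)$ torus of revolution must satisfy $\TurnZero = 1$, so only the $(1,\TurnTwo)$ families with integer $\TurnTwo \ge 2$ need to be examined. Lemma~\ref{thm:rev-mean-curvature} then tells me such a family contains a minimal torus if and only if $\Ratio = 1/\TurnTwo$ lies in $(\tfrac{1}{2},\,1/\sqrt{2}]$. For integer $\TurnTwo \ge 2$ we have $1/\TurnTwo \le \tfrac{1}{2}$, so this half-open interval is always missed, and no embedded rotational spectral genus one minimal torus exists.

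Combining the two cases identifies the Clifford torus as the unique embedded minimal rotational \cmc torus in $\bbS^3$. The substantive analysis, namely the mean-curvature range of the rotational $g = 1$ families, has already been dispatched in Lemma~\ref{thm:rev-mean-curvature}; what remains is the arithmetic observation that no reciprocal of an integer $\ge 2$ lies in $(\tfrac{1}{2},\,1/\sqrt{2}]$, so I do not anticipate any further obstacle.
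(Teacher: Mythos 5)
Your argument is correct and is essentially the route the paper intends: the corollary is stated without proof precisely because it follows by combining Theorem~\ref{thm:embedded} ($\TurnZero=1$ for embedded rotational $g=1$ tori, so $\Ratio=1/\TurnTwo\le\tfrac12$) with Lemma~\ref{thm:rev-mean-curvature} (minimal tori occur only for $\Ratio\in(\tfrac12,1/\sqrt2\,]$), together with the genus-zero identification of the Clifford torus via Proposition~\ref{th:clifford}. Your case split by spectral genus and the closing arithmetic observation match the paper's reasoning, so no further comment is needed.
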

By combining the above results we have shown that amongst the infinitely many minimal equivariant tori in the 3-sphere, only one is embedded \cite{HsiL}.
\begin{theorem}
The Clifford torus is the only embedded minimal equivariant torus in $\bbS^3$.
\end{theorem}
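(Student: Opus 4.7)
The plan is to combine two previously established classifications. An equivariant CMC torus in $\bbS^3$ is either rotational or twizzled (non-rotational), and every equivariant CMC torus has spectral genus zero or one. Note also that flat CMC tori may always be taken rotational (using the M\"obius freedom of $\lambda$, as in the proof of Proposition~\ref{prop:torus-of-revolution}), so any twizzled torus has spectral genus one.

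My first step would be to eliminate the twizzled case. By Corollary~\ref{cor:twizzled-nonembedded}, a non-rotational spectral genus one \cmc torus in $\bbS^3$ is never embedded. Consequently no twizzled torus can simultaneously be embedded and minimal, and any embedded minimal equivariant torus must be a surface of revolution.

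My second step would be to invoke the corollary immediately preceding the theorem, which states that the Clifford torus is the only minimal embedded rotational torus in $\bbS^3$. Internally, that corollary handles the flat rotational case via Proposition~\ref{th:clifford} (an embedded flat \cmc torus is minimal if and only if its lattice is square, i.e., if and only if it is the Clifford torus) and the spectral genus one rotational case by combining Theorem~\ref{thm:embedded} (embeddedness forces $\TurnZero=1$) with Lemma~\ref{thm:rev-mean-curvature} (which then gives $\Ratio=1/\TurnTwo \le 1/2$, outside the minimal-containing range $(1/2,\,1/\sqrt{2}\,]$).

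There is no genuine obstacle here: all substantive work has been carried out in the preceding sections. The theorem is a bookkeeping corollary that unifies the non-embeddedness of twizzled tori with the preceding classification of minimal embedded rotational tori, and it concludes the promised classification of equivariant \cmc tori in $\bbS^3$.
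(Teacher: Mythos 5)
Your proposal is correct and follows essentially the same route as the paper, which leaves this theorem as an immediate consequence ("by combining the above results") of Corollary~\ref{cor:twizzled-nonembedded} for the twizzled case and the preceding corollary on minimal embedded rotational tori (itself resting on Proposition~\ref{th:clifford}, Theorem~\ref{thm:embedded} and Lemma~\ref{thm:rev-mean-curvature}). Your explicit remark that flat tori may be taken rotational, so that twizzled tori have spectral genus one, is exactly the implicit step the paper relies on via the proof of Proposition~\ref{prop:torus-of-revolution}.
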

%
%%%%%%%%%%%%%%%%%%%%%%%%%%%%%%%%%%%%%%%%%%%%%%%%%%%%%%%%%%%%%%%%%%%%%%%%%%%%%%
%%%%%%%%%%%%%%%%%%%%%%%%%%%%%%%%%%%%%%%%%%%%%%%%%%%%%%%%%%%%%%%%%%%%%%%%%%%%%%
%%%%%%%%%%%%%%%%%%%%%%%%%%%%%%%%%%%%%%%%%%%%%%%%%%%%%%%%%%%%%%%%%%%%%%%%%%%%%%

\section{Connectedness of the moduli space}

Let $M$ denote the set of \cmc immersions from the oriented 2-torus $\bbT^2$ into the oriented 3-sphere $\bbS^3$. We define an equivalence relation by identifying two maps in $M$ if they differ by an orientation preserving diffeomorphism of $\bbT^2$ and an orientation preserving isometry of $\bbS^3$, and set $\mathcal{M} = M/\sim$. We denote the spectral genus zero maps by $\mathcal{M}_0 \subset \mathcal{M}$, that is
\[
    \mathcal{M}_0 = \left\{ \mbox{ equivalence classes of flat \cmc tori in } \bbS^3\,\, \right\}\,.
\]
Thus $\mathcal{M}_0$ consists of infinitely many $\bbR$-families of flat \cmc tori. Even though each of these families by Proposition~\ref{prop:flat-torus} (iii) is a finite cover of the family of the underlying embedded rectangular torus, we need the full diversity of $\mathcal{M}_0$ to bifurcate into all possible spectral genus one \cmc tori. The spectral genus one \cmc tori will be denoted by $\mathcal{M}_1 \subset \mathcal{M}$, that is
\begin{equation*}
    \mathcal{M}_1 = \left\{ \mbox{ equivalence classes of spectral genus one \cmc tori in } \bbS^3\,\, \right\} \,.
\end{equation*}
Since deformation families of rotational tori in $\mathcal{M}_1$ flow into bouquets of spheres, we take the closure of $\mathcal{M}_1$ by supplementing it with the limiting bouquets of spheres, and set
\begin{equation*}
    \overline{\mathcal{M}}_1 = \mathcal{M}_1 \cup \left\{ \mbox{ equivalence classes of sphere bouquets in } \bbS^3\,\, \right\}\,.
\end{equation*}
The aim of this section is to show that this \emph{completed} moduli space of equivariant  \cmc tori $\mathcal{M}_0 \cup \overline{\mathcal{M}}_1$ is connected (Theorem~\ref{th:connected}). We have already seen that the moduli space of equivariant \cmc tori in the 3-sphere is a graph, which consists of:
\begin{enumerate}
\item Edges of families of spectral genus zero tori;
\item Edges of families of spectral genus one tori;
\item 'Bifurcation' vertices in
  $\mathcal{M}_0$ that connect with other vertices in
  $\mathcal{M}_0$ via spectral genus one edges by
  Theorem~\ref{thm:moduli-space}.
\end{enumerate}
By Proposition~\ref{prop:flat-torus} (iii) any element in
$\mathcal{M}_0$ is isogenic to the unique (up to isomorphism)
embedding of a rectangular lattice with the same mean curvature. Hence each edge in $\mathcal{M}_0$ contains a unique minimal torus, obtained via an isogeny from the Clifford torus. If we identify two isogenies which differ only by an isomorphism of the domain, then we have a one-to-one correspondence between isomorphy classes of isogenies and co-finite sublattices of $\Lambda^\ast$. Hence we can identify the connected components of $\mathcal{M}_0$ with co-finite sublattices of $\Lambda^\ast$. We say that two such sublattices are connected, if the corresponding genus zero edges are connected in
$\mathcal{M}_0\cup\overline{\mathcal{M}}_1$.

We associated to the bifurcation vertices triples
$(\TurnZero,\,\TurnOne,\,\TurnTwo) \in \bbZ^3$ with
$\gcd(\TurnZero,\,\TurnOne,\,\TurnTwo)=1$ and
$0\leq\TurnOne < \TurnZero < \TurnTwo$. In the proof of Theorem~\ref{thm:flat-torus-integers}~(2) we showed that triples~\eqref{eq:triple} correspond to the lattices~\eqref{eq:sublattices}. The
genus one edges described in Theorem~\ref{thm:moduli-space} yield
four isomorphisms of each of these lattices onto one of those lattices corresponding to the triples
$(\TurnOne+\TurnTwo-\TurnZero,\,\TurnOne,\,\TurnTwo)$. Furthermore, the genus zero edge corresponding to a sublattice of one of the former lattices is connected by an isogenic genus one edge with the
corresponding sublattice of one of the latter lattices. But we do not use these isomorphisms of the lattices~\eqref{eq:sublattices} onto those lattices corresponding to
$(\TurnOne+\TurnTwo-\TurnZero,\,\TurnOne,\,\TurnTwo)$. We shall make use of these isomorphisms only in case of embedded tori with $\TurnZero=1$ and $\TurnOne = 0$. In this case rotation periods are preserved, and up to transformations $C'$ and $D'$ these isomorphisms are of the form
\begin{equation}\label{eq:isomorphism}
    \Lambda^\ast\to p\,\gamma_1^\ast\bbZ \oplus q\,\gamma_2^\ast\bbZ, \quad
    n_1\gamma_1^\ast+n_2\gamma_2^\ast\mapsto p\,n_1\gamma_1^\ast\oplus
    q\,n_2\gamma_2^\ast\quad\mbox{ with }p=1\mbox{ or }q=1.
\end{equation}
\begin{proposition} \label{th:lattices1}
\mbox{}

{\rm{(i)}} The edge of a lattice $\Gamma \subset \Lambda^\ast$ contains a vertex with triple $(\TurnZero,\,\TurnOne,\,\TurnTwo)$ if and only if $\Gamma$ is a sublattice of one of the lattices in \eqref{eq:sublattices}.

{\rm{(ii)}} For any $\TurnZero,\,\TurnOne,\,\TurnTwo$ we have that
$\Gamma_{[\TurnZero,\,\TurnOne,\,\TurnTwo]} = \Lambda^\ast$
if and only if $\TurnZero = 1$.

{\rm{(iii)}} The unique edge of embedded rotational tori in
$\mathcal{M}_0$ contains only the vertices with triples of the form
$(\TurnZero,\TurnOne,\TurnTwo)=(1,0,\TurnTwo)$ with
$\TurnTwo \geq 2$.
\end{proposition}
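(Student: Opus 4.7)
For part (ii), I would begin with a direct algebraic computation. The equality $\Gamma_{[\TurnZero,\,\TurnOne,\,\TurnTwo]} = \Lambda^\ast$ forces every element of $\Lambda^\ast$ to satisfy the divisibility condition $n_1\TurnOne + n_2\TurnTwo \in \TurnZero\bbZ$ from \eqref{eq:sublattices}. Testing on the standard basis $(n_1,n_2)=(1,0)$ and $(n_1,n_2)=(0,1)$ yields $\TurnZero \mid \TurnOne$ and $\TurnZero \mid \TurnTwo$, so $\TurnZero$ divides $\gcd(\TurnZero,\TurnOne,\TurnTwo)=1$, giving $\TurnZero=1$. The converse is immediate, since for $\TurnZero=1$ the defining congruence is vacuous.

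For part (i), the key observation is that the four lattices in \eqref{eq:sublattices} are by construction the largest period lattices compatible with spectral data $(\lambda_0,\lambda_1,\lambda_2)$ realizing the triple $(\TurnZero,\TurnOne,\TurnTwo)$, one lattice per $s$-vector of \eqref{eq:fours} up to the transformations $(C')$ and $(D')$. From the proof of Theorem~\ref{thm:flat-torus-integers}~(2), if a flat \cmc torus with period lattice $\Gamma$ has spectral data giving this triple, then every $\gamma \in \Gamma$ must respect the closing conditions at both \sym points and at the double point, forcing $\Gamma \subseteq \Gamma_s$ for the corresponding $s$. Conversely, any sublattice $\Gamma$ of one of the four lattices in \eqref{eq:sublattices} automatically satisfies those same closing conditions, so the flat torus with lattice $\Gamma$ and the spectral data realizing this triple --- whose existence is guaranteed by Proposition~\ref{thm:tau} --- realizes the desired vertex on the $\Gamma$-edge.

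For part (iii), the edge of embedded rectangular flat \cmc tori has period lattice $\Lambda^\ast$ by Proposition~\ref{prop:flat-torus}~(ii). By (i), a vertex with triple $(\TurnZero,\TurnOne,\TurnTwo)$ lies on this edge iff $\Lambda^\ast$ is contained in one of the four lattices of \eqref{eq:sublattices}. Since each of those four is already a sublattice of $\Lambda^\ast$, equality must hold in one of them. The operators $C''$ and $D''$ restrict to automorphisms of $\Lambda^\ast$, so any of the four equals $\Lambda^\ast$ precisely when $\Gamma_{[\TurnZero,\TurnOne,\TurnTwo]} = \Lambda^\ast$; by (ii) this is equivalent to $\TurnZero=1$. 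The rotational assumption combined with Theorem~\ref{thm:flat-torus-integers}~(3) forces $\TurnOne=0$, and the ordering $0\le\TurnOne<\TurnZero<\TurnTwo$ then gives $\TurnTwo\ge 2$. Conversely, every triple $(1,0,\TurnTwo)$ with $\TurnTwo\ge 2$ has $\gcd=1$ and satisfies the ordering, and Proposition~\ref{thm:tau} supplies spectral data on $\bbS^1$ realizing it, so every such triple does occur as a vertex on this edge.

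The routine part is the algebra of (ii); the main conceptual step is (i), where one must correctly identify each $\Gamma_s$ as the \emph{maximal} sublattice of $\Lambda^\ast$ consistent with the full set of closing conditions, and verify that passing to sublattices only tightens --- never violates --- those conditions. Once (i) and (ii) are in place, (iii) reduces to a combinatorial matching of the ordering inequalities with the rotational characterization.
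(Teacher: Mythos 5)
Your proposal is correct, and parts (i) and (ii) follow the paper's own argument: (ii) is the same basis-vector divisibility computation combined with $\gcd(\TurnZero,\TurnOne,\TurnTwo)=1$, and (i) is exactly the paper's (very terse) appeal to the proof of Theorem~\ref{thm:flat-torus-integers}~(2), which you usefully unpack by identifying each $\Gamma_s$ as the maximal sublattice of $\Lambda^\ast$ compatible with the closing conditions at the double point, so that passing to sublattices preserves them. Where you genuinely diverge is (iii): the paper gets $\TurnOne=0$ from Theorem~\ref{thm:flat-torus-integers}~(3) as you do, but obtains $\TurnZero=1$ by invoking Theorem~\ref{thm:embedded}, i.e.\ the geometric fact that the bifurcating $(\TurnZero,\TurnTwo)$ spectral genus one torus of revolution is embedded if and only if $\TurnZero=1$. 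You instead stay inside the lattice combinatorics: identify the embedded edge with the lattice $\Lambda^\ast$ via Proposition~\ref{prop:flat-torus}~(ii), note that the four lattices of \eqref{eq:sublattices} are sublattices of $\Lambda^\ast$ and that $C''$, $D''$ preserve $\Lambda^\ast$, so containment forces $\Gamma_{[\TurnZero,\TurnOne,\TurnTwo]}=\Lambda^\ast$, and then apply (ii). Both routes are valid; yours makes (iii) a formal consequence of (i)--(ii) and does not rely on the embeddedness theorem for genus one rotational tori, while the paper's is shorter and ties the vertex count directly to the geometry of the bifurcating family. Your converse direction in (iii) (existence of the vertices $(1,0,\TurnTwo)$ via Proposition~\ref{thm:tau}) is a detail the paper leaves implicit but is needed for the "contains all such vertices" usage in Lemma~\ref{rotational connected}, so including it is a small improvement rather than a deviation.
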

\begin{proof}
(i) In the proof of Theorem~\ref{thm:flat-torus-integers}~(2) we determined the lattices that correspond to a triple~\eqref{eq:triple}. They are given in ~\eqref{eq:sublattices}.

(ii) If $\TurnZero = 1$, then $n_1\TurnOne + n_2\TurnTwo \in
\bbZ$ holds for all $(n_1,\,n_2) \in \bbZ^2$. Hence
$\Gamma_{[1,\,\TurnOne,\,\TurnTwo]} = \Lambda^\ast$. Conversely, if
$\Gamma_{[\TurnZero,\,\TurnOne,\,\TurnTwo]} = \Lambda^\ast$, then
$n_1\TurnOne + n_2\TurnTwo \in \TurnZero\bbZ$ holds for all
$(n_1,\,n_2) \in \bbZ^2$. In particular $\TurnOne,\,\TurnTwo
\in\TurnZero \bbZ$, which implies $\TurnZero =1$ so as not to
contradict the assumption $\gcd(\TurnZero,\,\TurnOne,\,\TurnTwo) =1$.

(iii) By Proposition~\ref{thm:flat-torus-integers} we have that
$\TurnOne =0$ in the rotational case. By Theorem~\ref{thm:embedded}
the  $(\TurnZero,\,\TurnTwo)$ torus of revolution is embedded if and
only if $\TurnZero=1$.
\end{proof}
\begin{lemma}\label{rotational connected}
The moduli of rotational \cmc tori in the 3-sphere, supplemented by
bouquets of spheres is connected.
\end{lemma}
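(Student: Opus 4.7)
The plan is to show that every connected component of the rotational part of $\mathcal{M}_0$ is joined to the Clifford component by a finite chain of $g=1$ flows glued at sphere bouquets. Since every rotational $g=1$ family terminates at a sphere bouquet by Lemma~\ref{thm:sphere-bouquet} and is otherwise attached to a rotational flat \cmc torus at a bifurcation vertex, it suffices to connect all rotational edges of $\mathcal{M}_0$; by the discussion of the $\bbZ^2$-family of combs in the introduction, these edges are parameterised by the rectangular sublattices $p\gamma_1^\ast\bbZ\oplus q\gamma_2^\ast\bbZ\subset\Lambda^\ast$ with $p,q\ge 1$.

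First I would invoke Proposition~\ref{th:lattices1}(iii) to observe that the Clifford edge, of lattice $\Lambda^\ast$, contains the bifurcation vertex $(1,0,n)$ for every $n\ge 2$. For each such $n$, Theorem~\ref{thm:moduli-space} combined with Lemma~\ref{thm:sphere-bouquet} provides a continuous path in $\mathcal{M}_0\cup\overline{\mathcal{M}}_1$ joining the Clifford edge at $(1,0,n)$ to a second flat edge at $(n-1,0,n)$: the rotational $g=1$ families issuing from the two endpoints both converge, as $\jq\to 0$, to the common $(1,n)=(n-1,n)$ sphere bouquet, and the two half-flows are glued there.

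Next I would identify the new edge using \eqref{eq:isomorphism}. Up to $C'$ and $D'$, the $g=1$ flow at $(1,0,n)$ carries $\Lambda^\ast$ to one of $\gamma_1^\ast\bbZ\oplus(n-1)\gamma_2^\ast\bbZ$ or $(n-1)\gamma_1^\ast\bbZ\oplus\gamma_2^\ast\bbZ$, depending on which of the two branches at the bouquet is selected. Letting $n$ range over $n\ge 2$ already places every edge of the form $\gamma_1^\ast\bbZ\oplus k\gamma_2^\ast\bbZ$ and $k\gamma_1^\ast\bbZ\oplus\gamma_2^\ast\bbZ$ in the Clifford component. For a general rectangular edge $p\gamma_1^\ast\bbZ\oplus q\gamma_2^\ast\bbZ$ with $p,q\ge 2$, I would iterate: the edge $\Upsilon=\gamma_1^\ast\bbZ\oplus q\gamma_2^\ast\bbZ$ is a sublattice of $\Lambda^\ast=\Gamma_{[1,0,p+1]}$, so by Proposition~\ref{th:lattices1}(i) it contains the bifurcation vertex $(1,0,p+1)$; applying \eqref{eq:isomorphism} in the form $\Lambda^\ast\to p\gamma_1^\ast\bbZ\oplus\gamma_2^\ast\bbZ$ to the sublattice $\Upsilon$ then sends it to $p\gamma_1^\ast\bbZ\oplus q\gamma_2^\ast\bbZ$. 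Thus every rectangular sublattice is reached from the Clifford edge in at most two steps.

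The main obstacle, and the step deserving the most care, is the sphere-bouquet gluing itself: the vector field \eqref{eq:torus-flow} is singular at $\jq=0$, so the two half-flows do not arise from a single integral curve, and the concatenation must be interpreted as a path in the completed moduli $\mathcal{M}_0\cup\overline{\mathcal{M}}_1$. Legitimacy comes from Lemma~\ref{thm:sphere-bouquet} together with the observation in the proof of Theorem~\ref{thm:moduli-space} that each sphere bouquet is the common limit of exactly two rotational $g=1$ families, so that a canonical gluing exists. Once that is in hand, the lattice bookkeeping above is mechanical and yields the asserted connectedness.
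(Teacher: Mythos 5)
Your proposal is correct and follows essentially the same route as the paper: identify the rotational edges with rectangular sublattices $p\,\gamma_1^\ast\bbZ\oplus q\,\gamma_2^\ast\bbZ$, use Proposition~\ref{th:lattices1}~(iii) to place the vertices $(1,0,n)$ on the $\Lambda^\ast$ edge, flow through the genus-one families glued at sphere bouquets (Theorem~\ref{thm:moduli-space}, Lemma~\ref{thm:sphere-bouquet}) to reach $\gamma_1^\ast\bbZ\oplus(n-1)\gamma_2^\ast\bbZ$ and $(n-1)\gamma_1^\ast\bbZ\oplus\gamma_2^\ast\bbZ$, and then use the isogenies~\eqref{eq:isomorphism} to reach a general $p\,\gamma_1^\ast\bbZ\oplus q\,\gamma_2^\ast\bbZ$ in a second step. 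Your version only swaps the symmetric roles of $p$ and $q$ in that last isogeny step; the argument is otherwise the paper's.
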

\begin{proof}
We will show that any edge of rotational tori in $\mathcal{M}_0$ is
connected to the edge of embedded rotational tori, or equivalently
that any lattice $p\,\gamma_1^\ast\bbZ + q\,\gamma_2^\ast\bbZ$ of a
rotational torus is connected to the $\Lambda^\ast$ lattice. By
Proposition~\ref{th:lattices1}~(iii) the edge of embedded rotational
tori in $\mathcal{M}_0$ contains all the vertices $(1,\,0,\,\TurnTwo)$ with $\TurnTwo \geq 2$. This edge is connected with all the edges that contain the vertices $(\TurnTwo-1,\,0,\,\TurnTwo)$ with $\TurnTwo \geq 2$. Hence the lattice $\Gamma_{[1,\,0,\,\TurnTwo]}=\Lambda^\ast$ is
connected to the lattice $\Gamma_{[\TurnTwo -1,\,0,\,\TurnTwo]}=
\gamma_1^\ast\bbZ+(\TurnTwo-1)\gamma_2^\ast\bbZ$. Furthermore, the
lattice $D\;\Gamma_{[1,\,0,\,\TurnTwo]} = \Lambda^\ast$ is
connected to the lattice $D\;\Gamma_{[\TurnTwo -1,0,\TurnTwo]}=
(\TurnTwo-1)\gamma_1^\ast\bbZ + \gamma_2^\ast\bbZ$. Sublattices
$\Gamma=p\,\gamma_1^\ast\bbZ + \gamma_2^\ast \bbZ\subset\Gamma_{[1,0,\TurnTwo]}$
are connected along genus one edges isogenic to the former genus one
edges with $p\,\gamma_1^\ast\bbZ + (\TurnTwo-1)\,\gamma_2^\ast\bbZ
\subset\Gamma_{[\TurnTwo -1,0,\TurnTwo]}$ by the isomorphism \eqref{eq:isomorphism}.
\end{proof}
In the following we shall combine deformations to pass between
bifurcation vertices in $\mathcal{M}_0$. There are three different
types of deformations:

\ding{192} The deformation through spectral genus one, possibly also passing through bouquets of spheres as described in Theorem~\ref{thm:moduli-space}. In this case we write
   \[ (\TurnZero,\,\TurnOne,\,\TurnTwo)
   \xrightarrow{\text{\ding{192}}}
(\TurnTwo + \TurnOne - \TurnZero,\,\TurnOne,\,\TurnTwo).\]

\ding{193} The deformation along an edge of flat \cmc tori,
  passing from one 'bifurcation' vertex to another one:
  Suppose $(\TurnZero,\,\TurnOne,\,\TurnTwo) \in \bbZ^3$ is such that
  $0<\TurnOne$ and $\TurnTwo<2\TurnZero$. Then from
  $n_1\TurnOne+n_2\TurnTwo \in \TurnZero \bbZ$ get that
  also $n_1(\TurnOne+\TurnZero)+n_2(\TurnTwo-\TurnZero)\in\TurnZero\bbZ$.
  In this case the transformation
  $(\TurnZero,\TurnOne,\TurnTwo)\mapsto
  (\TurnZero,\TurnTwo-\TurnZero,\TurnZero+\TurnOne)$ acts on the
  corresponding lattices as the transformation~(D''), which interchanges
  the four lattices~\eqref{eq:sublattices}. In such a case we write
    \[ (\TurnZero,\TurnOne,\TurnTwo)\xrightarrow{\text{\ding{193}}}
       (\TurnZero,\TurnTwo-\TurnZero,\TurnZero+\TurnOne).\]

\ding{194} If $\TurnOne$ is odd, then we have
\begin{equation} \label{eq:turnone_odd}
n_1\gamma_1^\ast+n_2\gamma_2^\ast\in\Gamma_{[\TurnZero,\TurnOne,\TurnTwo]}
\Longleftrightarrow
2n_1\gamma_1^\ast+n_2\gamma_2^\ast\in \Gamma_{[2\TurnZero,\TurnOne,2\TurnTwo]}.
\end{equation}
Due to Lemma~\ref{rotational connected} both genus zero edges
corresponding to the lattices $2\bbZ\gamma_1^\ast+\bbZ\gamma_2^\ast$
and $\bbZ\gamma_1^\ast+2\bbZ\gamma_2^\ast$ are connected with the edge corresponding to $\Lambda^\ast$. If we apply an isogeny to these families we obtain with \eqref{eq:isomorphism} a deformation of a genus zero edge corresponding to the lattice $\Gamma\subset\Lambda^\ast$ to another genus zero edge corresponding to the lattices $\{2n_1\gamma_1^\ast+n_2\gamma_2^\ast\mid
n_1\gamma_1^\ast+n_2\gamma_2^\ast\in\Gamma\}$ and
$\{n_1\gamma_1^\ast+2n_2\gamma_2^\ast\mid
n_1\gamma_1^\ast+n_2\gamma_2^\ast\in\Gamma\}$, respectively.
In combination with \eqref{eq:turnone_odd} we write
\begin{equation*}
(\TurnZero,\TurnOne,\TurnTwo)\xrightarrow{\text{\ding{194}}}
(2\TurnZero,\TurnOne,2\TurnTwo).
\end{equation*}
\begin{lemma} \label{th:z2lattice} For co-prime integers
  $0\leq\TurnOne<\TurnZero<\TurnTwo$ the
  lattices~\eqref{eq:sublattices} are connected with $\Lambda^\ast$.
\end{lemma}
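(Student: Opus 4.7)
The plan is to proceed by strong induction on $\TurnZero$. Three base cases suffice. If $\TurnZero=1$, then $\Gamma_{[1,\TurnOne,\TurnTwo]}=\Lambda^\ast$ by Proposition~\ref{th:lattices1}(ii), and since $C''$ and $D''$ preserve $\Lambda^\ast$ all four lattices in~\eqref{eq:sublattices} equal $\Lambda^\ast$. If $\TurnOne=0$ the triple is rotational by Theorem~\ref{thm:flat-torus-integers}(3) and Lemma~\ref{rotational connected} supplies the connection to $\Lambda^\ast$. Finally, if $\TurnZero\mid\TurnTwo$, coprimality forces $\gcd(\TurnOne,\TurnZero)=1$, so $\Gamma_{[\TurnZero,\TurnOne,\TurnTwo]}=\TurnZero\bbZ\gamma_1^\ast+\bbZ\gamma_2^\ast$ is a rotational lattice (after a $D''$ to match the form in Proposition~\ref{th:lattices1}(iii)), and its $C''$- and $D''$-images---the remaining members of~\eqref{eq:sublattices}---are handled by Lemma~\ref{rotational connected} as well.

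In the inductive step I may therefore assume $\TurnZero\ge 2$, $\TurnOne\ge 1$, and $\TurnZero\nmid\TurnTwo$. The central move is~\ding{192}, which connects the edge of $(\TurnZero,\TurnOne,\TurnTwo)$ with that of $(\TurnOne+\TurnTwo-\TurnZero,\TurnOne,\TurnTwo)$; its first entry is strictly smaller than $\TurnZero$ precisely when $2\TurnZero>\TurnOne+\TurnTwo$, and in this regime the inductive hypothesis closes the argument. In the complementary regime $2\TurnZero\le\TurnOne+\TurnTwo$ I would use the auxiliary moves: \ding{193} (acting on the lattice as the involution $D''$, available when $\TurnTwo<2\TurnZero$) switches between members of the four-lattice family~\eqref{eq:sublattices} sharing the same $\TurnZero$, while~\ding{194} doubles the pair $(\TurnZero,\TurnTwo)$ when $\TurnOne$ is odd. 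A symmetric variant of~\ding{194}, obtained by conjugating with~\ding{193} (equivalently by reading off the $D''$-image of the doubling formula in~\eqref{eq:turnone_odd}), doubles $(\TurnZero,\TurnOne)$ when $\TurnTwo$ is odd. Since $\gcd(\TurnZero,\TurnOne,\TurnTwo)=1$ at least one of these parity-dependent doublings is always available.

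The strategy is then a Euclidean-algorithm-style descent: a carefully chosen composition along the lines of \ding{194}-\ding{192}-\ding{193} takes the triple to a new one with the same or smaller $\TurnZero$ but with $\TurnTwo\bmod\TurnZero$ strictly decreased, and iterating the cycle eventually lands in one of the three base-case situations ($\TurnZero=1$, $\TurnOne=0$, or $\TurnZero\mid\TurnTwo$). The main obstacle will be the termination argument: one has to verify that this iteration cannot cycle indefinitely among triples sharing a common $\TurnZero$, and that the parity toggles introduced by~\ding{194} do not undo the progress made on the residue pair $(\TurnOne\bmod\TurnZero,\TurnTwo\bmod\TurnZero)$. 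This reduces to a finite-step number-theoretic bookkeeping on those residues, analogous to the Stern--Brocot mediant process, with~\ding{194} playing the crucial role of unblocking the boundary $2\TurnZero=\TurnOne+\TurnTwo$ on which~\ding{192} is neutral.
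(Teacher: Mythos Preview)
Your proposal is a plan rather than a proof: you explicitly leave the termination argument as ``the main obstacle,'' and the composition ``along the lines of \ding{194}--\ding{192}--\ding{193}'' is never pinned down. That is precisely the content of the lemma, so as it stands there is a genuine gap. In particular, your secondary induction variable $\TurnTwo\bmod\TurnZero$ is not obviously monotone under the moves you list, and your ``symmetric variant of \ding{194}'' producing $(2\TurnZero,2\TurnOne,\TurnTwo)$ only lands in the admissible range $0\le\TurnOne'<\TurnZero'<\TurnTwo'$ when $\TurnTwo>2\TurnZero$, which is exactly the regime you have already normalized away.

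The paper avoids this by descending on $\TurnOne$, not on $\TurnZero$. After first replacing $\TurnTwo$ by its least residue in $(\TurnZero,2\TurnZero]$ (which does not change the lattices~\eqref{eq:sublattices}), the case $\TurnTwo=2\TurnZero$ is your rotational base case. Otherwise $\TurnTwo<2\TurnZero$, and one may assume $2\TurnZero\le\TurnOne+\TurnTwo$ (else replace $\TurnZero$ by $\TurnOne+\TurnTwo-\TurnZero$ via \ding{192}). Then the single explicit chain
\[
(\TurnZero,\TurnOne,\TurnTwo)\xrightarrow{\text{\ding{193}}}
(\TurnZero,\TurnTwo-\TurnZero,\TurnZero+\TurnOne)
\xrightarrow{\text{\ding{192}}}
(\TurnOne+\TurnTwo-\TurnZero,\TurnTwo-\TurnZero,\TurnZero+\TurnOne)
\xrightarrow{\text{\ding{193}}}
(\TurnOne+\TurnTwo-\TurnZero,\,2\TurnZero-\TurnTwo,\,\TurnOne+2\TurnTwo-2\TurnZero)
\]
produces a new middle entry $2\TurnZero-\TurnTwo<\TurnOne$ whenever $2\TurnZero<\TurnOne+\TurnTwo$. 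The only stuck case is the boundary $2\TurnZero=\TurnOne+\TurnTwo$, and there coprimality forces one of $\TurnOne$, $\TurnZero-\TurnOne$ to be odd; a single application of \ding{194} (preceded by \ding{193} if it is $\TurnZero-\TurnOne$ that is odd) followed by \ding{192} then strictly reduces the middle entry as well. So the descent on $\TurnOne$ terminates at $\TurnOne=0$, which is Lemma~\ref{rotational connected}. I would recommend reorganizing your argument around $\TurnOne$ rather than $\TurnZero$; the moves you have assembled are the right ones, but the paper's choice of invariant makes the bookkeeping a two-line computation instead of an open-ended Stern--Brocot analysis.
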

\begin{proof}
We shall show that it is possible to successively reduce $\TurnOne$ until $\TurnOne = 0$. We can add to $\TurnTwo$ multiples of $\TurnZero$ without changing the lattices \eqref{eq:sublattices}. We pick the smallest of all
possible $\TurnTwo$ and obtain $\TurnTwo\leq2\TurnZero.$ In case of
equality, the lattices \eqref{eq:sublattices} are of the form
$p\bbZ\gamma_1^\ast+q\bbZ\gamma_2^\ast$. In this case
Lemma~\ref{rotational connected} connects $\Gamma$ with
$\Lambda^\ast$. Therefore we may assume $\TurnTwo<2\TurnZero$.

In the deformation \ding{192} we pick the smaller of the first entries $\TurnZero$ and $\TurnTwo+\TurnOne-\TurnZero$. Hence we can assume that $2\TurnZero\leq\TurnOne+\TurnTwo$,
and now we have the following sequence of deformations
\begin{equation*} \label{eq:step1}
(\TurnZero,\TurnOne,\TurnTwo)\xrightarrow{\text{\ding{193}}}
(\TurnZero,\TurnTwo-\TurnZero,\TurnZero+\TurnOne)
\xrightarrow{\text{\ding{192}}}
(\TurnTwo+\TurnOne-\TurnZero,\TurnTwo-\TurnZero,\TurnZero+\TurnOne)
\xrightarrow{\text{\ding{193}}} (\TurnTwo+\TurnOne-\TurnZero,
2\TurnZero-\TurnTwo,\TurnOne+2\TurnTwo-2\TurnZero).
\end{equation*}
If $2\TurnZero<\TurnOne+\TurnTwo$ then $2\TurnZero-\TurnTwo<\TurnOne$,
so $\TurnOne$ has decreased by this deformation.

If $2\TurnZero = \TurnOne + \TurnTwo$, then we distinguish two cases:
If $\TurnOne$ and $\TurnZero - \TurnOne$ were both even, then
$\TurnTwo$ would be even, contradicting that
$\gcd(\TurnZero,\,\TurnOne,\,\TurnTwo) =1$. Hence we just need to
consider the two cases $\TurnZero - \TurnOne$ is odd, and $\TurnOne$
is odd.

If $\TurnZero-\TurnOne$ is odd, then
\[
(\TurnZero,\TurnOne,2\TurnZero-\TurnOne)\xrightarrow{\text{\ding{193}}}
(\TurnZero,\TurnZero-\TurnOne,\TurnZero+\TurnOne)
\xrightarrow{\text{\ding{194}}}
(2\TurnZero,\TurnZero-\TurnOne,2\TurnZero+2\TurnOne)
\xrightarrow{\text{\ding{192}}}
(\TurnZero+\TurnOne,\TurnZero-\TurnOne,2\TurnZero+2\TurnOne).\]

\item If $\TurnOne$ is odd, then
\[
(\TurnZero,\TurnOne,2\TurnZero-\TurnOne)\xrightarrow{\text{\ding{194}}}
(2\TurnZero,\TurnOne,4\TurnZero-2\TurnOne)\xrightarrow{\text{\ding{192}}}
(2\TurnZero-\TurnOne,\TurnOne,4\TurnZero-2\TurnOne).\]
Obviously for $\TurnTwo=2\TurnZero$ the
lattices~\eqref{eq:sublattices} are of the form
$p\bbZ\gamma_1^\ast+q\bbZ\gamma_2^\ast$.
Hence in all cases either the lattices \eqref{eq:sublattices} are
connected with $\Lambda^\ast$ or $\TurnOne$ is reduced. By repeating the above procedure finitely many times we eventually achieve $\TurnOne=0$,
which we have already dealt with in Lemma~\ref{rotational connected}.
\end{proof}
\begin{lemma} \label{th:lattice-lemma}
For every co-finite sublattice $\Gamma \varsubsetneq \Lambda^*$ there exists a triple $(\TurnZero,\,\TurnOne,\,\TurnTwo) \in \bbZ^3$ with
$\gcd(\TurnZero,\,\TurnOne,\,\TurnTwo)=1$, $0\leq\TurnOne < \TurnZero < \TurnTwo$ and $\TurnZero >1$, such that $\Gamma$ is contained in a
lattice of \eqref{eq:sublattices}.
\end{lemma}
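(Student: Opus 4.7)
The plan is to realize $\Gamma$ as a sublattice of the kernel of some surjective homomorphism $\psi\colon\Lambda^\ast\twoheadrightarrow\bbZ/\TurnZero\bbZ$ with $\TurnZero\geq 2$, and then read off the triple $(\TurnZero,\TurnOne,\TurnTwo)$ directly from $\psi(\gamma_1^\ast)$ and $\psi(\gamma_2^\ast)$. The first of the four lattices in \eqref{eq:sublattices}, namely $\Gamma_{[\TurnZero,\TurnOne,\TurnTwo]}$, is by definition the kernel of the reduction map $n_1\gamma_1^\ast+n_2\gamma_2^\ast\mapsto n_1\TurnOne+n_2\TurnTwo\bmod\TurnZero$, so any such $\psi$ will automatically produce a lattice of the required form containing $\Gamma$; I will not need to invoke the transformations $C''$, $D''$, $C''D''$ acting on the sublattices.

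To produce $\psi$, I will use that $A\coloneq\Lambda^\ast/\Gamma$ is a nontrivial finite abelian group, since $\Gamma$ is proper and co-finite. By the elementary divisors theorem, $A\cong\bbZ/d_1\bbZ\oplus\bbZ/d_2\bbZ$ with $d_1\mid d_2$ and $d_2\geq 2$; projecting onto the second summand and composing with the quotient $\Lambda^\ast\twoheadrightarrow A$ yields a surjection $\psi\colon\Lambda^\ast\twoheadrightarrow\bbZ/d_2\bbZ$ with $\Gamma\subseteq\ker\psi$. I set $\TurnZero\coloneq d_2$.

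Next, I take $\TurnOne\in\{0,1,\ldots,\TurnZero-1\}$ to be the residue of $\psi(\gamma_1^\ast)$ and let $a\in\{0,1,\ldots,\TurnZero-1\}$ be the residue of $\psi(\gamma_2^\ast)$; I set $\TurnTwo\coloneq a+\TurnZero$ if $a>0$ and $\TurnTwo\coloneq 2\TurnZero$ if $a=0$. Then $0\leq\TurnOne<\TurnZero<\TurnTwo$ and $\TurnZero>1$ hold by construction. Since $\psi$ is surjective, the residues $\TurnOne$ and $a\equiv\TurnTwo\pmod{\TurnZero}$ generate $\bbZ/\TurnZero\bbZ$, forcing $\gcd(\TurnZero,\TurnOne,\TurnTwo)=1$. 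One then has $\ker\psi=\Gamma_{[\TurnZero,\TurnOne,\TurnTwo]}$ by construction, so $\Gamma\subseteq\Gamma_{[\TurnZero,\TurnOne,\TurnTwo]}$, which is one of the four lattices in \eqref{eq:sublattices}.

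I do not foresee any substantive obstacle: the statement is essentially an elementary bookkeeping exercise in the cokernel $\Lambda^\ast/\Gamma$. The only point of care is the choice of representative for $\TurnTwo$---naive reduction modulo $\TurnZero$ can return a value at most $\TurnZero$, so I lift into the next period via the case split above to enforce the strict inequality $\TurnTwo>\TurnZero$.
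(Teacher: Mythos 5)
Your proof is correct, and it reaches the same endpoint as the paper's — exhibiting $\Gamma$ inside the kernel of a surjective homomorphism $\Lambda^\ast\to\bbZ/\TurnZero\bbZ$ with $\TurnZero\geq 2$, and reading off $\TurnOne,\,\TurnTwo$ as suitably lifted values on $\gamma_1^\ast,\,\gamma_2^\ast$ so that the kernel is exactly $\Gamma_{[\TurnZero,\,\TurnOne,\,\TurnTwo]}$ from \eqref{eq:sublattices} — but it manufactures that homomorphism by a genuinely different mechanism. The paper works concretely: it chooses a primitive $\gamma_1\in\Lambda^\ast\setminus\Gamma$, completes it to a basis $\gamma_1,\,\gamma_2$, takes the minimal integers $p\geq 2$ and $q\geq 0$ with $p\gamma_1\in\Gamma$ and $q\gamma_1+\gamma_2\in\Gamma$, uses the form $l\gamma_1+m\gamma_2\mapsto l-qm$, and only then rewrites it in the fixed basis $\gamma_1^\ast,\,\gamma_2^\ast$ to obtain $\TurnOne,\,\TurnTwo$ normalized modulo $\TurnZero=p$. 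You instead invoke the elementary divisor decomposition of the finite group $\Lambda^\ast/\Gamma$ and project onto its largest cyclic factor. Your route buys two things: you never leave the basis $\gamma_1^\ast,\,\gamma_2^\ast$, so no translation of coefficients is needed, and you avoid the delicate existence question for $q$ — as literally stated in the paper, an element $q\gamma_1+\gamma_2\in\Gamma$ need not exist (for instance when $\Gamma=2\Lambda^\ast$), a point your abstract surjection sidesteps; what the paper's construction buys in exchange is explicitness, since there $\TurnZero$ is identified as the order of $\gamma_1$ in $\Lambda^\ast/\Gamma$. Your derivation of $\gcd(\TurnZero,\,\TurnOne,\,\TurnTwo)=1$ from surjectivity and your lift of $\TurnTwo$ into $(\TurnZero,\,2\TurnZero]$ to enforce $\TurnTwo>\TurnZero$ are both correct and in fact spell out normalizations the paper leaves implicit.
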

\begin{proof}
Since $\Gamma \neq\Lambda^\ast$, there exists
$\gamma_1\in\Lambda^\ast$ with $\gamma_1\notin\Gamma$, and $\gamma_1$ is not a multiple of another element in $\Lambda^\ast$. Let $\gamma_2\in\Lambda^\ast$ such that $\Lambda^\ast\cong\gamma_1\bbZ\oplus\gamma_2\bbZ$. Since
$\Lambda^\ast/\Gamma$ is finite, there exist smallest integers
$p,q\in\bbZ$ with $p\geq 2,\,q\geq 0$ such that $\gamma_1p\in\Gamma$
and $\gamma_1q\oplus\gamma_2\in\Gamma$.

Consider the homomorphism $g :\Lambda^\ast\to\bbZ,\,
\gamma_1l\oplus\gamma_2m\mapsto l-qm$. By definition of $p,q$ we
have that $\Gamma\subset\gamma_1p\bbZ\oplus(\gamma_1q+\gamma_2)\bbZ$,
so that $g$ maps $\Gamma$ to a sublattice of $p\bbZ$. Every such
homomorphism is of the form $a\gamma_1^\ast+b\gamma_2^\ast\in\Lambda^\ast
\mapsto\TurnOne a+\TurnTwo b$ with $\TurnOne,\TurnTwo\in\bbZ$. By
adding appropriate multiples of $\TurnZero=p$ to
$\TurnOne,\TurnTwo$ we can achieve $0\leq\TurnOne<\TurnZero<\TurnTwo$.
\end{proof}
\begin{theorem} \label{th:connected}
The completed moduli space of equivariant \cmc tori in the 3-sphere is connected.
\end{theorem}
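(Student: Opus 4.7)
The completed moduli space $\mathcal{M}_0 \cup \overline{\mathcal{M}}_1$ forms a graph whose spectral genus zero edges are indexed (modulo the symmetries $C''$, $D''$) by co-finite sublattices $\Gamma \subseteq \Lambda^*$, and whose spectral genus one edges connect bifurcation vertices in $\mathcal{M}_0$, possibly limiting to a sphere bouquet in $\overline{\mathcal{M}}_1 \setminus \mathcal{M}_1$. My plan is to show that every such edge lies in the connected component of the Clifford edge $\Lambda^*$.

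Given a proper sublattice $\Gamma \varsubsetneq \Lambda^*$, Lemma~\ref{th:lattice-lemma} yields a coprime triple $(\TurnZero, \TurnOne, \TurnTwo)$ with $\TurnZero > 1$ such that $\Gamma$ is contained in one of the lattices~\eqref{eq:sublattices}. By Proposition~\ref{th:lattices1}(i), the $\Gamma$-edge then carries a bifurcation vertex of this triple. From that vertex I iterate the reduction procedure of Lemma~\ref{th:z2lattice}, reinterpreting its three moves at the level of $\Gamma$ rather than at the level of the maximal lattice $\Gamma_{[\TurnZero,\TurnOne,\TurnTwo]}$: move \ding{193} is a traversal within a single $\mathcal{M}_0$-edge between two of its bifurcation vertices; move \ding{192} follows a spectral genus one edge emanating from the bifurcation vertex on the $\Gamma$-edge to a bifurcation vertex of triple $(\TurnTwo + \TurnOne - \TurnZero, \TurnOne, \TurnTwo)$ on the edge of a sublattice $\Gamma' \subseteq \Gamma_{[\TurnTwo + \TurnOne - \TurnZero, \TurnOne, \TurnTwo]}$ prescribed by how the period data transports along the flow via Proposition~\ref{prop:closing-conditions}; and move \ding{194} is the isogeny \eqref{eq:isomorphism}, already stated for an arbitrary sublattice in the text preceding Lemma~\ref{th:z2lattice}. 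After finitely many reductions the argument of Lemma~\ref{th:z2lattice} reaches a rotational triple $(\TurnZero', 0, \TurnTwo')$ on the edge of a rotational sublattice $\widehat\Gamma \subseteq \Lambda^*$, which Lemma~\ref{rotational connected} in turn connects to $\Lambda^*$ through the appropriate sphere bouquet.

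The principal technical point to verify is that each reduction move of Lemma~\ref{th:z2lattice} has a well-defined action on an arbitrary sublattice $\Gamma \subseteq \Gamma_{[\TurnZero, \TurnOne, \TurnTwo]}$, not merely on the maximal lattice. Moves \ding{193} and \ding{194} are explicit from the discussion preceding Lemma~\ref{th:z2lattice}. The only delicate point is move \ding{192}: I must check that the spectral genus one flow of Theorem~\ref{thm:moduli-space} transports the period lattice of $\Gamma$ to a well-defined period lattice at the other endpoint. This is built into the construction, since by Theorem~\ref{th:nocommon} and~\eqref{eq:sym-point-deformation} the flow preserves the closing conditions, so the integers $p_{jk}$ of Proposition~\ref{prop:closing-conditions} remain constant along the entire edge and the natural identification between the period lattices at the two endpoints descends from the maximal lattice to any sublattice. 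With this bookkeeping in hand, Lemmas~\ref{th:z2lattice} and~\ref{rotational connected} assemble into the desired connectedness statement.
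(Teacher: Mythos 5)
There is a genuine gap in your endgame. Lemma~\ref{th:z2lattice} connects only the \emph{maximal} lattices~\eqref{eq:sublattices} of a triple $(\TurnZero,\,\TurnOne,\,\TurnTwo)$ to $\Lambda^\ast$; your plan is to re-run its moves while transporting the given sublattice $\Gamma\subseteq\Gamma_{[\TurnZero,\TurnOne,\TurnTwo]}$ along, and then to invoke Lemma~\ref{rotational connected} at the end. But after the reduction terminates at a triple $(\TurnZero',\,0,\,\TurnTwo')$, the transported lattice $\widehat\Gamma$ is merely \emph{some} sublattice of the rectangular lattice $\Gamma_{[\TurnZero',0,\TurnTwo']}$; it need not itself be of the form $p\,\gamma_1^\ast\bbZ+q\,\gamma_2^\ast\bbZ$, and Lemma~\ref{rotational connected} (together with the isogeny~\eqref{eq:isomorphism}) only connects such rectangular lattices and their rectangular sublattices to $\Lambda^\ast$. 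Indeed the paper explicitly declines to track how the genus-one isomorphisms act on general sublattices ("but we do not use these isomorphisms\dots"), precisely because the image sublattice is not controlled; your argument needs exactly that control and does not supply it. The paper's proof avoids the problem differently: it applies the isogeny to the \emph{whole path} produced by Lemma~\ref{th:z2lattice}, which connects the $\Gamma$-edge to the edge of some sublattice $\Gamma'\subseteq\Lambda^\ast$ whose identity is irrelevant -- all that matters is $|\Lambda^\ast/\Gamma'|=|\Lambda^\ast/\Gamma|/\TurnZero<|\Lambda^\ast/\Gamma|$ since $\TurnZero>1$ -- and then finishes by induction on the index $|\Lambda^\ast/\Gamma|$. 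This descending induction is the missing ingredient; a single pass through the reduction cannot land on $\Lambda^\ast$ directly.

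A secondary inaccuracy: you justify the transport along a genus-one edge by asserting that the integers $p_{jk}$ of Proposition~\ref{prop:closing-conditions} stay constant along the flow. The values $\ln\mu_j(\lambda_k)$ are indeed fixed, but in the paper's normalization (with the multivalued $\OMEGA$ cut at $\lambda=\sign(\Xzero)$) the integer matrix jumps by $\pm2\,(0,\,0,\,p_{j0})$ when $\lambda_2$ crosses the cut -- this is the computation in Proposition~\ref{prop:involution}, and it is the very reason the flow ends at the triple $(\TurnOne+\TurnTwo-\TurnZero,\,\TurnOne,\,\TurnTwo)$ rather than at $(\TurnZero,\,\TurnOne,\,\TurnTwo)$. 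The correct statement is that the flow induces a specific isomorphism of period lattices (so isogenic genus-one edges connect corresponding sublattices), not that the integer data is unchanged.
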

\begin{proof}
If $\Gamma \varsubsetneq \Lambda^*$ is a co-finite
sublattice, then by Lemma~\ref{th:lattice-lemma} the corresponding
genus zero edge contains a bifurcation vertex with triple
$(\TurnZero,\,\TurnOne,\,\TurnTwo)$ and integers
$0\leq\TurnOne<\TurnZero<\TurnTwo$. By Lemma~\ref{th:z2lattice}
the corresponding lattices \eqref{eq:sublattices} are connected to the lattice $\Lambda^\ast$. An isogeny of this path connects the edge corresponding to $\Gamma$ with a genus zero edge corresponding to
$\Gamma'$ with $|\Lambda^\ast/\Gamma'|<|\Lambda^\ast/\Gamma|$.
Repeating this argument we can successively reduce the order until
$|\Lambda^\ast/\Gamma'|= 1$.
\end{proof}

\section{Stability}
We conclude the paper by computing the sign of the second variation of the Willmore energy at all spectral genus one minimal tori in $\mathbb{S}^3$, and show that this is negative. Thus we obtain the following
\begin{theorem} Spectral genus one minimal tori in $\mathbb{S}^3$ are all local maxima of the Willmore energy.
\end{theorem}
\begin{proof}
It is proven in Proposition~\ref{th:alpha_less_beta} that $\JK'$ and $\JE'$ satisfy
\[
    1\leq\frac{2\JE'}{1+\jq^2}<\JK'<\frac{\JE'}{|\jq|}\quad
    \mbox{for}\quad 0<|\jq|<1.
\]
This implies
\begin{equation} \label{eq:ineqJK}
\JE^2 - \jq^2 \JK^2 >0\,.
\end{equation}
Since the elliptic integrals $\JK'$ and $\JE'$ are at $\jq=\pm 1$ equal to $\frac{\pi}{2}$, the function $2\JE'-(1+\jq^2)\JK'$ has zeroes only at $\jq =\pm 1$. By \eqref{eq:torus-flow} we have
\[
    \dot{\jh} =
    \jq\frac{1-\jh^2}{1-\jq^2}\,(2\JE'-(1+\jq^2)\JK'))\,.
\]
The conformal factor is $2 \langle f_z,\,f_{\bar{z}} \rangle = v^2 = \mathrm{dn}^2 (x,\,1-q^2)$, and thus
\[
  \int_0^\mathrm{K(m)} \mathrm{dn}^2(t,\,m)\, dt = 2\,\mathrm{E}(m) \,.
\]
The Willmore energy with respect to the simple generators of the lattice computes to
\[
  \calW =  \int (H^2 + 1)\,dA = \frac{4 \pi \sqrt{2}\,\JE }
  {\sqrt{\jq^2 - 2 \jq \left(\jk \jh-\sqrt{1-\jk^2}\sqrt{1-\jh^2}\right) + 1}}\,.
\]
The first derivative of the Willmore Energy is
\[
  \dot{\calW} = \frac{4 \pi \sqrt{2} \jh \jq \left(\JE^2 - \jq^2
   \JK^2\right)}{\sqrt{\jq^2 - 2 \jk \jh \jq + 2 \jq \sqrt{1-\jk^2} \sqrt{1-\jh^2} + 1}} \,.
\]
Thus $\dot{\calW} = 0$ if and only if $H=0$. The second derivative at these extrema computes to
\[
  \left.\ddot{\calW}\right|_{H=0} = \frac{4 \pi \sqrt{2} \jq^2}{\sqrt{\jq^2 + 2\jq \sqrt{1-\jk^2} +1}}
  \left(\jq \JK - \JE \right)\,\left(\jq \JK + \JE \right) \,\left(\jq^2 \JK + \JK - 2 \JE \right)\,.
\]
The product of the first two factors is negative by inequality \eqref{eq:ineqJK}, while for the third factor
we have
\begin{align*}
  &\left.\left(\jq^2 \JK + \JK - 2 \JE \right)\right|_{\jq^2 =1} =0 \,,\\
  &\tfrac{d}{d\jq} \left(\jq^2 \JK + \JK - 2 \JE \right) = \frac{\JE - \jq^2 \JK}{2\jq^2} \geq 0 \,,
\end{align*}
and thus
\begin{equation}
  \left.\ddot{\calW}\right|_{H=0} < 0  \mbox{ for } 0 < |\jq | < 1\,.
\end{equation}
Hence with respect to the deformation
through {\sc{cmc}} tori, the Willmore energy has local maxima at all the minimal spectral genus one tori in $\bbS^3$,
and therefore minimal tori of spectral genus one are unstable extrema.
\end{proof}
%

%%%%%%%%%%%%%%%%%%%%%%%%%%%%%%%%%%%%%%%%%%%%%%%%%%%%%%%%%%%%%%%%%%%%%%%%%%%%%%
%%%%%%%%%%%%%%%%%%%%%%%%%%%%%%%%%%%%%%%%%%%%%%%%%%%%%%%%%%%%%%%%%%%%%%%%%%%%%%
%%%%%%%%%%%%%%%%%%%%%%%%%%%%%%%%%%%%%%%%%%%%%%%%%%%%%%%%%%%%%%%%%%%%%%%%%%%%%%

\bibliographystyle{amsplain}

\def\cydot{\leavevmode\raise.4ex\hbox{.}} \def\cprime{$'$}
\providecommand{\bysame}{\leavevmode\hbox to3em{\hrulefill}\thinspace}
\providecommand{\MR}{\relax\ifhmode\unskip\space\fi MR }
% \MRhref is called by the amsart/book/proc definition of \MR.
\providecommand{\MRhref}[2]{%
  \href{http://www.ams.org/mathscinet-getitem?mr=#1}{#2}
}
\providecommand{\href}[2]{#2}

\end{document}